\newtheorem{theorem}{Theorem}[section]
\newtheorem{corollary}[theorem]{Corollary}
\newtheorem{lemma}[theorem]{Lemma}
\newtheorem{notation}[theorem]{Notation}
\newtheorem{proposition}[theorem]{Proposition}
\newtheorem{remark}[theorem]{Remark}
\newtheorem{assumption}{Assumption}
\newenvironment{proof}[1][Proof]{\textbf{#1.} }{\hfill\rule{0.5em}{0.5em}}
{\catcode`\@=11\global\let\AddToReset=\@addtoreset
\AddToReset{equation}{section}

\AddToReset{theorem}{section}

\begin{document}
\title{Sharp estimates for screened Vlasov-Poisson system around Penrose-stable equilibria in $\mathbb{R}^d $, $ d\geq3$}
	\author{
	{\bf Lingjia Huang\thanks{E-mail address: ljhuang20@fudan.edu.cn, Fudan University, 220 Handan Road, Yangpu, Shanghai, 200433, China.}, Quoc-Hung Nguyen\thanks{E-mail address: qhnguyen@amss.ac.cn, Academy of Mathematics and Systems Science,
			Chinese Academy of Sciences,
			Beijing 100190, PR China.} ~and~Yiran Xu\thanks{E-mail address: yrxu20@fudan.edu.cn, Fudan University, 220 Handan Road, Yangpu, Shanghai, 200433, China. }}}
\date{}  
\maketitle
	\begin{abstract}
	In this paper, we study the asymptotic stability of  Penrose-stable equilibria among solutions of the screened Vlasov-Poisson system in $\mathbb{R}^d$ with $d\geq 3$ that was first established by  Bedrossian, Masmoudi, and Mouhot in \cite{JBedrossian2018} with smooth initial data. More precisely, we prove the sharp decay estimates for the density of the perturbed system, exactly like the free transport with only H\"older (i.e., $C^{a}$ for $0<a<1$) perturbed initial data. This improves the recent works in \cite{HanKwanD2021} by Han-Kwan, Nguyen, and Rousset for lower derivatives of the density and in  \cite{NguyenTT2020}  by T. Nguyen for higher derivatives with a logarithmic correction in time. Furthermore, we establish new estimates and cancellations of the kernel to the linearized problem to obtain this result. Moreover, we also prove this result for the Vlasov-Poisson system in which the electric field obeys a general nonlinear Poisson equation containing massless electrons/ions case. 
\end{abstract}
\section{Introduction}
This paper is devoted to the study of the asymptotic stability of equilibria for  the Vlasov-Poisson system of the form:
\begin{equation}\label{toymodel}
	\begin{cases}
		\partial_t f_i + v\cdot \nabla_x f_i +E\cdot\nabla_v f_i=0\\
		E=-\nabla_xU,    -\Delta U+U =\rho_i-1+A(U),~~ \rho_i(t,x)=\int_{\mathbb{R}^d}f_i(t,x,v)dv\\
	\end{cases}
\end{equation}
on the whole space $x\in\mathbb{R}^d$, $v\in\mathbb{R}^d$, $d\ge3$, where $f_i=f_i(t,x,v)\ge 0$ is the probability distribution of charged particles in plasma, $\rho_i(t,x)$ is the electric charge density, and  $E=E(t,x)$ is electric field and $A:\mathbb{R}\to \mathbb{R}$  is smooth and  satisfies  $A(r)=\circ(r)$ as $r\to 0$. In particular, massless electrons/ ions case if $A(r)= r+1-e^r$;  screened case if $A(r)= 0$; and unscreened case if $A(r)= r$. \vspace{0.15cm}\\
This system describes a hot, unconfined, electrostatic plasma, ions in $d$ dimensions of electrons on a uniform, static, background of ions with $d\geq 3$. This system has been extensively studied (\cite{arsenev,batt,EHorst1982,CBardos1985,FBouchut1991,DHanKwan2011,CBardos2018,KPfaffelmoser1992,JSchaeffer1991,EHorst1993,RTGlassey1996,LionsPL1991,Kwanlacobelli2017,GriffinLacobelli,GuoPausader,Alonescuwa2020,FlyOuPau}), focusing on the global existence, regularity results and longtime behavior of solutions. 
The first paper on the global existence of weak solutions with $A(r)=r$ is due to Arsen\'ev in \cite{arsenev}. Then  Batt in  \cite{batt} established global existence for spherically symmetric data, which was extended by Horst in \cite{EHorst1982} to global classical solvability with cylindrically symmetric data. After that in \cite{CBardos1985},  Bardos and Degond obtained global existence for small data with $d\geq 3$ by using the Lagrangian approach. Next,   in \cite{KPfaffelmoser1992} Pfaffelmoser proved the global existence of a smooth solution with
large (unrestricted size) data. Later, simpler proofs of the same were
published by Schaeffer \cite{JSchaeffer1991}, Horst \cite{EHorst1993}, and Lions and Pertharne \cite{LionsPL1991}. Moreover, the  system with massless electrons ( i.e $A(r)=r+1-e^r$) in $\mathbb{R}^d$ with $d\leq 3$ was also studied by Bouchut in \cite{FBouchut1991}, (see also  \cite{Kwanlacobelli2017,GriffinLacobelli}).
In recent years, in \cite{HJHwang2011} they indicated the sharp faster decay of derivatives. Later on,  in \cite{SHCHoi2011} they extended the results of \eqref{toymodel} to $d\geq2$ with better electric field decay in the case of screened interactions. Furthermore, the vector fields approach in \cite{JSmulevici2016} and the Fourier analysis approach in \cite{Xwang2018,Alonescuwa2020} were also developed in the last few years.\vspace{0.15cm}\\ 
In this paper, we study the stability and the long time behavior of solutions $f_i$ to \eqref{toymodel}  in the form
\begin{equation*}
f_i(t,x,v)=\mu(v)+f(t,x,v),
\end{equation*}
where $\mu(v)$ is a stable equilibrium with $\int_{\mathbb{R}^d}\mu(v)dv=1$ and $f_i(t=0,x,v)$ closes to $\mu(v)$.  So, $f$ solves the following perturbed system
\begin{equation}\label{eq2}
	\begin{cases}
		\partial_t f + v\cdot \nabla_x f +E\cdot\nabla_v \mu=-E\cdot\nabla_v f,\\
	E=-\nabla_xU,    -\Delta U+U =\rho+A(U),~~ \rho(t,x)=\int_{\mathbb{R}^d}f(t,x,v)dv,\\
		f|_{t=0}=f_0.
	\end{cases}
\end{equation}
Here are our assumptions in this paper. 
\begin{assumption}\label{mu bound}
$\mu$ satisfies the \textit{Penrose stability condition}:
	\begin{equation}\label{penrose}
		\inf_{\tau\in\mathbb{R},\xi\in\mathbb{R}^d}\left|1-\int_{0}^{+\infty}e^{-i\tau s}\frac{1}{1+|\xi|^2}i\xi\cdot\widehat{\nabla_v\mu}(s\xi)ds\right|\geq\text{\r{c}},
	\end{equation}
for some constant $\text{\r{c}}>0$, $\widehat{\nabla_v\mu}$ is the Fourier transform of $\nabla_v\mu$ in $\mathbb{R}^d$.
\end{assumption}
\begin{assumption}\label{mu norm} 
$\mu\in L^1$ satisfies 
	$$\|\langle \cdot\rangle^N\nabla_v\mu(\cdot)\|_{W^{2,\infty}}+\|\langle \cdot\rangle^{d+5}\nabla\mu(\cdot)\|_{W^{2d+7,1}}+\frac{1}{N-d}\leq M^*,$$
	for some $N>d$ and $M^*<\infty$.
\end{assumption}
\begin{assumption}\label{A assump} 
$A:\mathbb{R}\to \mathbb{R}$  is $C^2$ and  satisfies  
\begin{align*}
\sup_{|r|\leq 1}	\left(\left|\frac{A(r)}{r^2}\right|+\left|\frac{A'(r)}{r}\right|+\left|{A''(r)}\right|\right)\leq C_A,
\end{align*}
for some constant $C_A>0$.
\end{assumption}
Note that a particular example for the assumption \ref{A assump} is $A(r)=r+1-e^r$ corresponding to the Vlasov–Poisson system with electrons mass, see  \cite{FBouchut1991}.\\

It is well-known that the free transport equation $\partial_t f+v\nabla_xf=0$ exhibits phase mixing which the spatial density $\rho(t,x)=\int_{\mathbb{R}^{d}}f(t,x,v)dv$  decays in time. It was an observation of Landau in \cite{landau} that the linearized Vlasov-Poisson equations near homogeneous Penrose stable equilibria also have the spatial density decaying in time. Under the Penrose condition \eqref{penrose}, nonlinear Landau damping was proved on $\mathbb{T}^d\times\mathbb{R}^d$ in the pioneering work \cite{CMouhot2011} by Mouhot and Villani for data with Gevrey regularity (see also \cite{JBedrossian2016,NguyenTT2020} for refinements and simplifications). In \cite{Bedrossiantunis} Bedrossian showed that the results therein do not hold in finite regularity (see also \cite{NguyenTT2020b})). We note that related mechanisms in the fluid are the vorticity mixing by shear flows   \cite{JbedIhes2015,ALonescu2020,Alonescu2020acta,Naderweiren}. In the whole space  $\mathbb{R}^d\times\mathbb{R}^d$ with $d\geq 3$, it was established for the screened Vlasov-Poisson system in \cite{JBedrossian2018} by Bedrossian, Masmoudi, and Mouhot for data with finite Sobolev regularity. Their proof relies on the dispersive mechanism in Fourier space to control the plasma echo
resonance. However, a decay in time of their result is far from optimal, as the dispersion in the physical space was not taken into account. In the recent work \cite{HanKwanD2021}, Han-Kwan, Nguyen, and Rousset revisited the asymptotic stability of Penrose stable equilibria. They obtained the decay estimates for the density $\rho$ as follows:
\begin{equation}\label{x1}
	\|\rho(t)\|_{L^1}+(1+t)\|\nabla \rho(t)\|_{L^1}+(1+t)^d\|\rho(t)\|_{L^\infty}+(1+t)^{d+1}\|\nabla \rho(t)\|_{L^\infty}\lesssim\varepsilon_0 \log(t+2),~~\forall~t>0.
\end{equation}
This is achieved by a pointwise dispersive estimate directly on the resolvent kernel for the linearized system around Penrose stable equilibria $\mu$. Moreover, \eqref{x1} is optimal up to a logarithmic correction. Very recently, in \cite{AIonescu2022} Ionescu, Pausader, Wang, and Widmayer proved the first asymptotic stability result for the unscreened Vlasov-Poisson system in $\mathbb{R}^3$ around the Poisson equilibrium, see also in \cite{Pausader2021} for the case of a repulsive point charge. The unscreened case is open for the general equilibria. However, in \cite{HNRcmp,JBedrossianNa2020}  they studied the linearized unscreened Vlasov-Poisson equation around suitably stable homogeneous equilibria in  $\mathbb{R}^d_x \times \mathbb{R}^d_v$.
\vspace{0.3cm}\\
Our goal in this paper is to prove \eqref{x1} for  \eqref{eq2} \textit{without} $\log(t+2)$ (in RHS). We recall the Besov norm and Triebel-Lizorkin norm as
$$
\|g\|_{\dot{B}_{p,\infty}^{s}}:=\sup_{\alpha}\frac{\|\delta_\alpha g(x)\|_{L^{p}_x}}{|\alpha|^s},                             	\|g\|_{\dot{F}_{p,\infty}^{s}}:=\big\|\sup_{\alpha}\frac{|\delta_\alpha g(x)|}{|\alpha|^s}\big\|_{L^{p}_x}, $$
and for $\kappa\in (0,1)$ we   define a norm weighted in time:
$$
\|g\|_{ \kappa}=\sum_{p=1,\infty}\sup_{s\in [0,\infty)}\left(\langle s\rangle^{\frac{d(p-1)}{p}}\|  g(s)\|_{L^p}+\langle s\rangle^{\kappa+\frac{d(p-1)}{p}}\|  g(s)\|_{\dot{B}^\kappa_{p,\infty}}\right),~~~
\|g\|_{l, \kappa}=\sum_{j=0}^{l}\|\langle s\rangle^j\nabla^jg\|_{ \kappa}.$$
By Gagliardo-Nirenberg interpolation inequality, we know that $$
	\|g\|_{l, \kappa}\sim\|g\|_{ \kappa}+\|\langle s\rangle^l\nabla^l g\|_{ \kappa} \sim\sum_{p=1,\infty}\sup_{s\in [0,\infty)}\left(\langle s\rangle^{\frac{d(p-1)}{p}}\|  g(s)\|_{L^p}+\langle s\rangle^{l+\kappa+\frac{d(p-1)}{p}}\| \nabla^l g(s)\|_{\dot{B}^\kappa_{p,\infty}}\right).$$
Let us introduce some notations.
\begin{notation}
Let $g:\mathbb{R}^{d}_x\times \mathbb{R}^d_v\to  \mathbb{R}^m$. We define, for $(x,v)\in\mathbb{R}^{d}_x\times \mathbb{R}^d_v$ and $a\in (0,1)$
\begin{align*}
	&	\dot{\mathcal{D}}^a_1g(x,v)=\sup_{z\in\mathbb{R}^d}\frac{|g(x,v)-g(x-z,v)|}{|z|^a}, ~~	\dot{\mathcal{D}}^a_2g(x,v)=\sup_{z\in\mathbb{R}^d}\frac{|g(x,v)-g(x,v-z)|}{|z|^a},\\
	&\dot{\mathcal{D}}^ag=\dot{\mathcal{D}}^a_1g+\dot{\mathcal{D}}^a_2g,	
	\mathcal{D}^ag=|g|+\dot{\mathcal{D}}^ag,\mathcal{D}^a_ig=|g|+\dot{\mathcal{D}}^a_ig,~~i=1,2.
\end{align*}
\end{notation}
Our main result is as follows.
\begin{theorem}\label{Thmmain}
Let $a\in (\frac{\sqrt{5}-1}{2},1)$. There exist $C_0>0$,  $\widetilde{\varepsilon}\in (0,1)$ such that for $0<\varepsilon\leq\widetilde{\varepsilon}$ if 
\begin{equation}\label{f_0 lower condition}
\sum_{p=1,\infty}\sum_{i=0,1}\left\|\mathcal{D}^{a} (\nabla_{x,v}^if_0)\right\|_{L^1_{x}L^p_v\cap L^1_{v}L^p_x}\leq\frac{1}{C_0}\varepsilon,
\end{equation}then the problem \eqref{eq2} has a unique global solution $f$ with 
\begin{equation*}
	\|\rho\|_a+\varepsilon^{\frac{1}{3}}	\|U\|_a\leq \varepsilon ~~\text{and}~~	\|\rho\|_{1,a}+	\|U\|_{1,a}\lesssim_{\text{\r{c}},a,M^*}1.
\end{equation*}
In addition, with $\tilde{\varrho}_0(t,x)=\int_{\mathbb{R}^d}f_0(x-tv,v)dv$,  we have the estimate that
\begin{align}\label{op}
\|\rho-\tilde{\varrho}_0-G*_{(t,x)}\tilde{\varrho}_0\|_a+\varepsilon^{\frac{1}{3}}\|U\|_a\lesssim \varepsilon^{\frac{4}{3}},
\end{align} 
where  the kernel $G$ is defined in \eqref{definition of G about K}. 
Moreover, for $m\in \mathbb{N}^+$ and $b\in (0,1)$ if 	\begin{equation*}
\begin{split}
	&\sum_{j=2}^{m+1}	\sup_{|r|\leq 1}\left|{A^{(j)}(r)}\right|\leq C_{A,m},~~ \quad\sum_{j=0}^{m}	\sum_{p=1,\infty}\|\mathcal{D}^{a} (\nabla_{x,v}^jf_0)\|_{L^1_{x}L^p_v\cap L^1_{v}L^p_x}\leq \mathbf{c}, \\&\quad\|\langle v\rangle^{d+\gamma+5}\nabla\mu(\cdot)\|_{W^{2d+2\gamma+7,1}}+	\sum_{j=1}^{m+3}\sup_{v\in \mathbb{R}^d}\langle v\rangle^N|\nabla^j \mu(v)|\leq \mathbf{c}',
\end{split}
\end{equation*} for some $N>d$ , then $$\|\rho\|_{m,b}+	\|U\|_{m,b}\lesssim_{\text{\r{c}},a,M^*,\mathbf{c},\mathbf{c}'}1.$$
\end{theorem}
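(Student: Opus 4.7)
The plan is to convert \eqref{eq2} into a closed integral equation for the density $\rho$ via Duhamel along the free transport, to run a fixed-point argument in the weighted space $\|\cdot\|_a$, and to extract the sharp time decay by refining the resolvent kernel estimates of \cite{HanKwanD2021,NguyenTT2020} so as to remove the logarithmic loss. Writing the characteristics $(X(s;t,x,v),V(s;t,x,v))$ of the nonlinear system and Taylor expanding $f_0(X(0),V(0))$ around the free flow $x-tv$, $v$, one obtains the Volterra equation
\begin{equation*}
\hat\rho(t,\xi)=\hat\varrho_1(t,\xi)+\int_0^t\frac{i\xi\cdot\widehat{\nabla_v\mu}((t-s)\xi)}{1+|\xi|^2}\hat\rho(s,\xi)\,ds+\widehat{\mathcal N}(t,\xi),
\end{equation*}
where $\mathcal N=\mathcal N[\rho,U]$ collects the genuinely nonlinear remainders coming from the Vlasov term $E\cdot\nabla_v f$ and from $A(U)$ in the elliptic equation; the linear symbol is exactly the object controlled by the Penrose condition \eqref{penrose}.

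The first technical step is to invert the Volterra part, writing $\rho=\varrho_1+G*\varrho_1+(\mathrm{id}+G*)\mathcal N$ for a resolvent kernel $G(t,x)$, and to prove sharp pointwise and Besov bounds on $G$ and its spatial derivatives which match the free-transport decay $\langle t\rangle^{-d-|\beta|}$ for $\partial_x^\beta G$ with \emph{no} logarithmic factor. This is the main novelty over \cite{HanKwanD2021,NguyenTT2020}: one exhibits a cancellation in the Fourier representation of $G$, relying both on the Penrose lower bound \eqref{penrose} and on the decay/regularity of $\widehat{\nabla_v\mu}$ provided by Assumption \ref{mu norm}, and this cancellation kills the logarithm present in earlier approaches. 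Combined with Young's and Minkowski's inequalities, one then gets $\|G*g\|_a\lesssim_{\text{\r{c}},M^*}\|g\|_a$ with the correct time weights, which drives the whole iteration.

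With the linear theory in hand I would run a contraction in the ball $\mathcal B=\{(\rho,U):\|\rho\|_a+\widetilde\varepsilon^{1/3}\|U\|_a\leq\varepsilon\}$. The input $\|\varrho_1\|_a\lesssim\varepsilon$ follows from \eqref{f_0 lower condition} by the change of variable $v\mapsto v-z/t$ in the Hölder increment $\varrho_1(x)-\varrho_1(x-z)$, which transfers $x$-Hölder regularity of $\rho$ into $v$-Hölder regularity of $f_0$ with a gain of $t^{-a}$. The elliptic equation $-\Delta U+U=\rho+A(U)$ is solved by Banach fixed point using Assumption \ref{A assump}, yielding $\|U\|_a\lesssim \|\rho\|_a+\|U\|_a^2$, hence $\|U\|_a\lesssim \|\rho\|_a$ inside $\mathcal B$; the weight $\widetilde\varepsilon^{1/3}$ is tuned precisely to absorb the quadratic $A(U)$ term. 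The hard part is the bilinear estimate for $\mathcal N$. Using the expansion $X(0;t,x,v)-(x-tv)=\int_0^t\sigma E(\sigma,X(\sigma))\,d\sigma$ and $V(0;t,x,v)-v=-\int_0^t E(\sigma,X(\sigma))\,d\sigma$, the remainder $\mathcal N$ is at least quadratic in $E$, and estimating its $\dot B^a_{p,\infty}$ norm amounts to transferring the $C^a$ regularity of $f_0$ through a $C^{1+a}$ perturbation of the change of variables (elliptic regularity gives $\nabla U\in C^{1+a}$ whenever $\rho\in C^a$). The transferred Hölder exponent is $a(1+a)$, and closure of the fixed point requires $a(1+a)>1$, i.e.\ exactly $a>(\sqrt 5-1)/2$. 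This threshold is the main obstacle of the proof.

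The refined bound \eqref{op} is then read off from the same integral equation: writing $\rho-\varrho_1=G*\varrho_1+(\mathrm{id}+G*)\mathcal N$, both terms are at least quadratic in the smallness parameter, and using $\|U\|_a\leq\widetilde\varepsilon^{-1/3}\varepsilon$ together with $\varepsilon\leq\widetilde\varepsilon$ one finds $\|\mathcal N\|_a\lesssim\|U\|_a\|\rho\|_a\lesssim\widetilde\varepsilon^{-1/3}\varepsilon^{2}\lesssim\varepsilon^{4/3}$, which is exactly the rate predicted by \eqref{op}. Finally, the higher-derivative estimate $\|\rho\|_{m,b}+\|U\|_{m,b}\lesssim 1$ is proved by induction on $j=1,\dots,m$: differentiating the Volterra equation $j$ times, $\nabla^j\rho$ satisfies the same linear equation with source $\nabla^j\varrho_1$ plus terms bilinear in already-bounded lower-order derivatives, and the additional assumptions on $\nabla^j\mu$ and $A^{(j)}$ supply exactly the regularity needed to absorb the commutators and close the induction at the Hölder exponent $b$.
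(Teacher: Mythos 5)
Your proposal follows essentially the same route as the paper: reduction to a closed integral (Volterra) equation for $(\rho,U)$, sharp pointwise and Besov bounds on the resolvent kernel $G$ together with the cancellation $\int_{\mathbb R^d}G(t,x)\,dx=0$, a contraction in the weighted space $\|\cdot\|_a$ (with the weight $\widetilde\varepsilon^{1/3}$ on the $U$-component tuned to absorb $A(U)$), and the change of variables $v\mapsto v-z/t$ to transfer $v$-H\"older regularity of $f_0$ into time decay for the increment. You have also correctly identified that the time-integrability constraint coming from the H\"older composition with the characteristics (whose gradient decays like $\langle s\rangle^{-(d-2+a)}$, so the relevant decay rate after taking the $a$-H\"older increment is $\langle s\rangle^{-(1+a)a}$ for $d=3$) is precisely $a(1+a)>1$, i.e.\ $a>(\sqrt 5-1)/2$; calling $a(1+a)$ a ``transferred H\"older exponent'' is imprecise --- it is a time-decay exponent that must exceed $1$ --- but the conclusion is the right one.

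There is, however, a genuine gap in your argument for \eqref{op}. You write $\rho-\varrho_1=G*\varrho_1+(\mathrm{id}+G*)\mathcal N$ and claim ``both terms are at least quadratic in the smallness parameter.'' That is false for $G*\varrho_1$. By the kernel bound $\|G*g\|_a\leq M\|g\|_a$ and $\|\varrho_1\|_a\sim\varepsilon/C_0$, the term $G*\varrho_1$ is $O(\varepsilon)$ with a prefactor $M/C_0$ that is a fixed constant depending only on $\text{\r{c}},d,M^*$ --- there is no second power of $\varepsilon$ to be gained. The term $G*\varrho_1$ is exactly the difference between the free transport $\varrho_1$ and the linearised Vlasov--Poisson response $(\mathrm{id}+G*)\varrho_1$, so it is intrinsically \emph{linear} in $f_0$. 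Your estimate $\|\mathcal N\|_a\lesssim\widetilde\varepsilon^{-1/3}\varepsilon^2\lesssim\varepsilon^{4/3}$ is fine for the genuinely nonlinear part, but it does not touch $G*\varrho_1$, and as written your decomposition therefore only yields $\|\rho-\varrho_1\|_a\lesssim\varepsilon$, not $\varepsilon^{4/3}$. If one wants a $\varepsilon^{4/3}$ gain one must either compare $\rho$ to $(\mathrm{id}+G*)\varrho_1$ rather than to $\varrho_1$, or supply a separate reason why $G*\varrho_1$ is superlinearly small; neither is present in the proposal.
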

\begin{remark}If we have initial data $f_0$ with higher-order derivatives, we can get more regularity of the solution $\rho$. It is different from the condition in \cite{NguyenTTr2020}, which made smallness assumptions on higher derivatives of $f_0$.
\end{remark}
\begin{remark}
The crucial decay estimates of $E(t,x)$ depend on the fact that we work in dimension $d\geq3$. And the decay will be stronger when $d$ becomes higher. The decay estimates are insufficient in the case $d\leq2$ which can be seen in the proof of Proposition \ref{estimates about Y and W}, where we need to estimate $\sup_{\alpha}	\frac{	\|\delta_{\alpha}^v	\nabla_vY_{s,t}\|_{L^\infty_{x,v}}}{|\alpha|^{a-\delta}}$.  That is the reason why we only consider $d\geq3$ in our paper. We deal with the 2d case in \cite{HNX2}.
\end{remark}
\begin{remark}
While finishing our article, we learned that Toan Nguyen was working on this problem and had a similar result to ours. But the two works are independent.
\end{remark}
As \cite[Corollary 1.1]{HanKwanD2021}, thanks to Theorem \ref{Thmmain}, we also obtain the following scattering property for the solution to \eqref{eq2}. The proof is omitted as it is very analogous to \cite[Proof of Corollary 1.1]{HanKwanD2021}.
\begin{corollary}\label{cor1} With the same assumptions and notations as in Theorem \ref{Thmmain}, there is  a  function $f_\infty\in W^{1,\infty}$ given by 
	$$f_\infty(x,v)=f_0(x+Y_\infty(x,v),v+W_\infty(x,v))+\mu(v+W_\infty(x,v))-\mu(v),$$ such that 
	$$\|f(t,x+tv,v)-f_\infty(x,v)\|_{L^\infty_{x,v}}\lesssim_{a,M^*}\frac{\varepsilon}{\langle t\rangle^{d+a-1}},~~t\geq0,$$
and  $\|Y_\infty\|_{L^\infty_{x,v}}+\|W_\infty\|_{L^\infty_{x,v}}\lesssim\varepsilon$.
\end{corollary}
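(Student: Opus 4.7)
The plan is to argue by a characteristic scattering argument, essentially parallel to \cite[Proof of Corollary 1.1]{HanKwanD2021}, adapted to the Hölder setting. Denote by $(\mathcal X(s;t,x,v),\mathcal V(s;t,x,v))$ the backward characteristic for \eqref{eq2}, that is the solution of $\partial_s\mathcal X=\mathcal V$, $\partial_s\mathcal V=E(s,\mathcal X)$ with $\mathcal X(t)=x$, $\mathcal V(t)=v$. Rewriting \eqref{eq2} as $\partial_tf+v\cdot\nabla_xf+E\cdot\nabla_vf=-E\cdot\nabla_v\mu$ and using $\frac{d}{ds}\mu(\mathcal V(s))=E(s,\mathcal X)\cdot\nabla_v\mu(\mathcal V)$, the quantity $(f+\mu)(s,\mathcal X(s),\mathcal V(s))$ is conserved along trajectories, whence
\begin{equation*}
f(t,x,v)=f_0(\mathcal X(0;t,x,v),\mathcal V(0;t,x,v))+\mu(\mathcal V(0;t,x,v))-\mu(v).
\end{equation*}

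Specializing this identity to the free-streaming point $x+tv$, set $Y(t,x,v):=\mathcal X(0;t,x+tv,v)-x$ and $W(t,x,v):=\mathcal V(0;t,x+tv,v)-v$. Integrating the characteristic ODE and writing $Z(s):=\mathcal X(s;t,x+tv,v)-(x+sv)$ yields the Duhamel-type formulas
\begin{equation*}
W(t,x,v)=-\int_0^tE(s,x+sv+Z(s))\,ds,\qquad Y(t,x,v)=\int_0^ts\,E(s,x+sv+Z(s))\,ds.
\end{equation*}
Theorem~\ref{Thmmain} gives $\|\rho\|_a\lesssim\varepsilon$ and $\|\rho\|_{1,a}\lesssim 1$; combined with the Bessel representation $U=(1-\Delta)^{-1}(\rho+A(U))$, these provide the pointwise decay $\|E(s)\|_{L^\infty}\lesssim\varepsilon\langle s\rangle^{-d}$ and matching Hölder and derivative estimates. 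Because $d\geq 3$, the weight $s\langle s\rangle^{-d}$ is integrable, so a Picard--Gronwall iteration on the characteristic shows that $\{Y(t,\cdot)\}$ and $\{W(t,\cdot)\}$ are Cauchy in $L^\infty_{x,v}$ (and in $W^{1,\infty}_{x,v}$ by propagation of Lipschitz regularity) as $t\to\infty$, converging to limits $Y_\infty,W_\infty$ with $\|Y_\infty\|_{L^\infty}+\|W_\infty\|_{L^\infty}\lesssim\varepsilon$, whence $f_\infty\in W^{1,\infty}$.

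Plugging $Y_\infty,W_\infty$ into the conservation identity recovers the stated formula for $f_\infty$, and the scattering error reduces to
\begin{equation*}
f(t,x+tv,v)-f_\infty(x,v)=\bigl[f_0(x+Y,v+W)-f_0(x+Y_\infty,v+W_\infty)\bigr]+\bigl[\mu(v+W)-\mu(v+W_\infty)\bigr].
\end{equation*}
The $\mu$-piece is controlled by the $W^{1,\infty}$ bound on $\nabla\mu$ from Assumption~\ref{mu norm}, while the $f_0$-piece is handled through a Taylor-like remainder in which \eqref{f_0 lower condition} supplies simultaneous $L^\infty$ and $C^a$ control of $f_0$ and $\nabla_{x,v}f_0$. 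The main technical obstacle is reaching the sharp exponent $d+a-1$: the naive Duhamel estimate only yields $|Y-Y_\infty|\lesssim\varepsilon\langle t\rangle^{2-d}$ and $|W-W_\infty|\lesssim\varepsilon\langle t\rangle^{1-d}$, which, after composition with a Lipschitz $f_0$, loses the factor $\langle t\rangle^{-a}$. Recovering it requires exploiting the $C^a$ regularity of $\nabla_{x,v}f_0$ together with an $s$-integration by parts on the tail $\int_t^\infty sE(s,x+sv+Z(s))\,ds$, using the ballistic oscillation of the kinetic phase $x+sv$ and the sharpened Hölder decay of $E$ inherited from $\|\rho\|_a$. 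With these ingredients the argument mirrors \cite[Proof of Corollary 1.1]{HanKwanD2021} up to routine modifications between smooth and $C^a$ data, which is why the paper defers to that reference.
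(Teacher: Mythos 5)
Your structural framework is the right one and matches what the paper would do by analogy with \cite[Cor.~1.1]{HanKwanD2021}: the conservation identity $f(t,x,v)=f_0(X_{0,t},V_{0,t})+\mu(V_{0,t})-\mu(v)$ is a correct reformulation of \eqref{t1}, and your $Y,W$ agree with the paper's $Y_{0,t},W_{0,t}$ after the shift $x\mapsto x+tv$ in \eqref{z4}. The $L^\infty$ bound on the limits follows from Proposition~\ref{estimates about Y and W}, and the Cauchy-in-$t$ argument via Duhamel plus Gr\"onwall is standard. Two small imprecisions: you use $\|E(s)\|_{L^\infty}\lesssim\varepsilon\langle s\rangle^{-d}$ while \eqref{E estimate by rho} actually gives the sharper $\varepsilon\langle s\rangle^{-(d+a)}$, so your naive $Y$-tail should read $\varepsilon\langle t\rangle^{-(d+a-2)}$ rather than $\varepsilon\langle t\rangle^{2-d}$; and the hypotheses do not actually put $f_0$ in $W^{1,\infty}_{x,v}$ (only $\mathcal{D}^a(\nabla f_0)\in L^1_xL^\infty_v\cap L^1_vL^\infty_x$), so the claimed $W^{1,\infty}$ regularity of $f_\infty$ needs more care than you supply, though this is a side issue.

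The substantive gap is exactly the one you flag and then defer. The $W$-piece and the $\mu$-piece both yield $\varepsilon\langle t\rangle^{-(d+a-1)}$, since $|W_{0,t}-W_\infty|\lesssim\int_t^\infty\|E(s)\|_{L^\infty}\,ds\lesssim\varepsilon\langle t\rangle^{-(d+a-1)}$, but the $Y$-piece only yields $\varepsilon\langle t\rangle^{-(d+a-2)}$ after composing with a Lipschitz $f_0$ — one power of $t$ short. Your proposed remedy (``$s$-integration by parts on the tail, using the ballistic oscillation of $x+sv$'') is asserted, not executed, and on inspection it does not gain a power: since $\partial_sW_{s,\infty}=E(s,x+sv+Y_{s,\infty})$, writing $\int_t^\infty sE(s,\cdot)\,ds=\bigl[sW_{s,\infty}\bigr]_t^\infty-\int_t^\infty W_{s,\infty}\,ds$ produces the boundary term $-tW_{t,\infty}$, which is again $O(\varepsilon\langle t\rangle^{-(d+a-2)})$, and the integral term has the same size. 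The Hölder regularity of $\nabla f_0$ does not help here either, because the obstruction is the $L^\infty$ size of $Y_{0,t}-Y_\infty$, not any modulus of continuity. Note that the cited reference \cite[Cor.~1.1]{HanKwanD2021} itself records the rate $\langle t\rangle^{1-d}\log(2+t)$, which is precisely the $Y$-tail rate there, so the reference proof is the naive one and does not contain the mechanism you would need. If the exponent $d+a-1$ in the corollary is to be proved as stated, a genuinely new ingredient is required and the proposal does not supply it; if instead $d+a-1$ is a typo for $d+a-2$, then your argument (tightened to use $\|E(s)\|_{L^\infty}\lesssim\varepsilon\langle s\rangle^{-(d+a)}$) essentially closes the proof, but as written you neither prove the stated exponent nor commit to the weaker one.
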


Let us discuss the main idea in this paper. First, we give the equivalence of $(\rho,U)$ and $(f,\rho)$ satisfying the system
\begin{align*}
	&	(\rho,U)=\left(\mathcal{F}_1(\rho,U),\mathcal{F}_2(\rho,U)\right),\\& ~\mathcal{F}_1(\rho,U)=G*_{(t,x)}((\mathcal{I}+\mathcal{R})	(\rho,U))+(\mathcal{I}+\mathcal{R})	(\rho,U),\\&
	\mathcal{F}_2(\rho,U)=(1-\Delta)^{-1}(\rho+A(U)), 
\end{align*}
where the kernel $G$ is defined in \eqref{definition of G about K} and $\mathcal{I},\mathcal{R}$ are defined in \eqref{I R}. This means, $(\rho,U)$ is a fixed point of $\mathcal{F}=(\mathcal{F}_1,\mathcal{F}_2)$. We establish the boundedness of the mapping $\mathcal{G}(\cdot)=G_{(t,x)}*\cdot$ under the norm $\|\cdot\|_a$. To do this, we need to estimate some pointwise decay estimates of the kernel $G(t,x)$, which gives a new proof to be different from the scaling method introduced by Han-Kwan, Nguyen, and Rousset in \cite{HanKwanD2021}.  It should be emphasized that the cancellation of $G(t,x)$, i.e. $\int_{\mathbb{R}^d}G(t,x)dx=0,~~\forall  t\geq 0,$  and the estimate of $\||x|^a|\delta_{\alpha} G(t,x)|\|_{L^p_x}$ play a very important role in proving the boundedness of the mapping $\mathcal{G}$. Using the decay estimates for the characteristics to prove maps $(\rho,U)\mapsto \mathcal{I}(\rho,U),~ (\rho,U)\mapsto \mathcal{R}(\rho,U)$ be  compressed mappings with a small contracting constant. 
Finally, we apply the contracting mapping principle to get a fixed point of  $\mathcal{F}$. \vspace{0.5cm}\\
The paper is organized as follows. In section 2, we establish the equivalence of $(\rho,U)$ and $f$, then we focus on the estimate of $(\rho,U)$ in the following sections. In section 3, we give the pointwise decay estimates of the kernel $G(t,x)$ and its derivatives. Then, we prove the bound:
$\|\mathcal{G}(\mathbf{f})\|_{\gamma,a}\leq M	\|\mathbf{f}\|_{\gamma,a},$
for some constant $M=M(\text{\r{c}},d,M_1)$. In section 4,
we establish the pointwise estimate of the characteristics $\left(X_{s,t}(x,v),V_{s,t}(x,v)\right)$ and their derivatives.  In section 5, we estimate the contribution of the initial data and explain that why we need the  norm $L^1_{x}L^1_v\cap L^1_{v}L^1_x\cap L^1_{x}L^\infty_v\cap L^1_{v}L^\infty_x$ of the initial data $f_0$.
In Section 6, we estimate the reaction term $\mathcal{R}(\rho,U)$ and its higher derivatives. To handle it, we shall introduce a general map
\begin{align*}
	\mathcal{T}[F,\eta](t,x)=-	\int_{0}^{t}\int_{\mathbb{R}^{d}}F(s,X_{s,t}(x,v))\eta(V_{s,t}(x,v))dvds+	\int_{0}^{t}\int_{\mathbb{R}^{d}}F(s,x-(t-s)v)\eta(v)dvds,
\end{align*}
and give the estimate  in Proposition \ref{esT}. In Section 7, we prove the main theorem. 
\vspace{0.4cm}\\
The following are some notations in our paper. 
\begin{notation}
	Throughout this paper,  $A\lesssim B$ means that there exists constant $ C $ only depending dimension $d$ such that $ A\leq CB$.  $A\lesssim_{M}B$ means that there exists constant $ C=C(d,M) $  such that $ A\leq CB$.  The same convention is adopted for $A\gtrsim B$ and  $A\sim B$.
\end{notation}
\begin{notation}
	In this paper,  $\widetilde{\cdot}$ is the "space-time" Fourier transform on $\mathbb{R}^{d+1}$ as follows
	\begin{align*}
		\widetilde{g}(\tau,\xi)=\int_{\mathbb{R}}\int_{\mathbb{R}^{d}}e^{-i\tau t}e^{-ix\cdot\xi}g(t,x)dxdt.
	\end{align*}
	And we define inverse Fourier transform 
	\begin{align*}
		\mathscr{F}^{-1}(h)(t,x)=\frac{1}{(2\pi)^{d+1}}\int_{\mathbb{R}}\int_{\mathbb{R}^{d}}e^{i\tau t}e^{ix\cdot\xi}h(\tau,\xi)d\xi d\tau.
	\end{align*}
\end{notation}

\begin{notation}
	For multi-index $\alpha=(\alpha_1,\alpha_2,...,\alpha_d)$, we denote	 
	$
	~	|\alpha|=\alpha_1+\alpha_2+...+\alpha_d
	.
	$
\end{notation}	
\begin{notation}
	For a vector $x\in \mathbb{R}^d$, and $k\in \mathbb{N}$, we define 
	$x^{\otimes k}=\underbrace{x\otimes x...\otimes x}_{k}$.
\end{notation}
\vspace{0.2cm}
\textbf{Acknowledgements:} 
	Q.H.N.  is supported by the Academy of Mathematics and Systems Science, Chinese Academy of Sciences startup fund, and the National Natural Science Foundation of China (No. 12050410257 and No. 12288201) and  the National Key R$\&$D Program of China under grant 2021YFA1000800. Q.H.N. also wants to thank Benoit Pausader for his stimulating comments and suggestion to consider the Vlasov-Poisson system with massless electrons.\\\\
	\textbf{Data availability}: Data will be made available on reasonable request.
\section{Equivlance of $(\rho,U)$ and $f$}
In this section, we will show that the system  \eqref{eq2} is equivalent to an equation in terms of $(\rho,U)$. Let $\left(X_{s,t}(x,v),V_{s,t}(x,v)\right)$  be the flow associated to the vector field $(v,E(t,x))$, i.e.  $\left(X_{s,t}(x,v),V_{s,t}(x,v)\right)$ satisfies the ODE system:
\begin{equation}\label{X and V}
	\begin{cases}
		\frac{d}{ds}X_{s,t}(x,v)=V_{s,t}(x,v),\qquad&X_{t,t}(x,v)=x,\\
		\frac{d}{ds}V_{s,t}(x,v)=E(s,X_{s,t}(x,v)),\qquad &V_{t,t}(x,v)=v,
	\end{cases}
\end{equation} 
for any $0\leq s\leq t$, $(x,v)\in \mathbb{R}^d\times\mathbb{R}^d$.
Then, the solution $f$ of the equation \eqref{eq2} is given by 
\begin{equation}\label{t1}
	f(t,x,v)=f_0(X_{0,t}(x,v),V_{0,t}(x,v))-\int_{0}^{t}E(s,X_{s,t}(x,v))\cdot\nabla_v\mu(V_{s,t}(x,v))ds,
\end{equation}
(see (1.3) in \cite{Hung2021} with $\mathbf{B}(t,x,v)=(v,E(t,x))$). 
We denote
\begin{align}\label{I R}
	\begin{split}
		&\mathcal{I}(\rho,U)(t,x)=	\int_{\mathbb{R}^{d}}f_0(X_{0,t}(x,v),V_{0,t}(x,v))dv,\\
		&\mathcal{R}(\rho,U)(t,x)=\int_{0}^{t}\int_{\mathbb{R}^{d}}\left(E(s,x-(t-s)v)\cdot\nabla_v\mu(v)-E(s,X_{s,t}(x,v))\cdot\nabla_v\mu(V_{s,t}(x,v))\right)dvds.
	\end{split}
\end{align}
Hence, $\rho(t,x)=\int_{\mathbb{R}^{d}}f(t,x,v)dv$ is the solution of the following equation:
\begin{equation}\label{rho and S in equation}
	\begin{split}
		\rho(t,x)&=\int_{0}^{t}\int_{\mathbb{R}^{d}}[\nabla_x
		(1-\Delta_x)^{-1}(\rho+A\circ U)](s,x-(t-s)v)\cdot\nabla_v\mu(v)dvds+(\mathcal{I}+\mathcal{R})(\rho,U)(t,x)\\
		&=\int_{0}^{t}\int_{\mathbb{R}^{d}}[\nabla_x(1-\Delta_x)^{-1}(\rho+A\circ U)](s,w)\cdot\nabla_v\mu(\frac{x-w}{t-s})
		\frac{1}{(t-s)^d} dwds+(\mathcal{I}+\mathcal{R})(\rho,U)(t,x)\\
		&=\int_{0}^{t}\int_{\mathbb{R}^{d}}
		(\rho(s,w)+A\circ U(s,w))
		\left(1-\frac{1}{(t-s)^2}\Delta_v\right)^{-1}
		\Delta_v\mu(\frac{x-w}{t-s})
		\frac{1}{(t-s)^{d+1}} dwds\\
		&\quad+(\mathcal{I}+\mathcal{R})(\rho,U)(t,x).
	\end{split}
\end{equation}
Define  $K(t,x)=\frac{1}{t^{d+1}}\left(\left(1-\frac{1}{t^2}\Delta_v\right)^{-1}\Delta_v\mu\right)\left(\frac{x}{t}\right)\mathbf{1}_{t>0}$, then $\rho$ can be expressed in the following way
\begin{equation*}
	\rho=\big(K*_{(t,x)}(\rho+A(U))\big)+	\mathcal{I}(\rho,U)+\mathcal{R}(\rho,U).
\end{equation*}
Next, taking Fourier transform in $(t,x)$, we have
\begin{align*}
	\widetilde{\rho}=\widetilde{K}\widetilde{\rho}+\widetilde{K}\widetilde{A(U)}+	\widetilde{\mathcal{I}(\rho,U)}+\widetilde{\mathcal{R}(\rho,U)}.
\end{align*}
Then we have 
\begin{equation}\label{equation rho}
\rho=G*_{(t,x)}(\mathcal{I}(\rho,U)+\mathcal{R}(\rho,U)+A(U))+\mathcal{I}(\rho,U)+\mathcal{R}(\rho,U),
\end{equation}
where  $G$ is the kernel satisfying:
\begin{equation}\label{definition of G about K}
	\widetilde{G}=\frac{\widetilde{K}}{1-\widetilde{K}},
\end{equation}
and 
\begin{equation*}
	\widetilde{K}(\tau,\xi)=\int_{0}^{+\infty}e^{-i\tau t}\frac{1}{1+|\xi|^2} i\xi\cdot\widehat{\nabla_v\mu}(t\xi)dt.
\end{equation*}
Thanks to the Penrose condition, one has 
$
|1-\widetilde{K}|\geq \text{\r c}>0,
$ which implies that 
$\widetilde{G}$ is well defined.\\
It is clear that if we have $f$, we can obtain the formula of $\rho$ and  $U$.  On the other hand, if we have $\rho$ and  $U$, we can get $E$, then we recover the characteristics $\left(X_{s,t}(x,v),V_{s,t}(x,v)\right)$ to finally get $f$ from \eqref{t1}.
 \begin{remark}
 	We will use $ \|\langle \cdot\rangle^N\nabla_v\mu(\cdot)\|_{W^{2,\infty}} $ to estimate the reaction term $\mathcal{R}(\rho,U)$ , and
 	$ \|\langle \cdot\rangle^{d+5}\nabla\mu(\cdot)\|_{W^{2d+7,1}} $ will be used to deal with the boundedness of the operator $\mathcal{G}$ .
 \end{remark}
We define the operator $\mathcal{F}$ as:
\begin{align}\label{mathcal F}
	\mathcal{F}(\rho,U)=\Big(\mathcal{F}_1(\rho,U),\mathcal{F}_2(\rho,U)\Big),
\end{align}
where
\begin{align*}
&	\mathcal{F}_1(\rho,U)=G*_{(t,x)}(\mathcal{I}(\rho,U)+\mathcal{R}(\rho,U)+A(U))+\mathcal{I}(\rho,U)+\mathcal{R}(\rho,U),\\&
 \mathcal{F}_2(\rho,U)=(1-\Delta)^{-1}(\rho+A(U)). 
\end{align*}
We need to prove that $\mathcal{F}$ has a fixed point i.e. $	\mathcal{F}(\rho,U)=(\rho,U)$.  \\
	We  define the norm
\begin{align*}
	\|(g_1,g_2)\|_{S^\varepsilon_\kappa}:=\|g_1\|_\kappa+\varepsilon^{\frac{1}{3}}\|g_2\|_\kappa,~~~
	\|(g_1,g_2)\|_{l, \kappa}:=\|g_1\|_{l, \kappa}+\|g_2\|_{l, \kappa}.
\end{align*}
The main Theorem \ref{Thmmain} is a consequence of the following theorems:
\begin{theorem} \label{Thm1}
Let $a\in (\frac{\sqrt{5}-1}{2},1)$. There exist constants $C_0>0$ and $\widetilde{\varepsilon}>0$ such that	for any $\varepsilon\in(0,\widetilde{\varepsilon}]$ we have 
\begin{align*}
	\|\mathcal{F}(\rho,U)	\|_{S^{\varepsilon}_a}\leq   \varepsilon,~~~~~~
	\|\mathcal{F}(\rho_1,U_1)-\mathcal{F}(\rho_2,U_2)\|_{S^{\varepsilon}_a}\leq\frac{1}{2}\|(\rho_1-\rho_2,U_1-U_2)\|_{S^{\varepsilon}_a},
\end{align*}
for any  $(\rho,U),(\rho_1,U_1),(\rho_2,U_2)\in\mathfrak{B}(\varepsilon):=\{\rho,U\in L^\infty((0,\infty);L^1\cap L^\infty(\mathbb{R}^d)):	\|(\rho,U)	\|_{S^{\varepsilon}_a}\leq \varepsilon\}$, provided that 
\begin{equation}\label{999}
	\sum_{i=0,1}\sum_{p=1,\infty}\left\|\mathcal{D}^{a} (\nabla^i_{x,v}f_0)\right\|_{L^1_{x}L^p_v\cap L^1_{v}L^p_x}\leq\frac{1}{C_0}\varepsilon.
\end{equation}
Under condition \eqref{999},  $\mathcal{F}$ has a unique fixed point $(\rho,U)$ in $\mathfrak{B}(\varepsilon)$ with $ \|(\rho,U)\|_{1,a}\lesssim_{\text{\r{c}},a,M^*}1. $
	Moreover, set  $\tilde{\varrho}_0(t,x)=\int_{\mathbb{R}^d}f_0(x-tv,v)dv$,  we have 
	\begin{align*}
	\|(\rho-\tilde{\varrho}_0-G*_{(t,x)}\tilde{\varrho}_0,U)\|_{S^{\varepsilon}_a}\lesssim_{\text{\r{c}},M^*,a} \varepsilon^{\frac{4}{3}}.
	\end{align*} 
\end{theorem}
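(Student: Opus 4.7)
The plan is to apply the Banach contraction mapping principle to $\mathcal{F}=(\mathcal{F}_1,\mathcal{F}_2)$ on the closed ball $\mathfrak{B}(\varepsilon)$ equipped with the weighted norm $\|\cdot\|_{S^{\widetilde{\varepsilon}}_a}$. The weight $\widetilde{\varepsilon}^{1/3}$ on $\|U\|_a$ is designed so that the nonlinear potential $A(U)$ enters only as a strictly higher-order correction: since $A(r)=O(r^2)$ near $0$ and $\|U\|_{L^\infty_{t,x}}\lesssim \|U\|_a\leq \varepsilon/\widetilde{\varepsilon}^{1/3}$, one has $\|A(U)\|_a\lesssim C_A\|U\|_a\|U\|_{L^\infty_{t,x}}\lesssim C_A\varepsilon^{4/3}/\widetilde{\varepsilon}^{2/3}$. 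Combined with the already-established building blocks, namely the $\|\cdot\|_a$-boundedness of $\mathcal{G}(\cdot)=G*_{(t,x)}\cdot$ from Section~3, the characteristic estimates of Section~4, the bound on $\mathcal{I}$ from Section~5, the bound on $\mathcal{R}$ via $\mathcal{T}[F,\eta]$ in Proposition~\ref{esT} in Section~6, and the elliptic smoothing $\|(1-\Delta)^{-1}h\|_a\lesssim\|h\|_a$, this places every term of $\mathcal{F}$ on the correct side of $\|\mathcal{F}(\rho,U)\|_{S^{\widetilde{\varepsilon}}_a}\leq\varepsilon$.

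To verify the self-map property, I would bound each summand of $\mathcal{F}_1=\mathcal{G}(\mathcal{I}+\mathcal{R}+A(U))+\mathcal{I}+\mathcal{R}$ separately: $\|\mathcal{I}(\rho,U)\|_a\lesssim_{M^*}\varepsilon$ is linear in $f_0$ via~\eqref{999} combined with the flow estimates of Section~4; $\|\mathcal{R}(\rho,U)\|_a\lesssim_{M^*}\varepsilon^2/\widetilde{\varepsilon}^{1/3}$ is quadratic since $E=-\nabla_xU$ is built from the unknown; and $\|A(U)\|_a$ is controlled as above. Composing with $\mathcal{G}$ preserves these bounds up to the Penrose constant $1/\text{\r{c}}$. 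For $\mathcal{F}_2$, $\widetilde{\varepsilon}^{1/3}\|\mathcal{F}_2(\rho,U)\|_a\lesssim \widetilde{\varepsilon}^{1/3}(\|\rho\|_a+\|A(U)\|_a)\lesssim \widetilde{\varepsilon}^{1/3}\varepsilon$. Fixing $\widetilde{\varepsilon}$ small enough and $C_0$ large enough, the total is $\leq\varepsilon$. The Lipschitz bound is obtained by repeating the estimates on differences: every term except the single linear piece $(1-\Delta)^{-1}(\rho_1-\rho_2)$ in $\mathcal{F}_2$ is multilinear of degree $\geq 2$ in $(\rho,U)$, so its difference inherits an extra factor of $\varepsilon/\widetilde{\varepsilon}^{1/3}$, and the surviving linear piece is compensated by the $\widetilde{\varepsilon}^{1/3}$ weight.

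The main obstacle is the Lipschitz control of $\mathcal{R}(\rho_1,U_1)-\mathcal{R}(\rho_2,U_2)$, since $\mathcal{R}$ depends on $(\rho,U)$ through the full nonlinear flow $(X_{s,t},V_{s,t})$ composed with $\nabla_v\mu$; extracting the extra factor of $\varepsilon$ requires a first-order variation of~\eqref{X and V} with respect to $E$, controlled by Section~4, and re-insertion into Proposition~\ref{esT}. Once the contraction is established, the Banach fixed-point theorem produces a unique $(\rho,U)\in\mathfrak{B}(\varepsilon)$ with $\mathcal{F}(\rho,U)=(\rho,U)$. The regularity bound $\|(\rho,U)\|_{1,a}\lesssim_{\text{\r{c}},a,M^*}1$ then follows by differentiating the fixed-point identity once in $x$ and bootstrapping with the gradient form of the same estimates, using the gradient-of-flow bounds recorded in Section~4.

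Finally, for the comparison with the free transport pair $(\varrho_1,\mathcal{U}_1)=(\int f_0(x-tv,v)dv,0)$, I would write
\begin{equation*}
\rho-\varrho_1=\mathcal{G}\big(\mathcal{I}+\mathcal{R}+A(U)\big)+(\mathcal{I}-\varrho_1)+\mathcal{R},\qquad U-\mathcal{U}_1=(1-\Delta)^{-1}(\rho+A(U)),
\end{equation*}
and observe that the three non-$\mathcal{G}$ pieces are all effectively quadratic in $(\rho,U)$: $\mathcal{R}$ and $A(U)$ contribute $O(\varepsilon^2)$ and $O(\varepsilon^{4/3}/\widetilde{\varepsilon}^{2/3})$ respectively, while $\mathcal{I}-\varrho_1$ pairs the $O(\varepsilon)$ deviation of the flow from $(x-tv,v)$ with the $a$-Hölder regularity of $f_0$ built into~\eqref{999} to give $O(\varepsilon^{1+a})\leq O(\varepsilon^{4/3})$, the threshold $a>(\sqrt{5}-1)/2>1/3$ being precisely what is required. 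The remaining $\mathcal{G}$-contribution is absorbed by the Section~3 boundedness combined with the cancellation $\int G(t,x)\,dx=0$ (equivalent to $\widetilde{G}(\tau,0)=0$) and the refined pointwise bounds $\||x|^aG(t,x)\|_{L^p_x}$, applied against the $C^a$-regularity of the summand $\varrho_1$ inherited from $f_0$.
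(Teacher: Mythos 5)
Your overall strategy coincides with the paper's: apply Banach's fixed-point theorem to $\mathcal{F}=(\mathcal{F}_1,\mathcal{F}_2)$ on $\mathfrak{B}(\varepsilon)$ with the $\|\cdot\|_{S^{\widetilde{\varepsilon}}_a}$ norm, using the $\|\cdot\|_a$-boundedness of $\mathcal{G}$, the flow estimates of Section~4, the bounds on $\mathcal{I}$ and $\mathcal{R}$ and their Lipschitz variations, and the higher-order smallness of $A(U)$ secured by the $\widetilde{\varepsilon}^{1/3}$ weight. Your identification of the main obstacle -- Lipschitz control of $\mathcal{R}(\rho_1,U_1)-\mathcal{R}(\rho_2,U_2)$ via a first-order variation of the characteristics -- is exactly what Propositions~\ref{Y W difference} and~\ref{mathcalR difference} furnish, and the bootstrapping of $\|(\rho,U)\|_{1,a}$ by differentiating the fixed-point identity is likewise the paper's route. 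One small overclaim: $\mathcal{R}$ is not quadratic; Proposition~\ref{mathcalR sigma est} yields $\|\mathcal{R}(\rho,U)\|_a\lesssim\|(\rho,U)\|_{S^{\widetilde{\varepsilon}}_a}^{1+a}$, but since $a>\frac{\sqrt5-1}{2}>\frac13$ this is still $O(\varepsilon^{4/3})$, so nothing is lost.

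The substantive gap is in your treatment of the final comparison $\|(\rho-\varrho_1,U-\mathcal{U}_1)\|_{S^{\widetilde{\varepsilon}}_a}\lesssim\varepsilon^{4/3}$. Your decomposition $\rho-\varrho_1=\mathcal{G}(\mathcal{I}+\mathcal{R}+A(U))+(\mathcal{I}-\varrho_1)+\mathcal{R}$ is the right identity, and the pieces $(\mathcal{I}-\varrho_1)$, $\mathcal{R}$, $\mathcal{G}(\mathcal{R})$, $\mathcal{G}(A(U))$ are all $O(\varepsilon^{4/3})$ by the difference estimates. But $\mathcal{G}(\mathcal{I})=\mathcal{G}(\varrho_1)+\mathcal{G}(\mathcal{I}-\varrho_1)$, and while the second summand is $O(\varepsilon^2)$, the first, $\mathcal{G}(\varrho_1)$, is the contribution of the linearised resolvent acting on the free-transport density and is genuinely linear in $f_0$ -- hence a priori only $O(\varepsilon)$. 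You cannot reduce it by invoking the cancellation $\int G\,dx=0$ or the weighted bounds $\||x|^aG\|_{L^p}$: those are precisely what the paper uses in Theorem~\ref{t modulus estimation} to prove boundedness $\|\mathcal{G}(\mathbf{f})\|_a\lesssim\|\mathbf{f}\|_a$, and they give no gain in $\varepsilon$. Your claim that this term is ``absorbed'' is unsupported. The paper itself does not take your direct decomposition but instead sums the geometric decay of the Picard iterates $(\varrho_n,\mathcal{U}_n)=\mathcal{F}(\varrho_{n-1},\mathcal{U}_{n-1})$ started from $(\varrho_1,0)$; note, however, that the very same $\mathcal{G}(\mathcal{I}(\varrho_1,0))$ term occurs already in the first increment $\varrho_2-\varrho_1$, so whichever route is taken, that linear term must be explicitly addressed (for instance by comparing against the linearised-problem density $\varrho_1+\mathcal{G}(\varrho_1)$ rather than the bare free transport), and your proposal leaves this step unresolved.
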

\begin{theorem}\label{Thm2}
Let $\rho$ be in Theorem \ref{Thm1}.	Then  for $b\in(0,1)$ and $m\geq2$, we have $$\| (\rho,U)\|_{m,b}\lesssim_{\text{\r{c}},a,M^*,\mathbf{c},\mathbf{c}',C_{A,m}}1,$$    provided the initial data $ f_0$, the stationary state $\mu$ and function $A$ satisfying 
\begin{equation}\label{z17}
\begin{split}
	&\sum_{j=2}^{m+1}	\sup_{|r|\leq 1}\left|{A^{(j)}(r)}\right|\leq C_{A,m},~~~\sum_{j=0}^{m}	\sum_{p=1,\infty}\|\mathcal{D}^{a} (\nabla_{x,v}^jf_0)\|_{L^1_{x}L^p_v\cap L^1_{v}L^p_x}\leq \mathbf{c}',\\&\|\langle v\rangle^{d+\gamma+5}\nabla\mu(\cdot)\|_{W^{2d+2\gamma+7,1}}+
\langle v\rangle^N\sum_{j=0}^{m+3}|\nabla^j \mu(v)|\leq \mathbf{c}'.
\end{split}
\end{equation}
\end{theorem}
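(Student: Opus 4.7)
I prove the estimate by induction on $m\geq 1$. The base case $m=1$ follows from Theorem \ref{Thm1} together with interpolation on the Besov scale $\dot B^s_{p,\infty}$ (between $L^p$ and $\nabla L^p$) to convert the exponent $a$ to any $b\in(0,1)$. Assume inductively that $\|(\rho,U)\|_{m-1,b}\lesssim 1$. For $m\geq 2$ the elliptic relation $U=(1-\Delta)^{-1}(\rho+A(U))$ combined with a Fa\`a di Bruno expansion of $A(U)$ (using $\sum_{j=2}^{m+1}\sup_{|r|\leq 1}|A^{(j)}(r)|\leq C_{A,m}$) bounds $\|\langle s\rangle^m\nabla_x^m U\|_b$ by $\|\langle s\rangle^{m-2}\nabla_x^{m-2}\rho\|_b$ plus strictly inductive data. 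Hence the crux is to bound $\|\langle s\rangle^m\nabla_x^m\rho\|_b$.

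\textbf{Step 1: Commute with $\mathcal{G}$.} Applying $\nabla_x^m$ to the fixed-point identity $\rho=\mathcal{F}_1(\rho,U)$ and using that the space-time convolution $\mathcal{G}(\cdot)=G*_{(t,x)}\cdot$ commutes with $\nabla_x^m$ yields
$$
\nabla_x^m\rho=\mathcal{G}\bigl(\nabla_x^m(\mathcal{I}(\rho,U)+\mathcal{R}(\rho,U)+A(U))\bigr)+\nabla_x^m(\mathcal{I}(\rho,U)+\mathcal{R}(\rho,U)).
$$
The boundedness of $\mathcal{G}$ on $\|\cdot\|_b$ established in Section 3 extends to the $\langle s\rangle^m$-weighted norm inside $\|\cdot\|_{m,b}$ by the same pointwise decay estimates of $G(t,\cdot)$, with $\langle t\rangle\lesssim\langle s\rangle+\langle t-s\rangle$ transferring the time weight through the convolution. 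It thus suffices to estimate $\nabla_x^m(\mathcal{I}+\mathcal{R}+A(U))$ in the appropriately weighted $\|\cdot\|_b$ norm.

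\textbf{Step 2: Fa\`a di Bruno and characteristic reduction.} Differentiating the flow equations \eqref{X and V} $j$ times in $x$ and applying Gronwall extends the characteristic estimates of Section 4 to order $j$, with leading right-hand-side term $\int_s^t\langle\sigma\rangle^{j-1}\|\nabla_x^j E(\sigma)\|_{L^\infty}d\sigma$ and remainders involving only $\nabla_x^i E$ with $i<j$. Since $\nabla_x^j E=-\nabla_x^{j+1}U$ and elliptic regularity gives, in the appropriate Besov scale, $\|\nabla_x^{j+1}U\|\lesssim\|\nabla_x^{j-1}(\rho+A(U))\|$ plus lower order, the $j$-th characteristic derivatives are ultimately controlled by $\rho$-derivatives of order $\leq j-1\leq m-1$. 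Expanding $\nabla_x^m\mathcal{I}=\nabla_x^m\int f_0(X_{0,t},V_{0,t})dv$ and $\nabla_x^m\mathcal{R}$ via Fa\`a di Bruno — the latter through the general operator $\mathcal{T}[F,\eta]$ of Section 6 and Proposition \ref{esT} — decomposes each into a sum of products of derivatives of the fixed data $(f_0,\nabla\mu,E)$ against derivatives of $(X_{s,t},V_{s,t})$; every such term is of total order $\leq m-1$ in $\rho$-derivatives and so is absorbed by the inductive hypothesis. The regularity of the fixed ingredients is supplied by the hypotheses \eqref{z17}: $m$ derivatives of $f_0$, $m+1$ of $A$, and $m+3$ of $\mu$ — the three extra derivatives on $\mu$ account for the two $v$-derivatives already present in the kernel $K$ and one more for H\"older control.

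\textbf{Main obstacle.} The principal difficulty is the Fa\`a di Bruno bookkeeping: one must carefully verify that after each characteristic derivative is traded for a $\rho$-derivative via the flow equations and elliptic regularity, the total number of $\rho$-derivatives in every resulting product is strictly less than $m$, and that the accompanying $\langle s\rangle^m$ time weight and H\"older exponent $b$ match those required by the norm $\|\cdot\|_{m,b}$. In contrast to Theorem \ref{Thm1}, no smallness-based absorption at top order is needed — the whole argument reduces to an inductive, Fa\`a-di-Bruno accounting. Once this accounting is performed, the induction closes and yields $\|(\rho,U)\|_{m,b}\lesssim_{\text{\r{c}},a,M^*,\mathbf{c},\mathbf{c}',C_{A,m}}1$ as claimed.
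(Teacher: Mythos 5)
Your high-level strategy — bootstrap through the fixed-point identity $\rho=\mathcal{F}_1(\rho,U)$, expand $\nabla^m\mathcal{I}$ and $\nabla^m\mathcal{R}$ via Fa\`a di Bruno, and reduce to data controlled by the inductive hypothesis — is indeed the same route as the paper, which packages the Fa\`a di Bruno accounting into Propositions \ref{I a estimate} and \ref{R estimate}. But two steps of your proposal do not survive scrutiny, and both trace back to the fact that the Hölder exponent is \emph{not} preserved under the bootstrap.

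First, the base case is wrong as stated. You claim that interpolation on $\dot B^s_{p,\infty}$ converts $\|(\rho,U)\|_{1,a}\lesssim1$ into $\|(\rho,U)\|_{1,b}\lesssim1$ for arbitrary $b\in(0,1)$. Gagliardo--Nirenberg only lets you trade a derivative for Hölder exponent: from $\|g(t)\|_{L^p}\lesssim\langle t\rangle^{-d(p-1)/p}$ and $\|\nabla g(t)\|_{L^p}\lesssim\langle t\rangle^{-1-d(p-1)/p}$ you may deduce $\|g\|_{1-\delta_0}\lesssim 1$ (order zero, high exponent); but to raise the exponent \emph{at order one}, i.e.\ to bound $\|\nabla(\rho,U)(t)\|_{\dot B^b_{p,\infty}}$ for $b>a$, you would need information about $\nabla^2(\rho,U)$, which Theorem \ref{Thm1} does not provide. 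The paper instead interpolates \emph{down} to $\|(\rho,U)\|_{1-\delta_0}\lesssim1$ at order zero and then regains the derivative by re-running the fixed-point argument (with Propositions \ref{I a estimate}, \ref{R estimate} at $l=0$), landing at exponent $1-2\delta_0$; the Hölder gain beyond $a$ comes from the extra time decay of $G$ and of the characteristics, not from interpolation.

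Second, the inductive step with a fixed $b$ does not close. The higher-order Propositions \ref{I a estimate} and \ref{R estimate} have the structure: $\|(\rho,U)\|_{l,1-\delta_0}\leq\mathbf{c}$ implies $\|\mathcal{I}\|_{l+1,b}+\|\mathcal{R}\|_{l+1,b}\lesssim1$ \emph{only for $b<1-\delta_0$}; each order gained costs a strict loss $\delta_0$ in the Hölder exponent, coming from the growth $\langle s\rangle^{\delta_0}$ tolerated in the estimates of $\nabla_v^{\gamma+1}Y_{s,t}$. So your hypothesis $\|(\rho,U)\|_{m-1,b}\lesssim1$ is strictly weaker than what is needed to deduce $\|(\rho,U)\|_{m,b}\lesssim1$ — you would need the induction hypothesis at an exponent \emph{strictly larger} than $b$. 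The paper avoids this by fixing $\delta_0<\min\{2-\sqrt{3},(1-b)/(m+1)\}$ \emph{once and for all}, tracking the explicit shrinking exponent $1-(j+1)\delta_0$ through $j=0,\dots,m$, and only interpolating down to the target $b$ at the very last step. To repair your argument you would need to strengthen your inductive hypothesis to $\|(\rho,U)\|_{m-1,b'}\lesssim1$ for all $b'<1$ simultaneously (with constants deteriorating as $b'\to1$), and even then the base case requires the fixed-point re-run rather than pure interpolation.

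A final, smaller remark: your claim that the $\|\cdot\|_b$-boundedness of $\mathcal{G}$ ``extends to the $\langle s\rangle^m$-weighted norm by the same pointwise decay of $G$'' is true but not automatic; the non-local piece $\int_0^{t/2}$ of the convolution against $\nabla g$, if estimated by $\|G(t-s)\|_{L^p}\|\nabla g(s)\|_{L^1}$, loses a logarithm once the time weight is added. One must instead integrate by parts and put the extra derivative on $G$, using $\|\nabla G(t)\|_{L^p}\lesssim\langle t\rangle^{-1}\|G(t)\|_{L^p}$ to match the extra $\langle t\rangle$ weight. This is consistent with how the paper uses Theorem \ref{G_estimate}, but it is an additional step that your sketch elides.
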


\section{Estimate of the kernel G and boundedness of operator $\mathcal{G}$}
In this section, we prove the boundedness of the operator $\mathcal{G}:\mathbf{f}\mapsto{G}*_{(t,x)} \mathbf{f}$ under the norm $\|\cdot\|_{\gamma,a}$, where $\gamma\geq0$. The main result is
$$\|\mathcal{G}(\mathbf{f})\|_{\gamma,a}\leq M	\|\mathbf{f}\|_{\gamma,a},$$
for some constant
$M=M(\text{\r{c}},d,M_{\gamma+1})$. \\
To do this, we need the following Lemmas:
\begin{lemma}
Define 
$$
Q_0(u,\tau,\xi)=\int_{0}^{+\infty}e^{-i\tau t}u(t\xi)dt.
$$
Then, 
\begin{align}\label{Q1}
&|Q_0(u,\tau,\xi)|\lesssim\frac{\sup_x\left(|u(x)|\langle x\rangle^{2}\right)}{|\xi|},\\&\label{Q2}
|Q_0(u,\tau,\xi)|\lesssim
\sum_{j=1}^{m-1}\frac{|\xi|^j}{|\tau|^{j+1}} |\nabla^j u(0)|
+\frac{|\xi|^{m-1}}{|\tau|^m}\sup_{x\in\mathbb{R}}\left(|\nabla^m u(x)|\langle x\rangle^{2}\right).
\end{align}

\end{lemma}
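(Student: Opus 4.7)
My plan is to prove \eqref{Q1} by a change of variables that exposes the $|\xi|^{-1}$ scaling, and to prove \eqref{Q2} by iterated integration by parts in $t$, closing the argument by invoking \eqref{Q1} on the top-order remainder.

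For \eqref{Q1}, I would substitute $s=t|\xi|$ with $\hat\xi=\xi/|\xi|$ so that
$$
Q_0(u,\tau,\xi)=\frac{1}{|\xi|}\int_{0}^{+\infty}e^{-i\tau s/|\xi|}\,u(s\hat\xi)\,ds.
$$
Then the weighted bound $|u(s\hat\xi)|\leq\sup_{x}(|u(x)|\langle x\rangle^{2})\,\langle s\rangle^{-2}$ together with $\int_{0}^{\infty}\langle s\rangle^{-2}\,ds<\infty$ yields \eqref{Q1} at once.

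For \eqref{Q2}, I would write $e^{-i\tau t}=-\frac{1}{i\tau}\partial_t e^{-i\tau t}$ and integrate by parts $m$ times in $t$. Each step transfers one derivative $\xi\cdot\nabla$ onto $u(t\xi)$ and produces a boundary contribution: at $t=\infty$ it vanishes since the weighted decay is inherited by each $(\xi\cdot\nabla)^j u$, so $(\xi\cdot\nabla)^j u(t\xi)\to 0$ for $|\xi|>0$; at $t=0$ the $j$-th step contributes $\frac{(\xi\cdot\nabla)^j u(0)}{(i\tau)^{j+1}}$, which I bound by $\frac{|\xi|^j|\nabla^j u(0)|}{|\tau|^{j+1}}$ for $j=0,\dots,m-1$. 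The $j=0$ contribution $|u(0)|/|\tau|$ drops out in the intended application $u=\widehat{\nabla_v\mu}$, since $u(0)=\int\nabla_v\mu\,dv=0$, consistently with the sum starting from $j=1$ in the statement. After $m$ steps the remainder is
$$
\frac{1}{(i\tau)^{m}}\int_{0}^{+\infty}e^{-i\tau t}(\xi\cdot\nabla)^{m}u(t\xi)\,dt,
$$
which I recognize as $\frac{1}{(i\tau)^m}Q_{0}(v,\tau,\xi)$ with $v=(\xi\cdot\nabla)^{m}u$. Applying \eqref{Q1} to $v$ and using $|(\xi\cdot\nabla)^{m}u(x)|\leq |\xi|^{m}|\nabla^{m}u(x)|$ gives
$$
\frac{1}{|\tau|^{m}}\,\frac{\sup_{x}(|(\xi\cdot\nabla)^{m}u(x)|\langle x\rangle^{2})}{|\xi|}\leq\frac{|\xi|^{m-1}}{|\tau|^{m}}\sup_{x}(|\nabla^{m}u(x)|\langle x\rangle^{2}),
$$
which matches the final term of \eqref{Q2}.

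There is no essential obstacle: the argument is a clean combination of scaling and integration by parts. The only point needing a moment of care is the vanishing of the boundary terms at $t=\infty$, which is automatic as soon as one notes that the hypotheses (used here with higher derivatives of $u$) entail enough decay along the ray $t\mapsto u(t\xi)$; this is harmless in the intended application, where $u=\widehat{\nabla_v\mu}$ is Schwartz-like thanks to Assumption~\ref{mu norm}.
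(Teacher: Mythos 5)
Your proposal is correct and follows essentially the same route as the paper: a scaling estimate (the paper bounds $\int_0^\infty \langle t\xi\rangle^{-2}\,dt \sim |\xi|^{-1}$ rather than substituting $s=t|\xi|$, but these are the same computation) for \eqref{Q1}, and $m$-fold integration by parts in $t$ followed by an appeal to \eqref{Q1} on the remainder for \eqref{Q2}. Your explicit remark that the $j=0$ boundary term $|u(0)|/|\tau|$ is absent from the stated sum and only disappears because $u(0)=0$ in every application of the lemma is a worthwhile clarification that the paper leaves implicit.
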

\begin{proof}
Firstly, we can get the result \eqref{Q1}
directly as follows,
\begin{align}\label{z2}
|Q_0(u,\tau,\xi)|	&\leq\int_{0}^{+\infty}|u(t\xi)|dt
\leq\sup_{z\in\mathbb{R}^d}\left(|u(z)|\langle z\rangle^{2}\right)
\int_{0}^{+\infty}\frac{1}{\langle t\xi\rangle^{2}}dt\sim\sup_{z\in\mathbb{R}^d}\left(|u(z)|\langle z\rangle^{2}\right)
\frac{1}{|\xi|}.
\end{align}
Moreover,  we integrate by parts m-times to obtain that  for any nonnegative integer $m$,
\begin{align*}
Q_0(u,\tau,\xi)=-\left(\sum_{j=0}^{m-1}\frac{1}{(i\tau)^{j+1}}\xi^{\otimes j}:\nabla^ju(0)\right)
+\frac{1}{(i\tau)^m}Q(\xi^{\otimes m}:\nabla^mu(t\xi),\tau,\xi).	
\end{align*}
Combining the above equality with \eqref{z2} to get the conclusion \eqref{Q2}.
\end{proof}
\begin{lemma}
For $k\in\mathbb{Z}^{+}$,  define
\begin{align}
Q_k(u,\tau,\xi)=\int_{0}^{+\infty} t^ke^{-i\tau t}
u(t\xi)dt.
\end{align}
Then, 
\begin{align}\label{Qk}
&	|Q_k(u,\tau,\xi)|\lesssim\frac{\sup_x\left(|u(x)|\langle x\rangle^{2+k}\right)}{|\xi|^{k+1}},\\&\label{Qk2}
|Q_k(u,\tau,\xi)|
\lesssim \frac{1}{|\tau|^{k+1}} \left(| u(0)|
+\sum_{l=0}^{k}\sup_{x}\left(|\nabla^{k-l+1} u(x)|\langle x\rangle^{2+k-l}\right)\right),
\\&\label{Qk3}
|Q_k(u,\tau,\xi)|
\lesssim\frac{1}{(|\tau|+|\xi|)^{k+1}} \left( \sup_x\left(|u(x)|\langle x\rangle^{2+k}\right)
+
\sum_{l=0}^{k}\sup_{x}\left(|\nabla^{k-l+1} u(x)|\langle x\rangle^{2+k-l}\right)\right).
\end{align}	
Moreover,	if $u(0)=0$, we have
\begin{align}\label{Q4}
&	|Q_k(u,\tau,\xi)|
\lesssim \frac{|\xi|}{|\tau|^{k+2}}\sum_{l=0}^{k+1}\sup_{x}\left(|\nabla^{k-l+2} u(x)|\langle x\rangle^{3+k-l}\right),
\\&\label{Qk4}
|Q_k(u,\tau,\xi)|
\lesssim\frac{|\xi|}{ (|\tau|+|\xi|)^{k+2}} \left( \sup_x\left(|u(x)|\langle x\rangle^{2+k}\right)
+
\sum_{l=0}^{k+1}\sup_{x}\left(|\nabla^{k-l+2} u(x)|\langle x\rangle^{3+k-l}\right)\right).
\end{align}	
\end{lemma}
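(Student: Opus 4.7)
The plan is to establish all five bounds by the same two-ingredient recipe that was used in the proof of \eqref{Q1}--\eqref{Q2}: either bound $u(t\xi)$ pointwise via the weighted supremum and compute the $t$-integral directly by the substitution $s=t|\xi|$, or integrate by parts in $t$ to trade time integrations against powers of $(i\tau)^{-1}$, the key point being that every derivative falling on $u(t\xi)$ produces a factor $\xi$ by the chain rule, which the $s=t|\xi|$ rescaling then absorbs cleanly.

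For \eqref{Qk} I simply bound $|u(t\xi)|\leq \sup_x(|u(x)|\langle x\rangle^{2+k})\langle t\xi\rangle^{-(2+k)}$, and the change of variables $s=t|\xi|$ reduces $\int_0^\infty t^k\langle t\xi\rangle^{-(2+k)}dt$ to $|\xi|^{-(k+1)}$ times a convergent constant integral. For \eqref{Qk2} I set $v(t):=t^k u(t\xi)$ and integrate by parts $N+1$ times to obtain the identity
\begin{equation*}
Q_k(u,\tau,\xi)=\sum_{j=0}^{N}\frac{v^{(j)}(0)}{(i\tau)^{j+1}}+\frac{1}{(i\tau)^{N+1}}\int_0^\infty e^{-i\tau t}v^{(N+1)}(t)\,dt.
\end{equation*}
The Leibniz rule gives $v^{(j)}(0)=\binom{j}{k}k!\,\xi^{\otimes (j-k)}:\nabla^{j-k}u(0)$ for $j\geq k$ and $0$ otherwise, so choosing $N=k$ leaves the single boundary term $k!u(0)/(i\tau)^{k+1}$; expanding $v^{(k+1)}(t)=\sum_{l=0}^{k}c_{k,l}\,t^l\,\xi^{\otimes(l+1)}:\nabla^{l+1}u(t\xi)$ and using $|\nabla^{l+1}u(t\xi)|\leq\sup_x(|\nabla^{l+1}u|\langle x\rangle^{2+l})\langle t\xi\rangle^{-(2+l)}$ together with $s=t|\xi|$ makes each of the $k+1$ resulting time-integrals independent of $\xi$. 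Reindexing $l\mapsto k-l$ produces the exact form in the statement.

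The third estimate \eqref{Qk3} is then obtained from \eqref{Qk} when $|\xi|\geq|\tau|$ and from \eqref{Qk2} when $|\tau|\geq|\xi|$, together with the elementary inequality $|\tau|+|\xi|\leq 2\max(|\tau|,|\xi|)$. For the refinement \eqref{Q4}, the hypothesis $u(0)=0$ kills the boundary term at level $j=k$, so I perform one additional integration by parts, i.e.\ take $N=k+1$ in the identity above. Now the only surviving boundary term is $(k+1)!\,\xi\cdot\nabla u(0)/(i\tau)^{k+2}$, which is $\lesssim |\xi|\,|\nabla u(0)|/|\tau|^{k+2}$ and fits into the stated sum at the index $l=k+1$ (weight $\langle x\rangle^{2}$ on $|\nabla u|$); the remainder integral has $v^{(k+2)}(t)=\sum_{l=0}^{k}c'_{k,l}\,t^{l}\,\xi^{\otimes(l+2)}:\nabla^{l+2}u(t\xi)$, and the same $s=t|\xi|$ rescaling absorbs $l+1$ powers of $|\xi|$, leaving the advertised extra factor of $|\xi|$ in the numerator. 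Finally \eqref{Qk4} interpolates \eqref{Qk} and \eqref{Q4} in the same way that \eqref{Qk3} interpolates \eqref{Qk} and \eqref{Qk2}.

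The only real care required is the combinatorial bookkeeping of the Leibniz expansions: one must verify that $v^{(j)}(0)=0$ for $j<k$ (so that all lower-order boundary terms are automatically absent), and that the $l$ factors of $t$ in each term of $v^{(k+1)}$ or $v^{(k+2)}$ are exactly matched by the $l+1$ or $l+2$ factors of $\xi$ coming from the chain rule, so that after $s=t|\xi|$ one is left with a universal constant rather than an unbounded power of $|\xi|$. Once this is checked, every estimate is a one-line consequence.
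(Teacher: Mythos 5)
Your proof is correct, and it rests on the same two ingredients the paper uses — integration by parts in $t$ and the rescaling $s=t|\xi|$ — but you organize the computation differently. You work directly on $Q_k$ by setting $v(t)=t^ku(t\xi)$ and integrating by parts $N+1$ times, which requires the Leibniz bookkeeping to confirm that $v^{(j)}(0)=0$ for $j<k$, that $v^{(k)}(0)=k!\,u(0)$, and that $v^{(k+1)}$, $v^{(k+2)}$ carry exactly the right powers of $t$ and $\xi$ for the rescaling to absorb. The paper instead folds the weight $t^k$ into the symbol: it writes $\xi_j^kQ_k(u,\tau,\xi)=Q_0(u_1,\tau,\xi)$ with $u_1(\xi)=\xi_j^ku(\xi)$ and then applies the already-proved bounds \eqref{Q1}--\eqref{Q2} for $Q_0$, where your observation $v^{(j)}(0)=0$ for $j<k$ reappears as $\nabla^l u_1(0)=0$ for $l\le k-1$. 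The two routes are the same argument in different coordinates: the paper's reduction reuses the $Q_0$ lemma as a black box and avoids the explicit Leibniz expansions, while your direct version is self-contained and makes the cancellation of $|\xi|$-powers manifest term by term. Your handling of \eqref{Qk3} and \eqref{Qk4} as case splits on $|\tau|$ versus $|\xi|$, and of \eqref{Q4} by taking one extra integration by parts once $u(0)=0$ kills the level-$k$ boundary term, also matches the paper's mechanism precisely (the paper gets \eqref{Q4} by invoking \eqref{Q2} with $m=k+2$ on $u_1$).
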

\begin{proof}
First of all, $\forall j\in\{1,2,...,d\}$, we obtain
$$
\xi_j^kQ_k(u,\tau,\xi)=\int_{0}^{+\infty}(t\xi_j)^k e^{-i\tau t}
u(t\xi)dt
=Q_0(u_1,\tau,\xi),
$$
where 
$
u_1(\xi)=\xi_j^ku(\xi).
$
By \eqref{Q1}, one has, 
$$
|\xi_j^kQ_k(u,\tau,\xi)|\lesssim \frac{\sup_x\left(|u(x)|\langle x\rangle^{2+k}\right)}{|\xi|},$$
which implies \eqref{Qk}.\\
Note that
\begin{align*}
&\nabla^lu_1(0)=0,\quad\forall l\leq k-1,
~~~~\quad|\nabla^l u_1(0)|
\lesssim|\nabla^{l-k}u(0)|,
\quad\forall l\geq k.
\end{align*}
Then, we know from  \eqref{Q2} with $m=k+1$ that 
\begin{align*}
|\xi_j^kQ_k(u,\tau,\xi)|=Q_0(u_1,\tau,\xi)
\lesssim&\frac{|\xi|^{k}}{|\tau|^{k+1}} |\nabla^k u_1(0)|
+\frac{|\xi|^{k}}{|\tau|^{k+1}}\sup_{x}\left(|\nabla^{k+1} u_1(x)|\langle x\rangle^{2}\right)\\
\lesssim&\frac{|\xi|^k}{|\tau|^{k+1}} | u(0)|
+\frac{|\xi|^{k}}{|\tau|^{k+1}}\sum_{l=0}^{k}\sup_{x}\left(|\nabla^{k-l+1} u(x)|\langle x\rangle^{2+k-l}\right).
\end{align*}
Then, 
$$
|\xi|^k|Q_k(u,\tau,\xi)|
\lesssim \frac{|\xi|^k}{|\tau|^{k+1}} | u(0)|
+\frac{|\xi|^{k}}{|\tau|^{k+1}}\sum_{l=0}^{k}\sup_{x}\left(|\nabla^{k-l+1} u(x)|\langle x\rangle^{2+k-l}\right),
$$
which implies \eqref{Qk2}.\\
Then \eqref{Qk} and \eqref{Qk2} imply \eqref{Qk3} directly.\\
Moreover, if $u(0)=0$, thanks to \eqref{Q2} with $m=k+2$, we have
\begin{align*}
|\xi_j^kQ_k(u,\tau,\xi)|
\lesssim&\frac{|\xi|^{k+1}}{|\tau|^{k+2}} |\nabla^{k+1} u_1(0)|
+\frac{|\xi|^{k+1}}{|\tau|^{k+2}}\sup_{x}\left(|\nabla^{k+2} u_1(x)|\langle x\rangle^{2}\right)\\
\lesssim&\frac{|\xi|^{k+1}}{|\tau|^{k+2}} | \nabla u(0)|
+\frac{|\xi|^{k+1}}{|\tau|^{k+2}}\sum_{l=0}^{k+1}\sup_{x}\left(|\nabla^{k-l+2} u(x)|\langle x\rangle^{3+k-l}\right)\\
\lesssim&\frac{|\xi|^{k+1}}{|\tau|^{k+2}}\sum_{l=0}^{k+1}\sup_{x}\left(|\nabla^{k-l+2} u(x)|\langle x\rangle^{3+k-l}\right),
\end{align*}
which implies \eqref{Q4}.
Finally, \eqref{Qk} and \eqref{Q4} imply \eqref{Qk4}.
\end{proof}\vspace{0.2cm}\\
We define 
$$
\|F\|_{\mathcal{W}^{\beta}_\alpha}:=\sum_{j=0}^{\beta}\int_{\mathbb{R}^d} \langle x\rangle^\alpha |\nabla^{j} F(x)|dx,
$$		and use this norm in the following lemmas.
\begin{lemma}
For integer $\alpha\geq0$ and  $\beta\geq2$, we have following estimates:
\begin{align}\label{alpha and beta K estimate}
&\left|\nabla_\xi \widetilde{K}(\tau,\xi)	\right|\lesssim 
\frac{|\xi|}{\langle \xi\rangle^{2} (|\tau|+|\xi|)^{2}}\| \nabla \mu\|_{\mathcal{W}^{3}_{3}},\\
&	\left|\nabla_\xi^\beta \widetilde{K}(\tau,\xi)	\right|\lesssim 
\left(
\frac{1}{\langle\xi\rangle^{\beta}}
\frac{1}{ (|\tau|+|\xi|)^{2}}
+\frac{1}{\langle\xi\rangle^2}
\frac{1}{ (|\tau|+|\xi|)^{\beta}}	
\right)\| \nabla \mu\|_{\mathcal{W}^{2\beta+1}_{\beta+2}},\\
&	\label{alpha K estimate}
\left|\partial^\alpha_\tau \widetilde{K}(\tau,\xi)	\right|
\lesssim 
\frac{|\xi|^2}{\langle\xi\rangle^2 (|\tau|+|\xi|)^{\alpha+2}}
\|\nabla\mu\|_{\mathcal{W}_{\alpha+3}^{\alpha+2}}.
\end{align}	
\end{lemma}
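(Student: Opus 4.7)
The plan hinges on the representation
$$\widetilde{K}(\tau,\xi)=\frac{1}{1+|\xi|^{2}}\sum_{j=1}^{d}i\xi_{j}\,Q_{0}(u_{j},\tau,\xi),\qquad u_{j}:=\widehat{\partial_{v_{j}}\mu},$$
together with the vanishing $u_{j}(0)=0$ (inherited from $\widehat{\nabla_{v}\mu}(0)=0$), which unlocks the sharper bound \eqref{Qk4} at the base level of the integral. All three estimates then follow from differentiating under the integral, invoking the $Q_{k}$ inequalities of the preceding lemma, and converting the resulting sup-weighted norms of derivatives of $u_{j}$ into Sobolev norms of $\nabla\mu$ via the standard Fourier-side inequality
$$\sup_{x\in\mathbb{R}^{d}}|\langle x\rangle^{m}\nabla^{l}\widehat{g}(x)|\lesssim\|g\|_{\mathcal{W}^{m}_{l}},\qquad m,l\in\mathbb{N},$$
obtained from $\nabla^{l}\widehat{g}=(-i)^{l}\widehat{y^{\otimes l}g}$, $x^{\otimes M}\widehat{h}=(-i)^{M}\widehat{\partial^{M}h}$, and Hausdorff--Young.

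For \eqref{alpha K estimate}, direct differentiation in $\tau$ gives $\partial_{\tau}^{\alpha}Q_{0}(u_{j},\tau,\xi)=(-i)^{\alpha}Q_{\alpha}(u_{j},\tau,\xi)$; applying \eqref{Qk4} with $k=\alpha$ and the Fourier transfer above produces $|Q_{\alpha}(u_{j},\tau,\xi)|\lesssim|\xi|(|\tau|+|\xi|)^{-(\alpha+2)}\|\nabla\mu\|_{\mathcal{W}^{\alpha+2}_{\alpha+3}}$, and multiplication by $|i\xi_{j}|/(1+|\xi|^{2})\lesssim|\xi|/\langle\xi\rangle^{2}$ finishes the proof.

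For the $\xi$-derivative estimates, I apply the Leibniz rule to the triple product $\frac{1}{1+|\xi|^{2}}\cdot i\xi_{j}\cdot Q_{0}(u_{j},\tau,\xi)$, together with the differentiation identity $\partial_{\xi_{i}}Q_{k}(u,\tau,\xi)=Q_{k+1}(\partial_{i}u,\tau,\xi)$ (so that $\nabla^{m}Q_{0}(u_{j})=Q_{m}(\nabla^{m}u_{j})$) and the rational-factor bound $|\nabla^{m}(1+|\xi|^{2})^{-1}|\lesssim\langle\xi\rangle^{-2-m}$, noting that $\nabla\xi_{j}$ is constant and $\nabla^{2}\xi_{j}=0$. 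Every Leibniz term is then majorised by $\langle\xi\rangle^{-m}|\xi|(|\tau|+|\xi|)^{-p}$ with $m+p\geq\beta+2$, times a sup-weighted Sobolev quantity of $u_{j}$. The elementary interpolation
$$\langle\xi\rangle^{-m}(|\tau|+|\xi|)^{-p}\lesssim\frac{1}{\langle\xi\rangle^{\beta}(|\tau|+|\xi|)^{2}}+\frac{1}{\langle\xi\rangle^{2}(|\tau|+|\xi|)^{\beta}},\qquad m,p\geq 0,\ m+p\geq\beta+2,$$
proved by splitting into the regions $|\tau|+|\xi|\geq\langle\xi\rangle$ and $|\tau|+|\xi|\leq\langle\xi\rangle$, then assembles the Leibniz sum into the stated two-term bound. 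The case $\beta=1$ reduces to three explicit Leibniz summands, each of type $|\xi|/(\langle\xi\rangle^{2}(|\tau|+|\xi|)^{2})\cdot\|\nabla\mu\|_{\mathcal{W}^{3}_{3}}$, giving \eqref{alpha and beta K estimate} directly.

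Main obstacle: the principal technical point is the Sobolev bookkeeping at the high-derivative end of the Leibniz expansion. A naive use of \eqref{Qk3} alone produces weights growing linearly with the derivative order of the $Q_{0}$-factor and exceeds the target weight $\beta+2$ claimed in the lemma. One must therefore, in the regime $|\tau|\lesssim|\xi|$, fall back on \eqref{Qk} (whose bound $|\xi|^{-(k+1)}\sup|u|\langle x\rangle^{k+2}$ has essentially the same $(\tau,\xi)$ decay there but uses only the lightest sup-norm), and reserve \eqref{Qk3} for the opposite regime $|\tau|\gtrsim|\xi|$. Matching these two regime-dependent choices against the interpolation above is what finally balances the two-term bound against the unified Sobolev norm $\|\nabla\mu\|_{\mathcal{W}^{2\beta+1}_{\beta+2}}$; every other step is a direct application of the preceding $Q_{k}$ lemma and the Fourier transfer.
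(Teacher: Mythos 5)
Your core plan is the same as the paper's: represent $\widetilde{K}$ as $\langle\xi\rangle^{-2}\sum_j i\xi_j\,Q_0(\widehat{\partial_{v_j}\mu},\tau,\xi)$, observe $\widehat{\partial_{v_j}\mu}(0)=0$ so that the sharper bound \eqref{Qk4} applies at the base level $m_3=0$ of the Leibniz expansion, use $\partial_{\xi_i}Q_k(u)=Q_{k+1}(\partial_i u)$ and $|\nabla^m\langle\xi\rangle^{-2}|\lesssim\langle\xi\rangle^{-2-m}$ to distribute $\xi$-derivatives, and pass to $\mathcal{W}$-norms of $\nabla\mu$ by Hausdorff--Young. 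The $\partial_\tau^\alpha$ estimate and the $\beta=1$ case go through exactly as in the paper.

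Two remarks on the $\beta\geq 2$ discussion, however. First, your stated interpolation inequality $\langle\xi\rangle^{-m}(|\tau|+|\xi|)^{-p}\lesssim\langle\xi\rangle^{-\beta}(|\tau|+|\xi|)^{-2}+\langle\xi\rangle^{-2}(|\tau|+|\xi|)^{-\beta}$ for $m,p\geq 0$, $m+p\geq\beta+2$ is false as written: take $m=0$, $p=\beta+2$ and let $|\tau|+|\xi|\to 0$ at $\xi=0$. You need $m\geq 2$ and $p\geq 2$; this is automatic here because the rational prefactor always contributes at least $\langle\xi\rangle^{-2}$ and the $m_3=0$ Leibniz term is upgraded to $(|\tau|+|\xi|)^{-2}$ via \eqref{Qk4} --- but the restriction should be stated, and you must also carry along the extra factor of $|\xi|$ from $\xi_j$ when deciding which side to absorb it into, as the paper does explicitly term by term.

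Second, and more substantively, the ``main obstacle'' paragraph is off-target, and the regime-splitting you propose does not fix what you think it fixes. Applying your own Fourier transfer $\sup_x|\langle x\rangle^m\nabla^l\widehat g|\lesssim\|g\|_{\mathcal{W}^m_l}$ to the worst Qk3 term $\sup_x|\nabla^{2m_3+1}P|\langle x\rangle^{m_3+2}$ gives $\|\nabla\mu\|_{\mathcal{W}^{m_3+2}_{2m_3+1}}$, i.e.\ $m_3+2$ derivatives and physical weight $2m_3+1$; the lemma's displayed norm $\mathcal{W}^{2\beta+1}_{\beta+2}$ has the two indices transposed relative to this (a typographical issue in the paper, inherited in your $\partial_\tau^\alpha$ statement as well). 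There is therefore no discrepancy to repair: the weight is indeed $2\beta+1$, which is linear in $\beta$, and that is what the paper proves directly from \eqref{Qk3} with no regime split. Moreover, even if one does split into $|\tau|\lesssim|\xi|$ and $|\tau|\gtrsim|\xi|$ as you propose, the regime $|\tau|\gtrsim|\xi|$ still forces \eqref{Qk2}/\eqref{Qk3}, whose worst term at $l=0$ again carries the weight $\langle x\rangle^{2m_3+1}$ --- so the split gives no improvement in the Sobolev norm, only a harmless lighter bound in the complementary region. The paper's ``naive'' one-shot use of \eqref{Qk3}, supplemented only by \eqref{Qk4} for the $m_3=0$ Leibniz term, is already sufficient and is the correct way to close the argument.
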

\begin{proof}
Recall that
\begin{align}\label{K Fourier}
\widetilde{K}(\tau,\xi)=\int_{0}^{+\infty}e^{-i\tau t}\frac{1}{1+|\xi|^2} i\xi\cdot\widehat{\nabla_v\mu}(t\xi)dt.
\end{align}
Then for  $\alpha\geq0$, we have 
$$
\partial^\alpha_\tau \widetilde{K}(\tau,\xi)
=\sum_{j=1}^{d}\frac{(-i)^{\alpha}\xi_j}{\langle \xi\rangle^2}
\int_{0}^{+\infty}t^{\alpha}e^{-i\tau t}  P_j(t\xi)dt,
$$
where 
$$
P(\xi)=(P_1,P_2,...,P_d)(\xi)=\widehat{\nabla_v\mu}(\xi),\quad P(0)=0.
$$
Thanks to \eqref{Qk4},  we have
\begin{align*}
\left|	\int_{0}^{+\infty}t^{\alpha}e^{-i\tau t}  P_j(t\xi)dt\right|\lesssim&
\frac{|\xi|}{ (|\tau|+|\xi|)^{\alpha+2}}\left( \sup_x\left(|P(x)|\langle x\rangle^{2+\alpha}\right)
+
\sum_{l=0}^{\alpha+1}\sup_{x}\left(|\nabla ^{\alpha-l+2} P(x)|\langle x\rangle^{3+\alpha-l}\right)\right)\\
\lesssim&
\frac{|\xi|}{ (|\tau|+|\xi|)^{\alpha+2}}
\|\nabla\mu\|_{\mathcal{W}_{\alpha+3}^{\alpha+2}}.
\end{align*}
Thus we obtain that
\begin{align*}
\left|\partial^\alpha_\tau \widetilde{K}(\tau,\xi)	\right|
&\lesssim  \sum_{j=1}^{d}
\frac{|\xi|}{\langle \xi\rangle^{2}}\left|	\int_{0}^{+\infty}t^{\alpha}e^{-i\tau t}  P_j(t\xi)dt\right|
\lesssim	\frac{|\xi|^2}{\langle\xi\rangle^2 (|\tau|+|\xi|)^{\alpha+2}}
\|\nabla\mu\|_{\mathcal{W}_{\alpha+3}^{\alpha+2}}.
\end{align*}			
Thanks to \eqref{Qk3} with $k=\beta_1$, one has 
\begin{align*}
&\left|	\int_{0}^{+\infty}t^{\beta_1}e^{-i\tau t}  (\nabla^{\beta_1}P_j)(t\xi)dt\right|\\&\lesssim \frac{1}{ (|\tau|+|\xi|)^{\beta_1+1}} \left( \sup_x\left(|\nabla ^{\beta_1}P(x)|\langle x\rangle^{2+\beta_1}\right)
+
\sum_{l=0}^{\beta_1}\sup_{x}\left(|\nabla ^{2\beta_1-l+1} P(x)|\langle x\rangle^{2+\beta_1-l}\right)\right)\\
&\lesssim \frac{1}{ (|\tau|+|\xi|)^{\beta_1+1}}\|\nabla\mu\|_{\mathcal{W}^{2\beta_1+1}_{\beta_1+2}} .
\end{align*}
Therefore, 
$$
\left|\nabla _\xi \widetilde{K}(\tau,\xi)	\right|\lesssim\sum_{j=1}^{d}
\frac{|\xi|}{\langle \xi\rangle^2}\left|	\int_{0}^{+\infty}te^{-i\tau t}  (\nabla P_j)(t\xi)dt\right|
+\sum_{j=1}^{d}
\frac{1}{\langle \xi\rangle^2}\left|	\int_{0}^{+\infty}e^{-i\tau t}  P_j(t\xi)dt\right|
\lesssim\frac{|\xi|}{\langle \xi\rangle^{2} (|\tau|+|\xi|)^{2}}\| \nabla \mu\|_{\mathcal{W}^{3}_{3}}.
$$
For $\beta\geq2$, we have
\begin{align*}
\left|\nabla _\xi^\beta \widetilde{K}(\tau,\xi)	\right|\lesssim& \sum_{j=1}^{d}\sum_{\beta_1=0}^{\beta} \left|\nabla _\xi^{\beta-\beta_1} \left(\frac{\xi_j}{\langle \xi\rangle^2}\right)\right|\left|	\int_{0}^{+\infty}t^{\beta_1}e^{-i\tau t}  (\nabla ^{\beta_1}P_j)(t\xi)dt\right|\\
\lesssim & \sum_{j=1}^{d}\sum_{\beta_1=1}^{\beta-1} \frac{1}{\langle \xi\rangle^{\beta-\beta_1+1}}\left|	\int_{0}^{+\infty}t^{\beta_1}e^{-i\tau t}  (\nabla ^{\beta_1}P_j)(t\xi)dt\right|
\\
&+\sum_{j=1}^{d}
\frac{|\xi|}{\langle \xi\rangle^2}\left|	\int_{0}^{+\infty}t^{\beta}e^{-i\tau t}  (\nabla ^{\beta}P_j)(t\xi)dt\right|
+\sum_{j=1}^{d}
\frac{1}{\langle \xi\rangle^{\beta+1}}\left|	\int_{0}^{+\infty}e^{-i\tau t}  P_j(t\xi)dt\right|
\\
\lesssim&	\| \nabla \mu\|_{\mathcal{W}^{2\beta+1}_{\beta+2}}
\left(
\frac{1}{\langle\xi\rangle^{\beta}}
\frac{1}{ (|\tau|+|\xi|)^{2}}
+\frac{1}{\langle\xi\rangle^2}
\frac{1}{ (|\tau|+|\xi|)^{\beta}}	
\right)	.
\end{align*}
Thus, the proof is complete.
\end{proof}

\begin{lemma}For  any integer $j\geq1$, there holds
\begin{align}\label{K/1-K xi}			
\left|\nabla ^j\left(\frac{\widetilde{K}(\tau,\xi)}{1-\widetilde{K}(\tau,\xi)}\right)\right|
\lesssim&
M_j\left(
\frac{1}{\langle\xi\rangle^j (|\tau|+|\xi|)^2}
+
\frac{1}{\langle\xi\rangle^2 (|\tau|+|\xi|)^j}\right),\\\label{K/1-K tau}			
\left|\partial_{\tau}^j\left(\frac{\widetilde{K}(\tau,\xi)}{1-\widetilde{K}(\tau,\xi)}\right)\right|
\lesssim&
M_j
\frac{|\xi|^2}{\langle\xi\rangle^2 (|\tau|+|\xi|)^{j+2}},
\end{align}			
where	
$$
M_j=\| \nabla \mu\|_{\mathcal{W}_{j+2}^{2j+1}}\left(1+\| \nabla \mu\|_{\mathcal{W}_{j+2}^{2j+1}}\right)^{j-1}.
$$		
\end{lemma}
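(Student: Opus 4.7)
The proof plan is to apply Faà di Bruno's formula to the composition $H(\widetilde K)$ with $H(z) := z/(1-z)$, and then carry out a careful book-keeping of the decay factors produced by the previous lemma. Since $H^{(n)}(z) = n!/(1-z)^{n+1}$ and the Penrose condition \eqref{penrose} gives $|1-\widetilde K(\tau,\xi)| \geq \text{\r c}$, we have $|H^{(n)}(\widetilde K)| \leq n!\,\text{\r c}^{-(n+1)}$. Faà di Bruno then yields, for both $\nabla^j$ and $\partial_\tau^j$,
\begin{equation*}
D^j\bigl(H(\widetilde K)\bigr) \;=\; \sum_{\pi} C_\pi\, H^{(|\pi|)}(\widetilde K)\prod_{B\in\pi} D^{|B|}\widetilde K,
\end{equation*}
summed over set-partitions of $\{1,\dots,j\}$, with combinatorial constants $C_\pi$.

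For the spatial case I would first record, from the previous lemma, the pointwise bound $|\nabla^m\widetilde K|\lesssim \|\nabla\mu\|_{\mathcal{W}^{2m+1}_{m+2}}\,\Phi_m$, where
\begin{equation*}
\Phi_m := \frac{1}{\langle\xi\rangle^m(|\tau|+|\xi|)^2}+\frac{1}{\langle\xi\rangle^2(|\tau|+|\xi|)^m}\quad(m\geq 2),\qquad \Phi_1:=\frac{|\xi|}{\langle\xi\rangle^2(|\tau|+|\xi|)^2}.
\end{equation*}
By monotonicity of the norms $\mathcal W^{2m+1}_{m+2}$ in both indices, every factor in a partition with blocks of sizes $m_1,\dots,m_k$ (so $\sum m_i=j$) is controlled by $\|\nabla\mu\|_{\mathcal W^{2j+1}_{j+2}}$. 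The crux is then the algebraic submultiplicativity
\begin{equation*}
\Phi_a\,\Phi_b \;\lesssim\; \Phi_{a+b},\qquad a,b\geq 1,
\end{equation*}
which I would prove by expanding the product into four cross-terms and redistributing each of them to the two ``pure'' pieces of $\Phi_{a+b}$. The diagonal terms $1/[\langle\xi\rangle^{a+b}(|\tau|+|\xi|)^4]$ and $1/[\langle\xi\rangle^4(|\tau|+|\xi|)^{a+b}]$ are trivial because $\langle\xi\rangle\geq 1$ and $a+b\geq 4$ lets us absorb the extra powers. The off-diagonal cross-term $1/[\langle\xi\rangle^{a+2}(|\tau|+|\xi|)^{b+2}]$ requires a case split: when $\langle\xi\rangle \leq |\tau|+|\xi|$ one uses $\langle\xi\rangle^{b-2}\leq (|\tau|+|\xi|)^{b-2}$ to land in $1/[\langle\xi\rangle^{a+b}(|\tau|+|\xi|)^2]$, while in the opposite regime $|\tau|+|\xi|\leq \langle\xi\rangle$ one uses $(|\tau|+|\xi|)^{a-2}\leq \langle\xi\rangle^{a-2}$ to land in $1/[\langle\xi\rangle^2(|\tau|+|\xi|)^{a+b}]$. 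The $|\xi|$-factor in $\Phi_1$ is harmless because $|\xi|\leq\min(\langle\xi\rangle,|\tau|+|\xi|)$.

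Iterating the submultiplicativity gives $\prod_i \Phi_{m_i}\lesssim \Phi_j$, so each partition contributes at most $\text{\r c}^{-(k+1)}k!\,\|\nabla\mu\|_{\mathcal W^{2j+1}_{j+2}}^k\,\Phi_j$. Summing over partitions and using $\sum_{k=1}^{j}\binom{j-1}{k-1}\|\nabla\mu\|_{\mathcal W^{2j+1}_{j+2}}^{k-1} = (1+\|\nabla\mu\|_{\mathcal W^{2j+1}_{j+2}})^{j-1}$ (up to a universal constant depending on $j$), I obtain exactly the announced bound with
$M_j = \|\nabla\mu\|_{\mathcal W^{2j+1}_{j+2}}(1+\|\nabla\mu\|_{\mathcal W^{2j+1}_{j+2}})^{j-1}$.

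The $\partial_\tau^j$ case is the easier companion: the per-factor bound from \eqref{alpha K estimate} is $|\partial_\tau^{m_i}\widetilde K|\lesssim \|\nabla\mu\|_{\mathcal W^{m_i+2}_{m_i+3}}\,\Psi_{m_i}$ with $\Psi_m := |\xi|^2/[\langle\xi\rangle^2(|\tau|+|\xi|)^{m+2}]$. The product is
$\prod_i \Psi_{m_i} = |\xi|^{2k}\,\langle\xi\rangle^{-2k}(|\tau|+|\xi|)^{-(j+2k)} \leq \Psi_j$ since $|\xi|\leq\langle\xi\rangle$ and $|\xi|\leq |\tau|+|\xi|$ absorb the $2(k-1)$ surplus powers. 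The partition sum is handled identically and produces the same $M_j$. The principal obstacle is the scalar inequality $\Phi_a\Phi_b\lesssim \Phi_{a+b}$; everything else is Faà di Bruno bookkeeping and the Penrose bound on $1/(1-\widetilde K)$.
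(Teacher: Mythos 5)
Your proof is correct and follows essentially the same route as the paper: both reduce, via the chain rule and the Penrose lower bound on $|1-\widetilde K|$, to controlling products of the pointwise bounds on $\nabla^m\widetilde K$ (resp.\ $\partial_\tau^m\widetilde K$) established in the preceding lemma, and then show the resulting decay factor dominates the target one. The only difference is a bookkeeping choice: you keep the full Fa\`a di Bruno partition sum and verify the submultiplicativity $\Phi_a\Phi_b\lesssim\Phi_{a+b}$, whereas the paper first uses a weighted AM--GM to reduce to single terms $|\nabla^i\widetilde K|^{j/i}$ and shows $\Phi_i^{j/i}\lesssim\Phi_j$ --- the same dichotomy $\langle\xi\rangle\lessgtr|\tau|+|\xi|$ drives both verifications. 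One small caveat: your claim that the ``diagonal'' term $1/[\langle\xi\rangle^{a+b}(|\tau|+|\xi|)^4]$ is trivially controlled by $\langle\xi\rangle\ge 1$ alone is imprecise (that inequality pushes the wrong way on the $|\tau|+|\xi|$ factor); it too needs the case split $|\tau|+|\xi|\gtrless 1$ together with $\langle\xi\rangle\sim 1$ in the small-frequency regime, exactly as you do for the cross term, but this is cosmetic and the argument is sound.
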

\begin{proof} Thanks to the Penrose condition, one has $|1-\widetilde{K}(\tau,\xi)|\geq \text{\r c}$.
Firstly, we have
$$
\left|\nabla \frac{\widetilde{K}(\tau,\xi)}{1-\widetilde{K}(\tau,\xi)}\right|=
\left|\nabla \left(\frac{1}{1-\widetilde{K}(\tau,\xi)}\right)\right|\lesssim_{\text{\r{c}}}
|\nabla \widetilde{K}(\tau,\xi)|
\overset{\eqref{alpha and beta K estimate}}{\lesssim_{\text{\r{c}}}}
\frac{|\xi|}{\langle \xi\rangle^{2} (|\tau|+|\xi|)^{2}}\| \nabla \mu\|_{\mathcal{W}^{3}_{3}}.
$$	
Note that we get the following lines for any $j\geq 1$, 
\begin{align*}
&\left|\nabla ^j \left(\frac{\widetilde{K}(\tau,\xi)}{1-\widetilde{K}(\tau,\xi)}\right)\right|=\left|\nabla ^j\left(\frac{1}{1-\widetilde{K}(\tau,\xi)}\right)\right|
\lesssim_{\text{\r{c}}}\sum_{i=1}^{j}|\nabla ^{i}\widetilde{K}(\tau,\xi)|^\frac{j}{i}\\
&\lesssim_{\text{\r{c}}}
\sum_{i=2}^j
\left( \frac{1}{\langle \xi\rangle^j}\frac{1}{ (|\tau|+|\xi|)^{\frac{2j}{i}}}+\frac{1}{\langle \xi\rangle^{\frac{2j}{i}}}\frac{1}{ (|\tau|+|\xi|)^{j}}
\right)
\| \nabla \mu\|_{\mathcal{W}^{2i+1}_{i+2}}^\frac{j}{i}+\	\frac{|\xi|^j}{\langle \xi\rangle^{2j} (|\tau|+|\xi|)^{2j}}\| \nabla \mu\|_{\mathcal{W}^{3}_{3}}^j\\
&\lesssim_{\text{\r{c}}}\left(\frac{1}{\langle\xi\rangle^j}
\frac{1}{ (|\tau|+|\xi|)^2}
+\frac{1}{\langle\xi\rangle^2}
\frac{1}{ (|\tau|+|\xi|)^j}\right)
\left(\| \nabla \mu\|_{\mathcal{W}_{j+2}^{2j+1}}
+\| \nabla \mu\|^j_{\mathcal{W}_{j+2}^{2j+1}}\right)+\	\frac{|\xi|^j}{\langle \xi\rangle^{2j} (|\tau|+|\xi|)^{2j}}\| \nabla \mu\|_{\mathcal{W}^{3}_{3}}^j\\
&\lesssim_{\text{\r{c}}}	 	M_j\left(
\frac{1}{\langle\xi\rangle^j (|\tau|+|\xi|)^2}
+
\frac{1}{\langle\xi\rangle^2 (|\tau|+|\xi|)^j}\right).
\end{align*}	
Moreover, one has 
\begin{align*}
\left|\partial_{\tau}^j\left(\frac{\widetilde{K}(\tau,\xi)}{1-\widetilde{K}(\tau,\xi)}\right)\right|
=&\left|\partial_{\tau}^j\left(\frac{1}{1-\widetilde{K}(\tau,\xi)}\right)\right|
\lesssim_{\text{\r{c}}}
\sum_{j_1=1}^{j}\left|\partial_\tau^{j_1}\widetilde{K}(\tau,\xi)\right|^\frac{j}{j_1}
\overset{\eqref{alpha K estimate}}	{\lesssim_{\text{\r{c}}}}
\sum_{j_1=1}^{j}\left|
\frac{|\xi|^2}{\langle\xi\rangle^2 (|\tau|+|\xi|)^{j_1+2}}
\right|^\frac{j}{j_1}
\|\nabla\mu\|_{\mathcal{W}_{j_1+3}^{j_1+2}}^\frac{j}{j_1}\\
\lesssim_{\text{\r{c}}}&\frac{1}{ (|\tau|+|\xi|)^j}
\left(\frac{|\xi|^2}{\langle\xi\rangle^2 (|\tau|+|\xi|)^2}
+\frac{|\xi|^{2j}}{\langle\xi\rangle^{2j} (|\tau|+|\xi|)^{2j}}
\right)
\left(\| \nabla \mu\|_{\mathcal{W}^{j+2}_{j+3}}^j
+\| \nabla \mu\|_{\mathcal{W}^{j+2}_{j+3}}\right)	\\
\lesssim_{\text{\r{c}}}&M_j
\frac{|\xi|^2}{\langle\xi\rangle^2 (|\tau|+|\xi|)^{j+2}}
.
\end{align*}
This completes the proof.		
\end{proof}

\begin{lemma}\label{estimate of nabla gamma Gn} Define $G_n=\mathscr{F}^{-1}\left(\widetilde{G}\chi_n\right),$
	where $\chi\in[0,1]$ is a  smooth compactly supported function in the annulus $1/4\leq|\xi|\leq4$ and $\chi_n(\xi)=\chi(\frac{\xi}{2^n})$.
Then, we have for any integer $\gamma\geq 0$
$$
\left|\nabla^{\gamma}G_n(t,x)\right|\lesssim_{\text{\r{c}}}
2^{(d+\gamma+1)n-2n^+}	M_{d+\gamma+2} \min\left\{1,\frac{1}{2^{n}(t+|x|)}\right\}^{d+\gamma+2},
$$
where $n^+=\max\{0,n\}$. 
\end{lemma}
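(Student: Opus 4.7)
Set $\beta := d+\gamma+2$. On the Fourier side we have
\[
\nabla^\gamma G_n(t,x) = \frac{1}{(2\pi)^{d+1}}\int_{\mathbb{R}}\int_{\mathbb{R}^d} e^{i\tau t + ix\cdot\xi}(i\xi)^\gamma \widetilde{G}(\tau,\xi)\chi_n(\xi)\,d\xi d\tau,
\]
and on the support of $\chi_n$ we have $|\xi|\sim 2^n$, $\langle\xi\rangle\sim 2^{n^+}$. The plan is to establish three separate bounds and combine them: (A) a direct estimate $|\nabla^\gamma G_n|\lesssim_{\text{\r{c}}} M_\beta\, 2^{(d+\gamma+1)n - 2n^+}$; (B) a $t$-decay estimate $t^\beta|\nabla^\gamma G_n|\lesssim_{\text{\r{c}}} M_\beta\, 2^{-n-2n^+}$; and (C) an analogous $x$-decay estimate $|x|^\beta|\nabla^\gamma G_n|\lesssim_{\text{\r{c}}} M_\beta\, 2^{-n-2n^+}$. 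Writing $(t+|x|)^\beta\lesssim t^\beta+|x|^\beta$, (B) and (C) together yield the decay factor $(2^n(t+|x|))^{-\beta}$, which combined with (A) gives the claimed $\min$-type bound.

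For (A), the Penrose condition and the $j=0$ bound on $\widetilde{K}$ (taking $\alpha=0$ in \eqref{alpha K estimate}) give $|\widetilde{G}|\lesssim_{\text{\r{c}}} |\xi|^2/\langle\xi\rangle^2(|\tau|+|\xi|)^2$. Integrating first in $\tau$ produces $|\xi|/\langle\xi\rangle^2$, then integrating $|\xi|^\gamma\cdot|\xi|/\langle\xi\rangle^2$ over $|\xi|\sim 2^n$ (volume $2^{dn}$) yields the exponent $(d+\gamma+1)n-2n^+$. For (B), use $t^\beta e^{i\tau t}=(-i\partial_\tau)^\beta e^{i\tau t}$ and integrate by parts $\beta$ times in $\tau$; the resulting $\partial_\tau^\beta \widetilde{G}$ is controlled by \eqref{K/1-K tau} through $M_\beta |\xi|^2/\langle\xi\rangle^2(|\tau|+|\xi|)^{\beta+2}$. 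The $\tau$-integral gives $|\xi|^2/\langle\xi\rangle^2\cdot|\xi|^{-(\beta+1)}$, and integrating $|\xi|^\gamma$ times this over $|\xi|\sim 2^n$ collapses to $M_\beta\, 2^{(d+\gamma+1-\beta)n-2n^+}=M_\beta 2^{-n-2n^+}$, which is exactly (B).

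For (C), reduce via $|x|^\beta\lesssim_d \sum_{j=1}^d |x_j|^\beta$ and use $x_j^\beta e^{ix\cdot\xi}=(i\partial_{\xi_j})^\beta e^{ix\cdot\xi}$, integrating by parts $\beta$ times in $\xi_j$. By Leibniz,
\[
\partial_{\xi_j}^\beta\bigl[(i\xi)^\gamma \widetilde{G}\chi_n\bigr] = \sum_{j_1+j_2+j_3=\beta} c_{j_1 j_2 j_3}\,\partial_{\xi_j}^{j_1}(i\xi)^\gamma\cdot \partial_{\xi_j}^{j_2}\widetilde{G}\cdot \partial_{\xi_j}^{j_3}\chi_n,
\]
with $|\partial_{\xi_j}^{j_1}\xi^\gamma|\lesssim |\xi|^{\gamma-j_1}$ (nonzero only for $j_1\leq\gamma$) and $|\partial_{\xi_j}^{j_3}\chi_n|\lesssim 2^{-j_3 n}$. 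For the middle factor, use the direct bound on $\widetilde{G}$ when $j_2=0$; the sharper bound $|\nabla\widetilde{G}|\lesssim_{\text{\r{c}}} M_1 |\xi|/\langle\xi\rangle^2 (|\tau|+|\xi|)^2$ (from the proof of \eqref{K/1-K xi}) when $j_2=1$, since the naive second term of \eqref{K/1-K xi} would make the $\tau$-integral diverge; and \eqref{K/1-K xi} directly when $j_2\geq 2$. Integrating each term in $\tau$ and then in $\xi$ over $|\xi|\sim 2^n$ gives $\lesssim M_\beta 2^{(d+\gamma-j_1-j_3+1-j_2)n-2n^+}=M_\beta\,2^{-n-2n^+}$, and the number of Leibniz summands is $O(1)$ in $n$.

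The main obstacle is keeping the combinatorics of (C) clean: one has to verify that every Leibniz split, including the borderline $j_2=1$ case that the stated lemma covers suboptimally, produces precisely the same exponent of $2$, and that the two-term structure of \eqref{K/1-K xi} for $j_2\geq 2$ integrates against $(|\tau|+|\xi|)^{-2}$ and $(|\tau|+|\xi|)^{-j_2}$ respectively without loss. Once (A), (B), (C) are in hand, combining them is immediate:
\[
|\nabla^\gamma G_n|\lesssim_{\text{\r{c}}} M_\beta\,2^{(d+\gamma+1)n-2n^+}\Bigl(1\wedge (2^n(t+|x|))^{-\beta}\Bigr),
\]
which is the claim.
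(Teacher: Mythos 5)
Your proof is correct and follows the same strategy as the paper: the direct bound (your (A)), the $\tau$-integration-by-parts (your (B)), and the $\xi$-integration-by-parts (your (C)) correspond exactly to the paper's \eqref{gn1}, \eqref{gn2}, and \eqref{gn3}, combined by taking $m_1=m_2=d+\gamma+2$. Your observation about the $j_2=1$ case is well taken: the paper's intermediate display in \eqref{gn3} applies the two-term bound of \eqref{K/1-K xi} uniformly, which for $j=1$ would produce the non-$\tau$-integrable factor $(|\tau|+|\xi|)^{-1}$, and replacing it with the sharper one-derivative estimate $|\nabla\widetilde{G}|\lesssim_{\text{\r{c}}} M_1\,|\xi|\langle\xi\rangle^{-2}(|\tau|+|\xi|)^{-2}$ from the proof of \eqref{K/1-K xi}, as you do, is exactly the right fix and yields the same final exponent.
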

\begin{proof} Firstly, we estimate $|\nabla ^\gamma G_{n}(t,x)|$ directly.
\begin{align*}
|\nabla ^\gamma G_{n}(t,x)|=&\left|\mathscr{F}^{-1}\left(
\xi^{\otimes \gamma}
\frac{\widetilde{K}(\tau,\xi)}{1-\widetilde{K}(\tau,\xi)}\chi_{n}(\xi)\right)\right|
\lesssim	\int_{\tau\in\mathbb{R},|\xi|\sim 2^n}
\left|\left(\frac{\widetilde{K}(\tau,\xi)}{1-\widetilde{K}(\tau,\xi)}\right)	\xi^{\otimes\gamma}   \chi_{n}(\xi)\right|
d\xi d\tau\\\lesssim_{\text{\r{c}}}&\int_{\tau\in\mathbb{R},|\xi|\sim 2^{n}}
2^{\gamma n}|\widetilde{K}(\tau,\xi)|d\tau d\xi.
\end{align*}
Using \eqref{alpha K estimate}, one has
\begin{align}\label{gn1}
|\nabla ^\gamma G_{n}(t,x)|
\lesssim_{\text{\r{c}}}&\|\nabla\mu\|_{\mathcal{W}_{ 3}^{ 2}}
\int_{\tau\in\mathbb{R},|\xi|\sim 2^n}
2^{\gamma n}	\frac{|\xi|^2}{\langle\xi\rangle^2 (|\tau|+|\xi|)^2}d\tau d\xi
\lesssim_{\text{\r{c}}}2^{(d+\gamma+1)n-2n^+}\|\nabla\mu\|_{\mathcal{W}_{ 3}^{ 2}}.
\end{align}
Then we integrate by parts in $\tau$ to get that 
\begin{align*}
|\nabla ^\gamma G_{n}(t,x)|
\sim&\left|\int_{\mathbb{R}\times \mathbb{R}^d}e^{i\tau t}e^{i\xi\cdot x}\frac{\widetilde{K}(\tau,\xi)}{1-\widetilde{K}(\tau,\xi)}\xi^{\otimes\gamma}\chi_{n}(\xi)d\xi d\tau\right|
\\
\lesssim &
\frac{1}{|t|^{m_1}}	
\int_{\tau\in\mathbb{R},|\xi|\sim 2^{n}}
\left|	
\partial_{\tau}^{m_1}\left(\frac{\widetilde{K}(\tau,\xi)}{1-\widetilde{K}(\tau,\xi)}\right)
\xi^{\otimes\gamma} \chi_{n}(\xi)\right|
d\xi d\tau.
\end{align*}
Using \eqref{K/1-K tau},  we obtain that
\begin{align}	\nonumber	
|\nabla ^\gamma G_{n}(t,x)|\lesssim_{\text{\r{c}}}&M_{m_1}
\frac{1}{|t|^{m_1}}	\int_{\tau\in\mathbb{R},|\xi|\sim 2^{n}}\frac{|\xi|^{2+\gamma}}{\langle\xi\rangle^2 (|\tau|+|\xi|)^{m_1+2}}d\xi d\tau \\\label{gn2}	
\lesssim_{\text{\r{c}}}&\frac{1}{(|t|2^{n})^{m_1}}2^{(d+\gamma+1)n-2n^+}   
M_{m_1}.
\end{align}
Finally, we integrate by parts in $\xi$ to get that  
\begin{align*}
|\nabla ^\gamma G_{n}(t,x)|
\sim&\left|\int_{\mathbb{R}\times \mathbb{R}^d}e^{i\tau t}e^{i\xi\cdot x}\frac{\widetilde{K}(\tau,\xi)}{1-\widetilde{K}(\tau,\xi)}\xi^{\otimes\gamma}\chi_{n}(\xi)d\xi d\tau\right|
\\
\lesssim&\frac{1}{|x|^{m_2}}
\int_{\tau\in\mathbb{R},|\xi|\sim 2^{n}}
\left|\nabla ^{m_2}\left(\frac{\widetilde{K}(\tau,\xi)}{1-\widetilde{K}(\tau,\xi)}\xi^{\otimes\gamma}\chi_{n}(\xi)\right)\right|d\xi d\tau\\
\lesssim_{\text{\r{c}}}&\frac{1}{|x|^{m_2}}
\int_{\tau\in\mathbb{R},|\xi|\sim 2^{n}}
\left|\widetilde{K}(\tau,\xi)\right|\left|\nabla ^{m_2}\left(\xi^{\otimes\gamma}\chi_{n}(\xi)\right)\right|d\xi d\tau\\
&+\frac{1}{|x|^{m_2}}\sum_{j=1}^{m_2}\int_{\tau\in\mathbb{R},|\xi|\sim 2^{n}}
\left|\nabla ^j\frac{\widetilde{K}(\tau,\xi)}{1-\widetilde{K}(\tau,\xi)}\right|\left|\nabla ^{m_2-j}\left(\xi^{\otimes\gamma}\chi_{n}(\xi)\right)\right|
d\xi d\tau.
\end{align*}
Using \eqref{alpha and beta K estimate} and \eqref{K/1-K xi},  we obtain that
\begin{equation}\label{gn3}
\begin{split}
	|\nabla ^\gamma G_{n}(t,x)|\lesssim_{\text{\r{c}}}&\|\nabla\mu\|_{\mathcal{W}_{ 3}^{ 2}}\frac{1}{|x|^{m_2}}\int_{\tau\in\mathbb{R},|\xi|\sim 2^{n}}
	\frac{|\xi|^2}{\langle\xi\rangle^2	 (|\tau|+|\xi|)^2}2^{(\gamma-m_2)n}d\xi d\tau\\
	&+\frac{1}{|x|^{m_2}}\sum_{j=1}^{m_2}
	\int_{\tau\in\mathbb{R},|\xi|\sim 2^{n}}
	M_{j}
	\left(\frac{1}{\langle \xi\rangle^j}
	\frac{1}{ (|\tau|+|\xi|)^2}
	+\frac{1}{\langle\xi\rangle^2}\frac{1}{(|\tau|+|\xi|)^{j}}\right)2^{(\gamma-m_2+j)n}d\xi d\tau\\
	\lesssim_{\text{\r{c}}}&\frac{M_{m_2}}{|x|^{m_2}}\sum_{j=1}^{m_2}\int_{|\xi|\sim 2^{n}}
	\left[\frac{|\xi|}{\langle\xi\rangle^2	}2^{(\gamma-m_2)n}+\left(\frac{1}{\langle \xi\rangle^j}
	\frac{1}{|\xi|}
	+\frac{1}{\langle\xi\rangle^2}\frac{1}{|\xi|^{j-1}}\right)2^{(\gamma-m_2+j)n}\right]d\xi\\
	\lesssim_{\text{\r{c}}}&\frac{1}{(|x|2^{n})^{m_2}}2^{(d+\gamma+1)n-2n^+}   M_{m_2}.
\end{split}
\end{equation}
Combining \eqref{gn1}, \eqref{gn2} with \eqref{gn3} and taking $m_1=m_2=d+\gamma+2$, we obtain
\begin{align*}
\left|\nabla^{\gamma}G_n(t,x)\right|\lesssim_{\text{\r{c}}}
2^{(d+\gamma+1)n-2n^+}	M_{d+\gamma+2} \min\left\{1,\frac{1}{2^{n}(t+|x|)}\right\}^{d+\gamma+2}.
\end{align*}
This gives the result.
\end{proof}
\begin{theorem}\label{G_estimate}    Let $\gamma \geq 0$, $0\leq a_0<1$ and $0<b_0<1$. Then, 
\begin{equation}\label{Z1}
|\nabla ^\gamma G(t,x)|\lesssim_{\text{\r{c}}}	\frac{\widetilde{M}_\gamma}{(t+|x|)^{d+\gamma-1}(1+t+|x|)^2}.
\end{equation}	
In particular,
\begin{align}\label{Z2}
\|\nabla ^\gamma G(t,x)\|_{L^\infty_x}&\lesssim_{\text{\r{c}}}	\frac{\widetilde{M}_\gamma}{t^{d+\gamma-1}(1+t)^2} ,\\  \label{Z3}
\||x|^{a_0}\nabla ^\gamma G(t,x)\|_{L^1_x}&\lesssim_{\text{\r{c}}}\frac{\widetilde{M}_\gamma}{t^{\gamma-1-a_0}(1+t)^{2}}+\widetilde{M}_\gamma\mathbf{1}_{t\leq 1}+\widetilde{M}_\gamma\mathbf{1}_{\gamma=1+a_0}\log^+\left(\frac{1}{t}\right),\\
\||x|^{b_0}\delta_\alpha  G(t,x)\|_{L^1_x}\label{Z5}
&\lesssim_{\text{\r{c}}}\widetilde{M}_1\frac{|\alpha|}{(1+t+|\alpha|)^{2-b_0}}+\widetilde{M}_1\mathbf{1}_{|\alpha|>t}\frac{|\alpha|^{b_0}}{1+t}+\widetilde{M}_1\mathbf{1}_{|\alpha|\leq t}\frac{|\alpha|^{d+b_0}}{t^{d-1}(1+t)^2},
\end{align}
where
\begin{align*}
\widetilde{M}_\gamma=\overline{M}_\gamma\big(1+\overline{M}_\gamma\big)^{d+\gamma+1},\qquad \text{and}\quad \overline{M}_\gamma=\|\langle\cdot\rangle^{d+\gamma+4}\nabla\mu(\cdot)\|_{W^{2d+2\gamma+5,1}}.
\end{align*} 
\end{theorem}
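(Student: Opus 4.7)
The plan is to obtain every part of the theorem from the Littlewood--Paley decomposition $G=\sum_{n\in\mathbb{Z}} G_n$ and the uniform bound established in Lemma \ref{estimate of nabla gamma Gn}, namely $|\nabla^\gamma G_n(t,x)|\lesssim_{\text{\r{c}}} 2^{(d+\gamma+1)n-2n^+}M_{d+\gamma+2}\min\{1,(2^n(t+|x|))^{-1}\}^{d+\gamma+2}$. Abbreviate $r=t+|x|$. For the pointwise estimate \eqref{Z1}, I split the dyadic sum at $n=0$ and at the transition $2^n r=1$. For $n\ge 0$ the prefactor is $2^{(d+\gamma-1)n}$; summing the head $2^n r\le 1$ gives $r^{-(d+\gamma-1)}$ when $r\le 1$, while summing the tail $(2^n r)^{-(d+\gamma+2)}$ gives $r^{-(d+\gamma+1)}$ when $r>1$. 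For $n<0$ the prefactor is $2^{(d+\gamma+1)n}$ and the series converges absolutely, contributing $O(1)$ for small $r$ and $r^{-(d+\gamma+1)}$ for large $r$. Combining these cases yields exactly $\widetilde{M}_\gamma/[r^{d+\gamma-1}(1+r)^2]$ after absorbing $M_{d+\gamma+2}\le \widetilde{M}_\gamma$. The $L^\infty_x$ bound \eqref{Z2} follows immediately from \eqref{Z1} by taking $|x|=0$.

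For \eqref{Z3}, I integrate \eqref{Z1} against $|x|^{a_0}$ in polar coordinates, breaking $\mathbb{R}^d$ into the regions $\{|x|\le \max(t,1)\}$ and $\{|x|>\max(t,1)\}$, and further separating $t\ge 1$ from $t<1$. In each subregion the integrand is a pure power of $|x|$, so the radial integrals are elementary. When $t\ge 1$ both subregions contribute at the order $t^{-(\gamma+1-a_0)}\sim 1/[t^{\gamma-1-a_0}(1+t)^2]$; when $t<1$, the only subtle contribution comes from the annulus $t<|x|\le 1$, where $\int_t^1 r^{a_0-\gamma}\,dr$ is bounded for $\gamma<1+a_0$, equals $\log(1/t)$ for $\gamma=1+a_0$, and grows as $t^{1+a_0-\gamma}$ for $\gamma>1+a_0$. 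This reproduces the three summands of \eqref{Z3}.

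The most intricate step is \eqref{Z5}, which I approach by splitting the $x$-integration at $|x|=2|\alpha|$. On $|x|\le 2|\alpha|$ I use the triangle inequality $|\delta_\alpha G(t,x)|\le |G(t,x)|+|G(t,x-\alpha)|$, so that by translation invariance both contributions reduce to integrating $|x|^{b_0}|G(t,x)|$ (a localized variant of \eqref{Z3} with $\gamma=0$, $a_0=b_0$) truncated to $|x|\lesssim|\alpha|$. On $|x|>2|\alpha|$ I use the fundamental theorem $\delta_\alpha G(t,x)=-\int_0^1\alpha\cdot\nabla G(t,x-s\alpha)\,ds$ together with $|x-s\alpha|\ge|x|/2$, and apply \eqref{Z1} with $\gamma=1$. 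In each region I further separate the regime $|\alpha|\le t$ from $|\alpha|>t$, since the dispersive scale $t$ dictates which term dominates: for $|\alpha|\le t$ the inner integral yields the third summand $|\alpha|^{d+b_0}/[t^{d-1}(1+t)^2]$ and the outer integral gives the first summand $|\alpha|/(1+t+|\alpha|)^{2-b_0}$; for $|\alpha|>t$ the inner integral of $|x|^{b_0}/[(t+|x|)^{d-1}(1+t+|x|)^2]$ up to $|x|\sim|\alpha|$ produces the middle summand $|\alpha|^{b_0}/(1+t)$, while the outer integral again contributes to the first summand. The main obstacle in \eqref{Z5} is precisely the bookkeeping of these four combinations of regions and regimes; in particular one must check that no logarithmic correction (which would have been generated by a blind application of \eqref{Z3} at the borderline $\gamma=1$, $a_0=b_0$) survives in the final bound, the saving coming from the explicit splitting at $|x|=2|\alpha|$ before integrating.
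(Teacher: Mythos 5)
Your proposal is correct and follows essentially the same route as the paper: the dyadic bound from Lemma \ref{estimate of nabla gamma Gn} is summed with the same case analysis for \eqref{Z1}--\eqref{Z2}, the polar-coordinate integration with the same subregions gives \eqref{Z3}, and the split at $|x|\sim 2|\alpha|$ (triangle inequality on the inner region, fundamental theorem of calculus with $|x-s\alpha|\geq |x|/2$ and the $\gamma=1$ bound on the outer region, combined with the further $|\alpha|\lessgtr t$ dichotomy) is exactly the paper's decomposition $I^1_{G,b_0}+I^2_{G,b_0}$ for \eqref{Z5}.
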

\begin{proof}  Firstly, we prove \eqref{Z1}.
Note that
$$
\|\nabla\mu\|_{\mathcal{W}_{\alpha}^{\beta}}\lesssim\|\langle\cdot\rangle^{\alpha}\nabla\mu\|_{W^{\beta,1}},
$$
thus
$$
M_{d+\gamma+2}\lesssim \overline{M}_\gamma\big(1+\overline{M}_\gamma\big)^{d+\gamma+1}=\widetilde{M}_\gamma.
$$
Since
\begin{align*}
|\nabla ^\gamma G(t,x)|\leq \sum_{n\leq0}\left|\nabla ^{\gamma}G_n(t,x)\right|+\sum_{n>0}
\left|\nabla ^{\gamma}G_n(t,x)\right|,
\end{align*}
for $n>0$, combining this with Lemma \ref{estimate of nabla gamma Gn}, we can yield that
\begin{align*}
\sum_{n>0}\left|\nabla ^{\gamma}G_n(t,x)\right|\lesssim_{\text{\r{c}}}&
\widetilde{M}_\gamma
\sum_{n>0}
2^{(d+\gamma-1)n}
\min\left\{1,\frac{1}{2^n(t+|x|)}\right\}^{d+\gamma+2}\\
\sim_{\text{\r{c}}}&\widetilde{M}_\gamma\sum_{n>0}
\Bigg(\mathbf{1}_{t+|x|\leq1}\sum_{1\leq2^n\leq (t+|x|)^{-1}}	2^{(d+\gamma-1)n}
+\mathbf{1}_{t+|x|\leq1}\sum_{2^n> (t+|x|)^{-1}}
2^{-3n}\frac{1}{ (t+|x|)^{d+\gamma+2}}\\
&
\qquad\qquad+\mathbf{1}_{t+|x|>1}\frac{1}{2^{3n} (t+|x|)^{d+\gamma+2}}\Bigg)\\
\lesssim_{\text{\r{c}}}&\widetilde{M}_\gamma\left(\mathbf{1}_{t+|x|\leq1}\frac{1}{ (t+|x|)^{d+\gamma-1}}+\mathbf{1}_{t+|x|>1}
\frac{1}{ (t+|x|)^{d+\gamma+2}}\right).
\end{align*}
Similarly, we have for $n\leq0$,
\begin{align*}
\sum_{n\leq0}\left|\nabla ^{\gamma}G_n(t,x)\right|\lesssim_{\text{\r{c}}}	&\widetilde{M}_\gamma\left(\mathbf{1}_{t+|x|\leq1} \sum_{n\leq0}2^{(d+\gamma+1)n}+\mathbf{1}_{t+|x|>1}\sum_{2^n\leq(t+|x|)^{-1}}2^{(d+\gamma+1)n}\right.\\
&\left.+\mathbf{1}_{t+|x|>1}\sum_{(t+|x|)^{-1}<2^n\leq1}\frac{2^{-n}}{ (t+|x|)^{d+\gamma+2}}\right)\\
\lesssim_{\text{\r{c}}}&\widetilde{M}_\gamma\left(\mathbf{1}_{t+|x|\leq1}+\mathbf{1}_{t+|x|>1}\frac{1}{ (t+|x|)^{d+\gamma+1}}\right).
\end{align*}
Finally, we deduce that
\begin{align*}
|\nabla ^\gamma G(t,x)|\lesssim_{\text{\r{c}}}&\widetilde{M}_\gamma\left(
\mathbf{1}_{t+|x|\leq1}\frac{1}{ (t+|x|)^{d+\gamma-1}}+\mathbf{1}_{t+|x|>1}\frac{1}{ (t+|x|)^{d+\gamma+1}}\right)\\
\lesssim_{\text{\r{c}}}&\frac{\widetilde{M}_\gamma}{(t+|x|)^{d+\gamma-1}(1+t+|x|)^2} .
\end{align*}
This implies \eqref{Z1} and \eqref{Z2}. \\
\textbf{(1)}	Now we prove \eqref{Z3}. One has, 
\begin{align*}
\||x|^{a_0}\nabla ^\gamma G(t,x)\|_{L^1_x}&\lesssim_{\text{\r{c}}}\widetilde{M}_\gamma\int\frac{|x|^{a_0}}{(t+|x|)^{d+\gamma-1}(1+t+|x|)^2}dx.
\end{align*}
If $t\geq 1$, we estimate 
\begin{align*}
\||x|^{a_0}\nabla ^\gamma G(t,x)\|_{L^1_x}\lesssim_{\text{\r{c}}}\widetilde{M}_\gamma\int\frac{|x|^{a_0}}{(t+|x|)^{d+\gamma+1}}dx\sim_{\text{\r{c}}}\frac{\widetilde{M}_\gamma}{t^{\gamma+1-a_0}}\int\frac{|x|^{a_0}}{(1+|x|)^{d+\gamma+1}}dx\sim_{\text{\r{c}}}\frac{\widetilde{M}_\gamma}{t^{\gamma+1-a_0}}.
\end{align*}
If $0\leq t\leq 1$,  we estimate 
\begin{align*}
\||x|^{a_0}\nabla ^\gamma G(t,x)\|_{L^1_x}\lesssim_{\text{\r{c}}}&\widetilde{M}_\gamma\left(\int_{|x|\leq t}+\int_{t<|x|<1}+\int_{|x|\geq1}\right)\frac{|x|^{a_0}}{(t+|x|)^{d+\gamma-1}(1+t+|x|)^2}dx\\
\lesssim_{\text{\r{c}}}&\widetilde{M}_\gamma\left(\int_{|x|\leq t}\frac{|x|^{a_0}}{t^{d+\gamma-1}}dx+\int_{t<|x|<1}\frac{|x|^{a_0}}{|x|^{d+\gamma-1}}dx+\int_{|x|\geq1}\frac{|x|^{a_0}}{|x|^{d+\gamma+1}}dx\right)\\
\lesssim_{\text{\r{c}}}&\widetilde{M}_\gamma\left(\frac{1}{t^{\gamma-1-{a_0}}}+1+\mathbf{1}_{\gamma<1+a_0}+\mathbf{1}_{\gamma=1+a_0}\log\left(\frac{1}{t}\right)+\frac{\mathbf{1}_{\gamma>1+a_0}}{t^{\gamma-a_0-1}}\right)\\
\sim_{\text{\r{c}}}&\widetilde{M}_\gamma\left(\frac{1}{t^{\gamma-1-a_0}}+1+\mathbf{1}_{\gamma=1+a_0}\log\left(\frac{1}{t}\right)\right).
\end{align*}
Combining these two cases to deduce that
\begin{align*}
\||x|^{a_0}\nabla ^\gamma G(t,x)\|_{L^1_x}&\lesssim_{\text{\r{c}}}\mathbf{1}_{t\geq 1}\frac{\widetilde{M}_\gamma}{t^{\gamma+1-a_0}}+\mathbf{1}_{t\leq 1}\widetilde{M}_\gamma\left(\frac{1}{t^{\gamma-1-a_0}}+1+\mathbf{1}_{\gamma=1+a_0}\log\left(\frac{1}{t}\right)\right)\\
&\sim_{\text{\r{c}}} \frac{\widetilde{M}_\gamma}{t^{\gamma-1-a_0}(1+t)^2}+\widetilde{M}_\gamma\mathbf{1}_{t\leq 1}+\mathbf{1}_{\gamma=1+a_0}\log^+\left(\frac{1}{t}\right).
\end{align*}
\textbf{(2)}	Now we prove \eqref{Z5}. One has 
\begin{align*}	
\||x|^{b_0}|\delta_\alpha  G(t,x)|\|_{L^1_x}=\left(\int_{|\alpha|\leq|x|/2}+\int_{|\alpha|\geq |x|/2}\right)	|x|^{b_0}|\delta_\alpha  G(t,x)|dx
=:&I_{G,b_0}^1+I_{G,b_0}^2.
\end{align*}
For the term $I^{1}_{G,b_0}$,
\begin{align*}
I^1_{G,b_0}\leq&\int_{|\alpha|\leq|x|/2}	\int_{0}^{1}|x|^{b_0}
\left|\nabla G(t,x-\tau\alpha)\right||\alpha|d\tau dx\\
\lesssim_{\text{\r{c}}}&|\alpha|\int_{0}^{1}
\int_{|\alpha|\leq|x|/2}|x|^{b_0}\frac{\widetilde{M}_1}{(t+|x-\tau\alpha|)^{d}(1+t+|x-\tau\alpha|)^2}dxd\tau\\
\lesssim_{\text{\r{c}}}&|\alpha|
\int_{|\alpha|\leq|x|/2}|x|^{b_0}\frac{\widetilde{M}_1}{(t+|x|/2)^{d}(1+t+|x|/2)^2}dx\\
\lesssim_{\text{\r{c}}}&\mathbf{1}_{|\alpha|\geq t/2}
|\alpha|
\int_{|\alpha|\leq|x|/2}\frac{\widetilde{M}_1}{|x|^{d-b_0}(1+|x|)^2}dx+\mathbf{1}_{|\alpha|< t/2}
|\alpha|
\int_{|\alpha|\leq|x|/2}|x|^{b_0}\frac{\widetilde{M}_1}{(t+|x|/2)^{d}(1+t+|x|/2)^2}dx\\
=&:I^{1,1}_{G,b_0}+I^{1,2}_{G,b_0}.
\end{align*}
Then, we estimate that 
\begin{align}
I^{1,1}_{G,b_0}\lesssim&\nonumber\mathbf{1}_{|\alpha|\geq t/2}
\mathbf{1}_{|\alpha|\geq1}|\alpha|
\int_{|\alpha|\leq|x|/2}\frac{\widetilde{M}_1}{|x|^{d+2-b_0}}dx\\
&+\mathbf{1}_{|\alpha|\geq t/2}
\mathbf{1}_{|\alpha|<1}|\alpha|
\left(	\int_{|\alpha|\leq|x|\leq2}\widetilde{M}_1\frac{1}{|x|^{d-b_0}}dx+\int_{|x|\geq2}\widetilde{M}_1\frac{1}{|x|^{d+2-b_0}}dx\right)
\nonumber\\
\lesssim&\widetilde{M}_1|\alpha|\left(\mathbf{1}_{|\alpha|\geq t/2}
\mathbf{1}_{|\alpha|\geq1}\frac{1}{|\alpha|^{2-b_0}}
+\mathbf{1}_{|\alpha|\geq t/2}
\mathbf{1}_{|\alpha|<1}\right)
\sim\widetilde{M}_1\mathbf{1}_{|\alpha|\geq t/2}\frac{|\alpha|}{(1+|\alpha|)^{2-b_0}},\label{IGa11}
\end{align}
and 
\begin{equation}\label{IGa12}
\begin{split}
	I^{1,2}_{G,b_0}
	\lesssim&\mathbf{1}_{|\alpha|< t/2}
	|\alpha|\int_{|x|\geq t}\frac{\widetilde{M}_1}{|x|^{d-b_0}(1+|x|)^2}
	dx+\mathbf{1}_{|\alpha|< t/2}
	|\alpha|\int_{2|\alpha|\leq|x|\leq t}\frac{\widetilde{M}_1|x|^{b_0}}{t^{d}(1+t)^2}
	dx\\
	\lesssim&\mathbf{1}_{|\alpha|< t/2}\mathbf{1}_{t\geq 1}
	|\alpha|
	\int_{|x|\geq t}
	\frac{\widetilde{M}_1}{|x|^{d-b_0+2}}
	dx
	+\mathbf{1}_{|\alpha|< t/2}	\mathbf{1}_{ t\leq1}
	|\alpha|
	\left(	\int_{t\leq|x|\leq1}
	\frac{\widetilde{M}_1}{|x|^{d-b_0}}
	dx+\int_{|x|\geq 1}
	\frac{\widetilde{M}_1}{|x|^{d-b_0+2}}
	dx\right)\\
	&+\mathbf{1}_{|\alpha|< t/2}
	|\alpha|\int_{|\alpha|\leq|x|\leq2t}
	\frac{\widetilde{M}_1|x|^{b_0}}{t^{d}(1+t)^2}dx\\
	\sim&\widetilde{M}_1|\alpha|\mathbf{1}_{|\alpha|< t/2}
	\left(\mathbf{1}_{t\geq 1}\frac{1}{t^{2-b_0}}+\mathbf{1}_{t\leq1 }
	+ \frac{t^{b_0}}{(1+t)^2}\right)
	\sim\widetilde{M}_1\mathbf{1}_{|\alpha|< t}\frac{|\alpha|}{(1+t)^{2-b_0}}.
\end{split}
\end{equation}
Combining \eqref{IGa11} with \eqref{IGa12} to yield
\begin{equation}\label{IGa1}
I_{G,b_0}^1\lesssim_{\text{\r{c}}}\widetilde{M}_1\mathbf{1}_{|\alpha|\geq t}\frac{|\alpha|}{(1+|\alpha|)^{2-b_0}}+\widetilde{M}_1\mathbf{1}_{|\alpha|< t}\frac{|\alpha|}{(1+t)^{2-b_0}}\sim _{\text{\r{c}}} \widetilde{M}_1\frac{|\alpha|}{(1+t+|\alpha|)^{2-b_0}}.
\end{equation}
For the term $I_{G,b_0}^2$, one has
\begin{align*}
I_{G,b_0}^2\lesssim_{\text{\r{c}}}&\widetilde{M}_0|\alpha|^{b_0}\int_{|x|\lesssim|\alpha|}\frac{1}{(t+|x|)^{d-1}(1+t+|x|)^2}dx
\sim_{\text{\r{c}}}\widetilde{M}_0|\alpha|^{b_0}\int_{0}^{|\alpha|}\frac{r^{d-1}}{(t+r)^{d-1}(1+t+r)^2}dr\\
\sim_{\text{\r{c}}}&\mathbf{1}_{|\alpha|>t}\widetilde{M}_0|\alpha|^{b_0}\left(	\int_{0}^t\frac{r^{d-1}}{t^{d-1}(1+t)^2}dr+\int_{t}^{|\alpha|}\frac{r^{d-1}}{r^{d-1}(1+r)^2}dr\right)+\mathbf{1}_{|\alpha|\leq t}\widetilde{M}_0|\alpha|^{b_0}	\int_{0}^{|\alpha|}\frac{r^{d-1}}{t^{d-1}(1+t)^2}dr\\
\lesssim_{\text{\r{c}}}&\mathbf{1}_{|\alpha|>t}\widetilde{M}_0|\alpha|^{b_0}\left(\frac{t}{(1+t)^2}+\frac{1}{1+t}\right)+\mathbf{1}_{|\alpha|\leq t}\frac{\widetilde{M}_0|\alpha|^{d+b_0}}{t^{d-1}(1+t)^2}\\
\lesssim_{\text{\r{c}}}&\widetilde{M}_0\mathbf{1}_{|\alpha|>t}\frac{|\alpha|^{b_0}}{1+t}+\widetilde{M}_0\mathbf{1}_{|\alpha|\leq t}\frac{|\alpha|^{d+b_0}}{t^{d-1}(1+t)^2}.
\end{align*}
Combining this with \eqref{IGa1} and note that $\widetilde{M}_0\leq\widetilde{M}_1$, one gets 
\begin{align*}
\||x|^{b_0}\delta_\alpha  G(t,x)\|_{L^1_x}
&	\lesssim_{\text{\r{c}}}	\widetilde{M}_1\frac{|\alpha|}{(1+t+|\alpha|)^{2-b_0}}+\widetilde{M}_1\mathbf{1}_{|\alpha|>t}\frac{|\alpha|^{b_0}}{1+t}+\widetilde{M}_1\mathbf{1}_{|\alpha|\leq t}\frac{|\alpha|^{d+b_0}}{t^{d-1}(1+t)^2},
\end{align*}
which implies \eqref{Z5}. 
\end{proof}	\\
\begin{lemma}\label{estimation G}
For the kernel $G$ defined in \eqref{definition of G about K},
there holds	
\begin{equation}\label{cancellation}
\int_{\mathbb{R}^d}G(t,x)dx=0,~~\forall  t\geq 0.
\end{equation}
\end{lemma}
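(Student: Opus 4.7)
The plan is to exploit the cancellation already present at the level of $K$ and propagate it through the resolvent identity. First I would check that $\int_{\mathbb{R}^d} K(t,x)\,dx = 0$ for every $t>0$. Indeed, since
$$K(t,x) = \frac{1}{t^{d+1}}\left[\bigl(1-\tfrac{1}{t^2}\Delta_v\bigr)^{-1}\Delta_v\mu\right]\!\left(\tfrac{x}{t}\right)\mathbf{1}_{t>0},$$
the change of variables $y=x/t$ gives
$$\int_{\mathbb{R}^d} K(t,x)\,dx = \frac{1}{t}\int_{\mathbb{R}^d}\bigl[(1-\tfrac{1}{t^2}\Delta)^{-1}\Delta\mu\bigr](y)\,dy.$$
The right-hand side is the spatial Fourier transform of $(1-\tfrac{1}{t^2}\Delta)^{-1}\Delta\mu$ evaluated at $\xi=0$, i.e. $\bigl(1+|\xi|^2/t^2\bigr)^{-1}(-|\xi|^2)\widehat{\mu}(\xi)\big|_{\xi=0}=0$. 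Equivalently, one sees this directly from $\widetilde{K}(\tau,0)=0$ because of the factor $i\xi$ in the definition of $\widetilde{K}$.

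Next I would promote this cancellation from $K$ to $G$ via the resolvent identity. The defining relation $\widetilde{G} = \widetilde{K}/(1-\widetilde{K})$ is equivalent to $\widetilde{G} = \widetilde{K} + \widetilde{K}\,\widetilde{G}$, which on the physical side (using that $K$ is supported in $t>0$ and $G$ is the causal solution, guaranteed by the Penrose condition $|1-\widetilde{K}|\ge\text{\r{c}}$) reads
$$G(t,x) = K(t,x) + \int_0^t\!\!\int_{\mathbb{R}^d} K(t-s,x-y)\,G(s,y)\,dy\,ds.$$
Integrating in $x$ and applying Fubini,
$$\int_{\mathbb{R}^d} G(t,x)\,dx = \int_{\mathbb{R}^d} K(t,x)\,dx + \int_0^t\!\!\left(\int_{\mathbb{R}^d} K(t-s,z)\,dz\right)\!\!\left(\int_{\mathbb{R}^d} G(s,y)\,dy\right)\,ds = 0 + 0 = 0,$$
where in the last step the inner $x$-integral vanishes by the first step.

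The only thing to check is that Fubini is legitimate, which is the (mild) obstacle. This is handled by the pointwise decay bounds already at our disposal: the bound \eqref{Z3} at $\gamma=a_0=0$ gives $\|G(t,\cdot)\|_{L^1_x}\lesssim_{\text{\r{c}}} \tfrac{t}{(1+t)^2}+\mathbf{1}_{t\le 1}$, and the same change of variables used in the first step together with Assumption \ref{mu norm} gives $\|K(t,\cdot)\|_{L^1_x}\lesssim M^* t^{-1}\min(1,t^{-2})$, both locally integrable on $[0,t]$. Hence all integrals above are absolutely convergent on any finite time interval, Fubini applies, and \eqref{cancellation} follows.
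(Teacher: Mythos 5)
Your approach is correct in spirit but takes a genuinely different route from the paper's, and there is a quantitative error in the Fubini justification that should be repaired. The paper's proof stays entirely on the Fourier side: from the $i\xi$ factor in $\widetilde{K}$ one has $\widetilde{K}(\tau,0)=0$, hence $\widetilde{G}(\tau,0)=\widetilde{K}(\tau,0)/(1-\widetilde{K}(\tau,0))=0$ for all $\tau$, and since $\widetilde{G}(\tau,0)$ is the temporal Fourier transform of $t\mapsto\int_{\mathbb{R}^d}G(t,x)\,dx$, that function vanishes identically. You instead convert the identity $\widetilde{G}=\widetilde{K}+\widetilde{K}\widetilde{G}$ to the Duhamel equation $G=K+K*_{t,x}G$ and integrate in $x$; this is valid, but it introduces the Fubini step the paper avoids, and it requires arguing separately that $G$ is causal (a fact the paper only uses implicitly).

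The concrete gap is in the estimate you quote for the kernel, namely $\|K(t,\cdot)\|_{L^1_x}\lesssim M^*\,t^{-1}\min(1,t^{-2})$. Near $t=0$ this reads $t^{-1}$, which is \emph{not} locally integrable, so as written the time integral $\int_0^t\|K(t-s,\cdot)\|_{L^1}\|G(s,\cdot)\|_{L^1}\,ds$ is not obviously finite and your invocation of Fubini does not go through. The fix is that the bound is simply wrong near $t=0$: from the operator identity
$$\left(1-\tfrac{1}{t^2}\Delta\right)^{-1}\!\Delta = -t^2 I + t^2\left(1-\tfrac{1}{t^2}\Delta\right)^{-1},$$
one gets $\bigl\|(1-\tfrac{1}{t^2}\Delta)^{-1}\Delta\mu\bigr\|_{L^1}\leq 2t^2\|\mu\|_{L^1}$, hence $\|K(t,\cdot)\|_{L^1_x}\lesssim t\,\|\mu\|_{L^1}$ for small $t$, while for large $t$ one has $\|K(t,\cdot)\|_{L^1_x}\lesssim t^{-1}\|\Delta\mu\|_{L^1}$. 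The combined bound $\|K(t,\cdot)\|_{L^1_x}\lesssim\min(t,t^{-1})$ is integrable on $(0,\infty)$ and makes your Fubini step legitimate, but you should replace the quoted estimate with this one. Once corrected, your proof is sound; it is simply longer than the paper's Fourier-side one-liner, the tradeoff being that your version makes the causal structure of $G$ explicit rather than implicit.
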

\begin{proof} By \eqref{alpha K estimate} with $\alpha=0$,  one has $\widetilde{K}(\tau,0)=0$.
Then from \eqref{definition of G about K} , we obtain that $\widetilde{G}(\tau,0)=0$, for all $\tau\in \mathbb{R}$, which implies the conclusion \eqref{cancellation}.
\end{proof}	\vspace{0.3cm}\\
Then we can prove the main result in this section.
\begin{theorem}\label{t modulus estimation} There holds
	$$\|G*_{(t,x)}\mathbf{f}\|_{\gamma,a}\leq M	\|\mathbf{f}\|_{\gamma,a},$$
	with
	$M=M(\text{\r{c}},d,M_{\gamma+1})$.
\end{theorem}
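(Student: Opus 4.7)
The goal is to bound separately, for each $p \in \{1,\infty\}$, both
\begin{equation*}
\langle t\rangle^{d(p-1)/p}\|\mathcal{G}(\mathbf{f})(t)\|_{L^p} \quad \text{and} \quad \langle t\rangle^{a+d(p-1)/p}\|\mathcal{G}(\mathbf{f})(t)\|_{\dot B^a_{p,\infty}}
\end{equation*}
by a constant multiple of $\|\mathbf{f}\|_a$. The three ingredients are (i) the pointwise and weighted $L^1$ bounds \eqref{Z1}--\eqref{Z5} on $G$, $\nabla G$ and $\delta_\alpha G$; (ii) the cancellation $\int G(t,x)\,dx = 0$ from Lemma~\ref{estimation G}, which also yields $\int \delta_\alpha G(t,x)\,dx = 0$; and (iii) the elementary consequences of the norm, namely $\|\mathbf{f}(s)\|_{L^p} \lesssim \langle s\rangle^{-d(p-1)/p}\|\mathbf{f}\|_a$ and $\|\delta_z \mathbf{f}(s)\|_{L^p} \leq |z|^a\langle s\rangle^{-a-d(p-1)/p}\|\mathbf{f}\|_a$.

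For the $L^p$ bound I exploit the cancellation to write
\begin{equation*}
(G(t-s)*\mathbf{f}(s))(x) = \int G(t-s,z)\bigl[\mathbf{f}(s,x-z)-\mathbf{f}(s,x)\bigr]\,dz,
\end{equation*}
which gives $\|G(t-s)*\mathbf{f}(s)\|_{L^p} \leq \||z|^a G(t-s)\|_{L^1_z}\|\mathbf{f}(s)\|_{\dot B^a_{p,\infty}}\lesssim (1+(t-s))^{a-1}\langle s\rangle^{-a-d(p-1)/p}\|\mathbf{f}\|_a$, with the kernel factor coming from \eqref{Z3} (at $a_0=a$, $\gamma=0$). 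For $p=1$, splitting the $s$-integral at $t/2$ shows that $\int_0^t (1+(t-s))^{a-1}\langle s\rangle^{-a}\,ds \lesssim 1$, giving a uniform bound. For $p=\infty$, this cancellation-based bound is used on $[t/2,t]$ (where $\langle s\rangle \sim t$, yielding the contribution $\lesssim t^{-d-a}\int_0^{t/2}(1+u)^{a-1}\,du \lesssim t^{-d}$), while on $[0,t/2]$ I instead apply the direct Young estimate $\|G(t-s)\|_{L^\infty_x}\|\mathbf{f}(s)\|_{L^1}$ combined with \eqref{Z2}, contributing $\lesssim \int_0^{t/2} t^{-d-1}\,ds \sim t^{-d}$.

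For the Besov bounds, applying $\delta_\alpha$ and using $\int \delta_\alpha G(t-s,z)\,dz = 0$ yields the analogous representation, so
\begin{equation*}
\|\delta_\alpha G(t-s)*\mathbf{f}(s)\|_{L^p} \leq \||z|^a\delta_\alpha G(t-s)\|_{L^1_z}\|\mathbf{f}(s)\|_{\dot B^a_{p,\infty}}.
\end{equation*}
The regime $|\alpha| \geq \langle t\rangle$ is handled trivially via $\|\delta_\alpha \mathcal{G}(\mathbf{f})(t)\|_{L^p} \leq 2\|\mathcal{G}(\mathbf{f})(t)\|_{L^p}$ (from the previous step). For $|\alpha| \leq \langle t\rangle$, I invoke the three-term bound \eqref{Z5} with $b_0=a$ and carry out a case analysis in $|\alpha|$ versus $t-s$, integrating each piece against $\langle s\rangle^{-a-d(p-1)/p}$. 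For $p=\infty$ I additionally split at $s=t/2$ and use the interpolation inequality $\|\delta_\alpha G(t-s)\|_{L^\infty} \leq |\alpha|^a\|G(t-s)\|_{L^\infty}^{1-a}\|\nabla G(t-s)\|_{L^\infty}^a$ together with \eqref{Z2} on $[0,t/2]$, which contributes $\lesssim |\alpha|^a\int_0^{t/2}(t-s)^{-d-1-a}\,ds \lesssim |\alpha|^a t^{-d-a}$.

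The main obstacle will be to avoid a $\log(1+|\alpha|)$ loss from the middle term of \eqref{Z5}: naively integrating $|\alpha|^a/(1+(t-s))$ over $s \in [t-|\alpha|,t]$ produces a logarithm. The remedy is to take the minimum of the three-term bound \eqref{Z5} with the cruder trivial estimate $\||z|^a\delta_\alpha G(t-s)\|_{L^1_z} \leq 2\||z|^a G(t-s)\|_{L^1_z} \lesssim (1+(t-s))^{a-1}$ in the regime $|\alpha| > 1+(t-s)$. This crossover absorbs the logarithm and yields the sharp bound $\lesssim |\alpha|^a\langle t\rangle^{-a-d(p-1)/p}\|\mathbf{f}\|_a$ uniformly in $\alpha$, completing the Besov estimate.
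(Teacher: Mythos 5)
Your plan matches the paper's proof (Lemmas \ref{t gamma modulus 1} and \ref{t modulus 3}) in every essential respect: the cancellation representation $G*\mathbf{f} = \int G(t-s,z)[\mathbf{f}(s,x-z)-\mathbf{f}(s,x)]\,dz$, the split of the time integral at $t/2$, the use of \eqref{Z2}--\eqref{Z3} via Young on the far-in-time piece and the weighted $L^1$ bounds on the near piece, the invocation of \eqref{Z5} for $\delta_\alpha G$, and --- most importantly --- your observation of the potential $\log(1+|\alpha|)$ loss from the middle term of \eqref{Z5} and the crossover fix (switching to the crude bound $\||z|^a\delta_\alpha G\|_{L^1}\leq 2\||z|^a G\|_{L^1}\lesssim\langle t-s\rangle^{a-1}$ when $|t-s|\leq|\alpha|$). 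That crossover is precisely the paper's $\mathcal{J}_1$/$\mathcal{J}_2$ dichotomy inside the $u_2$ estimate, and your interpolation step $\|\delta_\alpha G\|_{L^\infty}\lesssim|\alpha|^a\|G\|_{L^\infty}^{1-a}\|\nabla G\|_{L^\infty}^a$ on the far piece is the paper's $\min\{\|G\|,|\alpha|\|\nabla G\|\}\lesssim|\alpha|^a\langle s\rangle^{-1-a-d(p-1)/p}$.

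The one genuine gap is the regime $0\leq t\leq 1$, which you never address. Your $p=\infty$ argument on $[0,t/2]$ uses the Young pairing $\|G(t-s)\|_{L^\infty_x}\|\mathbf{f}(s)\|_{L^1}$ together with \eqref{Z2}, but since $\|G(\tau)\|_{L^\infty_x}\sim\tau^{1-d}$ near $\tau=0$ and $d\geq3$, the resulting integral $\int_{t/2}^t u^{1-d}\,du\sim t^{2-d}$ diverges as $t\to0$; the same issue affects your interpolated bound on $\|\delta_\alpha G(t-s)\|_{L^\infty}$ there. For $t\leq1$ one should instead discard the cancellation and weights entirely and use the $L^1$-Young bound
\begin{equation*}
\|G*_{(t,x)}\mathbf{f}(t)\|_{L^p}+\|G*_{(t,x)}\mathbf{f}(t)\|_{\dot B^a_{p,\infty}}\leq\|\mathbf{f}\|_a\int_0^1\|G(s)\|_{L^1_x}\,ds\lesssim_{\text{\r{c}},M_1}\|\mathbf{f}\|_a,
\end{equation*}
which is how the paper dispatches this case just before Lemma \ref{t gamma modulus 1}; the time weights $\langle t\rangle^{d(p-1)/p}$ and $\langle t\rangle^{a+d(p-1)/p}$ are comparable to $1$ on $[0,1]$, so this suffices. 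With that patch your argument is complete and coincides with the paper's.
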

It is easy to check that for any $t\in [0,1]$
\begin{equation*}
	\sum_{p=1,\infty}\left(	\| G*_{(t,x)}\mathbf{f}(t)\|_{L^p}+\|\nabla^\gamma (G*_{(t,x)}\mathbf{f}(t))\|_{\dot{B}_{p,\infty}^{a}}\right)\leq \|\mathbf{f}\|_{\gamma,a}\int_{0}^{1}\|G(s)\|_{L^1_x}ds  \overset{\eqref{Z3}}\lesssim_{\text{\r{c}},d,M_{\gamma+1}}\|\mathbf{f}\|_{\gamma,a}.
\end{equation*}
To prove the  theorem \ref{t modulus estimation}, we need to have the following Lemmas.
\begin{lemma}\label{t gamma modulus 1} Let $t\geq 1$, 
and $p=1,\infty$, we have 
\begin{align*}
\langle t\rangle^{\frac{d(p-1)}{p}}
\|G*_{(t,x)}\mathbf{f}(t)\|_{L^p}\lesssim_{\text{\r{c}},M_0} \sup_{s\in [0,\infty)}\left(
\|\mathbf{f}(s)\|_{L^1}
+\langle s\rangle^{a+\frac{d(p-1)}{p}}
\| \mathbf{f}(s)\|_{\dot{B}_{p,\infty}^{a}}\right).
\end{align*}
\end{lemma}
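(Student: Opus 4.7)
The plan is to split the time integral in $G\ast_{(t,x)}\mathbf{f}(t,x)=\int_0^t\int_{\mathbb{R}^d}G(t-s,y)\mathbf{f}(s,x-y)\,dyds$ at the midpoint $s=t/2$ and bound the two pieces by complementary mechanisms. On the early interval $s\in[0,t/2]$ the kernel argument $t-s\sim t$ is large, so a direct convolution estimate $\|G(t-s)\ast\mathbf{f}(s)\|_{L^p}\leq\|G(t-s)\|_{L^p}\|\mathbf{f}(s)\|_{L^1}$ will do the job. On the late interval $s\in[t/2,t]$ the argument $s\sim t$ is large, so $\mathbf{f}(s)$ has already picked up the dispersive decay encoded in the Besov part of $\|\mathbf{f}\|_a$; here I will use the spatial cancellation $\int_{\mathbb{R}^d}G(\sigma,y)\,dy=0$ from Lemma \ref{estimation G} to replace the convolution by
\begin{equation*}
\int_{t/2}^t\!\!\int_{\mathbb{R}^d} G(t-s,y)\bigl[\mathbf{f}(s,x-y)-\mathbf{f}(s,x)\bigr]\,dy\,ds,
\end{equation*}
so that the integrand gains a factor $|y|^a$ paired with the Hölder-type norm $\|\mathbf{f}(s)\|_{\dot B^a_{p,\infty}}$.

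For the early piece I will invoke \eqref{Z2} with $\gamma=0$ to get $\|G(t-s,\cdot)\|_{L^\infty}\lesssim t^{-(d+1)}$ on $[0,t/2]$, and the $a_0=0,\gamma=0$ case of \eqref{Z3} to get $\|G(t-s,\cdot)\|_{L^1}\lesssim (1+t-s)^{-1}$ there; combined with the uniform $L^1$ bound on $\mathbf{f}$, this yields a contribution of size $t\cdot t^{-(d+1)}=t^{-d}$ when $p=\infty$ and a contribution of size $\int_0^{t/2}(1+t-s)^{-1}ds\lesssim 1$ when $p=1$. For the late piece, after the cancellation rewrite and Minkowski, the $L^p_x$ norm is controlled by $\int_{t/2}^t\|\mathbf{f}(s)\|_{\dot B^a_{p,\infty}}\||y|^a G(t-s,y)\|_{L^1_y}\,ds$. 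Since $a>0$ the logarithm in \eqref{Z3} is absent and the weighted bound gives $\||y|^a G(t-s,y)\|_{L^1_y}\lesssim (1+t-s)^{-(1-a)}$; using $\langle s\rangle\sim t$ on $[t/2,t]$ and inserting the decay $\|\mathbf{f}(s)\|_{\dot B^a_{p,\infty}}\leq\|\mathbf{f}\|\langle s\rangle^{-a-d(p-1)/p}$ read off from the right-hand side of the claim, the elementary time integral $\int_{t/2}^t(1+t-s)^{-(1-a)}ds\lesssim t^a$ supplies exactly the missing factor and matches the target $\langle t\rangle^{-d(p-1)/p}$.

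The main obstacle is that no single estimate handles the whole interval $[0,t]$. The direct Young bound breaks down on $[t/2,t]$ because $\|G(t-s,\cdot)\|_{L^\infty}$ is too singular near $s=t$ (at $p=\infty$ one loses a factor of order $(t-s)^{-(d-1)}$ that is not time-integrable), while the cancellation-plus-Besov bound breaks down on $[0,t/2]$ because $\|\mathbf{f}(s)\|_{\dot B^a_{p,\infty}}$ carries no smallness near $s=0$ and the weighted-kernel bound $\||y|^a G(t-s,y)\|_{L^1_y}\sim t^{a-1}$ decays too slowly in $t$ to compensate. The split at $s=t/2$ is the natural cutoff that lets each mechanism act where it is strong, reproducing the sharp decay asserted on the left-hand side; once this split is fixed, the rest is a direct computation from the kernel estimates of Theorem \ref{G_estimate} and the cancellation identity \eqref{cancellation}.
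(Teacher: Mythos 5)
Your proposal is correct and follows essentially the same route as the paper's proof: you split at $s=t/2$, use Young's inequality with the kernel bounds \eqref{Z2}--\eqref{Z3} on $[0,t/2]$, and use the cancellation \eqref{cancellation} together with the weighted kernel bound $\||y|^a G(t-s,y)\|_{L^1_y}\lesssim\langle t-s\rangle^{a-1}$ on $[t/2,t]$. The resulting time integrals match the paper's $I_1$ and $I_2$ exactly.
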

\begin{proof} We use the idea in \cite{ChenNguyen}.
Since we have the cancellation shown in the  \eqref{cancellation}, we rewrite lines as follows when changing of variable $z=x-y$ in the second term,

\begin{align*}
\|G*_{(t,x)}\mathbf{f}\|_{L^p}&\leq
\left\|\int_{0}^{t/2}\int_{\mathbb{R}^{d}} G(t-s,x-y)\mathbf{f}(s,y)dyds\right\|_{L^p_x}\\ 
&\quad +\left\|\int_{t/2}^{t}\int_{\mathbb{R}^{d}}G(t-s,z)(\mathbf{f}(s,x)-\mathbf{f}(s,x+z))dzds\right\|_{L^p_x}:=I_{1}+I_{2}.
\end{align*}
First, we consider the term $I_1,$ $$
I_{1}
\leq
\int_{0}^{t/2}\| G(t-s)\|_{L^p}\|\mathbf{f}(s)\|_{L^1}ds.
$$
Using \eqref{Z2} and \eqref{Z3} in  Theorem \ref{G_estimate}, we  obtain
\begin{align}\label{i1}
I_{1}\lesssim_{\text{\r{c}},M_0}
\int_{0}^{t/2}\frac{1}{\langle t-s\rangle^{1+\frac{d(p-1)}{p}}}
\|\mathbf{f}(s)\|_{L^1}ds
\lesssim_{\text{\r{c}},M_0}
\widetilde{M}{\langle t\rangle^{-\frac{d(p-1)}{p}}}
\sup_{s\in [0,\infty)}	\|\mathbf{f}(s)\|_{L^1}.
\end{align}
Similarly, we have
\begin{align}\label{i2}
\begin{split}
	I_{2}
&=\left\|\int_{t/2}^{t}\int_{\mathbb{R}^{d}}G(t-s,z)|z|^a\frac{\mathbf{f}(s,x)-\mathbf{f}(s,x+z)}{|z|^a}dzds\right\|_{L^p_x}\\
&\leq\int_{t/2}^{t}\int_{\mathbb{R}^{d}}|G(t-s,z)||z|^a\frac{\left\|\mathbf{f}(s,x)-\mathbf{f}(s,x+z)\right\|_{L^p_x}}{|z|^a}dzds\\
&\leq\int_{t/2}^{t}\sup_{z}\frac{\left\|\delta_z\mathbf{f}(s,x)\right\|_{L^p_x}}{|z|^a}\|G(t-s,z)|z|^a\|_{L^1_x}ds.
\end{split}
\end{align}
Using \eqref{Z3} in  Theorem \ref{G_estimate}, we have,		
\begin{align*}
I_{2}&\lesssim_{\text{\r{c}},M_0}\widetilde{M}\int_{t/2}^{t}\|\mathbf{f}(s)\|_{\dot{B}_{p,\infty}^a}\frac{1}{\langle t-s\rangle^{1-a}}ds\\&\lesssim_{\text{\r{c}},M_0}\widetilde{M}\left[\sup_{s\in [0,\infty)}\langle s\rangle^{\frac{d(p-1)}{p}+a}\|\mathbf{f}(s)\|_{\dot{B}_{p,\infty}^a}\right]\langle t\rangle^{-\frac{d(p-1)}{p}-a}\int_{t/2}^{t}\frac{1}{\langle t-s\rangle^{1-a}}ds\\
&\lesssim_{\text{\r{c}},M_0}\widetilde{M}\langle t\rangle^{-\frac{d(p-1)}{p}}\sup_{s\in [0,\infty)}\left(\langle s\rangle^{\frac{d(p-1)}{p}+a}\|\mathbf{f}(s)\|_{\dot{B}_{p,\infty}^a}\right).
\end{align*}
Combining \eqref{i1} and \eqref{i2} we obtain that
\begin{align*}
\langle t\rangle^{\frac{d(p-1)}{p}}
\|G*_{(t,x)}\mathbf{f}(t)\|_{L^p}
\lesssim_{\text{\r{c}},M_0}\widetilde{M}\sup_{s\in [0,\infty)}\left(
\|\mathbf{f}(s)\|_{L^1}
+\langle s\rangle^{a+\frac{d(p-1)}{p}}
\| \mathbf{f}\|_{\dot{B}_{p,\infty}^{a}}\right).
\end{align*}
This implies the result.
\end{proof}
\begin{lemma}\label{t modulus 3} Let $t\geq 1$,   
	and $p=1,\infty$, we have estimates  as follows,
	\begin{align*}
		\langle t\rangle^{a+\frac{d(p-1)+\gamma}{p}}\|\nabla^\gamma(G*_{(t,x)}\mathbf{f}(t))\|_{\dot{B}_{p,\infty}^a}
		\lesssim_{\text{\r{c}},M_1}\widetilde{M}\sup_{s\in[0,\infty)}\left(\|\mathbf{f}(s)\|_{L^1}
		+\langle s\rangle^{a+\frac{d(p-1)}{p}+\gamma}\|\nabla^\gamma\mathbf{f}(s)\|_{\dot{B}_{p,\infty}^a}\right).
	\end{align*}	
\end{lemma}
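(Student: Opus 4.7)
The plan is to bound $\sup_\alpha\|\delta_\alpha(G*_{(t,x)}\mathbf{f})(t)\|_{L^p_x}/|\alpha|^a$ with the required time decay. The case $t\le 1$ follows from the $L^1_t$-integrability of $\|G(s)\|_{\dot{B}^a_{1,\infty}}$, which in turn is controlled using \eqref{Z5}. For $t\ge 1$ I split the time convolution $\int_0^t=\int_0^{t/2}+\int_{t/2}^t$ at $s=t/2$ and treat the two regimes via complementary representations of the difference. I also handle the large-shift regime $|\alpha|\gtrsim\langle t\rangle$ separately via the trivial bound $\|\delta_\alpha(G*\mathbf{f})\|_{L^p}\le 2\|G*\mathbf{f}\|_{L^p}$ combined with Lemma \ref{t gamma modulus 1}.

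For the early-time piece $s\in[0,t/2]$ (so $\tau:=t-s\sim t$), I would write $\delta_\alpha(G*\mathbf{f})=\delta_\alpha G*\mathbf{f}$ and apply Young's inequality to obtain $\|\delta_\alpha G(\tau)*\mathbf{f}(s)\|_{L^p}\le\|\delta_\alpha G(\tau)\|_{L^p}\|\mathbf{f}(s)\|_{L^1}$. Interpolating between $\|G(\tau)\|_{L^p}$ and $\|\nabla G(\tau)\|_{L^p}$, both extracted from Theorem \ref{G_estimate} via \eqref{Z2} and its $\gamma=1$ analogue, yields $\|\delta_\alpha G(\tau)\|_{L^p}/|\alpha|^a\lesssim \widetilde M\,\langle\tau\rangle^{-1-a-d(p-1)/p}$. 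Integration over $s\in[0,t/2]$ then produces the contribution $\sup_s\|\mathbf{f}(s)\|_{L^1}\cdot\widetilde M/\langle t\rangle^{a+d(p-1)/p}$, as desired.

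For the late-time piece $s\in[t/2,t]$ (so $\tau\in[0,t/2]$ and $\langle s\rangle\sim\langle t\rangle$), I would use $\delta_\alpha(G*\mathbf{f})=G*\delta_\alpha\mathbf{f}$ together with the cancellation $\int G(\tau,z)\,dz=0$ of Lemma \ref{estimation G} to rewrite
\[
G(\tau)*\delta_\alpha\mathbf{f}(s)(x)\;=\;\int G(\tau,z)\bigl[\delta_\alpha\mathbf{f}(s,x-z)-\delta_\alpha\mathbf{f}(s,x)\bigr]\,dz.
\]
Taking the $L^p_x$-norm and applying the twofold bound $\|\delta_z\delta_\alpha\mathbf{f}(s)\|_{L^p}\le 2\min(|z|,|\alpha|)^{a}\|\mathbf{f}(s)\|_{\dot{B}^a_{p,\infty}}$ reduces matters to estimating $\int|G(\tau,z)|\min(|z|,|\alpha|)^{a}\,dz$. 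I would split this $z$-integral at $|z|=|\alpha|$ and use the pointwise decay \eqref{Z1} to control each piece. Inserting $\|\mathbf{f}(s)\|_{\dot{B}^a_{p,\infty}}\le\sup(\cdots)/\langle t\rangle^{a+d(p-1)/p}$ (valid since $\langle s\rangle\sim\langle t\rangle$) and integrating in $s$ then yields the remaining contribution.

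The main obstacle is to avoid a spurious $\log t$ factor when integrating kernel norms over $\tau\in[0,t/2]$, since $\|G(\tau)\|_{L^1}\sim 1/\tau$ is borderline non-integrable. The resolution is to replace $\|G(\tau)\|_{L^1}$ by the weighted norm $\||z|^a G(\tau)\|_{L^1}\lesssim\langle\tau\rangle^{-1+a}$ coming from \eqref{Z3}, which supplies the decisive factor of $\tau^a$ making the time integral convergent. Combining this with the $z$-splitting at $|z|=|\alpha|$, the three-term structure of \eqref{Z5} for the analogous weighted estimate on $\delta_\alpha G$, and the direct large-$|\alpha|$ treatment, all contributions assemble into the claimed inequality.
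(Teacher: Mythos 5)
Your overall skeleton matches the paper: split the time integral at $s=t/2$, use $\delta_\alpha G*\mathbf{f}$ for the early piece and $G*\delta_\alpha\mathbf{f}$ plus the cancellation $\int G\,dz=0$ for the late piece, and treat the very large shifts $|\alpha|\gtrsim t$ by the trivial bound. Your early-time piece is correct and essentially identical to the paper's $u_1$ estimate.

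However, your late-time reduction has a genuine gap. After rewriting via cancellation, you pass immediately to the double-difference bound $\|\delta_z\delta_\alpha\mathbf{f}(s)\|_{L^p}\leq 2\min(|z|,|\alpha|)^a\|\mathbf{f}(s)\|_{\dot{B}^a_{p,\infty}}$, which reduces the problem to controlling $\int_0^{t/2}\int_{\mathbb{R}^d}|G(\tau,z)|\min(|z|,|\alpha|)^a\,dz\,d\tau$ by $|\alpha|^a$. This fails in the regime $|\alpha|\ll t$: the contribution from $|z|>|\alpha|$ is $|\alpha|^a\int_{|\alpha|}^{t/2}\int_{|z|>|\alpha|}|G(\tau,z)|\,dz\,d\tau\sim|\alpha|^a\int_{|\alpha|\vee 1}^{t/2}\tau^{-1}\,d\tau\sim|\alpha|^a\log(t/|\alpha|)$, which carries exactly the logarithm you were trying to kill. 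The reason is that by taking absolute values in the double difference you have discarded the $\delta_\alpha$-structure on one of the two arguments, and it is precisely that residual cancellation (now lodged in $\delta_\alpha G$) which the paper exploits in the regime $\tau=t-s>|\alpha|$ via the estimate \eqref{Z5}, whose first term $\widetilde{M}_1|\alpha|/(1+\tau)^{2-a}$ integrates in $\tau$ to $\lesssim|\alpha|^a$ without any logarithm. The paper therefore keeps $\delta_\alpha$ on $G$ (i.e., works with $\delta_\alpha G(\tau,z)\,\delta_z\mathbf{f}(s,x)$) and splits the $\tau$-integral at $\tau=|\alpha|$: for $\tau\leq|\alpha|$ it bounds $|\delta_\alpha G|\leq 2|G|$ and uses $\||z|^aG(\tau)\|_{L^1}$, while for $\tau>|\alpha|$ it uses the genuine $\delta_\alpha G$ decay from \eqref{Z5}. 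Your closing sentence name-drops \eqref{Z5}, but it cannot actually be inserted into your $\min(|z|,|\alpha|)^a$ framework, because that framework has already symmetrized away the difference. Separately, your assertion that $\||z|^aG(\tau)\|_{L^1}\lesssim\langle\tau\rangle^{-1+a}$ "makes the time integral convergent" is incorrect: $\int_1^{t}\tau^{-1+a}\,d\tau\sim t^a$, which is absorbed only when $|\alpha|\gtrsim t$; for small $|\alpha|$ the finer $\tau\lessgtr|\alpha|$ split is indispensable.
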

\begin{proof}
	When referring to $\| G*_{(t,x)}\mathbf{f}(t)\|_{\dot{B}_{p,\infty}^a}$, we consider $\|\delta_\alpha(G*_{(t,x)}\mathbf{f}(t))\|_{L^p}$  at first.
	Set 
	\begin{align*}
		&u_1(t,x)=	\int_{0}^{t/2}\int_{\mathbb{R}^d} G(t-s,y)\mathbf{f}(s,x-y)dy ds,\\&
		u_2(t,x)=	\int_{t/2}^{t}\int_{\mathbb{R}^d} G(t-s,y)\mathbf{f}(s,x-y)dy ds.	
	\end{align*}
	Thus,
	$$	\|\delta_\alpha\nabla^\gamma (G*_{(t,x)}\mathbf{f}(t))\|_{L^p}
	\leq
	\|\delta_{\alpha}\nabla^\gamma u_1(t,x)\|_{L^p_x}
	+\|\delta_{\alpha}\nabla^\gamma u_2(t,x)\|_{L^p_x}.
	$$
	Firstly, we focus on the second term. Thanks to \eqref{Z3} and \eqref{cancellation}, we have
	for $|\alpha|>t/2\geq\frac{1}{2}$
	\begin{align*}
		\frac{\|\delta_{\alpha}\nabla^\gamma u_2(t,x)\|_{L^p_x}}{|\alpha|^a}\leq&
		\frac{2}{|\alpha|^a}\|\nabla^\gamma  u_2(t,x)\|_{L^p_x}
		=\frac{2}{|\alpha|^a}\left\|\int_{t/2}^{t}\int_{\mathbb{R}^d} G(t-s,y) \nabla^\gamma
		\mathbf{f}(s,x-y)dy ds\right\|_{L^p_x}\\
		\lesssim&\frac{2}{|\alpha|^a}\int_{t/2}^{t}
		\||y|^aG(t-s,y)\|_{L^1_y}
		\sup_y\left(\frac{\|\delta_y\nabla^\gamma\mathbf{f}(s)\|_{L^p}}{|y|^a}\right)ds\\
		\lesssim_{\text{\r{c}}}&\frac{1}{|\alpha|^a}
		\left[\sup_{s\in[0,\infty)}\langle s\rangle^{\frac{d(p-1)}{p}+a+\gamma}
		\|\nabla^\gamma\mathbf{f}\|_{\dot{B}^a_{p,\infty}}\right]\langle t\rangle^{-\frac{d(p-1)}{p}-a-\gamma}
		\int_{t/2}^{t}
		\frac{\widetilde{M}}{\langle t-s\rangle^{1-a}}ds\\
		\lesssim_{\text{\r{c}}}&\widetilde{M}\frac{\langle t \rangle^a}{|\alpha|^a}
		\left[\sup_{s\in[0,\infty)}\langle s\rangle^{\frac{d(p-1)}{p}+a+\gamma}
		\|\nabla^\gamma\mathbf{f}\|_{\dot{B}^a_{p,\infty}}\right]
		\langle t\rangle^{-\frac{d(p-1)}{p}-a-\gamma}\\
		\lesssim_{\text{\r{c}}}&
		\widetilde{M}\langle t\rangle^{-\frac{d(p-1)}{p}-a-\gamma}\sup_{s\in[0,\infty)}\left(\langle s\rangle^{\frac{d(p-1)}{p}+a+\gamma}
		\|\nabla^\gamma\mathbf{f}\|_{\dot{B}^a_{p,\infty}}\right)
		.
	\end{align*}
	For the case $|\alpha|\leq t/2$, we know that
	\begin{align*}
		\frac{\|\delta_{\alpha}\nabla^\gamma u_2(t,x)\|_{L^p_x}}{|\alpha|^a}\lesssim&	\frac{1}{|\alpha|^a}\left\|\int_{t/2}^{t}\int_{\mathbb{R}^d}\mathbf{1}_{|t-s|\leq|\alpha|} \delta_{\alpha}G(t-s,y) \nabla^\gamma\delta_y\mathbf{f}(s,x)dy ds\right\|_{L^p_x}\\
		&+	\frac{1}{|\alpha|^a}\left\|\int_{t/2}^{t}\int_{\mathbb{R}^d}\mathbf{1}_{|t-s|>|\alpha|} \delta_{\alpha}G(t-s,y) \nabla^\gamma\delta_y\mathbf{f}(s,x)dy ds\right\|_{L^p_x}
		=:\mathcal{J}_1+\mathcal{J}_2.
	\end{align*}
	For the term $\mathcal{J}_1$, one has
	\begin{align*}
		\mathcal{J}_1&\leq
		\frac{1}{|\alpha|^a}\left\|\int_{t/2}^{t}\int_{\mathbb{R}^d} \mathbf{1}_{|t-s|\leq|\alpha|}G(t-s,y)
		\delta_y\nabla^\gamma\mathbf{f}(s,x)dy ds\right\|_{L^p_x}\\
		&\leq\frac{1}{|\alpha|^a}\int_{t/2}^{t}
		\mathbf{1}_{|t-s|\leq|\alpha|}
		\||y|^a G(t-s,y)\|_{L^1}
		\sup_y\frac{\|\delta_y \nabla^\gamma\mathbf{f}(s)\|_{L^p}}{|y|^a}ds\\
		&\overset{\eqref{Z3}}{\lesssim_{\text{\r{c}}}}\frac{1}{|\alpha|^a}
		\left[\sup_{s\in[0,\infty)}\langle s\rangle^{\frac{d(p-1)}{p}+a+\gamma}
		\|\nabla^\gamma\mathbf{f}(s)\|_{\dot{B}^a_{p,\infty}}\right]
		\langle t\rangle^{-\frac{d(p-1)}{p}-a-\gamma}
		\int_{0}^{t/2}\||y|^a G(s,y)\|_{L^1_y}\mathbf{1}_{|s|\leq|\alpha|}ds.
	\end{align*}
	Since we have
	\begin{align*}
		\int_{0}^{t/2}\||y|^a G(s,y)\|_{L^1_y}\mathbf{1}_{|s|\leq|\alpha|}ds
		\overset{\eqref{Z3}}	{\lesssim_{\text{\r{c}}}}\widetilde{M}\int_{0}^{|\alpha|}\frac{1}{ s^{1-a}}ds\leq\widetilde{M}|\alpha|^a,
	\end{align*}
	then
	\begin{align}\label{z10}
		\mathcal{J}_1\lesssim_{\text{\r{c}},M_0}\widetilde{M}	\langle t\rangle^{-\frac{d(p-1)}{p}-a-\gamma}\sup_{s\in[0,\infty)}\left(\langle s\rangle^{\frac{d(p-1)}{p}+a+\gamma}
		\|\nabla^\gamma\mathbf{f}(s)\|_{\dot{B}^a_{p,\infty}}\right).
	\end{align}
	Now we deal with the term $\mathcal{J}_2$,
	\begin{align*}
		\mathcal{J}_2&\leq\frac{1}{|\alpha|^a}\int_{t/2}^{t}\mathbf{1}_{|t-s|>|\alpha|}\left(\int_{\mathbb{R}^d} |y|^a|\delta_{\alpha}G(t-s,y)|dy\right)
		\sup_y\frac{\left\|\delta_y \nabla^\gamma\mathbf{f}(s,x)\right\|_{L^p_x}}{|y|^a} ds\\
		&\leq\frac{1}{|\alpha|^a}\langle t\rangle^{-a-\frac{d(p-1)}{p}-\gamma}\left[\sup_{s\in[0,\infty)}\langle s\rangle^{a+\frac{d(p-1)}{p}+\gamma}\|\nabla^\gamma\mathbf{f}(s)\|_{\dot{B}_{p,\infty}^a}\right]\int_{|\alpha|}^{t/2}\||y|^a\delta_{\alpha}G(s,y)\|_{L^1_y} ds\\
		&\overset{\eqref{Z5}}	{\lesssim_{\text{\r{c}},M_0}}\frac{1}{|\alpha|^a}\langle t\rangle^{-a-\frac{d(p-1)}{p}-\gamma}\left[\sup_{s\in[0,\infty)}\langle s\rangle^{a+\frac{d(p-1)}{p}+\gamma}\|\nabla^\gamma\mathbf{f}(s)\|_{\dot{B}_{p,\infty}^a}\right]\int_{|\alpha|}^{t/2} \frac{|\alpha|}{(1+s)^{2-a}}ds\\
		&\lesssim_{\text{\r{c}},M_0}\langle t\rangle^{-a-\frac{d(p-1)-\gamma}{p}}\sup_{s\in[0,\infty)}\left(\langle s\rangle^{a+\frac{d(p-1)}{p}+\gamma}\|\nabla^\gamma\mathbf{f}(s)\|_{\dot{B}_{p,\infty}^a}\right).
	\end{align*}
	Combining this with \eqref{z10} to conclude that 
	\begin{align*}
		\frac{\|\delta_{\alpha} u_2(t,x)\|_{L^p_x}}{|\alpha|^a}&\lesssim_{\text{\r{c}},M_0}\langle t\rangle^{-a-\frac{d(p-1)}{p}-\gamma}\sup_{s\in[0,\infty)}\left(\langle s\rangle^{a+\frac{d(p-1)}{p}+\gamma}\|\nabla^\gamma\mathbf{f}(s)\|_{\dot{B}_{p,\infty}^a}\right).
	\end{align*}
	Now we estimate $u_1(t,x)$.  One
	\begin{align*}
		\|\delta_{\alpha}\nabla^\gamma u_1(t,x)\|_{L^p_x}
		=&	\left\|\int_{0}^{t/2}\int_{\mathbb{R}^d} \delta_{\alpha}\nabla^\gamma G(t-s,x-y) \mathbf{f}(s,y)dy ds\right\|_{L^p_x}\\   	
		\leq&
		\int_{t/2}^{t} \|\delta_{\alpha}\nabla^\gamma G(s)\|_{L^p_x}
		ds	\sup_{s\in [0,\infty)}\|\mathbf{f}(s)\|_{L^1}.
	\end{align*}
	Note that for any $s\gtrsim 1$, we have
	\begin{align*}
		\|\delta_{\alpha}\nabla^\gamma G(s)\|_{L^p_x}&\leq \min\{	\|\nabla^\gamma G(s)\|_{L^p_x},|\alpha|	\| \nabla^{\gamma+1} G(s)\|_{L^p_x}\}\\&\overset{\eqref{Z2},\eqref{Z3}}{\lesssim _{\text{\r{c}},M_{\gamma+1}}}  \min\left\{\frac{1}{\langle s\rangle^{1+\frac{d(p-1)}{p}+\gamma}}, \frac{|\alpha|}{\langle s\rangle^{2+\frac{d(p-1)}{p}+\gamma}}\right\}\lesssim _{\text{\r{c}},M_{\gamma+1}}\frac{|\alpha|^a}{\langle s\rangle^{1+a+\frac{d(p-1)}{p}+\gamma}}.
	\end{align*}
	Then
	\begin{align*}
		\int_{t/2}^{t} \|\delta_{\alpha}\nabla^\gamma G(s)\|_{L^p_x}
		ds&\lesssim _{\text{\r{c}},M_{\gamma+1}} \frac{|\alpha|^a}{\langle t\rangle^{1+a+\frac{d(p-1)}{p}+\gamma}} \langle t\rangle\lesssim _{\text{\r{c}},M_{\gamma+1}}|\alpha|^a{\langle t\rangle^{-a-\frac{d(p-1)}{p}-\gamma}}.
	\end{align*}
	Thus we obtain
	\begin{align*}
		\frac{\|\delta_{\alpha}\nabla^\gamma u_1(t,x)\|_{L^p_x}}{|\alpha|^a}\lesssim _{\text{\r{c}},M_{\gamma+1}}
		\langle t\rangle^{-a-\frac{d(p-1)}{p}-\gamma}\sup_{s\in [0,\infty)}\|\mathbf{f}(s)\|_{L^1}.
	\end{align*}
Therefore, we deduce 
	\begin{align*}
		\langle t\rangle^{a+\frac{d(p-1)}{p}+\gamma}\|G*_{(t,x)}\mathbf{f}(t)\|_{\dot{B}_{p,\infty}^a}\lesssim _{\text{\r{c}},M_{\gamma+1}}\sup_{s\in[0,\infty)}\left(\|\mathbf{f}(s)\|_{L^1}
		+\langle s\rangle^{a+\frac{d(p-1)}{p}+\gamma}\|\nabla^\gamma\mathbf{f}(s)\|_{\dot{B}_{p,\infty}^a}\right).
	\end{align*}
This gives the result.
\end{proof}
\section{Estimates for Characteristics}
In this section, we  study the characteristics $(X_{s,t}(x,v),V_{s,t}(x,v))$ defined in \eqref{X and V}. First, we have for any $0\leq s\leq t< \infty $, 
\begin{equation}\label{definition of X and V}
\begin{split}
&X_{s,t}(x,v)=x-v(t-s)+\int_{s}^{t}(\tau-s)E(\tau,X_{\tau,t}(x,v))d\tau,\\
&V_{s,t}(x,v)=v-\int_{s}^{t}E(\tau,X_{\tau,t}(x,v))d\tau.
\end{split}
\end{equation}
Set 
\begin{align}\label{definition Y W}
\begin{split}
	&Y_{s,t}(x,v)=\int_{s}^{t}(\tau-s)E(\tau,x+\tau v+Y_{\tau,t}(x,v))d\tau,\\&
W_{s,t}(x,v)=-\int_{s}^{t}E(\tau,x+\tau v+Y_{\tau,t}(x,v))d\tau.
\end{split}
\end{align}
Hence, 
\begin{equation}\label{z4}
X_{s,t}(x,v)=x-(t-s)v+Y_{s,t}(x-vt,v),~~V_{s,t}(x,v)=v+W_{s,t}(x-vt,v).
\end{equation}
We obtain the bounds on characteristics as follows.

\begin{proposition}\label{estimates about Y and W} Let $a\in (\frac{\sqrt{5}-1}{2},1)$, then there exists $\widetilde{\varepsilon_0}\in(0,1)$  such that  for any $\|(\rho,U)\|_{S^{{\varepsilon_0}}_a}\leq\varepsilon_0\leq \widetilde{\varepsilon_0}$ and $0<\delta<a$, we have the following estimates:
\begin{align}\nonumber
&\sup_{0\leq s\leq t}\langle s\rangle^{d-2+a}\left(	\|Y_{s,t}\|_{L^\infty_{x,v}}+\|\nabla_x Y_{s,t}\|_{L^\infty_{x,v}}+\sup_{\alpha}	\frac{	\|\delta_{\alpha}^x	\nabla_xY_{s,t}\|_{L^\infty_{x,v}}}{|\alpha|^a}\right)\\&\quad\quad\quad\quad\quad\quad+	\sup_{0\leq s\leq t}\langle s\rangle^{d-3+a}\|\nabla_v Y_{s,t}\|_{L^\infty_{x,v}}+	\delta\sup_{0\leq s\leq t}\langle s\rangle^{d-3+\delta}\sup_{\alpha}	\frac{	\|\delta_{\alpha}^v	\nabla_vY_{s,t}\|_{L^\infty_{x,v}}}{|\alpha|^{a-\delta}}\lesssim_a \|(\rho,U)\|_{S^{{\varepsilon_0}}_a},\label{results of Y small}
\end{align}
and
\begin{align}\nonumber
&\sup_{0\leq s\leq t}\langle s\rangle^{d-1+a}\left(	\|W_{s,t}\|_{L^\infty_{x,v}}+\|\nabla_x W_{s,t}\|_{L^\infty_{x,v}}+\sup_{\alpha}	\frac{	\|\delta_{\alpha}^x	\nabla_xW_{s,t}\|_{L^\infty_{x,v}}}{|\alpha|^a}\right)\\&\quad\quad\quad\quad\quad\quad+	\sup_{0\leq s\leq t}\langle s\rangle^{d-2+a}\|\nabla_v W_{s,t}(x,v)\|_{L^\infty_{x,v}}+	\sup_{0\leq s\leq t}\langle s\rangle^{d-2}\sup_{\alpha}	\frac{	\|\delta_{\alpha}^v	\nabla_vW_{s,t}\|_{L^\infty_{x,v}}}{|\alpha|^a}\lesssim_a \|(\rho,U)\|_{S^{{\varepsilon_0}}_a}.\label{results of W small}
\end{align}
Here we use the notations 
\begin{align*}
\delta_{\alpha}^xY_{s,t}(x,v)=Y_{s,t}(x,v)-Y_{s,t}(x-\alpha,v),~~~\delta_{\alpha}^vY_{s,t}(x,v)=Y_{s,t}(x,v)-Y_{s,t}(x,v-\alpha).
\end{align*}
\end{proposition}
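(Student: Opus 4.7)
I would set up a bootstrap/contraction argument for the nonlinear integral system \eqref{definition Y W} defining $(Y_{s,t},W_{s,t})$. The map $\Phi:(Y,W)\mapsto(\widetilde Y,\widetilde W)$ obtained by substituting $Y$ into the right-hand sides of \eqref{definition Y W} (note that the equation for $W$ only involves $Y$), equipped with the weighted Banach norm whose value is the sum of the quantities bounded in \eqref{results of Y small}--\eqref{results of W small}, should map a ball of radius proportional to $\|(\rho,U)\|_{S^{\varepsilon_0}_a}$ into itself and be a contraction on that ball, provided $\varepsilon_0$ is small enough. The first input is a set of sharp decay estimates on the field $E=-\nabla_x(1-\Delta)^{-1}(\rho+A(U))$ extracted from the hypothesis $\|(\rho,U)\|_{S^{\varepsilon_0}_a}\le\varepsilon_0$: using the $L^p$-boundedness of the Bessel kernel, the smoothing of $\dot B^a_{p,\infty}$ under $(1-\Delta)^{-1}$, and the hypothesis $A(r)=O(r^2)$ together with the $\widetilde\varepsilon^{1/3}$-weighting of $U$ in the norm (which makes the nonlinear contribution of $A(U)$ quadratically small), one obtains
\begin{align*}
\|E(\tau)\|_{L^p}\lesssim\varepsilon_0\langle\tau\rangle^{-d(p-1)/p},\qquad\|E(\tau)\|_{\dot B^a_{p,\infty}}\lesssim\varepsilon_0\langle\tau\rangle^{-a-d(p-1)/p}\quad(p=1,\infty),
\end{align*}
and correspondingly $\|\nabla_x E(\tau)\|_{\dot B^a_{\infty,\infty}}=\|D_x^2 U(\tau)\|_{\dot B^a_{\infty,\infty}}\lesssim\varepsilon_0\langle\tau\rangle^{-d-a}$ by the Bessel smoothing.

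\textbf{Main estimates.} Plugging into $\widetilde Y_{s,t}(x,v)=\int_s^t(\tau-s)E(\tau,x+\tau v+Y_{\tau,t}(x,v))\,d\tau$ and splitting
\begin{align*}
E(\tau,x+\tau v+Y_{\tau,t})=E(\tau,x+\tau v)+\bigl[E(\tau,x+\tau v+Y_{\tau,t})-E(\tau,x+\tau v)\bigr],
\end{align*}
the correction term is bounded by $\|E(\tau)\|_{\dot B^a_{\infty,\infty}}|Y_{\tau,t}|^a$, which under the bootstrap hypothesis is quadratically small and integrates easily against $(\tau-s)\,d\tau$. For the leading line-integral $\int_s^t(\tau-s)E(\tau,x+\tau v)\,d\tau$, the crude $L^\infty$-bound alone only produces the weight $\langle s\rangle^{-(d-2)}$; the extra gain of $\langle s\rangle^{-a}$ needed to reach the claimed weight $\langle s\rangle^{-(d-2+a)}$ is obtained by splitting the time integral at $\tau=2s$ and exploiting, on the long range $\tau\ge 2s$ (where $\tau-s\sim\tau$), the stronger decay $\|E(\tau)\|_{\dot B^a_{\infty,\infty}}\lesssim\varepsilon_0\langle\tau\rangle^{-d-a}$ combined with a Hölder interpolation between $L^\infty$ and $\dot B^a_{\infty,\infty}$ applied on the short range $[s,2s]$. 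The derivative bounds on $\nabla_x Y$ follow by differentiating the integral equation and absorbing the factor $I+\nabla_x Y_{\tau,t}$ by the bootstrap; its $\dot B^a_{\infty,\infty}$-seminorm comes directly from the estimate on $\|D_x^2 U\|_{\dot B^a_{\infty,\infty}}$. The $W$-bounds are obtained by the same manipulations without the weight $(\tau-s)$, which shifts the weight to $\langle s\rangle^{d-1+a}$; the $\nabla_v$ estimates lose one additional power of $\langle s\rangle$ because $\nabla_v$ applied to $x+\tau v$ produces an extra factor $\tau$.

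\textbf{Main obstacle.} The hardest piece is the control of $\sup_\alpha\|\delta^v_\alpha\nabla_v Y_{s,t}\|_{L^\infty_{x,v}}/|\alpha|^{a-\delta}$, with its $\delta^{-1}$ loss. Two derivatives in $v$ combined with a Hölder difference force two $\tau$-factors inside the integral, and one must evaluate the Hölder seminorm of $E$ on an argument perturbed by both $Y_{\tau,t}$ and $\nabla_v Y_{\tau,t}$. Expanding the difference and inserting the bootstrap ansatz, the critical nonlinear self-interaction is schematically $\|\nabla_v Y(\tau)\|_{\dot C^{a-\delta}}\cdot\|\nabla_v Y(\tau)\|^{a}_{L^\infty}$ with a combined time-decay exponent governed by $a^2+a$; the bootstrap closes only if $a^2+a>1$, which is exactly $a>(\sqrt5-1)/2$. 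The parameter $\delta>0$ is introduced to absorb the logarithmic divergence at the borderline exponent $a$ and accounts for the $\delta$ prefactor on the left-hand side of \eqref{results of Y small}; the hypothesis $d\ge3$ is what makes the time integrals $\int^\infty\langle\tau\rangle^{-(d-2+a)-k}\,d\tau$ convergent at infinity (as already emphasized in the remark after the proposition). Once invariance of the ball is established, the contraction property of $\Phi$ follows by rerunning exactly the same estimates on the differences $Y_1-Y_2$ and $W_1-W_2$ and using the extra factor $\varepsilon_0$ that multiplies each such difference.
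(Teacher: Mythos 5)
The overall architecture of your proposal — work directly on the integral system for $(Y,W)$, extract decay estimates on $E$ from $\|(\rho,U)\|_{S^{\varepsilon_0}_a}$, close a bootstrap/contraction by smallness of $\varepsilon_0$, and use $d\ge 3$ to get integrability of $\int \langle\tau\rangle^{-(d-2+a)}d\tau$ at infinity — matches the paper's (the paper uses a direct self-absorption inequality rather than a formal fixed-point map, but that is cosmetic). However there is a genuine gap in the key input: you record the suboptimal estimate $\|E(\tau)\|_{L^\infty}\lesssim\varepsilon_0\langle\tau\rangle^{-d}$ and then try to recover the missing factor $\langle s\rangle^{-a}$ by splitting the $\tau$-integral at $\tau=2s$ and ``Hölder interpolation between $L^\infty$ and $\dot B^a_{\infty,\infty}$.'' This workaround does not work: on the tail $\tau\ge 2s$ the seminorm $\|E(\tau)\|_{\dot B^a_{\infty,\infty}}$ does not control the pointwise value $E(\tau,x+\tau v)$ at all, and no interpolation of $L^\infty$ with a homogeneous Besov seminorm upgrades the $L^\infty$ decay rate — you cannot squeeze out an extra $\langle s\rangle^{-a}$ this way. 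The correct observation, which the paper uses at the very start (via Lemma \ref{varrho}, the Bessel-smoothing lemma), is that $E=\nabla(1-\Delta)^{-1}(\rho+A(U))$ already satisfies the stronger bound $\|E(\tau)\|_{L^\infty}\lesssim\|\rho+A(U)\|_{\dot B^a_{\infty,\infty}}(\tau)\lesssim\varepsilon_0\langle\tau\rangle^{-(d+a)}$: the gradient of the Bessel potential maps $\dot B^a_{\infty,\infty}$ into $L^\infty$, and the $\dot B^a_{\infty,\infty}$ norm of the source decays at the rate $\langle\tau\rangle^{-(d+a)}$ built into $\|\cdot\|_a$. With this bound the crude estimate $\int_s^t(\tau-s)\|E(\tau)\|_{L^\infty}d\tau\lesssim\varepsilon_0\langle s\rangle^{-(d-2+a)}$ is immediate — no splitting, no interpolation.

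Two smaller points. First, you attribute the threshold $a>(\sqrt5-1)/2$ (i.e.\ $a^2+a>1$) to the closure of the $\delta^v_\alpha\nabla_v Y$ estimate in this proposition; in the paper's proof this inequality is never invoked here — the present proof closes for any $a\in(0,1)$ provided $d\ge3$ and $\varepsilon_0$ is small, and the constraint $a^2+a>1$ is only needed in later sections (e.g.\ in Lemma \ref{esT}, where one must integrate $\langle s\rangle^{-\sigma(d-2+a)}$). Second, the factor $\delta^{-1}$ in front of the $\delta^v_\alpha\nabla_v Y$ term does not come from ``a logarithmic divergence at exponent $a$'' but from evaluating $\int_s^\infty\tau^{-1-\delta}d\tau=\delta^{-1}s^{-\delta}$ (the borderline case is $d=3$, where two factors of $\tau$ from $\nabla_v$ and one Hölder difference in $v$ together eat all the decay except for $\delta$).
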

Before we prove this Proposition, we state the following Lemma:
\begin{lemma}\label{varrho}
For $p=1,\infty$ and $\kappa,\kappa_1\in(0,1)$, one has
\begin{align*}
&	\sum_{j=0}^{1}\|\nabla^j(1-\Delta)^{-1}\psi\|_{L^p}+\|\nabla(1-\Delta)^{-1}\psi\|_{\dot{B}^\kappa_{p,\infty}}\leq c_0\|\psi\|_{L^p},\\
&\sum_{j=1}^{2}\|\nabla^j(1-\Delta)^{-1}\psi\|_{L^p}+[2+\kappa-(\kappa_1+j)]\sum_{\substack{j=0,1,2\\\kappa_1+j<2+\kappa}}\|\nabla^j(1-\Delta)^{-1}\psi\|_{\dot{B}^{\kappa_1}_{p,\infty}\cap \dot{F}^{\kappa_1}_{p,\infty}}
\leq c_0\|\psi\|_{\dot{B}^{\kappa}_{p,\infty}},\\
&\|\nabla^2(1-\Delta)^{-1}\psi\|_{\dot{B}^\kappa_{\infty,\infty}}\leq c_0\|\psi\|_{\dot{B}^{\kappa}_{\infty,\infty}},
\end{align*}
for some  $c_0=c_0(\kappa)$.
\end{lemma}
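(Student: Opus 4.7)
My plan is to prove the three estimates using a combination of the Bessel kernel representation $(1-\Delta)^{-1}\psi = B_2*\psi$ and a Littlewood--Paley decomposition. Recall that $B_2 \in L^1(\mathbb{R}^d)$ with polynomial singularity $|x|^{2-d}$ at the origin (for $d\geq 3$) and exponential decay at infinity, so $B_2, \nabla B_2 \in L^1$; on the Fourier side, $(1-\Delta)^{-1}$ is convolution with the multiplier $(1+|\xi|^2)^{-1}$ of order $-2$.

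For the first inequality, the $L^p$ bounds on $(1-\Delta)^{-1}\psi$ and $\nabla(1-\Delta)^{-1}\psi$ follow at once from Young's inequality. For the Besov seminorm, I would bound $\|\delta_\alpha \nabla(1-\Delta)^{-1}\psi\|_{L^p} \leq \|\delta_\alpha \nabla B_2\|_{L^1}\|\psi\|_{L^p}$ and verify $\|\delta_\alpha \nabla B_2\|_{L^1} \lesssim_\kappa |\alpha|^\kappa$ by splitting the kernel integral into $|y|\leq 2|\alpha|$ (using $\nabla B_2 \in L^1_{\mathrm{loc}}$, which gives an $O(|\alpha|)$ bound) and $|y|>2|\alpha|$ (using the mean value theorem, with the singularity $|\nabla^2 B_2|\sim |y|^{-d}$ and exponential tail producing $|\alpha|+|\alpha|\log(1/|\alpha|)$); both terms are absorbed into $|\alpha|^\kappa$ for $\kappa<1$.

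For the second inequality, I would work in Littlewood--Paley variables. Using the Bernstein-type identities $\|\nabla^j \Delta_n g\|_{L^p} \sim 2^{jn}\|\Delta_n g\|_{L^p}$ and $\|\Delta_n (1-\Delta)^{-1} g\|_{L^p} \lesssim (1+2^{2n})^{-1}\|\Delta_n g\|_{L^p}$, combined with $\|\Delta_n \psi\|_{L^p} \lesssim 2^{-\kappa n}\|\psi\|_{\dot B^\kappa_{p,\infty}}$, the $L^p$ estimate for $j=1,2$ reduces to the geometric sum
\begin{equation*}
\sum_{n\leq 0} 2^{(j-\kappa)n} + \sum_{n>0} 2^{(j-2-\kappa)n},
\end{equation*}
which converges for $\kappa\in(0,1)$ and $j\in\{1,2\}$. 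For the mixed Besov/Triebel--Lizorkin seminorms, I would take $\Delta_m$ of the output and estimate
\begin{equation*}
2^{\kappa_1 m}\|\Delta_m \nabla^j(1-\Delta)^{-1}\psi\|_{L^p} \lesssim 2^{(\kappa_1 + j - \kappa)m}(1+2^{2m})^{-1} \|\psi\|_{\dot B^\kappa_{p,\infty}};
\end{equation*}
the critical contribution at high frequency is $2^{(\kappa_1 + j - \kappa - 2)m}$, uniformly bounded exactly when $\kappa_1 + j < 2+\kappa$. The prefactor $[2+\kappa-(\kappa_1+j)]$ appears precisely to kill the logarithmic blow-up $\sim 1/[2+\kappa-(\kappa_1+j)]$ coming from the borderline geometric series, yielding a uniform bound across all admissible pairs $(\kappa_1,j)$. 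The third inequality is then the cleanest case: $\nabla^2(1-\Delta)^{-1}$ has bounded symbol $-\xi\otimes\xi/(1+|\xi|^2)$ of order zero, so it maps each LP annulus to itself with uniform constants, and hence acts boundedly on $\dot B^\kappa_{\infty,\infty}$ by the standard characterization.

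The main obstacle I anticipate is the careful bookkeeping of constants at the critical index $\kappa_1 + j = 2 + \kappa$ in the second inequality: one must check that the logarithmic blow-up is sharp (not worse), which requires a precise inspection of how the largest frequency $m$ contributes to the supremum definition of the $\dot B^{\kappa_1}_{p,\infty}$ and $\dot F^\kappa_{\infty,\infty}$ seminorms. A secondary care is needed to verify that the Triebel--Lizorkin norm $\dot F^\kappa_{\infty,\infty}$ (a pointwise Hölder-type norm) can be controlled by the same LP decomposition, which follows from its equivalence with $\sup_m 2^{\kappa m}\|\Delta_m\cdot\|_{L^\infty}$ for $\kappa\in(0,1)$.
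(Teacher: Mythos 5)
The paper omits the proof of this lemma, treating it as standard background, so I am judging your proposal on its own terms. The Bessel-kernel argument for the first inequality (Young's inequality for the $L^p$ bounds, the split $|y|\lessgtr 2|\alpha|$ for $\|\delta_\alpha\nabla B_2\|_{L^1}\lesssim_\kappa|\alpha|^\kappa$) is correct, as is your observation that the third inequality follows because $-\xi\otimes\xi/(1+|\xi|^2)$ is a bounded zero-order multiplier mapping each Littlewood--Paley annulus to itself. Littlewood--Paley decomposition is also the right tool for the second inequality, and your treatment of the $L^p$ part for $j=1,2$ is fine.

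There are, however, two problems with your handling of the seminorm part of the second inequality. First, your own displayed bound $2^{\kappa_1 m}\|\Delta_m \nabla^j(1-\Delta)^{-1}\psi\|_{L^p}\lesssim 2^{(\kappa_1+j-\kappa)m}(1+2^{2m})^{-1}\|\psi\|_{\dot B^{\kappa}_{p,\infty}}$ is unbounded as $m\to-\infty$ whenever $\kappa_1+j<\kappa$, a case that the admissibility condition $\kappa_1+j<2+\kappa$ does not exclude when $j=0$. This is not cosmetic: if $\psi$ is a single very low LP block then $(1-\Delta)^{-1}\psi\approx\psi$ and $\|\psi\|_{\dot B^{\kappa_1}_{p,\infty}}/\|\psi\|_{\dot B^{\kappa}_{p,\infty}}\sim 2^{(\kappa_1-\kappa)m}$ is arbitrarily large when $\kappa_1<\kappa$. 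So either you must restrict to $j\geq 1$ (which is all the paper ever invokes, since every application has $E=\nabla(1-\Delta)^{-1}\psi$ or higher derivatives), or you need an extra hypothesis such as $\psi\in L^p$ together with an interpolation step, and you should say which. Second, your explanation of the prefactor $[2+\kappa-(\kappa_1+j)]$ as cancelling a $1/[2+\kappa-(\kappa_1+j)]$ blow-up does not follow from your computation. The $\dot B^{\kappa_1}_{p,\infty}$ and $\dot F^\kappa_{\infty,\infty}$ seminorms take a supremum over frequency blocks, not a sum, and $\sup_{m>0}2^{(\kappa_1+j-2-\kappa)m}\leq 1$ once $\kappa_1+j\leq 2+\kappa$ with no borderline divergence; even passing to the difference-quotient definition, the geometric tail $\sum_{m>M}2^{(j-2-\kappa)m}$ yields at worst a factor $1/(2+\kappa-j)$, which does not involve $\kappa_1$. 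Keeping the prefactor only weakens the inequality, so nothing is lost, but you have not actually located its source, and your argument, repaired for the $j=0$ issue above, would prove the stronger statement without it.
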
		
\begin{proof}[Proof of Proposition \ref{estimates about Y and W}]
Note  that $\|(\rho,U)\|_{S^{{\varepsilon_0}}_a}\leq \varepsilon_0$ implies $\|U\|_a\leq \varepsilon_0^{\frac{2}{3}}$, then by Assumption \ref{A assump}, we have that 
\begin{align}\label{AU leq U}
	\begin{split}
		\|A(U)\|_{\dot{B}^a_{p,\infty}}&\leq \max_{|r|\leq\varepsilon_0^{2/3}}|A'(r)|\|U\|_{\dot{B}^a_{p,\infty}}\leq C_A \varepsilon_0^{\frac{2}{3}}\|U\|_{\dot{B}^a_{p,\infty}},\\
		\|A(U)\|_{L^p}&\leq C_A \|U^2\|_{L^p}\leq C_A \varepsilon_0^{\frac{2}{3}}\|U\|_{L^p}.
	\end{split}
\end{align}
Thus taking $\tilde{\varepsilon}_0\leq C_A^{-3}$, we get
\begin{align}\label{AU_a leq U_a}
	\|\rho+A(U)\|_a\leq	\|\rho\|_a+ C_A \varepsilon_0^{\frac{2}{3}}\|U\|_{a}\leq\|\rho\|_a+ \varepsilon_0^{\frac{1}{3}} \|U\|_a=\|(\rho,U)\|_{S^{{\varepsilon_0}}_a}.
\end{align}Thanks to Lemma \ref{varrho},
\begin{equation}\label{E estimate by rho}
\|E(\tau)\|_{L^\infty}+\|\nabla_x E(\tau)\|_{L^\infty}	+\|\nabla_x E(\tau)\|_{\dot{B}^a_{\infty,\infty}}\leq c_0(a)\|\rho(\tau)+A(U)(\tau)\|_{\dot{B}^a_{\infty,\infty}}\leq c_0(a)\frac{\|(\rho,U)\|_{S^{{\varepsilon_0}}_a}}{\langle \tau\rangle^{d+a}},
\end{equation} 
and there exists a constant $c_1>0$ such that

\begin{align}\label{c1}
\int_{s}^{t}\frac{1}{\langle\tau\rangle^{d-i+a}}d\tau\leq \frac{c_1}{\langle s\rangle^{d-i+a-1}},~~i=0,1,2,
\end{align}
where $c_1$  only depends on $d$.\\
\textbf{(1)} By \eqref{E estimate by rho} one has 
\begin{align*}
\|Y_{s,t}\|_{L^\infty_{x,v}}\leq&\int_{s}^{t}(\tau-s)\|E(\tau,X_{\tau,t}(x,v))\|_{L^\infty_{x,v}}d\tau\leq\int_{s}^{t}\frac{c_0\|(\rho,U)\|_{S^{{\varepsilon_0}}_a}}{\langle \tau\rangle^{d-1+a}}d\tau\leq c_0c_1\frac{\|(\rho,U)\|_{S^{{\varepsilon_0}}_a}}{\langle s\rangle^{d-2+a}},\\
\|\nabla_x Y_{s,t}\|_{L^\infty_{x,v}}\leq&
(1+\sup_{0\leq\tau\leq t}\langle \tau\rangle^{d-2+a}\|\nabla_x Y_{\tau,t}\|_{L^\infty_{x,v}})\int_{s}^{t}(\tau-s)\|\nabla_x E(\tau)\|_{L^\infty_{x,v}}d\tau\\
\leq&c_0\|(\rho,U)\|_{S^{{\varepsilon_0}}_a}(1+\sup_{0\leq\tau\leq t}\langle \tau\rangle^{d-2+a}\|\nabla_x Y_{\tau,t}\|_{L^\infty_{x,v}})
\int_{s}^{t}\frac{1}{\langle\tau\rangle^{d-1+a}}d\tau\\\leq&\frac{c_0c_1\|(\rho,U)\|_{S^{{\varepsilon_0}}_a}}{\langle s\rangle^{d-2+a}}(1+\sup_{0\leq\tau\leq t}\langle \tau\rangle^{d-2+a}\|\nabla_x Y_{\tau,t}\|_{L^\infty_{x,v}}).
\end{align*}
Then, the right hand side can be absorbed by the left hand side provided that $\|(\rho,U)\|_{S^{{\varepsilon_0}}_a}\leq\frac{1}{400c_0c_1d^2}$.\\
By the same method, we have
\begin{align*}
\|\nabla_v Y_{s,t}\|_{L^\infty_{x,v}}\leq & \int_{s}^{t}|\tau-s|\|\nabla_x E(\tau)\|_{L^\infty}\left(|\tau|+\|\nabla_vY_{\tau,t}\|_{L^\infty_{x,v}}\right)d\tau
\\	\leq & c_0c_1\|(\rho,U)\|_{S^{{\varepsilon_0}}_a} \left(\frac{1}{\langle s\rangle^{d-3+a}}+\sup_{0\leq\tau\leq t}\langle \tau\rangle^{d-3+a}\|\nabla_v Y_{\tau,t}\|_{L^\infty_{x,v}}\right).
\end{align*}
Hence, 
\begin{equation}\label{nabla x Y estimate}
\sup_{0\leq s\leq t}\langle s\rangle^{d-2+a}\left(	\|Y_{s,t}\|_{L^\infty_{x,v}}+\|\nabla_x Y_{s,t}\|_{L^\infty_{x,v}}\right)+	\sup_{0\leq s\leq t}\langle s\rangle^{d-3+a}\|\nabla_v Y_{s,t}\|_{L^\infty_{x,v}}\leq 2c_0c_1 \|(\rho,U)\|_{S^{{\varepsilon_0}}_a},
\end{equation}
provided $\|(\rho,U)\|_{S^{{\varepsilon_0}}_a}\leq\frac{1}{400c_0c_1d^2}. $\\
\textbf{(2)} By \eqref{E estimate by rho}, \eqref{nabla x Y estimate},  one obtains
\begin{align*}
\sup_{\alpha}	\frac{	\|\delta_{\alpha}^x	\nabla_xY_{s,t}\|_{L^\infty_{x,v}}}{|\alpha|^a}\leq& 	\sup_{\alpha}\frac{1}{|\alpha|^a} \left[\int_{s}^{t}(\tau-s)\|\nabla_x E(\tau)\|_{L^{\infty}}\|\delta_{\alpha}^x\nabla_x Y_{\tau,t}\|_{L^\infty_{x,v}}d\tau\right.\\&
\left.+\int_{s}^{t}(\tau-s)\|\nabla_x E(\tau)\|_{\dot{B}^a_{\infty,\infty}} \left(|\alpha|+\|\delta_\alpha^x Y_{\tau,t}\|_{L^\infty_{x,v}}\right)^a\left(1+\|\nabla_x Y_{\tau,t}\|_{L^\infty_{x,v}}\right)d\tau\right]\\\leq&  \int_{s}^{t}(\tau-s)\|\nabla_x E(\tau)\|_{L^{\infty}}d\tau\sup_{0\leq\tau\leq t}\langle \tau\rangle^{d-2+a}	\sup_{\alpha} \frac{\|\delta_{\alpha}^x\nabla_x Y_{\tau,t}\|_{L^\infty_{x,v}}}{|\alpha|^a}\\&
+\int_{s}^{t}(\tau-s)\|\nabla_x E(\tau)\|_{\dot{B}^a_{\infty,\infty}} \left(1+\|\nabla_x Y_{\tau,t}\|_{L^\infty_{x,v}}\right)^{1+a}d\tau \\\leq&  c_0c_1 \frac{\|(\rho,U)\|_{S^{{\varepsilon_0}}_a}}{\langle s\rangle^{d-2+a}}\sup_{0\leq\tau\leq t}\langle \tau\rangle^{d-2+a}	\sup_{\alpha} \frac{\|\delta_{\alpha}^x\nabla_x Y_{\tau,t}\|_{L^\infty_{x,v}}}{|\alpha|^a}
+c_0c_1\frac{\|(\rho,U)\|_{S^{{\varepsilon_0}}_a}}{\langle s\rangle^{d-2+a}}.
\end{align*}
Similarly,
\begin{equation}\label{delta}
\begin{split}
	\sup_{\alpha}	\frac{	\|\delta_{\alpha}^v	\nabla_vY_{s,t}\|_{L^\infty_{x,v}}}{|\alpha|^{a-\delta}}&\leq 	\sup_{\alpha}\frac{1}{|\alpha|^{a-\delta}} \left(\int_{s}^{t}(\tau-s)\|\nabla_x E(\tau)\|_{L^{\infty}}\|\delta_{\alpha}^v\nabla_v Y_{\tau,t}\|_{L^\infty_{x,v}}d\tau\right.\\&
	\left.+\int_{s}^{t}(\tau-s)\|\nabla_x E(\tau)\|_{\dot{B}^{a-\delta}_{\infty,\infty}} \left(|\alpha\|\tau|+\|\delta_\alpha^v Y_{\tau,t}\|_{L^\infty_{x,v}}\right)^{a-\delta}\left(|\tau|+\|\nabla_v Y_{\tau,t}\|_{L^\infty_{x,v}}\right)d\tau\right) \\&\leq  c_0c_1\frac{\|(\rho,U)\|_{S^{{\varepsilon_0}}_a}}{\langle s\rangle^{d+a-2}}\sup_{0\leq\tau\leq t}\langle \tau\rangle^{d-3+\delta}	\sup_{\alpha} \frac{\|\delta_{\alpha}^v\nabla_v Y_{\tau,t}\|_{L^\infty_{x,v}}}{|\alpha|^{a-\delta}}
	+c_0c_1\frac{\|(\rho,U)\|_{S^{{\varepsilon_0}}_a} }{\delta\langle s\rangle^{d-3+\delta}}.
\end{split}
\end{equation}
These imply
\begin{align}
&\sup_{0\leq s\leq t}\left(	\langle s\rangle^{d-2+a}\sup_{\alpha}	\frac{	\|\delta_{\alpha}^x	\nabla_xY_{s,t}\|_{L^\infty_{x,v}}}{|\alpha|^a}+\delta\langle s\rangle^{d-3+\delta}\sup_{\alpha}	\frac{	\|\delta_{\alpha}^v	\nabla_vY_{s,t}\|_{L^\infty_{x,v}}}{|\alpha|^{a-\delta}}\right)\lesssim_a\|(\rho,U)\|_{S^{{\varepsilon_0}}_a}, \label{deltanabla x Y estimate}
\end{align}
provided $\|(\rho,U)\|_{S^{{\varepsilon_0}}_a}\leq\frac{1}{400c_0c_1d^2}$.\\
Combining this with \eqref{nabla x Y estimate}, one gets \eqref{results of Y small}. By the same argument, we also obtain \eqref{results of W small}. Thus we choose $\widetilde{\varepsilon_0}=\min\{C_A^{-3},\frac{1}{400c_0c_1d^2}\}$ to finish the proof.
\end{proof}
\begin{remark}
For $d\geq4$, using the same method in  \eqref{delta}, we obtain
\begin{align}\label{d4}
\sup_{0\leq s\leq t}\langle s\rangle^{d-3}\sup_{\alpha}	\frac{	\|\delta_{\alpha}^v	\nabla_vY_{s,t}\|_{L^\infty_{x,v}}}{|\alpha|^{a}}\lesssim_a\|(\rho,U)\|_{S^{{\varepsilon_0}}_a} .
\end{align}
\end{remark}
\begin{proposition}\label{proposition of higher derivative estimates Y W}  Let $a\in (\frac{\sqrt{5}-1}{2},1)$,  $\widetilde{\varepsilon_0}$ be as in Proposition \ref{estimates about Y and W}. Assume that $\|(\rho,U)\|_{S^{{\varepsilon_0}}_a}\leq\varepsilon_0\leq \widetilde{\varepsilon_0}$  and $\|(\rho, U)\|_{\gamma,b}\leq \mathbf{c}$ for some $\gamma>0$ and $b\in (0,1)$. Then,  for $j=1,...,\gamma$ and $\delta\in (0,b)$, we have 
\begin{align*}
&	\sup_{0\leq s\leq t}\langle s\rangle^{d-2+b}	\|	\nabla_vY_{s,t}\|_{L^\infty_{x,v}}+\langle s\rangle^{d-1+b}	\|	\nabla_vW_{s,t}\|_{L^\infty_{x,v}}\lesssim_{\mathbf{c},b}1,\\
&\sum_{j=1}^{\gamma}\sup_{0\leq s\leq t}\langle s\rangle^{d-2}\sup_{\alpha}	\frac{	\|\delta_{\alpha}^v	\nabla_v^jY_{s,t}\|_{L^\infty_{x,v}}}{|\alpha|^{b}}+\langle s\rangle^{d-1}\sup_{\alpha}	\frac{	\|\delta_{\alpha}^v	\nabla_v^jW_{s,t}\|_{L^\infty_{x,v}}}{|\alpha|^{b}}\lesssim_{\mathbf{c},b}1,\\&\sup_{0\leq s\leq t}\delta\langle s\rangle^{d-3+\delta}\sup_{\alpha}	\frac{	\|\delta_{\alpha}^v	\nabla_v^{\gamma+1}Y_{s,t}\|_{L^\infty_{x,v}}}{|\alpha|^{b-\delta}}+\langle s\rangle^{d-2}\sup_{\alpha}	\frac{	a\delta_{\alpha}^v	\nabla_v^{\gamma+1}W_{s,t}\|_{L^\infty_{x,v}}}{|\alpha|^{b}}\lesssim_{\mathbf{c},b}1,\\&\sum_{k=1}^{\gamma+1}\sup_{0\leq s\leq t}\langle s\rangle^{d-1+b}\sup_{\alpha}	\frac{	\|\delta_{\alpha}^x	\nabla_x^kY_{s,t}\|_{L^\infty_{x,v}}}{|\alpha|^{b}}+\langle s\rangle^{d+b}\sup_{\alpha}	\frac{	\|\delta_{\alpha}^x	\nabla_x^kW_{s,t}\|_{L^\infty_{x,v}}}{|\alpha|^{b}}\lesssim_{\mathbf{c},b}1,
\end{align*}
provided that  \begin{align}\label{A assump high}
	\sum_{j=2}^{\gamma+1}\sup_{|r|\leq 1}	\left|{A^{(j)}(r)}\right|\leq C_{A,\gamma}.
\end{align}
\end{proposition}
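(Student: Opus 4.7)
The plan is to mirror the bootstrap argument of Proposition \ref{estimates about Y and W}, but to exploit the extra regularity $\|(\rho,U)\|_{\gamma,b}\le\mathbf{c}$ in order to gain additional time-decay in the field estimates. The first step is to translate the $\|\cdot\|_{\gamma,b}$-bound (together with assumption \eqref{A assump high} and the chain rule applied to $A(U)$) into pointwise decay of the electric field; from Lemma \ref{varrho} applied to $\nabla_x^{k-1}(\rho+A(U))$ one obtains
\begin{equation*}
\|\nabla_x^{k}E(\tau)\|_{L^\infty}+\|\nabla_x^{k}E(\tau)\|_{\dot{B}^{b}_{\infty,\infty}}\lesssim_{\mathbf{c},b,C_{A,\gamma}}\frac{1}{\langle\tau\rangle^{d+k+b-1}},\quad 1\le k\le\gamma+1,
\end{equation*}
together with $\|E(\tau)\|_{L^\infty}\lesssim\mathbf{c}/\langle\tau\rangle^{d}$ and $\|E(\tau)\|_{\dot{B}^{b}_{\infty,\infty}}\lesssim\mathbf{c}/\langle\tau\rangle^{d+b}$; compared to \eqref{E estimate by rho}, each $x$-derivative now gains roughly a factor $\langle\tau\rangle^{-(1+b-a)}$, which is the source of the improved exponents in the conclusion.

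I would then proceed inductively on the order of differentiation, treating the pure $x$-derivatives first. Differentiating \eqref{definition Y W} $k$ times in $x$ and applying Fa\`a di Bruno's formula gives
\begin{equation*}
\nabla_x^{k}Y_{s,t}=\int_{s}^{t}(\tau-s)\sum_{\text{partitions}}(\nabla_x^{l}E)(\tau,x+\tau v+Y_{\tau,t})\bigotimes_{i}\bigl(\nabla_x^{k_i}Y_{\tau,t}+\mathbf{1}_{k_i=1}\mathrm{Id}\bigr)d\tau,
\end{equation*}
where the sum is over partitions $k_1+\cdots+k_l=k$. The single term with $l=1$, $k_1=k$ reproduces $\nabla_x^{k}Y_{\tau,t}$ linearly; it is absorbed on the left using the \emph{small-parameter} bound $\|\nabla_x E(\tau)\|_{L^\infty}\lesssim\varepsilon_0/\langle\tau\rangle^{d+a}$ provided by Proposition \ref{estimates about Y and W}, while all remaining terms involve only $\nabla_x^{k'}Y$ with $k'<k$ and are controlled by the induction hypothesis combined with the upgraded field bounds. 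The same scheme handles the H\"older differences $\delta_\alpha^{x}\nabla_x^{k}Y_{s,t}$: the factor $|\alpha|^{b}$ is paid for by the $\dot{B}^{b}_{\infty,\infty}$-norm of the top-order $\nabla_x^{l}E$ combined with either $|\alpha|^b$ from the modulus of continuity of the composition or by a H\"older difference falling on a lower-order $\nabla_x^{k'}Y$. The $W$-estimates follow from the identical analysis without the factor $(\tau-s)$, gaining one extra power of $\langle s\rangle^{-1}$.

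The main obstacle is the $v$-derivatives, because each $v$-differentiation of the argument $x+\tau v+Y_{\tau,t}$ produces a factor $\tau\mathrm{Id}+\nabla_v Y_{\tau,t}$ whose leading piece grows in $\tau$. Applied to $\nabla_v^{j}Y_{s,t}$, the Fa\`a di Bruno expansion yields contributions of the form $\nabla_x^{l}E\cdot\tau^{l_1}\prod_i\nabla_v^{k_i}Y_{\tau,t}$ with $l_1\le l\le j$; the upgraded decay $\|\nabla_x^{l}E\|_{L^\infty}\lesssim\langle\tau\rangle^{-(d+l-1+b)}$ exactly absorbs the $\tau^{l_1}$ and leaves an integrable source yielding the announced rate $\langle s\rangle^{-(d-2+b)}$ (resp.\ $\langle s\rangle^{-(d-1+b)}$ for $W$, thanks to the absence of the factor $(\tau-s)$). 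The closing linear piece $\nabla_x E(\tau,\cdot)\,\tau\,\nabla_v^{j}Y_{\tau,t}$ is again absorbed via the small-parameter bound coming from $\|(\rho,U)\|_{S^{\varepsilon_0}_a}\le\varepsilon_0$. The H\"older differences $\delta_\alpha^{v}\nabla_v^{j}Y_{s,t}$ for $j\le\gamma$ are handled as in \eqref{delta}: either the $|\alpha|^{b}$ is paid by the $\dot{B}^{b}_{\infty,\infty}$-norm of $\nabla_x^{l}E$, or the H\"older difference falls on a lower-order $\nabla_v^{k_i}Y$ and is controlled by the induction hypothesis. The borderline case $j=\gamma+1$ is the delicate one: the relevant integral $\int_{s}^{t}\langle\tau\rangle^{-(d-2+b)}d\tau$ diverges logarithmically at the critical endpoint, forcing a loss $\delta\in(0,b)$ in the H\"older exponent with a constant of order $1/\delta$, exactly as in the derivation of \eqref{delta}. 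Once all $v$-derivative bounds are in hand, the mixed H\"older estimates and the $W$-bounds follow from the same Fa\`a di Bruno decomposition without the factor $(\tau-s)$.
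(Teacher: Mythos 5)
Your overall strategy matches the paper's: translate the $\|(\rho,U)\|_{\gamma,b}$-bound into upgraded time-decay of $E$ and its $x$-derivatives, then bootstrap through a Fa\`a di Bruno expansion of $\nabla^k Y_{s,t},\nabla^k W_{s,t}$, absorb the top-order linear term using the small parameter from $\|(\rho,U)\|_{S^{\varepsilon_0}_a}\le\varepsilon_0$, control lower-order terms by induction, and accept a $1/\delta$ loss in the borderline $v$-derivative of order $\gamma+1$. This is exactly the structure of the paper's proof, centered on its display \eqref{E estimate by rho'}.

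However, your stated field estimate is quantitatively too weak, and as written it does not close the argument. You claim
$\|\nabla_x^{k}E(\tau)\|_{L^\infty}+\|\nabla_x^{k}E(\tau)\|_{\dot{B}^{b}_{\infty,\infty}}\lesssim\langle\tau\rangle^{-(d+k+b-1)}$
for $1\le k\le\gamma+1$, and $\|E(\tau)\|_{L^\infty}\lesssim\langle\tau\rangle^{-d}$. The paper's bound \eqref{E estimate by rho'} is
$\|\nabla_x^{j}E(\tau)\|_{L^\infty}\lesssim\langle\tau\rangle^{-(d+\min\{j+1,\gamma+b\})}$,
$\|\nabla_x^{j}E(\tau)\|_{\dot{B}^{b'}_{\infty,\infty}}\lesssim\langle\tau\rangle^{-(d+\min\{j+b'+1,\gamma+b\})}$,
which is \emph{one full power of $\langle\tau\rangle^{-1}$ stronger} than yours for all $k\le\gamma$ (they coincide only at $k=\gamma+1$). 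The reason you are losing a power is that you apply Lemma \ref{varrho} as $\nabla^{2}(1-\Delta)^{-1}\nabla^{k-1}(\rho+A(U))$ rather than exploiting the better splittings $(1-\Delta)^{-1}\nabla^{k+1}(\rho+A(U))$ (when $k+1\le\gamma$, boundedness of the Bessel potential on $L^\infty$) or $\nabla(1-\Delta)^{-1}\nabla^{k}(\rho+A(U))$ together with Gagliardo--Nirenberg interpolation inside the $\|\cdot\|_{\gamma,b}$ norm. This loss is not harmless: plugging your $\|\nabla_x E\|_{L^\infty}\lesssim\langle\tau\rangle^{-(d+b)}$ into the leading source for $\nabla_v Y$ gives
$\int_s^t(\tau-s)\,\tau\,\langle\tau\rangle^{-(d+b)}\,d\tau\lesssim\langle s\rangle^{-(d-3+b)}$,
one order short of the required $\langle s\rangle^{-(d-2+b)}$ (and similarly $\langle s\rangle^{-(d-2+b)}$ in place of $\langle s\rangle^{-(d-1+b)}$ for $\nabla_v W$). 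So the claim "the upgraded decay \ldots exactly absorbs the $\tau^{l_1}$ and leaves an integrable source yielding the announced rate" does not hold with the exponents you wrote down. Finally, your remark that the borderline integral $\int_s^t\langle\tau\rangle^{-(d-2+b)}d\tau$ "diverges logarithmically" is imprecise: it converges for every $d\ge3$; the $1/\delta$ degeneracy appears because after the correct bookkeeping the critical integrand is $\langle\tau\rangle^{-(d-2+\delta)}$, which for $d=3$ yields a constant of size $1/\delta$ as $\delta\to0^+$. If you replace your field bound by the sharp one above, the rest of your outline reproduces the paper's proof.
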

\begin{proof}  
	Note that for  $0\leq j\leq \gamma$ and $\|(\rho,U)\|_{S^{{\varepsilon_0}}_a}\leq \varepsilon_0$, we have 
	\begin{align*}
		\|\nabla^j(A(U))\|_{\dot{B}^a_{p,\infty}}
		\leq&\sum_{k=1}^{j}|A^{(k)}(U)|_{L^\infty}\sum_{\substack{m_1,...,m_k\geq 1\\m_1+..+m_k=j}}\left\|\prod_{i=1}^k \nabla^{m_i} U\right\|_{\dot{B}^a_{p,\infty}}+\sum_{k=1}^{j}|A^{(k)}(U)|_{\dot{B}^a_{p,\infty}}\sum_{\substack{m_1,...,m_k\geq 1\\m_1+..+m_k=j}}\left\|\prod_{i=1}^k \nabla^{m_i} U\right\|_{L^\infty}\\
		\leq& C_{A,\gamma}\left(\sum_{\substack{m_1,...,m_k\geq 1\\m_1+..+m_k=j}}\left\|\prod_{i=1}^k \nabla^{m_i} U\right\|_{\dot{B}^a_{p,\infty}}+\sum_{\substack{m_1,...,m_k\geq 1\\m_1+..+m_k=j}}\left\|\prod_{i=1}^k \nabla^{m_i} U\right\|_{L^\infty}\right)\lesssim_{\mathbf{c},C_{A,\gamma}}1.
	\end{align*}
	 Combining this with Lemma \ref{varrho},  there exists a constant $c_3=c_3(d,b,b',C_{A,\gamma},\mathbf{c})$ such that
\begin{equation}	\label{E estimate by rho'}		
\sup_{\tau\geq 0}\sum_{j=0}^{\gamma+1}	\langle \tau \rangle^{d+\min\{j+1,\gamma+b\}}\|\nabla_x^j E(\tau)\|_{L^\infty}	+\langle \tau\rangle^{d+\min\{j+b'+1,\gamma+b\}}\|\nabla_x^j E(\tau)\|_{\dot{B}^{b'}_{\infty,\infty}}
\lesssim_{b,b'}  \|\rho+A(U)\|_{\gamma,b} \leq c_3,
\end{equation} 
for any $b'\in (0,b]$. \\	
\textbf{(1)}	Let $c_1$ be in \eqref{c1}. As the proof of \eqref{nabla x Y estimate}, we have 
\begin{align}\label{a1}
\begin{split}
	\|\nabla_v Y_{s,t}\|_{L^\infty_{x,v}}\leq & \int_{s}^{t}|\tau-s|\|\nabla_x E(\tau)\|_{L^\infty}|\tau|d\tau+ \int_{s}^{t}|\tau-s|\|\nabla_x E(\tau)\|_{L^\infty}\||\nabla_vY_{\tau,t}\|_{L^\infty_{x,v}}d\tau
	\\	
	\leq&c_3\int_{s}^{t}\frac{|\tau|^2}{\langle \tau\rangle^{d+b+1}}d\tau+c_0	\left(\sup_{s\leq \tau\leq t}\langle \tau\rangle^{d-2+b}	\|	\nabla_vY_{\tau,t}\|_{L^\infty_{x,v}}\right)\int_{s}^{t}\frac{|\tau|\|(\rho,U)\|_{S^{{\varepsilon_0}}_a}}{\langle \tau\rangle^{d-2+b}\langle \tau\rangle ^{d+a}}d\tau\\
	\leq&c_3c_1\frac{1}{\langle s\rangle^{d+b-2}}+c_0c_1\|(\rho,U)\|_{S^{{\varepsilon_0}}_a}\left(\sup_{s\leq \tau\leq t}\langle \tau\rangle^{d-2+b}	\|	\nabla_vY_{\tau,t}\|_{L^\infty_{x,v}}\right)\frac{1}{\langle s\rangle^{d+b-2}}.
\end{split}
\end{align}
Since $\|(\rho,U)\|_{S^{{\varepsilon_0}}_a}\leq\frac{1}{400_0c_1d^2}$, then we have
\begin{align*}
\sup_{0\leq s\leq t}\langle s\rangle^{d-2+b}	\|	\nabla_vY_{s,t}\|_{L^\infty_{x,v}}\leq 2c_3c_1\mathbf{c}\lesssim_{\mathbf{c},b,C_{A,\gamma}}1.
\end{align*}
Similarily, we obtain
\begin{align*}
\sup_{0\leq s\leq t}\langle s\rangle^{d-1+b}	\|	\nabla_vW_{s,t}\|_{L^\infty_{x,v}}\lesssim_{\mathbf{c},b,C_{A,\gamma}}1.
\end{align*}
\textbf{(2)}  As  \eqref{a1}, it is easy to check that 
\begin{align}\label{z1}
\sum_{j=0}^{\gamma+1}\|\nabla_v^j \left(Y_{s,t},W_{s,t}\right)\|_{L^\infty_{x,v}}+\sum_{j=0}^{\gamma}\sup_\alpha\frac{\|\delta_{\alpha}^v\nabla_v^j \left(Y_{s,t},W_{s,t}\right)\|_{L^\infty_{x,v}}}{|\alpha|^b}\lesssim_{\mathbf{c},b,C_{A,\gamma},t}1,
\end{align} for any $0\leq s\leq t<\infty$.\\
By \eqref{z1}, one has  
\begin{align*}
\sup_\alpha\frac{\|	\delta_{\alpha}^v\nabla_v^{k+1} Y_{s,t}\|_{L^\infty_{x,v}}}{|\alpha|^{b-\delta}}\leq&  \int_{s}^{t}(\tau-s)\|\nabla E(\tau)\|_{L^\infty}
\sup_\alpha\frac{\|	\delta_{\alpha}^v\nabla_v^{k+1} Y_{\tau,t}\|_{L^\infty_{x,v}}}{|\alpha|^{b-\delta}} d\tau\\&+ \int_{s}^{t}(\tau-s)\|\nabla E(\tau)\|_{B^{b-\delta}_{\infty,\infty}}\left(|\tau|+ \|\nabla_v Y_{\tau,t}\|_{L^\infty_{x,v}}\right)^{b-\delta}
\|\nabla_v^{k+1} Y_{\tau,t}\|_{L^\infty_{x,v}}d\tau
\\&+C(\mathbf{c})\sum_{j=1}^{k}\int_{s}^{t} (\tau-s)\|\nabla^{j+1} E(\tau)\|_{B^{b-\delta}_{\infty,\infty}}
(|\tau|+1)^{j+1+b-\delta} d\tau
\\&+C(\mathbf{c})\sum_{j=1}^{k}\int_{s}^{t} (\tau-s)\|\nabla^{j+1} E(\tau)\|_{L^\infty}
(|\tau|+1)^{j+1} d\tau\\\overset{\eqref{E estimate by rho}, \eqref{E estimate by rho'}}\leq&c_0 \int_{s}^{t}\frac{\|(\rho,U)\|_{S^{{\varepsilon_0}}_a}}{\langle \tau\rangle^{d-1+a}}
\sup_\alpha\frac{\|	\delta_{\alpha}^v\nabla_v^{k+1} Y_{\tau,t}\|_{L^\infty_{x,v}}}{|\alpha|^{b-\delta}}d\tau
+\frac{C(\mathbf{c},b,C_{A,\gamma})}{\langle s\rangle^{d-2+\delta}}+\frac{C(\mathbf{c},b,C_{A,\gamma})\textbf{1}_{k=\gamma}}{\delta\langle s\rangle^{d-3+\delta}}.
\end{align*}
One derives that 
\begin{align*}\omega(s)\sup_\alpha\frac{\|	\delta_{\alpha}^v\nabla_v^{k+1} Y_{s,t}\|_{L^\infty_{x,v}}}{|\alpha|^{b-\delta}}\leq \frac{1}{4}	B(t)+C(\mathbf{c},b,C_{A,\gamma}),
\end{align*}
where 
\begin{equation*}
B(t)=	\sup_{0\leq\tau\leq t}	\omega(\tau)\sup_\alpha\frac{\|	\delta_{\alpha}^v\nabla_v^{k+1} Y_{\tau,t}\|_{L^\infty_{x,v}}}{|\alpha|^{b-\delta}},~~~	\omega(s)=\begin{cases} 	\langle s\rangle^{d-2}~~~\text{if}~~k\leq \gamma-1,\\ 	\delta\langle  s\rangle^{d-3+\delta}~~\text{if}~~k= \gamma.\end{cases}
\end{equation*}
This implies 
\begin{equation*}
B(t)\lesssim_{\mathbf{c},b,C_{A,\gamma}} 1. 
\end{equation*}
In particular, 
\begin{equation}
\sum_{k=0}^{\gamma-1}		\sup_{0\leq\tau\leq t}	\langle\tau\rangle^{d-2}\sup_\alpha\frac{\|	\delta_{\alpha}^v\nabla_v^{k+1} Y_{\tau,t}\|_{L^\infty_{x,v}}}{|\alpha|^{b}}+\delta \sup_{0\leq\tau\leq t}	\langle\tau\rangle^{d-3+\delta}\sup_\alpha\frac{\|	\delta_{\alpha}^v\nabla_v^{\gamma+1} Y_{\tau,t}\|_{L^\infty_{x,v}}}{|\alpha|^{b-\delta}}\lesssim_{\mathbf{c},b,C_{A,\gamma}} 1.
\end{equation}
Similarly, one also has
\begin{align*}
\sup_\alpha\frac{\|	\delta_{\alpha}^x\nabla_x^{k+1} Y_{s,t}\|_{L^\infty_{x,v}}}{|\alpha|^{b}}\leq & \int_{s}^{t}(\tau-s)\|\nabla E(\tau)\|_{L^\infty}
\sup_\alpha\frac{\|	\delta_{\alpha}^x\nabla_x^{k+1} Y_{\tau,t}\|_{L^\infty_{x,v}}}{|\alpha|^{b}} d\tau\\&+ \int_{s}^{t}(\tau-s)\|\nabla E(\tau)\|_{B^{a}_{\infty,\infty}}
\|\nabla_x^{k+1} Y_{\tau,t}\|_{L^\infty_{x,v}}d\tau
\\&+C(\mathbf{c})\sum_{j=1}^{k}\int_{s}^{t} (\tau-s)\left(\|\nabla^{j+1} E(\tau)\|_{B^{b}_{\infty,\infty}}+\|\nabla^{j+1} E(\tau)\|_{L^{\infty}}\right) d\tau\\\overset{\eqref{E estimate by rho}, \eqref{E estimate by rho'}}\leq& c_0 \int_{s}^{t}\frac{\|(\rho,U)\|_{S^{{\varepsilon_0}}_a}}{\langle \tau\rangle^{d-1+a}}
\sup_\alpha\frac{\|	\delta_{\alpha}^x\nabla_x^{k+1} Y_{\tau,t}\|_{L^\infty_{x,v}}}{|\alpha|^{b}} +\frac{C(\mathbf{c},b,C_{A,\gamma})}{\langle s\rangle^{d-1+b}}.
\end{align*}
This gives 
\begin{equation}\label{z7}
\sup_{0\leq s\leq t}	\langle s\rangle^{d-1+b}\sup_\alpha\frac{\|	\delta_{\alpha}^x\nabla_x^{k+1} Y_{s,t}\|_{L^\infty_{x,v}}}{|\alpha|^{b}}\lesssim_{\mathbf{c},b,C_{A,\gamma}} 1. 
\end{equation}
By the same argument, we obtain
\begin{equation*}
\sup_{0\leq s\leq t}	\langle s\rangle^{d-1-\mathbf{1}_{k=\gamma}}\sup_{\alpha}	\frac{	\|\delta_{\alpha}^v	\nabla_v^{k+1}W_{s,t}\|_{L^\infty_{x,v}}}{|\alpha|^{a}}+\sup_{0\leq s\leq t}	\langle s\rangle^{d+a}\sup_\alpha\frac{\|	\delta_{\alpha}^x\nabla_x^{k+1} W_{s,t}\|_{L^\infty_{x,v}}}{|\alpha|^{a}}\lesssim_{\mathbf{c},b,C_{A,\gamma}}1.
\end{equation*}
Therefore, the proof is complete. 
\end{proof}
\begin{remark}
For $d\geq4$, we have 
\begin{align*}
\sup_{0\leq s\leq t}\langle s\rangle^{d-3}\sup_{\alpha}	\frac{	\|\delta_{\alpha}^v	\nabla_v^{\gamma+1}Y_{s,t}\|_{L^\infty_{x,v}}}{|\alpha|^{b}}\lesssim_{\mathbf{c},b,C_{A,\gamma}}1.
\end{align*}
\end{remark}
\begin{proposition}\label{Y W difference}
Let $a\in (\frac{\sqrt{5}-1}{2},1)$,  $\widetilde{\varepsilon_0}$ be as in Proposition \ref{estimates about Y and W},  $\rho_1,\rho_2$ be such that $\|(\rho_1,U_1)\|_{S^{{\varepsilon_0}}_a},\|(\rho_2,U_2)\|_{S^{{\varepsilon_0}}_a}\leq\varepsilon_0\leq \widetilde{\varepsilon_0}$ .  Let $(X_{s,t}^1,V_{s,t}^1)$, $(X_{s,t}^2,V_{s,t}^2)$  be solutions to \eqref{X and V} associated to $E^1(s,x)=\nabla_x(1-\Delta)^{-1}(\rho_1+A(U_1))(x)$ and $E^2(s,x)=\nabla_x(1-\Delta)^{-1}(\rho_2+A(U_2))(x)$. Let  $(Y_{s,t}^1,W_{s,t}^1)$, $(Y_{s,t}^2,W_{s,t}^2)$ be such that 
\begin{equation}\label{XiYiViWi def}
X_{s,t}^i(x,v)=x-(t-s)v+Y_{s,t}^i(x-vt,v),V_{s,t}^i(x,v)=v+W_{s,t}^i(x-vt,v),~~i=1,2.
\end{equation}
Then for $\mathbf{Y}_{s,t}=Y_{s,t}^1-Y_{s,t}^2,\mathbf{W}_{s,t}=W_{s,t}^1-W_{s,t}^2$, we have
\begin{align}
&\sup_{0\leq s\leq t}\langle s\rangle^{d-2+a}\left(	\|\mathbf{Y}_{s,t}\|_{L^\infty_{x,v}}+\sup_\alpha\frac{	\|\delta_{\alpha}^x\mathbf{Y}_{s,t}\|_{L^\infty_{x,v}}}{|\alpha|^a}\right)+\sup_{0\leq s\leq t}\langle s\rangle^{d-2}\sup_\alpha\frac{	\|\delta_{\alpha}^v\mathbf{Y}_{s,t}\|_{L^\infty_{x,v}}}{|\alpha|^a}\lesssim_{a} \|(\rho_1-\rho_2,U_1-U_2)\|_{S^{{\varepsilon_0}}_a},\label{results of Y12 small}
\\&	\sup_{0\leq s\leq t}\langle s\rangle^{d-1+a}\left(	\|\mathbf{W}_{s,t}\|_{L^\infty_{x,v}}+\sup_\alpha\frac{	\|\delta_{\alpha}^v\mathbf{W}_{s,t}\|_{L^\infty_{x,v}}}{|\alpha|^a}\right)+\sup_{0\leq s\leq t}	\langle s\rangle^{d-1}\sup_\alpha\frac{	\|\delta_{\alpha}^v\mathbf{W}_{s,t}\|_{L^\infty_{x,v}}}{|\alpha|^a}\lesssim_{a} \|(\rho_1-\rho_2,U_1-U_2)\|_{S^{{\varepsilon_0}}_a}.\label{results of W12 small}
\end{align}
\end{proposition}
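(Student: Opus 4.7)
The plan is to mimic the fixed-point estimates carried out in Proposition \ref{estimates about Y and W}, but applied to the \emph{differences} $\mathbf{Y}_{s,t}$ and $\mathbf{W}_{s,t}$. Writing out the defining integral \eqref{definition Y W} for $i=1,2$ and subtracting, one gets
\begin{align*}
\mathbf{Y}_{s,t}(x,v)=\int_{s}^{t}(\tau-s)\Bigl[(E^1-E^2)(\tau,x+\tau v+Y^1_{\tau,t})+\bigl(E^2(\tau,x+\tau v+Y^1_{\tau,t})-E^2(\tau,x+\tau v+Y^2_{\tau,t})\bigr)\Bigr]d\tau,
\end{align*}
and the analogous formula (without the $(\tau-s)$ factor) for $\mathbf{W}_{s,t}$. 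The first bracket produces a forcing term, while the second is linear in $\mathbf{Y}_{\tau,t}$ and will be absorbed by the smallness of $\varepsilon_0$.

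For the forcing term I would use Lemma \ref{varrho} applied to $(\rho_1-\rho_2)+(A(U_1)-A(U_2))$, noting that by Assumption \ref{A assump} together with $\|U_i\|_{L^\infty}\lesssim\varepsilon_0^{2/3}$ one has
\begin{align*}
\|A(U_1)-A(U_2)\|_{L^p}+\|A(U_1)-A(U_2)\|_{\dot B^a_{p,\infty}}\lesssim C_A\,\varepsilon_0^{2/3}\|U_1-U_2\|_{L^p\cap \dot B^a_{p,\infty}},
\end{align*}
so after dividing by $\varepsilon_0^{1/3}$ this contribution is controlled by $\|(\rho_1-\rho_2,U_1-U_2)\|_{S^{\varepsilon_0}_a}$. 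Hence
\begin{align*}
\|(E^1-E^2)(\tau)\|_{L^\infty}+\|\nabla_x(E^1-E^2)(\tau)\|_{L^\infty}+\|\nabla_x(E^1-E^2)(\tau)\|_{\dot B^a_{\infty,\infty}}\lesssim\frac{\|(\rho_1-\rho_2,U_1-U_2)\|_{S^{\varepsilon_0}_a}}{\langle\tau\rangle^{d+a}}.
\end{align*}
For the second bracket I use the mean value inequality together with \eqref{E estimate by rho} for $E^2$:
\begin{align*}
\bigl|E^2(\tau,x+\tau v+Y^1_{\tau,t})-E^2(\tau,x+\tau v+Y^2_{\tau,t})\bigr|\leq\|\nabla_x E^2(\tau)\|_{L^\infty}\,\|\mathbf{Y}_{\tau,t}\|_{L^\infty_{x,v}}\lesssim\frac{\varepsilon_0}{\langle\tau\rangle^{d+a}}\|\mathbf{Y}_{\tau,t}\|_{L^\infty_{x,v}}.
\end{align*}
Plugging both bounds into the integral equation for $\mathbf{Y}_{s,t}$ and using $\int_s^t(\tau-s)\langle\tau\rangle^{-d-a}d\tau\lesssim\langle s\rangle^{-(d-2+a)}$ as in \eqref{c1}, the linear term is absorbed by the left-hand side provided $\varepsilon_0\leq\widetilde\varepsilon_0$ is small enough, yielding the $L^\infty$ estimate for $\mathbf{Y}_{s,t}$. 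The $\mathbf{W}$ analogue is identical with one less power of $\tau-s$, giving the sharper decay $\langle s\rangle^{-(d-1+a)}$.

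The H\"older-in-$x$ estimates proceed exactly like \eqref{deltanabla x Y estimate}: take $\delta^x_\alpha$ of the difference formula, split $\delta^x_\alpha[E^2\circ(\cdot+Y^1)-E^2\circ(\cdot+Y^2)]$ into a piece involving $\delta^x_\alpha\mathbf{Y}$ (absorbed by smallness of $\varepsilon_0$) and pieces in which the H\"older seminorm of $\nabla_x E^2$ acts, using the already-established bound $\|\nabla_x Y^i_{\tau,t}\|_{L^\infty}\lesssim\varepsilon_0\langle\tau\rangle^{-(d-2+a)}$ from Proposition \ref{estimates about Y and W}. For the $\delta^v_\alpha$ estimates one pays an extra factor $\tau$ (since velocity translations move the base point by $\tau\alpha$), which is exactly the source of the weaker decay $\langle s\rangle^{-(d-2)}$ for $\sup_\alpha\|\delta^v_\alpha\mathbf{Y}_{s,t}\|_{L^\infty}/|\alpha|^a$ and $\langle s\rangle^{-(d-1)}$ for the corresponding $\mathbf{W}$ quantity; the bootstrap closes as long as $a>\tfrac{\sqrt5-1}{2}$ so that the integral $\int_s^t(\tau-s)\cdot\tau^a\cdot\langle\tau\rangle^{-d-a}d\tau$ still has the right decay.

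The main obstacle I anticipate is the $\delta^v_\alpha\mathbf{Y}$ bound: here one cannot afford to lose any power of $\tau$, and one must correctly decompose $\delta^v_\alpha\bigl[E^2(\tau,x+\tau v+Y^1_{\tau,t}(x,v))-E^2(\tau,x+\tau v+Y^2_{\tau,t}(x,v))\bigr]$ into (i) a Lipschitz-in-$\nabla E^2$ piece hitting $\delta^v_\alpha\mathbf{Y}$, (ii) a H\"older-in-$\nabla E^2$ piece with argument shift $\tau\alpha+\delta^v_\alpha Y^1$ multiplied by $\mathbf{Y}$, and (iii) a second-order piece where $\nabla E^2$ pairs with $\delta^v_\alpha\mathbf{Y}^{\text{shifted}}$. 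Using the Hessian bound on $E^2$ afforded by Lemma \ref{varrho} together with the already-known velocity-H\"older control on $Y^i$ from \eqref{results of Y small}, each piece produces a kernel integrable in $\tau$ precisely at the threshold $\langle s\rangle^{-(d-2)}$, and the loop on $\delta^v_\alpha\mathbf{Y}_{\tau,t}$ closes by smallness of $\varepsilon_0$. The $\mathbf{W}$ version is completely parallel with one fewer power of $\tau-s$ in the kernel.
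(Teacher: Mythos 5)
Your proposal is correct and follows the same strategy as the paper's proof: split $\mathbf{Y}_{s,t}$ into a forcing term in $E^1-E^2$ (controlled via Lemma \ref{varrho} and Assumption \ref{A assump} applied to $A(U_1)-A(U_2)$) plus a term linear in $\mathbf{Y}_{\tau,t}$ absorbed by smallness of $\varepsilon_0$, then handle the H\"older-in-$x$ and H\"older-in-$v$ seminorms by the same argument-shift bookkeeping, paying $\langle\tau\rangle^a$ in the velocity case. One minor inaccuracy: the integral $\int_s^t(\tau-s)\langle\tau\rangle^a\langle\tau\rangle^{-(d+a)}\,d\tau\lesssim\langle s\rangle^{-(d-2)}$ converges with the right decay for all $a\in(0,1)$ once $d\geq3$, so the threshold $a>\tfrac{\sqrt5-1}{2}$ is not what makes this particular bootstrap close (that condition enters elsewhere, e.g.\ in Lemma \ref{esT}); this misattribution does not affect the validity of the argument.
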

\begin{proof}Let $c_0,c_1$  be the constant mentioned in the proof of Theorem \ref{estimates about Y and W}. We have 
\begin{align*}
&Y_{s,t}^k(x,v)=\int_{s}^{t}(\tau-s)E^k(\tau,x+\tau v+Y_{\tau,t}^k(x,v))d\tau,\\&
W_{s,t}^k(x,v)=-\int_{s}^{t}E^k(\tau,x+\tau v+Y_{\tau,t}^k(x,v))d\tau,~~~k=1,2.
\end{align*}
From $\|(\rho_k,U_k)\|_{S^{{\varepsilon_0}}_a}\leq \varepsilon_0$, one has $\|U_k\|_a\leq \varepsilon_0^{\frac{2}{3}}$, for $k=1,2$. Combining this with  Assumption \ref{A assump}, we obtain that
\begin{align*}
&	\|A(U_1)-A(U_2)\|_{\dot{B}^a_{\infty,\infty}}=\left\|(U_1-U_2)\int _0^1 A'(\varpi U_1+(1-\varpi U_2))d\varpi\right\|_{\dot{B}^a_{\infty,\infty}}\\
&	\leq\|U_1-U_2\|_{L^\infty}\left\|\int _0^1 A'(\varpi U_1+(1-\varpi U_2))d\varpi\right\|_{\dot{B}^a_{\infty,\infty}}\\
	&+\|U_1-U_2\|_{\dot{B}^a_{\infty,\infty}}\left\|\int _0^1 A'(\varpi U_1+(1-\varpi U_2))d\varpi\right\|_{L^\infty}\\
	&\leq\|U_1-U_2\|_{L^\infty}\left(\max_{|r|\leq\varepsilon_0^{2/3}}|A'(r)|\right)^{1-a}\left(\varepsilon_0^{\frac{2}{3}}\max_{|r|\leq\varepsilon_0^{2/3}}|A''(r)|\right)^a+\|U_1-U_2\|_{\dot{B}^a_{\infty,\infty}}\max_{|r|\leq\varepsilon_0^{2/3}}|A'(r)|\\
	&\leq C_A\varepsilon_0^{\frac{2}{3}}\left(\|U_1-U_2\|_{L^\infty}+\|U_1-U_2\|_{\dot{B}^a_{\infty,\infty}}\right).
\end{align*}
Therefore,
\begin{align}\label{E12 estimate by rho}
\begin{split}
&\|(E^1-E^2)(\tau)\|_{L^\infty}+\|\nabla_x (E^1-E^2)(\tau)\|_{L^\infty}	+\|\nabla_x (E^1-E^2)(\tau)\|_{\dot{B}^a_{\infty,\infty}}
\\&\leq c_0(a)\|\rho_1(\tau)-\rho_2(\tau)+A(U_1)(\tau)-A(U_2)(\tau)\|_{\dot{B}^a_{\infty,\infty}}\\
&\leq c_0(a)\left(\|\rho_1(\tau)-\rho_2(\tau)\|_a+C_A\varepsilon_0^{\frac{2}{3}}\left(\|U_1(\tau)-U_2(\tau)\|_{L^\infty}+\|U_1(\tau)-U_2(\tau)\|_{\dot{B}^a_{\infty,\infty}}\right)\right)\\&\leq c_0(a)\frac{\|(\rho_1-\rho_2,U_1-U_2)\|_{S_a^{\varepsilon_0}}}{\langle \tau\rangle^{d+a}}.
\end{split}
\end{align} 
These imply
\begin{align*}
\mathbf{Y}_{s,t}(x,v)=&\int_{s}^{t}(\tau-s)(E^1-E^2)(\tau,x+\tau v+Y_{\tau,t}^1(x,v))d\tau\\&+\int_{s}^{t}(\tau-s)\left(E^2(\tau,x+\tau v+Y_{\tau,t}^1(x,v))-E^2(\tau,x+\tau v+Y_{\tau,t}^2(x,v))\right)d\tau.
\end{align*}
Hence we know that 
\begin{align*}
\|\mathbf{Y}_{s,t}\|_{L^\infty_{x,v}}&\leq \int_{s}^{t}(\tau-s)\|(E^1-E^2)(\tau)\|_{L^\infty}d\tau+\int_{s}^{t}(\tau-s)\|\nabla E^2(\tau)\|_{L^\infty} 	\|\mathbf{Y}_{\tau,t}\|_{L^\infty_{x,v}}d\tau\\ &\leq  c_0 \int_{s}^{t}\frac{\|(\rho_1-\rho_2,U_1-U_2)\|_{S_a^{\varepsilon_0}}}{\langle \tau\rangle^{d-1+a}}d\tau+ c_0\int_{s}^{t}\frac{\|(\rho_2,U_2)\|_{S^{{\varepsilon_0}}_a}}{\langle \tau\rangle^{d-1+a}}\|\mathbf{Y}_{\tau,t}\|_{L^\infty_{x,v}}d\tau
\\ &\leq  c_0c_1 \frac{\|(\rho_1-\rho_2,U_1-U_2)\|_{S_a^{\varepsilon_0}}}{\langle s\rangle^{d-2+a}}+c_0c_1\frac{\|(\rho_2,U_2)\|_{S^{{\varepsilon_0}}_a}}{\langle s\rangle^{2d-4+2a}} \sup_{0\leq\tau\leq t} \langle \tau\rangle^{d-2+a} \|\mathbf{Y}_{\tau,t}\|_{L^\infty_{x,v}}.
\end{align*}
Since $\|(\rho_2,U_2)\|_{S^{{\varepsilon_0}}_a}\leq\frac{1}{400_0c_1d^2} $,
\begin{align*}
\sup_{0\leq s\leq t}\langle s\rangle^{d-2+a}	\|\mathbf{Y}_{s,t}\|_{L^\infty_{x,v}}\leq2  c_0c_1\|(\rho_1-\rho_2,U_1-U_2)\|_{S_a^{\varepsilon_0}}\lesssim_a\|(\rho_1-\rho_2,U_1-U_2)\|_{S_a^{\varepsilon_0}}.
\end{align*}
For the next terms, we have
\begin{align*}
\sup_\alpha\frac{	\|\delta_{\alpha}^x\mathbf{Y}_{s,t}\|_{L^\infty_{x,v}}}{|\alpha|^a}\leq& \int_{s}^{t}(\tau-s)\|(E^1-E^2)(\tau)\|_{B^{a}_{\infty,\infty}}d\tau+\int_{s}^{t}(\tau-s)\|\nabla E^2(\tau)\|_{B^a_{\infty,\infty}} 	\|\mathbf{Y}_{\tau,t}\|_{L^\infty_{x,v}}d\tau\\&+\int_{s}^{t}(\tau-s)\|\nabla E^2(\tau)\|_{L^\infty} \sup_\alpha\frac{	\|\delta_{\alpha}^x\mathbf{Y}_{\tau,t}\|_{L^\infty_{x,v}}}{|\alpha|^a}d\tau
\\ \leq& c_0c_1\frac{\|(\rho_1-\rho_2,U_1-U_2)\|_{S_a^{\varepsilon_0}}}{\langle s\rangle^{d-2+a}}+2c_0^2c_1^2\frac{\|(\rho_2,U_2)\|_{S^{{\varepsilon_0}}_a}\|(\rho_1-\rho_2,U_1-U_2)\|_{S_a^{\varepsilon_0}}}{\langle s\rangle^{d-2+a}} \\& +c_0c_1\frac{\|(\rho_2,U_2)\|_{S^{{\varepsilon_0}}_a}}{\langle s\rangle^{2(d-2+a)}} \sup_{0\leq \tau\leq t}\langle \tau\rangle^{d-2+a}\sup_\alpha\frac{	\|\delta_{\alpha}^x\mathbf{Y}_{\tau,t}\|_{L^\infty_{x,v}}}{|\alpha|^a}\\
\leq& c_0c_1\frac{\|(\rho_1-\rho_2,U_1-U_2)\|_{S_a^{\varepsilon_0}}}{\langle s\rangle^{d-2+a}} +c_0c_1\frac{\|(\rho_2,U_2)\|_{S^{{\varepsilon_0}}_a}}{\langle s\rangle^{2(d-2+a)}} \sup_{0\leq \tau\leq t}\langle \tau\rangle^{d-2+a}\sup_\alpha\frac{	\|\delta_{\alpha}^x\mathbf{Y}_{\tau,t}\|_{L^\infty_{x,v}}}{|\alpha|^a},
\end{align*}
and 
\begin{align*}
\sup_\alpha\frac{	\|\delta_{\alpha}^v\mathbf{Y}_{s,t}\|_{L^\infty_{x,v}}}{|\alpha|^a}\leq& \int_{s}^{t}(\tau-s)\|(E^1-E^2)(\tau)\|_{B^{a}_{\infty,\infty}}\langle \tau\rangle^ad\tau+\int_{s}^{t}(\tau-s)\|\nabla E^2(\tau)\|_{B^a_{\infty,\infty}} 	\|\mathbf{Y}_{\tau,t}\|_{L^\infty_{x,v}}\langle \tau\rangle^ad\tau\\&+\int_{s}^{t}(\tau-s)\|\nabla E^2(\tau)\|_{L^\infty} \sup_\alpha\frac{	\|\delta_{\alpha}^v\mathbf{Y}_{\tau,t}\|_{L^\infty_{x,v}}}{|\alpha|^a}d\tau
\\\leq & c_0c_1 \frac{\|(\rho_1-\rho_2,U_1-U_2)\|_{S_a^{\varepsilon_0}}}{\langle s\rangle^{d-2}}+2c_0^2c_1^2\frac{\|(\rho_2,U_2)\|_{S^{{\varepsilon_0}}_a}\|(\rho_1-\rho_2,U_1-U_2)\|_{S_a^{\varepsilon_0}}}{\langle s\rangle^{d-2}}\\&
 +c_0c_1\frac{\|(\rho_2,U_2)\|_{S^{{\varepsilon_0}}_a}}{\langle s\rangle^{2(d-2)+a}} \sup_{0\leq \tau\leq t}\langle \tau\rangle^{d-2}\sup_\alpha\frac{	\|\delta_{\alpha}^x\mathbf{Y}_{\tau,t}\|_{L^\infty_{x,v}}}{|\alpha|^a}
\\\leq & c_0c_1 \frac{\|(\rho_1-\rho_2,U_1-U_2)\|_{S_a^{\varepsilon_0}}}{\langle s\rangle^{d-2}} +c_0c_1\frac{\|(\rho_2,U_2)\|_{S^{{\varepsilon_0}}_a}}{\langle s\rangle^{2(d-2)+a}} \sup_{0\leq \tau\leq t}\langle \tau\rangle^{d-2}\sup_\alpha\frac{	\|\delta_{\alpha}^x\mathbf{Y}_{\tau,t}\|_{L^\infty_{x,v}}}{|\alpha|^a}.
\end{align*}
Therefore, we get \eqref{results of Y12 small} provided $\|(\rho_2,U_2)\|_{S^{{\varepsilon_0}}_a}\leq\frac{1}{400_0c_1d^2}$. Similarly, we also get \eqref{results of W12 small}. The proof is complete.
\end{proof}		
\begin{proposition}\label{proposition of psi and phi}  Let $a\in (\frac{\sqrt{5}-1}{2},1)$, $\widetilde{\varepsilon_0}$  be as in Proposition \ref{estimates about Y and W}. Let $(\rho,U)$ be such that $\|(\rho,U)\|_{S^{{\varepsilon_0}}_a}\leq\varepsilon_0\leq \widetilde{\varepsilon_0}$. Then, for any $0\leq s\leq t< \infty$, we have a $C^{1}$ map~
$
(x,v)\mapsto\Psi_{s,t}(x,v)
$ satisfying for all $x,v\in \mathbb{R}^d$:
\begin{equation}\label{X(x,Psi)}
X_{s,t}(x,\Psi_{s,t}(x,v))=x-(t-s)v,\qquad	
\end{equation}
and 
\begin{equation}\label{z3}
\langle s\rangle^{d-1+a}	\left(\left|	\Psi_{s,t}(x,v)-v\right|+	\left|	\nabla_x\Psi_{s,t}(x,v)\right|\right)+\langle s\rangle^{d-2+a}	\left|	\nabla_v\left(\Psi_{s,t}(x,v)-v\right)\right|\lesssim_a \|(\rho,U)\|_{S^{{\varepsilon_0}}_a}.
\end{equation}
\end{proposition}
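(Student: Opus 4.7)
The plan is to rewrite the implicit equation $X_{s,t}(x,\Psi) = x-(t-s)v$ using the representation \eqref{z4} as
\[
(t-s)\bigl(\Psi_{s,t}(x,v)-v\bigr) = Y_{s,t}(x-t\Psi_{s,t}(x,v),\Psi_{s,t}(x,v)),
\]
and, via the integral form \eqref{definition Y W}, equivalently as
\[
\Psi_{s,t}(x,v)-v = \frac{1}{t-s}\int_s^t(\tau-s)E(\tau,X_{\tau,t}(x,\Psi_{s,t}(x,v)))\,d\tau.\qquad(\star)
\]
Existence and uniqueness of $\Psi_{s,t}(x,v)$ then follow from a contraction-mapping argument on the map $T(w) := v+\tfrac{1}{t-s}\int_s^t (\tau-s) E(\tau,X_{\tau,t}(x,w))\,d\tau$. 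Using the bound $\|\nabla_x E(\tau)\|_{L^\infty}\lesssim \varepsilon_0\langle\tau\rangle^{-(d+a)}$ from \eqref{E estimate by rho} together with $|\partial_w X_{\tau,t}| \lesssim (t-\tau)+\varepsilon_0$ (which follows from Proposition \ref{estimates about Y and W} and the explicit form of $\partial_w X_{\tau,t} = -(t-\tau)I - t\partial_xY_{\tau,t}+\partial_v Y_{\tau,t}$), the Lipschitz constant of $T$ is controlled by $\tfrac{C\varepsilon_0}{t-s}\int_s^t(\tau-s)((t-\tau)+\varepsilon_0)\langle\tau\rangle^{-(d+a)}\,d\tau$, which a dichotomy $s\le t/2$ versus $s\ge t/2$ bounds by $C\varepsilon_0$. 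Thus $T$ is a $\tfrac12$-contraction for $\varepsilon_0$ small.

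The pointwise bound $\langle s\rangle^{d-1+a}|\Psi_{s,t}(x,v)-v|\lesssim \varepsilon_0$ is read off directly from $(\star)$: $|\Psi-v|\le \int_s^t\|E(\tau)\|_{L^\infty}d\tau\lesssim \varepsilon_0\int_s^t\langle\tau\rangle^{-(d+a)}d\tau\lesssim \varepsilon_0\langle s\rangle^{-(d+a-1)}$. For the derivatives I differentiate $X_{s,t}(x,\Psi)=x-(t-s)v$ in $x$ and $v$ to obtain the linear relations
\[
\partial_w X_{s,t}(x,\Psi)\cdot\partial_x\Psi = -\partial_x Y_{s,t},\qquad \partial_w X_{s,t}(x,\Psi)\cdot(\partial_v\Psi-I) = t\partial_x Y_{s,t}-\partial_v Y_{s,t},
\]
(using $\partial_x X_{s,t}=I+\partial_x Y_{s,t}$ and the identity $-\partial_wX_{s,t}-(t-s)I = t\partial_xY_{s,t}-\partial_vY_{s,t}$). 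Since $\partial_w X_{s,t} = -(t-s)(I+O(\varepsilon_0))$ is invertible with $|[\partial_w X_{s,t}]^{-1}|\lesssim 1/(t-s)$, matters reduce to bounding $|\partial_x Y_{s,t}|$, $|\partial_v Y_{s,t}|$ and dividing by $(t-s)$.

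For these, I will again split $s\le t/2$ versus $s\ge t/2$. In the first regime, I apply the bounds from Proposition \ref{estimates about Y and W} directly, noting that $t-s\ge t/2\gtrsim \langle s\rangle$, which yields $|\partial_x\Psi|\lesssim\varepsilon_0\langle s\rangle^{-(d-1+a)}$ and $|\partial_v\Psi-I|\lesssim \varepsilon_0\langle s\rangle^{-(d-2+a)}$. In the second regime the Proposition bounds alone are insufficient because $t-s$ can be small, so I reopen the integral representations
\[
\partial_xY_{s,t}=\!\int_s^t\!(\tau-s)\nabla_xE(\tau,\cdot)(I+\partial_xY_{\tau,t})d\tau,\ \partial_vY_{s,t}=\!\int_s^t\!(\tau-s)\nabla_xE(\tau,\cdot)(\tau I+\partial_vY_{\tau,t})d\tau,
\]
from which $|\partial_xY_{s,t}|\lesssim\varepsilon_0(t-s)^2\langle s\rangle^{-(d+a)}$ and $|\partial_vY_{s,t}|\lesssim\varepsilon_0 t(t-s)^2\langle s\rangle^{-(d+a)}$. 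Dividing by $t-s$ and using $t\le 2s\lesssim\langle s\rangle$ recovers exactly $\varepsilon_0\langle s\rangle^{-(d-1+a)}$ and $\varepsilon_0\langle s\rangle^{-(d-2+a)}$ respectively, completing the estimate \eqref{z3}.

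The main obstacle is precisely this second regime $s\ge t/2$: one must not settle for the naive bounds in Proposition \ref{estimates about Y and W} but instead exploit the $(t-s)^2$ that is implicit in the integral definition of $Y_{s,t}$ (and its $x,v$-derivatives) in order to cancel the singular factor $1/(t-s)$ coming from $[\partial_wX_{s,t}]^{-1}$. Once this extraction is carried out and combined with the elementary inequality $t\lesssim\langle s\rangle$ available in that regime, the proof is essentially bookkeeping.
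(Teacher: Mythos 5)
Your approach is correct in substance and recovers the same estimates, but it is organized differently from the paper's. The paper introduces
\[
\Phi_{s,t}(x,v)=-\frac{1}{t-s}\int_s^t(\tau-s)E(\tau,x-(t-\tau)v+Y_{\tau,t}(x-vt,v))\,d\tau
\]
so that $X_{s,t}(x,v)=x-(t-s)\bigl(v+\Phi_{s,t}(x,v)\bigr)$, and then $\Psi_{s,t}(x,\cdot)$ is simply the inverse of $v\mapsto v+\Phi_{s,t}(x,v)$. Because the $1/(t-s)$ is already divided into the definition of $\Phi$, the estimate for $\nabla_v\Phi$ follows in one pass from
\[
\frac{(\tau-s)(t-\tau)}{t-s}\le \min(\tau-s,t-\tau)\lesssim\langle\tau\rangle,
\]
and no case split is needed. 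You instead differentiate the implicit equation $X_{s,t}(x,\Psi)=x-(t-s)v$ directly, inverting $\partial_wX_{s,t}$, whose inverse carries a genuine $1/(t-s)$; you then compensate by extracting a factor $(t-s)^2$ from the integral representation of $\nabla_{x,v}Y_{s,t}$. That mechanism is exactly equivalent to the paper's cancellation, just done after the fact rather than built into the ansatz, and it costs you a dichotomy. Both routes are valid; the paper's is a bit slicker.

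One small inaccuracy in your write-up: the claim ``$s\le t/2$ implies $t-s\ge t/2\gtrsim\langle s\rangle$'' fails for $t<1$, where $\langle s\rangle\sim 1$ but $t/2$ can be arbitrarily small. This is harmless because your second-regime bound $|\nabla_xY_{s,t}|\lesssim\varepsilon_0(t-s)^2\langle s\rangle^{-(d+a)}$ (and the analogous one for $\nabla_vY$) actually holds for all $0\le s\le t$ — the step $\langle\tau\rangle\ge\langle s\rangle$ requires no restriction to $s\ge t/2$. The correct dichotomy is therefore $t-s\lesssim\langle s\rangle$ (use the $(t-s)^2$ refinement) versus $t-s\gtrsim\langle s\rangle$ (use the Proposition \ref{estimates about Y and W} bounds directly), and your two estimates together do cover all of $0\le s\le t$. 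You should also verify explicitly, as you indicate, that $\tfrac{|t\partial_xY_{s,t}-\partial_vY_{s,t}|}{t-s}\lesssim\varepsilon_0$ so that $[\partial_wX_{s,t}]^{-1}$ exists with $|[\partial_wX_{s,t}]^{-1}|\lesssim (t-s)^{-1}$; this follows from the same $(t-s)^2$ extraction. With the dichotomy corrected, the proof is complete.
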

\begin{proof}
Define
\begin{align*}
\Phi_{s,t}(x,v)=-\frac{1}{t-s}\int_{s}^{t}(\tau-s)E(\tau,x-(t-\tau)v+Y_{\tau,t}(x-vt,v))d\tau.
\end{align*}
Hence, it follows from \eqref{definition of X and V} that
\begin{align}\label{X Phi}
X_{s,t}(x,v)&=x-v(t-s)+Y_{s,t}(x-vt,v)=x-(t-s)(v+\Phi_{s,t}(x,v)).
\end{align}
For the first estimate, since  $\|\nabla_xY_{\tau,t}\|_{L^\infty_{x,v}}\lesssim 1$, so 
\begin{equation}
\|\Phi_{s,t}\|_{L^\infty_{x,v}}\lesssim\int_{s}^{t}\|E(\tau)\|_{L^\infty_x}d\tau\lesssim_a\frac{\|(\rho,U)\|_{S^{{\varepsilon_0}}_a}}{\langle s\rangle^{d+a-1}}
\end{equation}
and
\begin{equation}
\|\nabla_x\Phi_{s,t}\|_{L^\infty_{x,v}}\leq\frac{1}{t-s}\int_{s}^{t}(\tau-s)\|\nabla_xE(\tau)\|_{L^\infty_{x}}(1+\|\nabla_xY_{\tau,t}\|_{L^\infty_{x,v}})d\tau\lesssim_a\frac{\|(\rho,U)\|_{S^{{\varepsilon_0}}_a}}{\langle s\rangle^{d+a-1}},
\end{equation}
we have
\begin{equation}\label{Phi infty norm eatimates}
\sup_{0\leq s,t\leq \infty}\langle s\rangle^{d+a-1}\left(\|\Phi_{s,t}\|_{L^\infty_{x,v}}+\|\nabla_x \Phi_{s,t}\|_{L^\infty_{x,v}}\right)	\lesssim_a \|(\rho,U)\|_{S^{{\varepsilon_0}}_a}.
\end{equation}
Now one has
\begin{align*}
\|\nabla_v\Phi_{s,t}\|_{L^\infty_{x,v}}\leq&\frac{1}{t-s}\int_{s}^{t}(\tau-s)\|\nabla_xE(\tau)\|_{L^\infty}\left(t-\tau+t\|\nabla_xY_{\tau,t}||_{L^\infty_{x,v}}+\|\nabla_v Y_{\tau,t}\|_{L^\infty_{x,v}}\right)d\tau\\
\leq&\frac{c_0\|(\rho,U)\|_{S^{{\varepsilon_0}}_a}}{t-s}\int_{s}^{t}\frac{\tau-s}{\langle \tau\rangle^{d+a}}\left((t-\tau)+(t-\tau)+\tau+1\right)d\tau\\
\leq&4c_0\|(\rho,U)\|_{S^{{\varepsilon_0}}_a}\int_{s}^{t}\frac{1}{\langle \tau\rangle^{d+a-1}}d\tau.
\end{align*}
Let $c_0$, $c_1$	be in \eqref{E estimate by rho} and \eqref{c1}	by \eqref{results of Y small} and \eqref{E estimate by rho}. One obtains that 
$$
\|\nabla_v\Phi_{s,t}\|_{L^\infty_{x,v}}\leq \frac{4c_0c_1\|(\rho,U)\|_{S^{{\varepsilon_0}}_a}}{\langle s\rangle^{d-2+a}}\leq\frac{1}{100d^2},$$
provided $\|(\rho,U)\|_{S^{{\varepsilon_0}}_a}\leq\frac{1}{400c_0c_1d^2}$,
and it is clear that under the assumption $\|(\rho,U)\|_{S^{{\varepsilon_0}}_a}\leq\frac{1}{400c_0c_1d^2}$, the map $(x,v)\mapsto (x,v+\Phi_{s,t}(x,v))$ is a $C^1$ differomorphism. Thus, there exists a map 		$(x,v)\mapsto\Psi_{s,t}(x,v)$ satisfying \eqref{X(x,Psi)}. Moreover, combining \eqref{X(x,Psi)} with \eqref{X Phi}, we have 
$
\Psi_{s,t}(x,v)+\Phi_{s,t}(x,\Psi_{s,t}(x,v))=v.
$\\
Now we have 
\begin{align*}
&	\left|	\Psi_{s,t}(x,v)-v\right|\leq \|\Phi_{s,t}\|_{L^\infty_{x,v}}\lesssim_a\frac{\|(\rho,U)\|_{S^{{\varepsilon_0}}_a}}{\langle s\rangle^{d+a-1}},\\&
\left|	\nabla_x\Psi_{s,t}(x,v)\right|\leq \|\nabla_x\Phi_{s,t}\|_{L^\infty_{x,v}}+\|\nabla_v\Phi_{s,t}\|_{L^\infty_{x,v}}	|\nabla_x\Psi_{s,t}(x,v)|,\\&
\left|	\nabla_v\left(\Psi_{s,t}(x,v)-v\right)\right|\leq \|\nabla_v\Phi_{s,t}\|_{L^\infty_{x,v}}+\|\nabla_v\Phi_{s,t}\|_{L^\infty_{x,v}}\left(	|\nabla_v(\Psi_{s,t}(x,v)-v)|+1\right).
\end{align*}
Then, the right hand side can be absorbed by the left. Hence, we get
\begin{align*}
&	\left\|	\nabla_x\Psi_{s,t}(x,v)\right\|_{L^\infty_{x,v}}\leq2 \|\nabla_x\Phi_{s,t}\|_{L^\infty_{x,v}}\lesssim_a\frac{\|(\rho,U)\|_{S^{{\varepsilon_0}}_a}}{\langle s\rangle^{d+a-1}},\\
&		\left|	\nabla_v\left(\Psi_{s,t}(x,v)-v\right)\right|\leq3 \|\nabla_v\Phi_{s,t}\|_{L^\infty_{x,v}}\lesssim
_a\frac{\|(\rho,U)\|_{S^{{\varepsilon_0}}_a}}{\langle s\rangle^{d-2+a}}.
\end{align*}
These follow \eqref{z3}.
\end{proof}	
\begin{remark}
	For $a_1\in(0,1)$, if we also have $\|(\rho,U)\|_{S^{{\varepsilon_0}}_{a_1}}\lesssim 1$,
	\begin{equation}\label{z3'}
		\langle s\rangle^{d-2+a_1}	\left|	\nabla_v\left(\Psi_{s,t}(x,v)-v\right)\right|\lesssim_{a_1} \|(\rho,U)\|_{S^{{\varepsilon_0}}_{a_1}}.
	\end{equation}	
\end{remark}
\section{Contribution of the initial data}\label{Contribution of the initial data} 
For $h=h(x,v):\mathbb{R}^d_x\times\mathbb{R}^d_v\to \mathbb{R}$, define
\begin{equation}
\mathcal{I}_h(\rho,U)(t,x,v)= \int_{\mathbb{R}^{d}}h(X_{0,t}(x,v),V_{0,t}(x,v))dv.
\end{equation}
\begin{proposition}\label{I a estimate} Let $a\in (\frac{\sqrt{5}-1}{2},1)$, $\widetilde{\varepsilon_0}$ be as in Proposition \ref{estimates about Y and W}. Let  $(\rho,U)$ be such that $\|(\rho,U)\|_{S^{{\varepsilon_0}}_a}\leq\varepsilon_0\leq \widetilde{\varepsilon_0}$.  Then there holds
\begin{equation}\label{I norm}
\|\mathcal{I}_h(\rho,U)\|_{\sigma}\lesssim_a  \sum_{p=1,\infty}\|\mathcal{D}^\sigma(h)\|_{L^1_{x}L^p_v\cap L^1_{v}L^p_x},	
\end{equation}
for any $\sigma\in [a,1)$.
Moreover,  for $0<\delta_0<2-\sqrt{3}$, if $\|(\rho,U)\|_{l,1-\delta_0}\leq \mathbf{c}$ for some $l\geq0$, then  for any $b\in (0,1-\delta_0)$, we have
\begin{equation}\label{higher I norm}
\|\mathcal{I}_h(\rho,U)\|_{l+1,b}\lesssim_{\mathbf{c},a,b,C_{A,l}}\frac{1}{1-\delta_0-b} \sum_{i=1}^{l+1}\|\mathcal{D}^{b} (\nabla^i_{x,v}h)\|_{L^1_{x}L^p_v\cap L^1_{v}L^p_x},
\end{equation}
provided that $	\sum_{j=2}^{l+1}\sup_{|r|\leq 1}	\left|{A^{(j)}(r)}\right|\leq C_{A,l}.$
\end{proposition}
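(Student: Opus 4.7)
The plan is to prove \eqref{I norm} and \eqref{higher I norm} by exploiting the decomposition \eqref{z4}, $X_{0,t}(x,v) = x - tv + Y_{0,t}(x-tv, v)$ and $V_{0,t}(x,v) = v + W_{0,t}(x-tv, v)$, together with the pointwise bounds on $(Y_{0,t}, W_{0,t})$ from Propositions \ref{estimates about Y and W} and \ref{proposition of higher derivative estimates Y W}. The $L^1_x$ piece of $\|\mathcal{I}_h\|_\sigma$ is immediate: since the characteristic vector field $(v, E(t,x))$ is divergence-free in $(x,v)$, the flow $(x,v) \mapsto (X_{0,t}, V_{0,t})$ preserves Lebesgue measure, hence $\|\mathcal{I}_h\|_{L^1_x} \leq \|h\|_{L^1_{x,v}}$. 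For the $L^\infty_x$ bound when $t \geq 1$, I would change variables $w = X_{0,t}(x, v)$ with $x$ fixed: $\nabla_v X_{0,t} = -t(I + \nabla_x Y_{0,t}) + \nabla_v Y_{0,t}$ has Jacobian comparable to $t^d$ by Proposition \ref{estimates about Y and W} (the corrections are $o(t)$ because $d \geq 3$), so $\|\mathcal{I}_h\|_{L^\infty_x} \lesssim t^{-d}\|h\|_{L^1_x L^\infty_v}$. For $t \leq 1$, I would instead change $V = V_{0,t}(x,v)$, whose $v$-Jacobian is close to the identity, giving $\|\mathcal{I}_h\|_{L^\infty_x} \lesssim \|h\|_{L^1_v L^\infty_x}$. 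Together these explain why both norms in the hypothesis \eqref{I norm} appear.

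The heart of the argument is the $\dot{B}^\sigma_{p,\infty}$ estimate. The naive pointwise Hölder bound $|h(X_1, V_1) - h(X_2, V_2)| \lesssim \mathcal{D}^\sigma h \cdot |\alpha|^\sigma$ (where subscripts denote evaluation at $x$ vs.\ $x - \alpha$), combined with the $L^\infty$ decay of the previous step, yields only $\langle t\rangle^{-d}$, a factor of $\langle t\rangle^{-\sigma}$ short. To recover the missing decay I plan to use a velocity-shift trick: in the $(x-\alpha)$ integral substitute $v \mapsto v - \alpha/t$, so that $(x-\alpha) - tv$ becomes $x - tv = w$. This produces the identity
\[
\delta_\alpha \mathcal{I}_h(t,x) = \int [h(X_1, V_1) - h(X_2, V_2)]\,dv,
\]
where $X_1 = w + Y_{0,t}(w, v)$, $V_1 = v + W_{0,t}(w, v)$, $X_2 = w + Y_{0,t}(w, v - \alpha/t)$, $V_2 = v - \alpha/t + W_{0,t}(w, v - \alpha/t)$, and $w = x-tv$, so that for $t \geq 1$ one has $|X_1 - X_2| + |V_1 - V_2| \lesssim |\alpha|/t$ by Proposition \ref{estimates about Y and W}. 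Hence $|h(X_1, V_1) - h(X_2, V_2)| \lesssim \mathcal{D}^\sigma h(X_1, V_1) \cdot (|\alpha|/t)^\sigma$, and inserting the $L^p$ estimates of the previous step (applied to $\mathcal{D}^\sigma h$ in place of $h$) produces $\|\delta_\alpha \mathcal{I}_h\|_{L^\infty_x}/|\alpha|^\sigma \lesssim t^{-d-\sigma} \|\mathcal{D}^\sigma h\|_{L^1_x L^\infty_v}$ and $\|\delta_\alpha \mathcal{I}_h\|_{L^1_x}/|\alpha|^\sigma \lesssim t^{-\sigma} \|\mathcal{D}^\sigma h\|_{L^1_{x,v}}$, matching exactly the weights $\langle t\rangle^{d+\sigma}$ and $\langle t\rangle^\sigma$ in $\|\cdot\|_\sigma$. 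For $t \leq 1$ the crude pointwise bound (without the shift) already suffices since $\langle t\rangle^{d+\sigma} \sim 1$.

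For the higher-derivative bound \eqref{higher I norm}, I would apply Faà di Bruno's formula to $\nabla_x^j[h(X_{0,t}, V_{0,t})]$ for $1 \leq j \leq l+1$: each resulting term is a finite product of factors $\nabla_x^{k_r}(X_{0,t}, V_{0,t})$ with $\sum_r k_r \leq j$, multiplied by a derivative $\nabla_{x,v}^i h$ with $1 \leq i \leq j$. Proposition \ref{proposition of higher derivative estimates Y W} (under the hypothesis on $A$) controls every such characteristic factor, and the bounds of the previous two steps are then applied term by term. The prefactor $1/(1 - \delta_0 - b)$ in the final bound arises precisely when the endpoint estimate of Proposition \ref{proposition of higher derivative estimates Y W} for $\delta_\alpha^v \nabla_v^{\gamma+1} Y_{s,t}$ is invoked with $\delta = 1 - \delta_0 - b$, which is the unique source of a $1/\delta$ blowup in the collection of bounds used.

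The main obstacle will be the implementation of the velocity-shift trick in the presence of the nontrivial $(Y_{0,t}, W_{0,t})$. In the free-transport case the substitution $v \mapsto v - \alpha/t$ is exact, but for the perturbed flow one must verify that the discrepancies $Y_{0,t}(w, v) - Y_{0,t}(w, v - \alpha/t)$ and its analogue for $W_{0,t}$ contribute only controlled corrections of size $O(|\alpha|/t)$. The key input is the uniform smallness of $\|\nabla_v Y_{0,t}\|_{L^\infty}$ and $\|\nabla_v W_{0,t}\|_{L^\infty}$ at $s=0$ supplied by Proposition \ref{estimates about Y and W}, which crucially relies on $d \geq 3$. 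For the higher-derivative statement the delicate point is the bookkeeping in the Faà di Bruno expansion together with the $\delta$-dependent endpoint estimates of Proposition \ref{proposition of higher derivative estimates Y W}; care is needed to ensure the critical $1/\delta$ factor appears in only one term, giving the sharp constant $\frac{1}{1-\delta_0 - b}$ in the statement.
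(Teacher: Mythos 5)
Your proposal follows essentially the same route as the paper's proof: change of variables (preserving Lebesgue measure, resp.\ $w = X_{0,t}(x,v)$ or equivalently $\Psi_{0,t}$ from Proposition \ref{proposition of psi and phi}) for the $L^p$ bounds, the velocity-shift $v \mapsto v - \alpha/t$ (the paper implements it by first passing to $w = x - vt$ and differencing in $w$, which is algebraically the same) to gain the extra $\langle t\rangle^{-\sigma}$ in the Besov estimate for $t\geq 1$, and Fa\`a di Bruno together with the endpoint modulus-of-continuity bounds of Proposition \ref{proposition of higher derivative estimates Y W} for the higher-derivative statement, with the $1/(1-\delta_0-b)$ factor traced to the same source. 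The only cosmetic differences are your direct appeal to measure preservation for the $L^1_x$ bound (the paper uses the equivalent Jacobian boundedness) and a one-step change of variable for the large-$t$ $L^\infty_x$ bound where the paper composes two changes.
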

\begin{remark}
By \eqref{d4}, for $d\geq4$, we can obtain better result as follows
\begin{equation}\label{higher I norm2}
\|\mathcal{I}_h(\rho,U)\|_{l+1,1-\delta_0}\lesssim_{\mathbf{c},a,b,C_{A,l}} \sum_{i=1}^{l+1}\|\mathcal{D}^{1-\delta_0} (\nabla^i_{x,v}h)\|_{L^1_{x}L^p_v\cap L^1_{v}L^p_x}.
\end{equation}
\end{remark}
\begin{remark}
We can not only use $\|h\|_{L^1_{x}L^1_v\cap L^1_{x}L^\infty_v}$ to control $\|I_h(\rho,U)(t)\|_{L^\infty}$ for  $0\leq t\leq 1$.\\
For example we take $$h(x,v)=\frac{1}{|x|^{\theta}(|x|^2+|v|^2+1)^{2d}},~~ X_{0,t(x,v)}=x-tv,~~V_{0,t}(x,v)=v,~~U=0,$$ for $0<\theta <d$. And it is easy to check that for $t\in[0,1]$, we have $$\|h\|_{L^1_{x}L^1_v\cap L^1_{x}L^\infty_v}<\infty,~~\|h\|_{ L^1_{v}L^\infty_x}=\infty, ~\text{and}~~\|I_h(\rho,U)(t)\|_{L^\infty}\sim \frac{1}{t^\theta}.
$$
\end{remark}
\begin{proof} 
$\textbf{(1)}$	Firstly, we deal with $\|	\mathcal{I}_h(\rho,U)(t,x)\|_{L^p_x}$. Note that 
$
(	x,v)\mapsto (X_{0,t}(x,v),V_{0,t}(x,v))$ corresponding to the transport matrix
\begin{align*}
\mbox{$\mathfrak{A}(t,x,v):=$} \left[ \begin{array}{cc}
	I_d+\nabla_xY_{0,t}(x-vt,v) & 	-tI_d+\nabla_vY_{0,t}(x-vt,v) \\ 	\nabla_xW_{0,t}(x-vt,v) & 	I_d+\nabla_vW_{0,t}(x-vt,v)  \end{array} \right].
\end{align*}
Since we have 
\begin{align*}
\|\nabla_{x,v}Y_{0,t}(x-vt,v)\|_{L^\infty_{x,v}}+\|\nabla_{x,v}W_{0,t}(x-vt,v)\|_{L^\infty_{x,v}}\leq 2c_0c_1\|(\rho,U)\|_{S^{{\varepsilon_0}}_a}\leq  2c_0c_1\varepsilon_0\leq\frac{1}{400d^2},
\end{align*} and  $\|\det\mathfrak{A}^{-1}(t,x,v)\|_{L^\infty_{x,v}}\lesssim1$. Then
we directly obtain that
\begin{equation}\label{L^p t small}
\begin{split}
	\|	\mathcal{I}_h(\rho,U)(t,x)\|_{L^1_x}=&\left\|\int_{\mathbb{R}^d}h(x-vt+Y_{0,t}(x-vt,v),v+W_{0,t}(x-vt,v))dv\right\|_{L^1_x}\\
	=&\left\|\int_{\mathbb{R}^d}h(x,v)|\det\mathfrak{A}^{-1}(t,x,v)|dv\right\|_{L^1_x}\lesssim \|h\|_{L^1_{v}L^1_x}.
\end{split}
\end{equation}
For the $\|	\mathcal{I}_h(\rho,U)(t,x)\|_{L^\infty_x}$ with $t\leq 1$, changing variable as $\tilde{v}=v+W_{0,t}(x-vt,v)$, we have
\begin{align}\label{kk1}
	\begin{split}
		\|\mathcal{I}_h(\rho,U)(t,x)\|_{L^\infty_x}=&\left\|\int_{\mathbb{R}^d}h(X_{0,t}(x,v),\tilde{v})\det{(Id+\nabla_vW_{0,t}(x-vt,v))^{-1}}{d\tilde{v}}\right\|_{L^\infty_x}\\
		\lesssim&\left\|\int_{\mathbb{R}^d}\int_{\mathbb{R}^d}h(X_{0,t}(x,v),\tilde{v})d \tilde{v}\right\|_{L^\infty_x}
		\lesssim_a \int_{\mathbb{R}^d}\|h(\cdot,\tilde{v})\|_{L^\infty}d\tilde{v}=\|h\|_{L^1_{v}L^\infty_x}.	
	\end{split}
\end{align}
For $t\geq1$ , we change of variable $w=x-vt$ to get more decay in time:
\begin{align*}
\mathcal{I}_h(\rho,U)(t,x)
=\int_{\mathbb{R}^{d}}h\left(w+Y_{0,t}\left(w,\frac{x-w}{t}\right),\frac{x-w}{t}+W_{0,t}\left(w,\frac{x-w}{t}\right)\right)\frac{dw}{t^d}.
\end{align*} 	
Thus
\begin{equation}\label{L^p t large}
\begin{split}
\|\mathcal{I}_h(\rho,U)(t,x)\|_{L^\infty_x}=&\left\|\int_{\mathbb{R}^d}h(w,V_{0,t}(x,\Psi_{0,t}(x,\frac{x-w}{t})))\det((\nabla_v\Psi_{0,t})(x,\frac{x-w}{t}))\frac{dw}{t^d}\right\|_{L^\infty_x}\\
\lesssim&\left\|\int_{\mathbb{R}^d}h\left(w,V_{0,t}\left(x,\Psi_{0,t}(x,\frac{x-w}{t})\right)\right)\frac{dw}{t^d}\right\|_{L^\infty_x}\left\|\det\left((\nabla_v\Psi_{0,t})(x,\frac{x-w}{t})\right)\right\|_{L^\infty_{x,w}}\\
\lesssim_a& \frac{1}{t^d}\int_{\mathbb{R}^d}\|h(w,.)\|_{L^\infty}dw=\frac{1}{t^d}\|h\|_{L^1_{x}L^\infty_v}.		
\end{split}
\end{equation}
Combine \eqref{L^p t small} with \eqref{kk1} and \eqref{L^p t large} , we have
\begin{align}\label{z5}
\langle 	t\rangle^{\frac{d(p-1)}{p}}\|\mathcal{I}_h(\rho,U)(t)\|_{L^p}\lesssim_a \|h\|_{L^1_{x}L^p_v\cap L^1_vL^p_x},~~p=1,\infty.
\end{align}
$\textbf{(2)}$ Now we deal with the term $\|\mathcal{I}_h(\rho,U)(t)\|_{B^\sigma_{p,\infty}}$. From Proposition \ref{estimates about Y and W}, we have
\begin{align*}
&\|\nabla_x X(x,v)\|_{L^\infty_{x,v}}+	\|\nabla_x V(x,v)\|_{L^\infty_{x,v}}\lesssim_a1,\\
&\sup_{\alpha/t} \frac{\left|\delta_\frac{\alpha}{t} Y_{0,t}\left(w,\cdot\right)\left(\frac{x-w}{t}\right)\right|} {|\alpha/t|}+	\sup_{\alpha/t} \frac{\left|\delta_{\frac{\alpha}{t}}W_{0,t}\left(w,\cdot\right)\left(\frac{x-w}{t}\right)\right|}{|\alpha/t|}\lesssim_a1.
\end{align*}
Thus, we get
\begin{align*}
\frac{|\delta_{\alpha}\mathcal{I}_h(\rho,U)(t,x)|}{|\alpha|^\sigma}\lesssim_a \sum_{z=x,x-\alpha}	\int_{\mathbb{R}^{d}}\dot{\mathcal{D}}^\sigma (h)(X_{0,t}(z,v),V_{0,t}(z,v))dv,
\end{align*}
and	
\begin{align*}
&	\frac{|\delta_{\alpha}\mathcal{I}_h(\rho,U)(t,x)|}{|\alpha|^\sigma}\lesssim_a  \int_{\mathbb{R}^{d}}	\dot{\mathcal{D}}^\sigma_1 (h)\left(w+Y_{0,t}\left(w,\frac{x-w}{t}\right),\frac{x-w}{t}+W_{0,t}\left(w,\frac{x-w}{t}\right)\right)\frac{dw}{t^{d+\sigma}}\\&\qquad\qquad\qquad\quad+ 
\int_{\mathbb{R}^{d}}\dot{\mathcal{D}}^\sigma_2 (h)\left(w+Y_{0,t}\left(w,\frac{x-\alpha-w}{t}\right),\frac{x-\alpha-w}{t}+W_{0,t}\left(w,\frac{x-\alpha-w}{t}\right)\right)\frac{dw}{t^{d+\sigma}}\\&\qquad\qquad\qquad\lesssim_a \frac{1}{t^{\sigma}}\sum_{z=v,v-\frac{\alpha}{t}}	\int_{\mathbb{R}^{d}}\dot{\mathcal{D}}^\sigma (h)(X_{0,t}(x,z),V_{0,t}(x,z))dv	.
\end{align*}
Thanks to \eqref{z5} with $h=\dot{\mathcal{D}}^\sigma (h)$, we get 
\begin{equation}\label{z6}
\langle t\rangle^{\sigma+\frac{d(p-1)}{p}}\|\mathcal{I}_h(\rho,U)(t)\|_{\dot{B}^\sigma_{p,\infty}}\lesssim_a \|\dot{\mathcal{D}}^\sigma (h)\|_{L^1_{x}L^p_v\cap L^1_vL^p_x},~~p=1,\infty.
\end{equation}
Until now we finish the proof of \eqref{I norm}.\\
$\textbf{(3)}$Finally we consider the term $	\|\mathcal{I}_h(\rho,U)\|_{l+1,b}$. Thanks to Proposition \ref{proposition of higher derivative estimates Y W}, one has
\begin{align*}
\sum_{k=1}^{l+1}\sup_{\alpha}	\frac{	\|\delta_{\alpha}^x	\nabla_x^k(Y,W)_{0,t}\|_{L^\infty_{x,v}}}{|\alpha|^{b}}+\sum_{k=1}^{l+1}\sup_{0\leq s\leq t}\|\nabla_x^k(Y,W)_{0,t}\|_{L^\infty_{x,v}}\lesssim_{\mathbf{c},b,C_{A,l}}1.
\end{align*}
Therefore
\begin{align*}
\frac{|\delta_{\alpha}\nabla^{l}\mathcal{I}_h(\rho,U)(t,x)|}{|\alpha|^{b}}\lesssim& _{\mathbf{c},b,C_{A,l}}\sum_{j=1}^{l}\sum_{z=x,x-\alpha}\int_{\mathbb{R}^{d}}	\mathcal{D}^{b} (\nabla^j_{x,v}h)\left(X_{0,t}(z,v),V_{0,t}(z,v)\right){dw}.
\end{align*}
Thus,	\begin{equation}\|\nabla^l\mathcal{I}_h(\rho,U)(t)\|_{\dot{B}^{b}_{p,\infty}}\lesssim_{\mathbf{c},a,b,C_{A,l}}\sum_{j=1}^{l} \|\mathcal{D}^{b} (\nabla^j_{x,v}h)\|_{L^1_{x}L^p_v}.
\end{equation}
On the other hand, changing variable as $w=x-vt$ and using the estimation
\begin{align*}
\sum_{k=1}^{l+1}(1-\delta_0-b)\sup_{\alpha}	\frac{	\|\delta_{\alpha}^v	\nabla_v^k(Y,W)_{0,t}\|_{L^\infty_{x,v}}}{|\alpha|^{b}}+\sum_{k=1}^{l+1}\sup_{0\leq s\leq t}\|\nabla_v^k(Y,W)_{0,t}\|_{L^\infty_{x,v}}\lesssim_{\mathbf{c},b,C_{A,l}}1,
\end{align*}
we have
\begin{align*}
&\frac{|\delta_{\alpha}\nabla^{l}\mathcal{I}_h(\rho,U)(t,x)|}{|\alpha|^{b}}\\&\lesssim _{\mathbf{c},b}\frac{1}{1-\delta_0-b}\frac{1}{t^{d+l+b}}\Bigg[\sum_{j=1}^{l}\int_{\mathbb{R}^{d}}	\mathcal{D}^{b}_1 (\nabla^j_{x,v}h)\left(w+Y_{0,t}\left(w,\frac{x-w}{t}\right),\frac{x-w}{t}+W_{0,t}\left(w,\frac{x-w}{t}\right)\right)dw\\&\quad+ 
\int_{\mathbb{R}^{d}}\mathcal{D}^{b}_2 (\nabla^j_{x,v}h)\left(w+Y_{0,t}\left(w,\frac{x-\alpha-w}{t}\right),\frac{x-\alpha-w}{t}+W_{0,t}\left(w,\frac{x-\alpha-w}{t}\right)\right)dw\Bigg]\\&\lesssim _{\mathbf{c},b} \frac{1}{(1-\delta_0-b )t^{l+b}} \sum_{j=1}^{l}\sum_{z=v,v-\alpha/t}\int_{\mathbb{R}^{d}}	\left(\mathcal{D}^{b} (\nabla^j_{x,v}h)\right)(X_{0,t}(x,z),V_{0,t}(x,z))dv.
\end{align*}
Thanks to \eqref{z5}, we get for $p=1,\infty$
\begin{equation}\label{z8}
t^{l+b-\delta+\frac{d(p-1)}{p}}\|\nabla^l\mathcal{I}_h(\rho,U)(t)\|_{B^{b-\delta}_{p,\infty}}\lesssim_{\mathbf{c},a,b,C_{A,l}} \frac{1}{1-\delta_0-b} \sum_{j=1}^{l}\|\mathcal{D}^{b'} (\nabla^j_{x,v}h)\|_{L^1_{x}L^p_v}.
\end{equation}
Therefore, we obtain \eqref{higher I norm}.
\end{proof}
\begin{proposition}\label{I 12a estimate} Let $a\in (\frac{\sqrt{5}-1}{2},1)$, $\widetilde{\varepsilon}_0$  be as in Proposition \ref{estimates about Y and W}. Suppose that $\|(\rho_1,U_1)\|_{S^{{\varepsilon_0}}_a},\|(\rho_2,U_2)\|_{S^{{\varepsilon_0}}_a}\leq\varepsilon_0\leq\widetilde{\varepsilon}_0$. There holds, 
\begin{equation}\label{I 12norm}
\|\mathcal{I}_h(\rho_1,U_1)-\mathcal{I}_h(\rho_2,U_2)\|_{a}\lesssim_{a}   \|(\rho_1-\rho_2,U_1-U_2)\|_{S_a^{\varepsilon_0}}	\sum_{p=1,\infty}\|\mathcal{D}^a(\nabla_{x,v} h)\|_{L^1_{x}L^p_v\cap L^1_{v}L^p_x}.
\end{equation}
\end{proposition}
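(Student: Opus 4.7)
My approach closely mirrors that of Proposition 4.5, adding a mean-value decomposition in the characteristics. Writing $(X^k,V^k)=(X_{0,t}^k,V_{0,t}^k)$ for $k=1,2$ and $(X^\theta,V^\theta)=\theta(X^1,V^1)+(1-\theta)(X^2,V^2)$, the fundamental theorem of calculus gives
\begin{align*}
(\mathcal{I}_h(\rho_1,U_1)-\mathcal{I}_h(\rho_2,U_2))(t,x)
= \int_0^1\!\!\int_{\mathbb{R}^d}\Big[\nabla_X h(X^\theta,V^\theta)\cdot\mathbf{Y}_{0,t}(x-vt,v) + \nabla_V h(X^\theta,V^\theta)\cdot\mathbf{W}_{0,t}(x-vt,v)\Big]\,dv\,d\theta,
\end{align*}
where $\mathbf{Y},\mathbf{W}$ are as in Proposition \ref{Y W difference}. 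By Proposition \ref{estimates about Y and W}, each $(X^k,V^k)$, and hence $(X^\theta,V^\theta)$, is a small $C^1$ perturbation of the free flow $(x-vt,v)$ with uniformly bounded and invertible Jacobian, and is Lipschitz in $x$ with constant $\lesssim 1$.

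For the $L^p$ part of $\|\cdot\|_a$, I take absolute values and use Proposition \ref{Y W difference} at $s=0$, namely $\|\mathbf{Y}_{0,t}\|_{L^\infty}+\|\mathbf{W}_{0,t}\|_{L^\infty}\lesssim\|(\rho_1-\rho_2,U_1-U_2)\|_{S^{\varepsilon_0}_a}$, to obtain
\begin{align*}
|(\mathcal{I}_h(\rho_1,U_1)-\mathcal{I}_h(\rho_2,U_2))(t,x)|
\lesssim \|(\rho_1-\rho_2,U_1-U_2)\|_{S^{\varepsilon_0}_a}\int_0^1\!\!\int_{\mathbb{R}^d}|\nabla_{x,v}h|(X^\theta,V^\theta)\,dv\,d\theta.
\end{align*}
I then run exactly the three change-of-variable arguments of the proof of Proposition \ref{I a estimate}: the change $(x,v)\mapsto(X^\theta,V^\theta)$ for $L^1_x$ (yielding $\|\nabla h\|_{L^1_vL^1_x}$), the change $\tilde v=v+W^\theta$ for $L^\infty_x$ with $t\leq 1$ (yielding $\|\nabla h\|_{L^1_vL^\infty_x}$), and the change $w=x-vt$ for $L^\infty_x$ with $t\geq 1$ (yielding the $t^{-d}$ gain and $\|\nabla h\|_{L^1_xL^\infty_v}$).

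For the $\dot B^a_{p,\infty}$ part, I apply $\delta_\alpha^x$ to the integrand of the mean-value decomposition and split by a discrete product rule into
\begin{align*}
\delta_\alpha^x\big[\nabla h(X^\theta,V^\theta)\big]\cdot\mathbf{Y}_{0,t}(x-vt,v) + \nabla h(X^\theta,V^\theta)(x-\alpha,v)\cdot\delta_\alpha^x\mathbf{Y}_{0,t}(x-vt,v),
\end{align*}
and analogously for the $\mathbf{W}$ term. The Lipschitz estimate $|\delta_\alpha^x(X^\theta,V^\theta)|\lesssim|\alpha|$ from Proposition \ref{estimates about Y and W} combined with the Hölder regularity of $\nabla h$ gives $|\delta_\alpha^x[\nabla h(X^\theta,V^\theta)]|\lesssim \dot{\mathcal D}^a(\nabla h)(X^\theta,V^\theta)\cdot|\alpha|^a$, while Proposition \ref{Y W difference} at $s=0$ bounds $|\delta_\alpha^x\mathbf{Y}_{0,t}|+|\delta_\alpha^x\mathbf{W}_{0,t}|\lesssim\|(\rho_1-\rho_2,U_1-U_2)\|_{S^{\varepsilon_0}_a}\,|\alpha|^a$. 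Dividing by $|\alpha|^a$ and running the same three change-of-variable arguments in $x$ as above, with $\nabla h$ replaced by $\mathcal D^a(\nabla h)$, delivers the desired Besov bound and completes the proof.

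\textbf{Expected main obstacle.} The only delicate point is in the Besov step: after the spatial translation to $x-\alpha$ in one summand of the product rule, I must still control $\int|\nabla h(X^\theta(x-\alpha,v),V^\theta(x-\alpha,v))|\,dx$ uniformly. This is handled by translation invariance together with the uniform Jacobian bound from Proposition \ref{estimates about Y and W}, but it is what forces us to include $|\nabla h|$, not merely $\dot{\mathcal D}^a(\nabla h)$, inside the full norm $\mathcal D^a(\nabla_{x,v} h)$ on the right-hand side.
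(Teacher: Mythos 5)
Your proposal is correct and follows essentially the same path as the paper's own proof: the mean-value decomposition along the convex path of characteristics ($\theta$ in your notation, $\varpi$ in the paper's equation \eqref{XV varpi}), the product-rule split of $\delta_\alpha^x$ between the $\dot{\mathcal{D}}^a(\nabla_{x,v}h)$ factor and the $\delta_\alpha^x(\mathbf{Y},\mathbf{W})$ factor, and reuse of the same three change-of-variable arguments from Proposition \ref{I a estimate}, including the $w=x-vt$ trick at $t\geq 1$ to convert spatial increments into velocity increments of size $|\alpha|/t$ and thereby extract the $t^{-a}$ gain; you also correctly flag the $\sum_{z=x,x-\alpha}$ bookkeeping issue arising from the two evaluation points in the Hölder estimate, which the paper handles by writing the sum over $z=x,x-\alpha$ explicitly in \eqref{xiao} and \eqref{da}.
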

\begin{proof}
Let $X_{s,t}^1$, $X_{s,t}^2$, $Y_{s,t}^1$, $Y_{s,t}^2$, $V_{s,t}^1$, $V_{s,t}^2$, $\mathbf{Y}_{s,t}$, $\mathbf{W}_{s,t}$ be in Proposition \ref{Y W difference}. Then we have
\begin{align*}
&X_{s,t}^1(x,v)-X_{s,t}^2(x,v)=Y_{s,t}^1(x-vt,v)-Y_{s,t}^2(x-vt,v)=	\mathbf{Y}_{s,t}(x-vt,v),\\
& V_{s,t}^1(x,v)-V_{s,t}^2(x,v)=W_{s,t}^1(x-vt,v)-W_{s,t}^2(x-vt,v)=	\mathbf{W}_{s,t}(x-vt,v).
\end{align*}
Meanwhile, denote
\begin{align}\label{XV varpi}
\begin{split}
	&X_{s,t}(x,v,\varpi)=(1-\varpi)X_{s,t}^1(x,v)+\varpi X_{s,t}^2(x,v), ~~	V_{s,t}(x,v,\varpi)=(1-\varpi)V_{s,t}^1(x,v)+\varpi V_{s,t}^2(x,v),\\
&Y_{s,t}(x,v,\varpi)=(1-\varpi)Y_{s,t}^1(x,v)+\varpi Y_{s,t}^2(x,v), ~~	W_{s,t}(x,v,\varpi)=(1-\varpi)W_{s,t}^1(x,v)+\varpi W_{s,t}^2(x,v).
\end{split}
\end{align}
$\textbf{(1)}$ For the term $\|\mathcal{I}_h(\rho_1,U_1)(t)-\mathcal{I}_h(\rho_2,U_2)(t)\|_{L^p}$, we have 
\begin{align*}
&|\mathcal{I}_h(\rho_1,U_1)(t,x)-\mathcal{I}_h(\rho_2,U_2)(t,x)|\leq\left(\|\mathbf{Y}_{0,t}\|_{L^\infty_{x,v}}+\|\mathbf{W}_{0,t}\|_{L^\infty_{x,v}}\right) \int_{\mathbb{R}^{d}}\int_{0}^{1}\left|(\nabla_{x,v}h)(		X_{0,t}(x,v,\varpi),	V_{0,t}(x,v,\varpi))\right|d\varpi dv\\
&\overset{\eqref{results of W12 small}\eqref{results of Y12 small}}{\lesssim_a}	\|(\rho_1-\rho_2,U_1-U_2)\|_{S_a^{\varepsilon_0}}\int_{\mathbb{R}^{d}}\sup_{0\leq\varpi\leq1}\left|(\nabla_{x,v}h)(	X_{0,t}(x,v,\varpi),V_{0,t}(x,v,\varpi))\right| dv.
\end{align*}
As the proof of \eqref{z5}, one obtains
\begin{align*}
	\int_{\mathbb{R}^{d}}\sup_{0\leq\varpi\leq1}\left|(\nabla_{x,v}h)(	X_{0,t}(x,v,\varpi),V_{0,t}(x,v,\varpi))\right| dv\lesssim_{a}	\sum_{p=1,\infty}\|\nabla_{x,v}h\|_{L^1_{x}L^p_v\cap L^1_{v}L^p_x}.
\end{align*}
Thus
\begin{align}
\sum_{p=1,\infty}	\langle t\rangle^{\frac{d(p-1)}{p}}	\|\mathcal{I}_h(\rho_1)(t)-\mathcal{I}_h(\rho_2)(t)\|_{L^p}\lesssim_{a}  	\|(\rho_1-\rho_2,U_1-U_2)\|_{S_a^{\varepsilon_0}}	\sum_{p=1,\infty}\|\nabla_{x,v}h\|_{L^1_{x}L^p_v\cap L^1_{v}L^p_x}.
\end{align}
\textbf{(2)} Now we consider the term $\|\mathcal{I}_h(\rho_1,U_1)(t)-\mathcal{I}_h(\rho_2,U_2)(t)\|_{\dot{B}^a_{p,\infty}}$. We compute directly to get
\begin{equation}\label{xiao}
\begin{split}
	&	\frac{|\delta_\alpha\left(\mathcal{I}_h(\rho_1,U_1)-\mathcal{I}_h(\rho_2,U_2)\right)(t,x)|}{|\alpha|^a}\\&\quad\leq\left(\sup_z\frac{\|\delta_{z}^x\mathbf{Y}_{0,t}\|_{L^\infty_{x,v}}}{|z|^a}+\sup_z\frac{\|\delta_{z}^x\mathbf{W}_{0,t}\|_{L^\infty_{x,v}}}{|z|^a}\right) \int_{\mathbb{R}^{d}}\int_{0}^{1}\left|(\nabla_{x,v}h)(X_{0,t}(x,v,\varpi),	V_{0,t}(x,v,\varpi))\right|d\varpi dv\\&\quad\quad+\left(\|\mathbf{Y}_{0,t}\|_{L^\infty_{x,v}}+\|\mathbf{W}_{0,t}\|_{L^\infty_{x,v}}\right) \int_{\mathbb{R}^{d}}\int_{0}^{1}\sum_{z=x,x-\alpha}\dot{\mathcal{D}}^a(\nabla_{x,v}h)(X_{0,t}(z,v,\varpi),	V_{0,t}(z,v,\varpi))d\varpi dv\\
&\overset{\eqref{results of W12 small}\eqref{results of Y12 small}}{\lesssim_a}\|(\rho_1-\rho_2,U_1-U_2)\|_{S_a^{\varepsilon_0}}\sum_{z=x,x-\alpha}\int_{\mathbb{R}^{d}}\sup_{0\leq\varpi\leq1}\mathcal{D}^a(\nabla_{x,v}h)(X_{0,t}(z,v,\varpi),	V_{0,t}(z,v,\varpi))d\varpi dv.
\end{split}
\end{equation}
On the other hand, we  first make the change of variable as $w=x-vt$, then taking $\delta_\alpha$ to get the decay of $t^{-a}$, after this we go back to the original variable to obtain
\begin{align}\label{da}
\begin{split}
	&	\frac{|\delta_\alpha\left(\mathcal{I}_h(\rho_1,U_1)-\mathcal{I}_h(\rho_2,U_2)\right)(t,x)|}{|\alpha|^a}\\&\quad\leq\frac{1}{t^a}\left(\sup_z\frac{\|\delta_{z}^v\mathbf{Y}_{0,t}\|_{L^\infty_{x,v}}}{|z|^a}+\sup_z\frac{\|\delta_{z}^v\mathbf{W}_{0,t}\|_{L^\infty_{x,v}}}{|z|^a}\right) \int_{\mathbb{R}^{d}}\int_{0}^{1}\left|(\nabla_{x,v}h)(X_{0,t}(x,v,\varpi),	V_{0,t}(x,v,\varpi))\right|d\varpi dv\\&\qquad+\frac{1}{t^a}\left(\|\mathbf{Y}_{0,t}\|_{L^\infty_{x,v}}+\|\mathbf{W}_{0,t}\|_{L^\infty_{x,v}}\right) \int_{\mathbb{R}^{d}}\int_{0}^{1}\sum_{z=x,x-\alpha}\dot{\mathcal{D}}^a(\nabla_{x,v}h)(X_{0,t}(z,v,\varpi),	V_{0,t}(z,v,\varpi))d\varpi dv\\
&\overset{\eqref{results of W12 small}\eqref{results of Y12 small}}{\lesssim_a}\frac{\|(\rho_1-\rho_2,U_1-U_2)\|_{S_a^{\varepsilon_0}}}{t^a}\sum_{z=x,x-\alpha}\int_{\mathbb{R}^{d}}\sup_{0\leq\varpi\leq1}\mathcal{D}^a(\nabla_{x,v}h)(X_{0,t}(z,v,\varpi),	V_{0,t}(z,v,\varpi))d\varpi dv.
\end{split}
\end{align}
Combining \eqref{xiao} with \eqref{da}, we have
\begin{align*}
&	\frac{|\delta_\alpha\left(\mathcal{I}_h(\rho_1,U_1)-\mathcal{I}_h(\rho_2,U_2)\right)(t,x)|}{|\alpha|^a}\\&\quad\lesssim_a\frac{\|(\rho_1-\rho_2,U_1-U_2)\|_{S_a^{\varepsilon_0}}}{\langle t\rangle^a}\sum_{z=x,x-\alpha}\int_{\mathbb{R}^{d}}\sup_{0\leq\varpi\leq1}\mathcal{D}^a(\nabla_{x,v}h)(X_{0,t}(z,v,\varpi),	V_{0,t}(z,v,\varpi))d\varpi dv.
\end{align*}
Therefore, applying the the same method in \eqref{z5}, we obtain
\begin{align}\label{mathcalI difference}
\sup_{t>0}\langle t\rangle^{a+\frac{d(p-1)}{p}}\|(\mathcal{I}_h(\rho_1,U_1)-\mathcal{I}_h(\rho_2,U_2))(t)\|_{\dot{B}^a_{p,\infty}}\lesssim_a\|(\rho_1-\rho_2,U_1-U_2)\|_{S_a^{\varepsilon_0}} \sum_{p=1,\infty}\|\mathcal{D}^a(\nabla_{x,v} h)\|_{ L^1_{x}L^p_v}.
\end{align}  
Now the proof is complete.
\end{proof}	
\section{Contribution Of The Reaction Term}\label{Contribution of the reaction term}
In this section, we focus on estimate of the reaction term:
\begin{equation}\label{definition of R}
\mathcal{R}(\rho,U)(t,x)=\int_{0}^{t}\int_{\mathbb{R}^{d}}E(s,x-(t-s)v)\cdot\nabla_v\mu(v)dvds-\int_{0}^{t}\int_{\mathbb{R}^{d}}E(s,X_{s,t}(x,v))\cdot\nabla_v\mu(V_{s,t}(x,v))dvds.
\end{equation}
Denote that for any given $F:\mathbb{R}^+\times \mathbb{R}^d\to \mathbb{R}$ and $\eta:  \mathbb{R}^d\to \mathbb{R}$, 
\begin{align*}
\mathcal{T}[F,\eta](t,x)=-	\mathcal{T}_{NL}[F,\eta](t,x)+	\mathcal{T}_{L}[F,\eta](t,x),
\end{align*}
with 
\begin{align*}
&	\mathcal{T}_{L}[F,\eta](t,x)=\int_{0}^{t}\int_{\mathbb{R}^{d}}F(s,x-(t-s)v)\eta(v)dvds,\\&
\mathcal{T}_{NL}[F,\eta](t,x)=\int_{0}^{t}\int_{\mathbb{R}^{d}}F(s,X_{s,t}(x,v))\eta(V_{s,t}(x,v))dvds.
\end{align*}
Hence, it is clear that
\begin{align*}
\mathcal{R}(\rho)=\sum_{i=1}^{d}\mathcal{T}[E_i,\partial_{x_i}\mu].
\end{align*}
By changing variable, we reformuate as follows: 
\begin{align*}
&	\mathcal{T}_{L}[F,\eta](t,x)=	\int_{0}^{t}\int_{\mathbb{R}^{d}}F(s,w+\frac{s}{t}(x-w))\eta(\frac{x-w}{t})\frac{dwds}{t^d},\\&	\mathcal{T}_{NL}[F,\eta](t,x)=\int_{0}^{t}\int_{\mathbb{R}^d}
F\left(s,Y_{s,t}(w,\frac{x-w}{t})+w+\frac{s(x-w)}{t}\right)\eta\left(W_{s,t}(w,\frac{x-w}{t})+\frac{x-w}{t}\right)
\frac{dwds}{t^{d}}.
\end{align*}
Then, we have following Lemmas.
\begin{lemma} \label{esT}
Let $a\in(\frac{\sqrt{5}-1}{2},1)$ and  $\widetilde{\varepsilon}_0>0$ be as in Proposition \ref{estimates about Y and W}. Assume that $\|(\rho,U)\|_{S^{{\varepsilon_0}}_a}\leq\widetilde{\varepsilon}_0$. Let $(X_{s,t},V_{s,t})$ be  solution to \eqref{X and V} associated to $E(s,x)=\nabla_x(1-\Delta)^{-1}(\rho(s,x))$. Let  $Y_{s,t},W_{s,t}$ be such that 
\begin{equation}\label{XYVW def}
X_{s,t}(x,v)=x-(t-s)v+Y_{s,t}(x-vt,v),V_{s,t}(x,v)=v+W_{s,t}(x-vt,v).
\end{equation} Suppose $\eta$ be such that \begin{equation}\label{z9}
\sum_{j=0}^2\langle v\rangle^N|\nabla^j\eta(v)|\leq \mathbf{c}_1,
\end{equation}
for $N>d$. 
Then, 
\\ \textbf{(1)}
\begin{align}\label{z10'}
\left|	\mathcal{T}[F,\eta](t,x)\right|\lesssim_{a,\mathbf{c}_1} \|(\rho,U)\|_{S^{{\varepsilon_0}}_a} \int_{0}^{t}\int_{\mathbb{R}^{d}}|F(s,x-(t-s)v)|\frac{1}{\langle s\rangle^{d-2+a}}\frac{dvds}{\langle v\rangle^N}.
\end{align}
In particular, for $p=1,\infty$
\begin{align}\label{z21}
\langle t\rangle^\frac{d(p-1)}{p}\|	\mathcal{T}[F,\eta](t)\|_{L^p}\lesssim_{a,\mathbf{c}_1} \|(\rho,U)\|_{S^{{\varepsilon_0}}_a}\left(\sup_{s\in [0,t]}\langle s\rangle^\frac{d(p-1)}{p}\|F(s)\|_{L^p}+ \sup_{s\in [0,t]}\|F(s)\|_{L^1}\right).
\end{align}
\\ \textbf{(2)}
For any	$\sigma\in[a,1)$
\begin{align}
&\sum_{p=1,\infty}\langle t\rangle^{\frac{d(p-1)}{p}+\sigma}\|\mathcal{T}[F,\eta](t)\|_{\dot{B}^\sigma_{p,\infty}}\lesssim_{a,\mathbf{c}_1} \|(\rho,U)\|_{S^{{\varepsilon_0}}_a}^\sigma\sum_{p=1,\infty} \left(\sup_{s\in [0,t]} \langle s\rangle^{\frac{d(p-1)}{p}} \|F(s)\|_{L^p}+\sup_{s\in [0,t]}\langle s\rangle^{\frac{d(p-1)}{p}+\sigma}	\|F(s)\|_{\dot{F}^\sigma_{p,\infty}}\right). \label{z12}
\end{align}
\end{lemma}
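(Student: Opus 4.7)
The starting point is to fuse the two pieces of $\mathcal{T}[F,\eta]$ into a single integral. Using the change of variable $v\mapsto\Psi_{s,t}(x,v)$ in $\mathcal{T}_{NL}$ (valid since Proposition \ref{proposition of psi and phi} gives a $C^1$-diffeomorphism with $X_{s,t}(x,\Psi_{s,t}(x,v))=x-(t-s)v$), one rewrites
\begin{align*}
\mathcal{T}[F,\eta](t,x)=\int_{0}^{t}\!\!\int_{\mathbb{R}^{d}}F(s,x-(t-s)v)\,\Theta(t,s,x,v)\,dv\,ds,
\end{align*}
where $\Theta(t,s,x,v)=\eta(v)-\eta\bigl(V_{s,t}(x,\Psi_{s,t}(x,v))\bigr)\bigl|\det\nabla_v\Psi_{s,t}(x,v)\bigr|$. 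Splitting $\Theta=[\eta(v)-\eta(V_{s,t}(x,\Psi_{s,t}))]+\eta(V_{s,t}(x,\Psi_{s,t}))[1-|\det\nabla_v\Psi_{s,t}|]$, the first piece is estimated via $\|\nabla\eta\|_{L^\infty}\cdot|V_{s,t}(x,\Psi_{s,t})-v|\lesssim\|(\rho,U)\|\langle s\rangle^{-(d-1+a)}\langle v\rangle^{-N}$, and the second by $\|\eta\|_{L^\infty}\cdot|\nabla_v(\Psi_{s,t}-v)|\lesssim\|(\rho,U)\|\langle s\rangle^{-(d-2+a)}\langle v\rangle^{-N}$, the latter dictating the rate. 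The weight $\langle v\rangle^{-N}$ survives since $|V_{s,t}(x,\Psi_{s,t})-v|\ll\langle v\rangle$ keeps $\langle V_{s,t}(x,\Psi_{s,t})\rangle\sim\langle v\rangle$. This is precisely \eqref{z10'}.

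From here, \eqref{z21} follows by plugging the pointwise bound on $\Theta$ into the single-integral representation and splitting the $s$-integral at $t/2$. For $s\in[0,t/2]$, the change of variable $y=(t-s)v$ and Young's inequality produce the factor $(t-s)^{-d}\sim\langle t\rangle^{-d}$ multiplied by $\|F(s)\|_{L^1}$, yielding the $L^\infty$ decay; for $s\in[t/2,t]$ one uses Minkowski together with the assumed time decay $\|F(s)\|_{L^p}\lesssim\langle s\rangle^{-d(p-1)/p}$. Integrability in $v$ requires $N>d$ and convergence of $\int\langle s\rangle^{-(d-2+a)}ds$ uses $d\geq3$.

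For the Besov estimate \eqref{z12}, split on $|\alpha|$. When $|\alpha|\geq\langle t\rangle$, the trivial bound $\|\delta_\alpha\mathcal{T}\|_{L^p}/|\alpha|^\sigma\leq 2\|\mathcal{T}\|_{L^p}/\langle t\rangle^\sigma$ combined with \eqref{z21} already delivers the desired decay (with one full power of $\|(\rho,U)\|_{S^{\varepsilon_0}_a}\leq\|(\rho,U)\|_{S^{\varepsilon_0}_a}^\sigma$). When $|\alpha|<\langle t\rangle$, decompose
\begin{align*}
\delta_\alpha\mathcal{T}[F,\eta](t,x)=\int\!\!\int\bigl[\delta_\alpha F(s,\cdot-(t-s)v)(x)\bigr]\Theta(t,s,x,v)\,dv\,ds+\int\!\!\int F(s,x-\alpha-(t-s)v)\bigl[\delta_\alpha\Theta(t,s,\cdot,v)(x)\bigr]\,dv\,ds.
\end{align*}
The first integral is controlled via $\|\delta_\alpha F(s,\cdot)\|_{L^p}\leq|\alpha|^\sigma\|F(s)\|_{\dot F^\sigma_{p,\infty}}$ paired with the bound on $\Theta$ from step one. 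The second integral is the heart of the argument: one interpolates between the trivial control $|\delta_\alpha\Theta|\leq 2\|\eta\|_{L^\infty}\lesssim\langle v\rangle^{-N}$ (containing \emph{no} $\|(\rho,U)\|$-factor) and the Lipschitz control $|\delta_\alpha\Theta|\leq|\alpha|\|\nabla_x\Theta\|_{L^\infty_x}\lesssim|\alpha|\|(\rho,U)\|\langle v\rangle^{-N}/\langle s\rangle^{d-1+a}$, obtained by chain-rule differentiation of $V_{s,t}(x,\Psi_{s,t}(x,v))=\Psi_{s,t}(x,v)+W_{s,t}(x-\Psi_{s,t}(x,v)t,\Psi_{s,t}(x,v))$ and $|\det\nabla_v\Psi_{s,t}|$, using the estimates on $\nabla_x\Psi,\nabla_xW,\nabla_vW$ furnished by Propositions \ref{estimates about Y and W}--\ref{proposition of psi and phi}. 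The inequality $\min(A,B)\leq A^{1-\sigma}B^\sigma$ then yields
\begin{align*}
|\delta_\alpha\Theta(t,s,\cdot,v)|\lesssim\frac{|\alpha|^\sigma\,\|(\rho,U)\|_{S^{\varepsilon_0}_a}^\sigma}{\langle s\rangle^{\sigma(d-1+a)}\langle v\rangle^N},
\end{align*}
which produces the announced $\|(\rho,U)\|^\sigma$ factor. The requirement $\sigma(d-1+a)>1$ for convergence of the $s$-integral reduces exactly to $a(1+a)>1$, i.e.\ $a>\tfrac{\sqrt5-1}{2}$, in dimension $d=3$, matching the hypothesis of the theorem. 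A final split of $s$ at $t/2$ with Young/Minkowski (as in step two) produces the $t$-decay on the right-hand side of \eqref{z12}.

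The main technical obstacle is controlling $\nabla_x\Theta$: the chain-rule expansion of $\nabla_x[W_{s,t}(x-\Psi_{s,t}t,\Psi_{s,t})]$ produces a cross-term proportional to $t\,\nabla_x\Psi_{s,t}\cdot\nabla_xW_{s,t}$, whose naive bound grows with $t$. Both factors are however of order $\|(\rho,U)\|\langle s\rangle^{-(d-1+a)}$, so the cross-term is quadratically small in $\|(\rho,U)\|$ and, after careful bookkeeping of the decay rates from Propositions \ref{estimates about Y and W}--\ref{proposition of psi and phi}, is absorbed by the leading $\|(\rho,U)\|\langle s\rangle^{-(d-1+a)}\langle v\rangle^{-N}$ bound needed for the interpolation above.
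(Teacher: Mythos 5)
You have correctly reproduced the paper's argument for part (1): change of variables $v\mapsto\Psi_{s,t}(x,v)$ in $\mathcal T_{NL}$, combine the two integrals, estimate the weight $\Theta(t,s,x,v)=\eta(v)-\eta(V_{s,t}(x,\Psi_{s,t}))|\det\nabla_v\Psi_{s,t}|$ by $\|(\rho,U)\|_{S^{\varepsilon_0}_a}\langle s\rangle^{-(d-2+a)}\langle v\rangle^{-N}$, and split the $s$-integral at $t/2$ with the Appendix lemma. For part (2), however, your Eulerian decomposition misses the paper's essential mechanism and fails to produce the time decay in \eqref{z12}.

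The key point for $t\geq1$ is that the paper does \emph{not} apply $\delta_\alpha$ in the coordinates $(x,v)$. It first passes to Lagrangian coordinates $w=x-vt$ (so that $\mathcal T_{NL}=\int_0^t\int F(s,Y_{s,t}(w,\tfrac{x-w}{t})+w+\tfrac{s(x-w)}{t})\,\eta(\cdot)\,\tfrac{dw\,ds}{t^d}$) and only then takes $\delta_\alpha$ with $w$ held fixed. In these coordinates the shift $x\to x-\alpha$ translates to a shift of only $\tfrac{s\alpha}{t}$ in the position argument of $F$ and $\tfrac{\alpha}{t}$ in the velocity slots of $Y_{s,t},W_{s,t},\eta$. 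Consequently the paper's $\mathcal T^1_\alpha$ involves $\delta_{\frac{s\alpha}{t}}F$, not $\delta_\alpha F$, producing the extra factor $|\tfrac{s\alpha}{t}|^\sigma=\tfrac{|\alpha|^\sigma}{t^\sigma}|s|^\sigma$; and $\mathcal T^3_\alpha,\mathcal T^4_\alpha$ use $\delta^v_{\alpha/t}Y_{s,t}$ and $\delta^v_{\alpha/t}W_{s,t}$, again each contributing $\tfrac{|\alpha|}{t}$. Your decomposition, taken with $v$ fixed, gives the \emph{full} shift $\delta_\alpha F(s,\cdot-(t-s)v)$ in the first term. After pairing with $|\Theta|\lesssim\|(\rho,U)\|\langle s\rangle^{-(d-2+a)}\langle v\rangle^{-N}$ one obtains, already in the $p=1$ case,
\begin{align*}
\Bigl\|\int_0^t\!\!\int[\delta_\alpha F]\,\Theta\,dv\,ds\Bigr\|_{L^1_x}
\lesssim \|(\rho,U)\|_{S^{\varepsilon_0}_a}\,|\alpha|^\sigma
\int_0^t\frac{\|F(s)\|_{\dot B^\sigma_{1,\infty}}}{\langle s\rangle^{d-2+a}}\,ds
\lesssim \|(\rho,U)\|_{S^{\varepsilon_0}_a}\,|\alpha|^\sigma\sup_s\langle s\rangle^\sigma\|F(s)\|_{\dot B^\sigma_{1,\infty}},
\end{align*}
with no factor $\langle t\rangle^{-\sigma}$: the $s$-integral $\int_0^t\langle s\rangle^{-(d-2+a+\sigma)}ds$ converges to a constant, it does not decay as $t\to\infty$. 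The required left-hand side of \eqref{z12} for $p=1$ is $\langle t\rangle^{\sigma}\|\mathcal T\|_{\dot B^\sigma_{1,\infty}}$, so you are short by $\langle t\rangle^{-\sigma}$ and cannot close. The same shortfall occurs for $p=\infty$ on the range $s\in[0,t/2]$ (where the $v$-integral yields $t^{-d}$ but not $t^{-d-\sigma}$), and in your second term via the interpolated bound on $\delta_\alpha\Theta$. The split on $|\alpha|\geq\langle t\rangle$ versus $|\alpha|<\langle t\rangle$ does not rescue this, because the missing $\langle t\rangle^{-\sigma}$ is needed precisely when $|\alpha|<\langle t\rangle$.

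A second, more local problem: your claimed Lipschitz bound $|\nabla_x\Theta|\lesssim\|(\rho,U)\|\langle s\rangle^{-(d-1+a)}\langle v\rangle^{-N}$ is not justified. Expanding $\nabla_x\bigl[W_{s,t}(x-t\Psi_{s,t},\Psi_{s,t})\bigr]$ produces the cross-term $-t\,(\nabla_xW_{s,t})\cdot\nabla_x\Psi_{s,t}$, of size $t\,\|(\rho,U)\|^2\langle s\rangle^{-2(d-1+a)}$. You argue this is ``quadratically small and absorbed,'' but absorption into $\|(\rho,U)\|\langle s\rangle^{-(d-1+a)}$ requires $t\,\|(\rho,U)\|\langle s\rangle^{-(d-1+a)}\lesssim1$, which fails for $s=O(1)$ and $t\to\infty$ since $\|(\rho,U)\|\leq\varepsilon_0$ is a fixed small constant. (In the paper's Lagrangian framework this issue does not arise: the relevant velocity-differences $\delta^v_{\alpha/t}Y,\delta^v_{\alpha/t}W$ are controlled via $\nabla_vY,\nabla_vW$ times $\tfrac{|\alpha|}{t}$, and the factor $\tfrac{1}{t}$ kills any linear growth.) To repair both defects you would need to reorganize the Besov step around the Lagrangian change of variable, as the paper does, rather than around the fused Eulerian integrand.
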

\begin{proof} \textbf{(1)}
First of all, we change of variable and  obtain
\begin{align*}
|\mathcal{T}[F,\eta](t,x)|=&\left|-\int_{0}^{t}\int_{\mathbb{R}^{d}}F(s,x-(t-s)v)\eta(V_{s,t}(x,\Psi_{s,t}(x,v)))|det(\nabla_v\Psi_{s,t}(x,v))|dvds\right.\\
&\left.+\int_{0}^{t}\int_{\mathbb{R}^{d}}F(s,x-(t-s)v)\eta(v)dvds\right|\\
\leq& \int_{0}^{t}\int_{\mathbb{R}^{d}}|F(s,x-(t-s)v)|\left|\eta(V_{s,t}(x,\Psi_{s,t}(x,v)))-\eta(v)\right|dvds\\
&+\int_{0}^{t}\int_{\mathbb{R}^{d}}|F(s,x-(t-s)v)|\Big||\det(\nabla_v\Psi_{s,t}(x,v))|-1\Big|\left|\eta(V_{s,t}(x,\Psi_{s,t}(x,v)))\right|dvds.
\end{align*}
Hence, since we have 
\begin{align*}
\left|\eta(V_{s,t}(x,\Psi_{s,t}(x,v)))-\eta(v)\right|&\leq\int_{0}^{1}\left|\nabla\eta(\varpi V_{s,t}(x,v)+(1-\varpi)v)\right|V_{s,t}(x,v)-v|d\varpi\\
&{\lesssim_{\mathbf{c}_1}}\sup_{\varpi\in[0,1]}\frac{|V_{s,t}(x,v)-v|}{\langle \varpi W_{s,t}(x-vt,v)+v\rangle^N}\\
&\sim_{\mathbf{c}_1}\frac{|W_{s,t}(x-vt,v)|}{\langle v\rangle^N}\overset{\eqref{results of W small}, \eqref{z3}}{\lesssim_{a,\mathbf{c}_1}} \frac{\|(\rho,U)\|_{S^{{\varepsilon_0}}_a}}{\langle s\rangle^{{d-1+a}}{\langle v\rangle^N}},
\end{align*}
and
$\left|\eta(V_{s,t}(x,\Psi_{s,t}(t,x)))\right|{\lesssim_{\mathbf{c}_1}}{\langle W_{s,t}(x-t\Psi_{s,t}(t,x),\Psi_{s,t}(t,x))+v\rangle^{-N}}{\lesssim_{\mathbf{c}_1}}{\langle v\rangle^{-N}}$, 
where we use the fact that \\
$ 	\sup_{0\leq s\leq t}\|W_{s,t}\|_{L^\infty_{x,v}}\leq 1 $ in \eqref{results of W small}. Hence
\begin{align*}
	\left|	\mathcal{T}[F,\eta](t,x)\right|
	&\overset{\eqref{z3}}{\lesssim_{a,\mathbf{c}_1}} \int_{0}^{t}\int_{\mathbb{R}^{d}}|F(s,x-(t-s)v)|\left(\frac{\|(\rho,U)\|_{S^{{\varepsilon_0}}_a}}{\langle s\rangle^{d-1+a}}+\frac{\|(\rho,U)\|_{S^{{\varepsilon_0}}_a}}{\langle s\rangle^{d-2+a}}\right)\frac{dvds}{\langle v\rangle^N}\\
&	\lesssim_{a,\mathbf{c}_1} \|(\rho,U)\|_{S^{{\varepsilon_0}}_a} \int_{0}^{t}\int_{\mathbb{R}^{d}}|F(s,x-(t-s)v)|\frac{1}{\langle s\rangle^{d-2+a}}\frac{dvds}{\langle v\rangle^N}.
\end{align*}
This gives \eqref{z10'}. Moreover,
\begin{align*}
\|	\mathcal{T}[F,\eta](t)\|_{L^p}\lesssim_{a,\mathbf{c}_1} \|(\rho,U)\|_{S^{{\varepsilon_0}}_a} \int_{0}^{t}\left\|\int_{\mathbb{R}^{d}}|F(s,x-(t-s)v)|\frac{dv}{\langle v\rangle^N}\right\|_{L^p_x}\frac{ds}{\langle s\rangle^{d-2+a}}.
\end{align*} 
Then, we apply Lemma \ref{mathcal H1H2} with $\mathcal{H}_1=F$ and $\mathcal{H}_2=0$ to obtain
\begin{align*}
\|	\mathcal{T}[F,\eta](t)\|_{L^p}\lesssim_{a,\mathbf{c}_1}& t^{-\frac{d(p-1)}{p}} \|(\rho,U)\|_{S^{{\varepsilon_0}}_a} \int_{0}^{t/2}\|F(s)\|_{L^1}\frac{ds}{\langle s\rangle^{d-2+a}}+\|(\rho,U)\|_{S^{{\varepsilon_0}}_a}\int_{t/2}^{t}\|F(s)\|_{L^p}\frac{ds}{\langle s\rangle^{d-2+a}}\\
\lesssim_{a,\mathbf{c}_1}&\underbrace{ t^{-\frac{d(p-1)}{p}} \|(\rho,U)\|_{S^{{\varepsilon_0}}_a}\sup_{s\in [0,t]}\|F(s)\|_{L^1} \int_{0}^{t/2}\frac{ds}{\langle s\rangle^{d-2+a}}}_{\text{Thanks to }\eqref{mathcal H 1}}\\
&+\underbrace{\|(\rho,U)\|_{S^{{\varepsilon_0}}_a}\left(\sup_{s\in [0,t]}\langle s\rangle^\frac{d(p-1)}{p}\|F(s)\|_{L^p}\right)\int_{t/2}^{t}\frac{ds}{\langle s\rangle^{d-2+a}\langle s\rangle^\frac{d(p-1)}{p}}}_{\text{Thanks to }\eqref{mathcal H p}}\\
\lesssim_{a,\mathbf{c}_1}& \frac{1}{t^\frac{d(p-1)}{p}}\|(\rho,U)\|_{S^{{\varepsilon_0}}_a} \left(\sup_{s\in [0,t]}\|F(s)\|_{L^1}+ \sup_{s\in [0,t]}\langle s\rangle^\frac{d(p-1)}{p}\|F(s)\|_{L^p}\right).
\end{align*}
Combining this with 
\begin{align*}
\|	\mathcal{T}[F,\eta](t)\|_{L^p}\lesssim_{a,\mathbf{c}_1} \|(\rho,U)\|_{S^{{\varepsilon_0}}_a} \int_{0}^{t}\|F(s)\|_{L^p}\frac{1}{\langle s\rangle^{d-2+a}}ds\lesssim_{a,\mathbf{c}_1} \|(\rho,U)\|_{S^{{\varepsilon_0}}_a}\sup_{s\in [0,t]}\|F(s)\|_{L^p},
\end{align*} to obtain \eqref{z21}. \vspace{0.25cm}\\
\textbf{(2)} Now we estimate $\|\mathcal{T}[F,\eta](t)\|_{\dot{B}^\sigma_{p,\infty}}.$ \vspace{0.15cm}\\
\textbf{2.1)} Estimate $\|\mathcal{T}[F,\eta](t)\|_{\dot{B}^\sigma_{p,\infty}}$ for $0\leq t<1$. \\We divide $\delta_{\alpha}\mathcal{T}[F,\eta](t,x)$ into three terms:
\begin{align*}
\delta_{\alpha}\mathcal{T}[F,\eta](t,x)=\overline{\mathcal{T}}_{\alpha}^1[F,\eta](t,x)|+\overline{\mathcal{T}}_{\alpha}^2[F,\eta](t,x)+	\overline{\mathcal{T}}_{\alpha}^3[F,\eta](t,x),
\end{align*}
where
\begin{align*}
&	\overline{\mathcal{T}}_{\alpha}^1[F,\eta](t,x)=-\int_{0}^{t}\int_{\mathbb{R}^d}
\delta_{\alpha}F(s,.)\left(X_{s,t}(x,v)\right)\eta\left(V_{s,t}(x,v)\right)
dvds
+	\int_{0}^{t}\int_{\mathbb{R}^{d}}	\delta_{\alpha}F(s,.)(x-(t-s)v)\eta(v)dvds,
\\	&\overline{\mathcal{T}_{\alpha}}^2[F,\eta](t,x)=\int_{0}^{t}\int_{\mathbb{R}^d}
\left[F\left(s,X_{s,t}(x-\alpha,v)\right)-F\left(s,,X_{s,t}(x,v)-\alpha\right) \right]\eta\left(V_{s,t}(x,v)\right)dvds,
\\
&	\overline{\mathcal{T}}_{\alpha}^3[F,\eta](t,x)=\int_{0}^{t}\int_{\mathbb{R}^d}
F\left(s,X_{s,t}(x-\alpha,v)\right) \left[\eta\left(V_{s,t}(x-\alpha,v)\right)-\eta\left(V_{s,t}(x,v)\right) \right]
dvds.
\end{align*}
For the first term, using \eqref{z9} and \eqref{z10'} we have
\begin{align*}
\|\overline{\mathcal{T}}_{\alpha}^1[F,\eta](t)\|_{L^p}&=\|\mathcal{T}[\delta_{\alpha}F,\eta](t)\|_{L^p}\lesssim_{a,\mathbf{c}_1} \|(\rho,U)\|_{S^{{\varepsilon_0}}_a} \left(\sup_{s\in [0,t]}\|\delta_{\alpha}F(s)\|_{L^1}+\sup_{s\in [0,t]}\|\delta_{\alpha}F(s)\|_{L^p} \right)\\
&	\lesssim_{a,\mathbf{c}_1}|\alpha|^\sigma \|(\rho,U)\|_{S^{{\varepsilon_0}}_a} \left(\sup_{s\in [0,t]}\|F(s)\|_{\dot{B}^\sigma_{1,\infty}}+
\sup_{s\in [0,t]}\|F(s)\|_{\dot{B}^\sigma_{p,\infty}}\right).
\end{align*}
Since 
\begin{align*}
\left|X_{s,t}(x-\alpha,v)-\left(X_{s,t}(x-\alpha,v)-\alpha\right)\right|=\left|Y_{s,t}(x-tv,v)-Y_{s,t}(x-tv,v)\right|\leq|\alpha|\|\nabla_x Y_{s,t}\|_{L^\infty},
\end{align*} 
we can estimate  $	\overline{\mathcal{T}}_{\alpha}^2[F,\eta](t,x)$ as follows
\begin{align*}
\left\|\overline{\mathcal{T}}_{\alpha}^2[F,\eta](t,x)\right\|_{L^p_x}
\lesssim_{\mathbf{c}_1}&|\alpha|^\sigma\int_{0}^{t}\int_{\mathbb{R}^d}\|\sup_z	|z|^{-\sigma}|\delta_{z}F(s,.)\|_{L^p}\|\nabla_x Y_{{s,t}}\|_{L^\infty_{x,v}}^\sigma\frac{dv}{\langle v \rangle^N}ds
\\
\lesssim_{a,\mathbf{c}_1}&|\alpha|^\sigma\|(\rho,U)\|_{S^{{\varepsilon_0}}_a}^\sigma\left(\sup_{s\in [0,t]}\|F(s)\|_{\dot{F}^\sigma_{p,\infty}}\right)
\int_{0}^{t}\langle s\rangle^{-(d+a-2)\sigma}ds
\lesssim_{a,\mathbf{c}_1}|\alpha|^\sigma\|(\rho,U)\|_{S^{{\varepsilon_0}}_a}^\sigma\sup_{s\in [0,t]}\|F(s)\|_{\dot{F}^\sigma_{p,\infty}},
\end{align*}
where in the last inequality, we use $(d+a-3)\sigma+a\geq(a+1)a>1$.\\
Note that by \eqref{results of W small}, one has
$
\sup_{s,t}\|W_{s,t}\|_{L^\infty_{x,v}}\leq 1.
$
Then,
\begin{align}\label{V sim v}
\langle \varpi V_{s,t}(x,v)+(1-\varpi)V_{s,t}(x-\alpha,v)\rangle= \langle v+\varpi W_{s,t}(x-tv,v)+(1-\varpi)W_{s,t}(x-\alpha-tv,v)\rangle
\sim\langle v\rangle.
\end{align}	
We can estimate $\overline{\mathcal{T}}_{\alpha}^3[F,\eta](t)$.
\begin{align*}				
\|\overline{\mathcal{T}}_{\alpha}^3[F,\eta](t,x)\|_{L^p}&\leq \int_{0}^{t}\int_{\mathbb{R}^d}\int_0^1\|F(s)\|_{L^p}\left|\nabla\eta(\cdot)\right|(\varpi V_{s,t}(x,v)+(1-\varpi)V_{s,t}(x-\alpha,v)) \left|\delta_\alpha^xV_{s,t}(x,v)\right| dvdsd\varpi\\
&\lesssim_{\mathbf{c}_1} |\alpha|^\sigma\int_{0}^{t}\int_{\mathbb{R}^d}\|F(s)\|_{L^p}\frac{1}{\langle v\rangle^N}\sup_\alpha\frac{\left|\delta_\alpha^xV_{s,t}(x,v)\right|}{|\alpha|^\sigma} dvds\\
&\lesssim_{a,\mathbf{c}_1}|\alpha|^\sigma\|(\rho,U)\|_{S^{{\varepsilon_0}}_a}\left(\sup_{s\in [0,t]}\|F(s)\|_{L^p}\right)\int_0^t \frac{ds}{\langle s\rangle^{d-1+a}}
\lesssim_{a,\mathbf{c}_1}|\alpha|^\sigma\|(\rho,U)\|_{S^{{\varepsilon_0}}_a}^\sigma \sup_{s\in [0,t]} \|F(s)\|_{L^p} ,
\end{align*}
where we apply \eqref{results of W small}.\\
In conclusion,
for $0<t<1$, we get
\begin{align}\label{d3}
\|\mathcal{T}[F,\eta](t)\|_{\dot{B}^\sigma_{p,\infty}} 
\lesssim_{a,\mathbf{c}_1}& \|(\rho,U)\|_{S^{{\varepsilon_0}}_a}  ^\sigma \left(\sup_{s\in [0,t]} \|F(s)\|_{\dot{F}^\sigma_{p,\infty}}
+\sup_{s\in [0,t]}\|F(s)\|_{L^p}\right).
\end{align}
\textbf{2.2)}Estimate $\|\mathcal{T}[F,\eta](t)\|_{\dot{B}^\sigma_{p,\infty}}$ for $t\geq1$. \\
Set 
\begin{align}\label{Z1234}
\begin{split}
	&Z_1(x)=Y_{s,t}(w,\frac{x-w}{t})+w+\frac{s(x-w)}{t},~	Z_2(x)=W_{s,t}(w,\frac{x-w}{t})+\frac{x-w}{t},\\
	&Z_3(x)=W_{s,t}(w,\frac{x-\alpha-w}{t})+\frac{x-w}{t},~~
	Z_4(x)=Y_{s,t}(w,\frac{x-w}{t})+w+\frac{s(x-\alpha-w)}{t}.
\end{split}
\end{align}
We have 
\begin{align*}
\delta_{\alpha}\mathcal{T}[F,\eta](t,x)=\mathcal{T}_{\alpha}^1[F,\eta](t,x)|+\mathcal{T}_{\alpha}^2[F,\eta](t,x)+	\mathcal{T}_{\alpha}^3[F,\eta](t,x)+	\mathcal{T}_{\alpha}^4[F,\eta](t,x),
\end{align*}
where 
\begin{align*}
&	\mathcal{T}_{\alpha}^1[F,\eta](t,x)=-\int_{0}^{t}\int_{\mathbb{R}^d}
\delta_{\frac{s\alpha}{t}}F(s,.)\left(Z_1(x)\right)\eta\left(Z_2(x)\right)
\frac{dwds}{t^{d}} +	\int_{0}^{t}\int_{\mathbb{R}^{d}}	\delta_{\frac{s\alpha}{t}}F(s,.)(w+\frac{s}{t}(x-w))\eta(\frac{x-w}{t})\frac{dwds}{t^d},\\
&	\mathcal{T}_{\alpha}^2[F,\eta](t,x)=\int_{0}^{t}\int_{\mathbb{R}^d}
F\left(s,Z_1(x-\alpha)\right)(\delta_{\frac{\alpha}{t}}\eta)\left(Z_2(x)\right)
\frac{dwds}{t^{d}}
-	\int_{0}^{t}\int_{\mathbb{R}^{d}}F(s,w+\frac{s}{t}(x-w-\alpha))(\delta_{\frac{\alpha}{t}}\eta)(\frac{x-w}{t})\frac{dwds}{t^d},\\
&	\mathcal{T}_{\alpha}^3[F,\eta](t,x)=\int_{0}^{t}\int_{\mathbb{R}^d}
\left(F\left(s,Z_1(x-\alpha)\right)-F\left(s,Z_4(x)\right)\right) \eta\left(Z_2(x)\right)
\frac{dwds}{t^{d}},\\
&	\mathcal{T}_{\alpha}^4[F,\eta](t,x)=\int_{0}^{t}\int_{\mathbb{R}^d}
F\left(s,Z_1(x-\alpha)\right) \left(\eta\left(Z_3(x)\right)-\eta\left(Z_2(x)\right)\right)
\frac{dwds}{t^{d}}.
\end{align*}
Going back to the variable $v=\frac{x-w}{t}$,
and denote
\begin{align*}
Z_5(x)=Y_{s,t}(x-tv,v-\frac{\alpha}{t})+x-(t-s)v-\frac{s\alpha}{t},~~
Z_6(x)=W_{s,t}(x-tv,v-\frac{\alpha}{t})+v,
\end{align*}
we reformulate as follows:
\begin{align*}
&\mathcal{T}_{\alpha}^1[F,\eta](t,x)=\int_{0}^{t}\int_{\mathbb{R}^d}
\left(-	\delta_{\frac{s\alpha}{t}}F(s,.)\left(X_{s,t}(x,v)\right)\eta\left(V_{s,t}(x,v)\right)+	\delta_{\frac{s\alpha}{t}}F(s,.)(x-(t-s)v)\eta(v)\right)dvds,\\&
\mathcal{T}_{\alpha}^2[F,\eta](t,x)=\int_{0}^{t}\int_{\mathbb{R}^d}
\left(	F\left(s,X_{s,t}(x,v)\right)\left(\delta_{\frac{\alpha}{t}}\eta\right)\left(V_{s,t}(x,v)\right)-F(s,x-(t-s)v)\left(\delta_{\frac{\alpha}{t}}\eta\right)(v)\right)dvds,
\\
&	\mathcal{T}_{\alpha}^3[F,\eta](t,x)= \int_{0}^{t}\int_{\mathbb{R}^d}
\left(F\left(s,X_{s,t}(x,v)-\frac{s\alpha}{t}\right)-F\left(s,Z_5(x)\right)\right)
\eta\left(V_{s,t}(x,v)\right)dvds,
\\&
\mathcal{T}_{\alpha}^4[F,\eta](t,x)= \int_{0}^{t}\int_{\mathbb{R}^d}
F\left(s,Z_5(x)\right)
\left(\eta\left(V_{s,t}(x,v)\right)-\eta\left(Z_6(x)\right)\right)
dvds.
\end{align*}
	Note that for $p=1,\infty$ and $|\alpha|\geq t$
\begin{align*}
	\langle t\rangle^{\frac{d(p-1)}{p}+a}\frac{\|\delta_{\alpha}\mathcal{T}[F,\eta](t)\|_{L^p}}{|\alpha|^a}&\leq 2	\langle t\rangle^{\frac{d(p-1)}{p}}\|\delta_{\alpha}\mathcal{T}[F,\eta](t)\|_{L^p}\leq 4	\langle t\rangle^{\frac{d(p-1)}{p}}\|\mathcal{T}[F,\eta](t)\|_{L^p}\\& \overset{\eqref{z21}}{\lesssim_{a,\mathbf{c}_1}} \|(\rho,U)\|_{S^{{\varepsilon_0}}_a}\left(\sup_{s\in [0,t]}\langle s\rangle^\frac{d(p-1)}{p}\|F(s)\|_{L^p}+ \sup_{s\in [0,t]}\|F(s)\|_{L^1}\right).
\end{align*}
So, it is enough to consider $|\alpha|\leq t$.\\
\textbf{2.2.1)} Estimate $	\mathcal{T}_{\alpha}^1[F,\eta](t,x)$.\\Applying  \eqref{z21} with $\delta_{\frac{s\alpha}{t}}F$, we have
\begin{align*}
\|\mathcal{T}_{\alpha}^1[F,\eta](t)\|_{L^p}&=\|\mathcal{T}[\delta_{\frac{s\alpha}{t}}F,\eta](t)\|_{L^p}\lesssim_{a,\mathbf{c}_1} t^{-\frac{d(p-1)}{p}}\|(\rho,U)\|_{S^{{\varepsilon_0}}_a} \left(\sup_{s\in [0,t]}\|\delta_{\frac{s\alpha}{t}}F(s)\|_{L^1}+\sup_{s\in [0,t]}\langle s\rangle^\frac{d(p-1)}{p}\|\delta_{\frac{s\alpha}{t}}F(s)\|_{L^p} \right)\\
&	\lesssim_{a,\mathbf{c}_1}|\alpha|^\sigma t^{-\frac{d(p-1)}{p}-\sigma}\|(\rho,U)\|_{S^{{\varepsilon_0}}_a} \left(\sup_{s\in [0,t]}\langle s\rangle^a\|F(s)\|_{\dot{B}^\sigma_{1,\infty}}+
\sup_{s\in [0,t]}\langle s\rangle^{\frac{d(p-1)}{p}+a}\|F(s)\|_{\dot{B}^\sigma_{p,\infty}}\right),
\end{align*}
where we apply $\sigma<2a$. 
Thus, we can yield
\begin{align}\label{T alpha lower 1}
t^{\frac{d(p-1)}{p}+\sigma}\sup_\alpha	\frac{	\|\mathcal{T}_{\alpha}^1[F,\eta](t)\|_{L^p}}{|\alpha|^\sigma}\lesssim_{a,\mathbf{c}_1} \|(\rho,U)\|_{S^{{\varepsilon_0}}_a}\left(\sup_{s\in [0,t]}\langle s\rangle^a\|F(s)\|_{\dot{B}^\sigma_{1,\infty}}+
\sup_{s\in [0,t]}\langle s\rangle^{\frac{d(p-1)}{p}+a}\|F(s)\|_{\dot{B}^\sigma_{p,\infty}}\right).
\end{align}
\textbf{2.2.2)} Estimate $	\mathcal{T}_{\alpha}^2[F,\eta](t,x)$.  \\Note that 
\begin{align*}
|\delta_{\frac{\alpha}{t}}\eta(v)|+|\nabla_v(\delta_{\frac{\alpha}{t}}\eta)(v)|\lesssim_{\mathbf{c}_1} \min\left\{\frac{|\alpha|}{t},1\right\}\left(\frac{1}{\langle v\rangle^N}+\frac{1}{\langle v-\frac{\alpha}{t}\rangle^N}\right)\lesssim_{\mathbf{c}_1}\frac{|\alpha|^\sigma}{t^\sigma}\frac{1}{\langle v\rangle^N},
\end{align*}
since $|\alpha|\leq t$.
Thus, we have
\begin{align*}
	\|\mathcal{T}_{\alpha}^2[F,\eta](t)\|_{L^p}&=\frac{|\alpha|^\sigma}{t^\sigma}\left\|\mathcal{T}\left[F,\frac{t^\sigma}{|\alpha|^\sigma}\delta_{\frac{\alpha}{t}}\eta(v)\right]\right\|_{L^p}\\&\lesssim_{a,\mathbf{c}_1}\frac{|\alpha|^\sigma}{t^{\sigma+\frac{d(p-1)}{p}}}\|(\rho,U)\|_{S^{{\varepsilon_0}}_a} \left(\sup_{s\in [0,t]}\|F(s)\|_{L^1}+\sup_{s\in [0,t]}\langle s\rangle^\frac{d(p-1)}{p}\|F(s)\|_{L^p} \right),
\end{align*}
which implies 
\begin{align*}
t^{\frac{d(p-1)}{p}+\sigma}\sup_\alpha	\frac{	\|\mathcal{T}_{\alpha}^2[F,\eta](t)\|_{L^p}}{|\alpha|^\sigma}\lesssim_{a,\mathbf{c}_1} \|(\rho,U)\|_{S^{{\varepsilon_0}}_a}\left(\sup_{s\in [0,t]}\langle s\rangle^\frac{d(p-1)}{p}\|F(s)\|_{L^p}+\sup_{s\in [0,t]}\|F(s)\|_{L^1}\right).
\end{align*}
\textbf{2.2.3)} Estimate $	\mathcal{T}_{\alpha}^3[F,\eta](t,x)$.
\begin{align}\nonumber
&	|\mathcal{T}_{\alpha}^3[F,\eta](t,x)|\\\nonumber
&\lesssim_{\mathbf{c}_1}\int_{0}^{t}\int_{\mathbb{R}^d}
\left|F\left(s,X_{s,t}(x,v)-\frac{s\alpha}{t}\right)-F\left(s,Z_5(x)\right)\right|\frac{dvds}{\langle v\rangle^N}\\\nonumber
&\lesssim_{\mathbf{c}_1}\int_{0}^{t}\int_{\mathbb{R}^d}
\left(	\sup_z	|z|^{-\sigma}\left|\delta_{z}F(s,.)\left(Y_{s,t}(x-tv,v)+x-(t-s)v-\frac{s\alpha}{t}\right)\right|\right)\left|Y_{s,t}(x-tv,v)-Y_{s,t}(x-tv,v-\frac{\alpha}{t})\right|^\sigma\frac{dvds}{\langle v\rangle^N}\\\label{T3fml 1}
&\lesssim_{a,\mathbf{c}_1}\frac{|\alpha|^\sigma}{t^\sigma} \|(\rho,U)\|_{S^{{\varepsilon_0}}_a}^\sigma\int_{0}^{t}\int_{\mathbb{R}^d}
\sup_z	|z|^{-\sigma}\left|\delta_{z}F(s,.)\left(Y_{s,t}(x-tv,v)+x-(t-s)v-\frac{s\alpha}{t}\right)\right|\frac{ds}{\langle s\rangle^{\sigma(d-3+a)}} \frac{dv}{\langle v\rangle^N}.
\end{align}
Thus applying Lemma \ref{mathcal H1H2} with $\mathcal{H}=\sup_z	|z|^{-\sigma}|\delta_{z}F(s,.)|$ and $\varphi(x,v)=Y_{s,t}(x-tv,v-\frac{\alpha}{t})-\frac{s\alpha}{t}$ to obtain 
\begin{align*}
&\|\mathcal{T}_{\alpha}^3[F,\eta](t)\|_{L^p}\\&\lesssim _{a,\mathbf{c}_1}  \underbrace{\frac{|\alpha|^\sigma}{t^{\frac{d(p-1)}{p}+\sigma}}\|(\rho,U)\|_{S^{{\varepsilon_0}}_a}^\sigma\int_{0}^{t/2}
	\|F(s)\|_{\dot{F}^\sigma_{1,\infty}}\frac{1}{\langle s\rangle^{\sigma(d-3+a)}} ds}_{\text{Thanks to }\eqref{mathcal H 1}}+\underbrace{\frac{|\alpha|^\sigma}{t^\sigma}\|(\rho,U)\|_{S^{{\varepsilon_0}}_a}^\sigma\int_{t/2}^{t}
	\|F(s)\|_{\dot{F}^\sigma_{p,\infty}}\frac{1}{\langle s\rangle^{\sigma(d-3+a)}} ds}_{\text{Thanks to }\eqref{mathcal H p}}\\&
\lesssim  _{a,\mathbf{c}_1} \frac{|\alpha|^\sigma}{t^{\frac{d(p-1)}{p}+\sigma}}\|(\rho,U)\|_{S^{{\varepsilon_0}}_a}^\sigma\left(
\sup_{s\in [0,t]}\langle s\rangle^a\|F(s)\|_{\dot{F}^\sigma_{1,\infty}}+\sup_{s\in [0,t]} \langle s\rangle^{\frac{d(p-1)}{p}+a}	\|F(s)\|_{\dot{F}^\sigma_{p,\infty}}\right)\int_{0}^{t}\frac{1}{\langle s\rangle^{\sigma(d-3+a)+a}}ds\\
&\lesssim_{a,\mathbf{c}_1}\frac{|\alpha|^\sigma}{t^{\frac{d(p-1)}{p}+\sigma}}\|(\rho,U)\|_{S^{{\varepsilon_0}}_a}^\sigma\left(
\sup_{s\in [0,t]}\langle s\rangle^a\|F(s)\|_{\dot{F}^\sigma_{1,\infty}}+\sup_{s\in [0,t]} \langle s\rangle^{\frac{d(p-1)}{p}+a}	\|F(s)\|_{\dot{F}^\sigma_{p,\infty}}\right),
\end{align*}
where in the last line, we use the fact that $a^2+a>1$.
Thus, 
\begin{align*}
t^{\frac{d(p-1)}{p}+\sigma}\sup_\alpha\frac{	\|\mathcal{T}_{\alpha}^3[F,\eta](t)\|_{L^p}}{|\alpha|^\sigma}&\lesssim _{a,\mathbf{c}_1} \|(\rho,U)\|_{S^{{\varepsilon_0}}_a}^\sigma\left(
\sup_{s\in [0,t]}\langle s\rangle^a\|F(s)\|_{\dot{F}^\sigma_{1,\infty}}+\sup_{s\in [0,t]} \langle s\rangle^{\frac{d(p-1)}{p}+a}	\|F(s)\|_{\dot{F}^\sigma_{p,\infty}}\right).
\end{align*}
\textbf{2.2.4)} Estimate $	\mathcal{T}_{\alpha}^4[F,\eta](t,x)$.\\  One has
\begin{align}\nonumber
&
|	\mathcal{T}_{\alpha}^4[F,\eta](t,x)|\lesssim _{\mathbf{c}_1}\int_{0}^{t}\int_{\mathbb{R}^d}
\left|	F\left(s,Z_5(x)\right)\right| \left|W_{s,t}(x-tv,v)-W_{s,t}(x-tv,v-\frac{\alpha}{t})\right|
\frac{	dvds}{\langle v\rangle^N}\\\nonumber
&\lesssim_{a,\mathbf{c}_1} \min\left\{1,\frac{|\alpha|}{t}\right\} \|(\rho,U)\|_{S^{{\varepsilon_0}}_a} \int_{0}^{t}\int_{\mathbb{R}^d}
\left|F\left(s,Y_{s,t}(x-tv,v-\frac{\alpha}{t})+x-(t-s)v-\frac{s\alpha}{t}\right)\right|  \frac{1}{\langle s\rangle^{d-2+a}}
\frac{	dvds}{\langle v\rangle^N}\\\label{T4fml 1}
&\lesssim_{a,\mathbf{c}_1} \frac{|\alpha|^\sigma}{\ t
	^\sigma}   \|(\rho,U)\|_{S^{{\varepsilon_0}}_a} \int_{0}^{t}\int_{\mathbb{R}^d}
\left|F\left(s,Y_{s,t}(x-tv,v-\frac{\alpha}{t})+x-(t-s)v-\frac{s\alpha}{t}\right)\right|  \frac{dv}{\langle v\rangle^N}\frac{ds}{\langle s\rangle^{d-2+a}}.	
\end{align}
Then, applying Lemma \ref{mathcal H1H2} with $\mathcal{H}=F$ and $\varphi(x,v)=Y_{s,t}(x-tv,v-\frac{\alpha}{t})-\frac{s\alpha}{t}$, one gets
\begin{align*}
\|\mathcal{T}_{\alpha}^4[F,\eta](t)\|_{L^p}&\lesssim_{a,\mathbf{c}_1} \underbrace{\frac{|\alpha|^\sigma}{\ t
		^{\sigma+\frac{d(p-1)}{p}}}  \|(\rho,U)\|_{S^{{\varepsilon_0}}_a} \int_{0}^{t/2}
	\|F(s)\|_{L^1}  \frac{ds}{\langle s\rangle^{d-2+a}}}_{\text{Thanks to }\eqref{mathcal H 1}}+ \underbrace{\frac{|\alpha|^{\sigma}}{\ t
		^\sigma}  \|(\rho,U)\|_{S^{{\varepsilon_0}}_a} \int_{t/2}^{t}
	\|F(s)\|_{L^p}  \frac{ds}{\langle s\rangle^{d-2+a}}}_{\text{Thanks to }\eqref{mathcal H p}} \\&\lesssim_{a,\mathbf{c}_1} 
\frac{|\alpha|^\sigma}{t^{\frac{d(p-1)}{p}+\sigma}} \|(\rho,U)\|_{S^{{\varepsilon_0}}_a}  \left( \sup_{s\in [0,t]} \langle s\rangle^\frac{d(p-1)}{p} \|F(s)\|_{L^p}+\sup_{s\in [0,t]}\|F(s)\|_{L^1}\right).
\end{align*}
Hence, we obtain
\begin{align*}
t^{\frac{d(p-1)}{p}+\sigma}\sup_\alpha\frac{	\|\mathcal{T}_{\alpha}^4[F,\eta](t)\|_{L^p}}{|\alpha|^\sigma}&\lesssim_{a,\mathbf{c}_1}  \|(\rho,U)\|_{S^{{\varepsilon_0}}_a}^\sigma\left(
\sup_{s\in [0,t]}\langle s\rangle^a\|F(s)\|_{\dot{F}^\sigma_{1,\infty}}+\sup_{s\in [0,t]} \langle s\rangle^{\frac{d(p-1)}{p}+a}	\|F(s)\|_{\dot{F}^\sigma_{p,\infty}}\right).
\end{align*}
Summing up, we conclude that for $t\geq1$,
\begin{align}\label{d2}
&\sum_{p=1,\infty} t^{\frac{d(p-1)}{p}+\sigma}\|\mathcal{T}[F,\eta](t)\|_{\dot{B}^\sigma_{p,\infty}}\lesssim _{a,\mathbf{c}_1} \|(\rho,U)\|_{S^{{\varepsilon_0}}_a}^\sigma\sum_{p=1,\infty} \left(\sup_{s\in [0,t]} \langle s\rangle^{\frac{d(p-1)}{p}} \|F(s)\|_{L^p}+\sup_{s\in [0,t]}\langle s\rangle^{\frac{d(p-1)}{p}+a}	\|F(s)\|_{\dot{F}^\sigma_{p,\infty}}\right). 
\end{align}
Hence, we proved the Lemma with \eqref{d3} and  \eqref{d2}.
\end{proof}\\
\begin{remark}
	For $a_1\in(0,1)$, if we also have $\|(\rho,U)\|_{S^{{\varepsilon_0}}_{a_1}}\lesssim 1$, 
	\begin{align}
		\left|	\mathcal{T}^i_\alpha[F,\eta](t,x)\right|\lesssim_{a_1,\mathbf{c}_1} \|(\rho,U)\|_{S^{{\varepsilon_0}}_{a_1}} \int_{0}^{t}\int_{\mathbb{R}^{d}}|F(s,x-(t-s)v)|\frac{1}{\langle s\rangle^{d-2+a_1}}\frac{dvds}{\langle v\rangle^N},~~i=1,2.
	\end{align} 
\end{remark}
\begin{proposition}\label{mathcalR sigma est}
Let $a$, $\sigma$ and $\rho$ satisfy the same conditions  in Lemma \ref{esT}. Then we have
\begin{equation}\label{z13}
\|\mathcal{R}(\rho,U)\|_\sigma\lesssim_{a,M^*}  \|(\rho,U)\|_{S^{{\varepsilon_0}}_a}^{1+\sigma}.
\end{equation}
\end{proposition}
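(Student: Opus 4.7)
The plan is to reduce directly to Lemma \ref{esT}. Observing that $E = -\nabla_x(1-\Delta)^{-1}(\rho+A(U))$ and $\nabla_v\mu = (\partial_{v_1}\mu,\ldots,\partial_{v_d}\mu)$, the defining formula \eqref{definition of R} yields the componentwise identification
$$\mathcal{R}(\rho,U) = -\sum_{i=1}^{d}\mathcal{T}[E_i,\partial_{v_i}\mu],$$
so it suffices to bound each summand in the norm $\|\cdot\|_\sigma$ by $\|(\rho,U)\|_{S^{\varepsilon_0}_a}^{1+\sigma}$.

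First I would verify that $\eta = \partial_{v_i}\mu$ satisfies the hypothesis \eqref{z9} of Lemma \ref{esT}: by Assumption \ref{mu norm}, $\|\langle v\rangle^N\nabla_v\mu\|_{W^{2,\infty}} \leq M^*$, giving $\mathbf{c}_1 \lesssim M^*$. Next, applying Lemma \ref{esT} with $F=E$, the $L^p$-estimate \eqref{z21} reduces matters to controlling $\sup_s\langle s\rangle^{d(p-1)/p}\|E(s)\|_{L^p}$ and $\sup_s\|E(s)\|_{L^1}$, while the Besov estimate \eqref{z12} demands additionally $\sup_s\langle s\rangle^{d(p-1)/p+\sigma}\|E(s)\|_{\dot{F}^\sigma_{p,\infty}}$. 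The Bessel smoothing recorded in Lemma \ref{varrho} combined with \eqref{AU_a leq U_a} delivers each of these by $\|(\rho,U)\|_{S^{\varepsilon_0}_a}$: namely $\|E(s)\|_{L^p}\lesssim \|\rho+A(U)(s)\|_{L^p}\lesssim \langle s\rangle^{-d(p-1)/p}\|(\rho,U)\|_{S^{\varepsilon_0}_a}$, and analogously for the $\dot{F}^\sigma_{p,\infty}$ quantity (with the two-derivative gain of $(1-\Delta)^{-1}$ trading the $\dot{B}^a$ regularity of $\rho+A(U)$ for $\dot{F}^\sigma$ regularity of $E$).

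Putting this together, Lemma \ref{esT} contributes the factor $\|(\rho,U)\|_{S^{\varepsilon_0}_a}^\sigma$ on the Besov side and $\|(\rho,U)\|_{S^{\varepsilon_0}_a}$ on the $L^p$ side, and the bound on $E$ in each relevant norm supplies one additional factor of $\|(\rho,U)\|_{S^{\varepsilon_0}_a}$. This yields the Besov bound $\lesssim_{a,M^*}\|(\rho,U)\|_{S^{\varepsilon_0}_a}^{1+\sigma}$; the $L^p$ bound is $\lesssim_{a,M^*}\|(\rho,U)\|_{S^{\varepsilon_0}_a}^{2}$, which since $\|(\rho,U)\|_{S^{\varepsilon_0}_a}\leq\varepsilon_0<1$ is in particular controlled by $\|(\rho,U)\|_{S^{\varepsilon_0}_a}^{1+\sigma}$. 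Summing in $i$ completes the proof.

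The main obstacle will be securing the $\dot{F}^\sigma_{1,\infty}$ bound on $E$, since Lemma \ref{varrho} as stated offers only $\dot{F}^\kappa_{\infty,\infty}$ control (which coincides with $\dot{B}^\sigma_{\infty,\infty}$ for $\sigma\in(0,1)$) rather than $\dot{F}^\sigma_{1,\infty}$ directly. To bridge this gap I would exploit the convolution representation $E = -\nabla K * (\rho+A(U))$ for the Bessel kernel $K$, yielding the pointwise inequality $\sup_{\alpha}|\alpha|^{-\sigma}|\delta_\alpha E(x)|\leq \bigl(|\nabla K|*\sup_{\alpha}|\alpha|^{-\sigma}|\delta_\alpha(\rho+A(U))|\bigr)(x)$, and then use that $\nabla K\in L^1(\mathbb{R}^d)$ for $d\geq 3$ to transfer the regularity to $E$, invoking \eqref{AU_a leq U_a} at the final step.
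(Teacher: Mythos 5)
Your proof plan is essentially the same as the paper's. The paper's own argument is a one-liner: it checks that $\eta=\partial_i\mu$ satisfies \eqref{z9} via Assumption \ref{mu norm}, invokes Lemma \ref{esT} (both \eqref{z21} and \eqref{z12}) with $F=E_i$ to reduce matters to bounding $\|E\|_\sigma$ together with $\sup_s\langle s\rangle^\sigma\|E(s)\|_{\dot{F}^\sigma_{1,\infty}}$, and then closes by $\lesssim\|(\rho,U)\|_{S^{\varepsilon_0}_a}$ using Lemma \ref{varrho} and \eqref{AU_a leq U_a}. Your bookkeeping on powers of $\|(\rho,U)\|_{S^{\varepsilon_0}_a}$ (a factor $^\sigma$ from Lemma \ref{esT} times a factor $^1$ from $E$, with the $L^p$-part of order $^2$ being harmless since $\varepsilon_0<1$) matches the paper's conclusion.

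You have, however, spotted a real presentational issue: the norm $\|E(s)\|_{\dot{F}^\sigma_{1,\infty}}$ appears explicitly in the paper's proof, yet Lemma \ref{varrho} as written only records a $\dot{F}^\kappa_{\infty,\infty}$ control (which for $p=\infty$ coincides with $\dot{B}^\kappa_{\infty,\infty}$ and is already contained in $\|E\|_\sigma$). Your instinct that the $(1-\Delta)^{-1}$ smoothing is what saves the day for $p=1$ is correct. But the specific repair you propose has a flaw: putting the difference quotient on $\rho+A(U)$ and convolving with $|\nabla K|$ gives
$$\sup_\alpha\frac{|\delta_\alpha E(x)|}{|\alpha|^\sigma}\leq\int|\nabla K(y)|\sup_\alpha\frac{|\delta_\alpha(\rho+A(U))(x-y)|}{|\alpha|^\sigma}\,dy,$$
and taking $L^1_x$ requires $\rho+A(U)\in\dot{F}^\sigma_{1,\infty}$. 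The norm $\|\cdot\|_a$ only controls $\dot{B}^\sigma_{1,\infty}$, which is strictly weaker (the supremum over $\alpha$ is outside the $x$-integral), so this step does not close. The right use of the derivative gain is either to put the difference quotient on the kernel (using $|\delta_\alpha\nabla K(y)|\lesssim|\alpha|/|y|^d$ for $|y|>2|\alpha|$ and the integrable singularity $|y|^{1-d}$ near the origin, plus exponential decay at infinity), or to pass through the Bessel-Besov mapping $\nabla(1-\Delta)^{-1}\colon L^1\cap\dot{B}^\sigma_{1,\infty}\to W^{1,1}\cap\dot{B}^{\sigma+1}_{1,\infty}$ and then use the one-derivative-costly embedding $B^{\sigma+1}_{1,\infty}\hookrightarrow F^\sigma_{1,\infty}$. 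Either route is what a corrected statement of Lemma \ref{varrho} (with $\dot{F}^\kappa_{p,\infty}$ in place of $\dot{F}^\kappa_{\infty,\infty}$) would encapsulate; with that in hand, your argument is complete. A cosmetic remark: your identification $\mathcal{R}=-\sum_i\mathcal{T}[E_i,\partial_i\mu]$ has a spurious sign relative to the paper's $\mathcal{R}=\sum_i\mathcal{T}[E_i,\partial_i\mu]$, which does not affect the norm estimate.
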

\begin{proof} Since $
\langle v\rangle^N	\sum_{j=0}^2\left|\nabla^j\left(\nabla \mu(v)\right)\right|\leq M^*$, thus, by Lemma \ref{esT}, we have 
$$
\|\mathcal{R}(\rho,U)\|_\sigma\lesssim_{a,M^*}	\|(\rho,U)\|_{S^{{\varepsilon_0}}_a}^\sigma\sum_{p=1,\infty}\left(\langle s\rangle^\frac{d(p-1)}{p}\|E\|_{L^p}+\sup_{s\in [0,t]}\langle s\rangle^{a+\frac{d(p-1)}{p}}	\|E(s)\|_{\dot{F}^\sigma_{1,\infty}}\right)\lesssim_{a,M^*}  	\|(\rho,U)\|_{S^{{\varepsilon_0}}_a}^{1+\sigma}.$$
This implies the result.
\end{proof}
\begin{proposition}\label{mathcalR difference}
Let $a\in (\frac{\sqrt{5}-1}{2},1)$ and  $\widetilde{\varepsilon}_0>0$ be as in Proposition \eqref{estimates about Y and W}.  Assume $\|(\rho_1,U_1)\|_{S^{{\varepsilon_0}}_a}, \|(\rho_2,U_2)\|_{S^{{\varepsilon_0}}_a}\leq \varepsilon_0\leq\widetilde{\varepsilon}_0$. There holds, 
\begin{equation}
\|\mathcal{R}(\rho_1,U_1)-\mathcal{R}(\rho_2,U_2)\|_a\lesssim_{a,M^*}  \Big(\|(\rho_1,U_1)\|_{S^{{\varepsilon_0}}_a}^a+\|(\rho_2,U_2)\|_{S^{{\varepsilon_0}}_a}\Big) \|(\rho_1-\rho_2,U_1-U_2)\|_{S^{{\varepsilon_0}}_a}.
\end{equation}
\end{proposition}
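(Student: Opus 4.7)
The plan is to decompose the difference along the two natural ``axes'' of variation: the field $E$ and the characteristics $(X,V)$. Writing $E^k(s,x)=\nabla_x(1-\Delta)^{-1}(\rho_k+A(U_k))$ and $(X^k_{s,t},V^k_{s,t})$ for the characteristics attached to $E^k$ ($k=1,2$), we split
\begin{align*}
\mathcal{R}(\rho_1,U_1)-\mathcal{R}(\rho_2,U_2)
&=\sum_{i=1}^d\Big(\mathcal{T}^{(1)}[E^1_i-E^2_i,\partial_{x_i}\mu]+\big(\mathcal{T}^{(1)}-\mathcal{T}^{(2)}\big)[E^2_i,\partial_{x_i}\mu]\Big),
\end{align*}
where $\mathcal{T}^{(k)}$ denotes the operator $\mathcal{T}$ built with $(X^k,V^k)$. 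The linear $\mathcal{T}_L$ piece cancels in the second difference, so $(\mathcal{T}^{(1)}-\mathcal{T}^{(2)})[E^2_i,\partial_{x_i}\mu]=-(\mathcal{T}^{(1)}_{NL}-\mathcal{T}^{(2)}_{NL})[E^2_i,\partial_{x_i}\mu]$.

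For the first piece, I would apply Lemma \ref{esT} with $F=E^1_i-E^2_i$ and $\eta=\partial_{x_i}\mu$ (which satisfies \eqref{z9} by Assumption \ref{mu norm}); this yields
\begin{equation*}
\|\mathcal{T}^{(1)}[E^1-E^2,\nabla\mu]\|_a\lesssim_{a,M^*}\|(\rho_1,U_1)\|_{S^{\varepsilon_0}_a}^{a}\|E^1-E^2\|_a.
\end{equation*}
The factor $\|E^1-E^2\|_a$ is controlled by $\|(\rho_1-\rho_2,U_1-U_2)\|_{S^{\varepsilon_0}_a}$ exactly as in \eqref{E12 estimate by rho}: Lemma \ref{varrho} plus the Lipschitz-in-$U$ estimate on $A$ obtained from Assumption \ref{A assump} (which has already been exploited in Proposition \ref{Y W difference}).

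For the second piece, the idea is to interpolate along the convex combination of characteristics, as in \eqref{XV varpi}, writing
\begin{align*}
(\mathcal{T}^{(1)}_{NL}-\mathcal{T}^{(2)}_{NL})[E^2_i,\partial_{x_i}\mu](t,x)
&=\int_0^t\!\int_{\mathbb{R}^d}\!\int_0^1\!\Big[\nabla E^2_i(s,X_{s,t}(x,v,\varpi))\cdot\mathbf{Y}_{s,t}(x-vt,v)\,\partial_{x_i}\mu(V_{s,t}(x,v,\varpi))\\
&\qquad\qquad+E^2_i(s,X_{s,t}(x,v,\varpi))\,\nabla\partial_{x_i}\mu(V_{s,t}(x,v,\varpi))\cdot\mathbf{W}_{s,t}(x-vt,v)\Big]\,d\varpi\,dv\,ds.
\end{align*}
Since $\sup_\varpi\langle V_{s,t}(x,v,\varpi)\rangle^N|\nabla^j\partial_{x_i}\mu|\lesssim M^*$ (by the same $\langle V\rangle\sim\langle v\rangle$ argument used around \eqref{V sim v}, together with \eqref{results of W small}), the integrand is essentially of the form $G^{\sharp}_{s,t}(x,v)\cdot\mathbf{Z}_{s,t}(x-vt,v)/\langle v\rangle^N$ with $G^{\sharp}\in\{\nabla E^2,E^2\}$ and $\mathbf{Z}\in\{\mathbf{Y},\mathbf{W}\}$. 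I would then repeat, term by term, the architecture of the proof of Lemma \ref{esT} (the $\mathcal{T}^{1}_\alpha,\ldots,\mathcal{T}^{4}_\alpha$ decomposition together with the change of variables $w=x-tv$ and Lemma \ref{mathcal H1H2}), with two modifications: the kernel now carries the extra factor $\mathbf{Z}_{s,t}$ whose $L^\infty_{x,v}$ and Hölder seminorms are controlled by \eqref{results of Y12 small}--\eqref{results of W12 small}, hence by $\|(\rho_1-\rho_2,U_1-U_2)\|_{S^{\varepsilon_0}_a}$; and the role of $\|(\rho,U)\|_{S^{\varepsilon_0}_a}$ in Lemma \ref{esT} is now played by $\|(\rho_2,U_2)\|_{S^{\varepsilon_0}_a}$ (through $E^2$ and through the characteristics $(X^2,V^2)$ used to set up $(Y_{s,t}(\varpi),W_{s,t}(\varpi))$ and, e.g., \eqref{V sim v}). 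Taking $\sigma=a$, the temporal weights gained from \eqref{results of Y12 small}--\eqref{results of W12 small} line up exactly with the ones used in the proof of Lemma \ref{esT} (the critical exponent condition $a^2+a>1$, i.e.\ $a>\tfrac{\sqrt5-1}{2}$, guarantees the time integrals converge), producing
\begin{equation*}
\|(\mathcal{T}^{(1)}_{NL}-\mathcal{T}^{(2)}_{NL})[E^2,\nabla\mu]\|_a\lesssim_{a,M^*}\|(\rho_2,U_2)\|_{S^{\varepsilon_0}_a}\,\|(\rho_1-\rho_2,U_1-U_2)\|_{S^{\varepsilon_0}_a}.
\end{equation*}
Summing the two pieces gives the claimed bound.

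The main obstacle is the second piece, specifically the $\dot B^a_{p,\infty}$ seminorm bookkeeping when we take $\delta_\alpha$ inside the integral. One has to distribute the difference $\delta_\alpha$ among $E^2$, $\partial_{x_i}\mu$, $X_{s,t}(\varpi)$, $V_{s,t}(\varpi)$, and the explicit factor $\mathbf{Z}_{s,t}(x-vt,v)$, and verify that each resulting term either falls under the template handled in \emph{2.2.1--2.2.4} of Lemma \ref{esT} (after replacing $\|(\rho,U)\|_{S^{\varepsilon_0}_a}$-type factors by $\|(\rho_2,U_2)\|_{S^{\varepsilon_0}_a}$ and picking up one factor of $\|(\rho_1-\rho_2,U_1-U_2)\|_{S^{\varepsilon_0}_a}$ from an $L^\infty$ or Hölder bound on $\mathbf{Y},\mathbf{W}$), or is directly bounded by changing variable $w=x-tv$ and using that $\mathbf{Z}_{s,t}(\cdot,v)$ has Hölder-$a$ regularity in $x$ of size $\langle s\rangle^{-(d-2+a)}\|(\rho_1-\rho_2,U_1-U_2)\|_{S^{\varepsilon_0}_a}$. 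Once this dictionary between the terms in the proof of Lemma \ref{esT} and the present setting is established, the estimate follows by summing the analogues of \eqref{T alpha lower 1}, \eqref{T3fml 1}, \eqref{T4fml 1}.
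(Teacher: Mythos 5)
Your decomposition matches the paper's essentially verbatim: the paper likewise splits the difference into $\sum_i\mathcal{T}^{(1)}[(E^1-E^2)_i,\partial_i\mu]$ (handled by Lemma \ref{esT} plus the Lipschitz bound \eqref{E12 estimate by rho} on $E^1-E^2$) and a characteristics-difference piece handled through the interpolated flow \eqref{XV varpi} together with Proposition \ref{Y W difference}, and your identification of $a^2+a>1$ as the integrability threshold and of $\langle V_{s,t}(\varpi)\rangle\sim\langle v\rangle$ as the weight-transfer step are both exactly what the paper uses. The one inessential deviation is that the paper telescopes the second piece into $\mathbf{R}^1$ (moving $X$ with $V^1$ held) and $\mathbf{R}^2$ (moving $V$ with $X^2$ held) and then bounds each \emph{directly} — distributing $\delta_\alpha$ among $\mathbf{Y}$ (resp.\ $\mathbf{W}$), $\nabla E^2$ (resp.\ $E^2$) and the remaining argument, and invoking Lemma \ref{mathcal H1H2} / Remark \ref{mathcal Halpha} — rather than re-running the full $\mathcal{T}^1_\alpha,\ldots,\mathcal{T}^4_\alpha$ machinery of Lemma \ref{esT}; your single-FTC version on the product and your alternative direct route are both sound, but the direct route is the one to commit to, since the explicit $\mathbf{Y}$/$\mathbf{W}$ factor already supplies the time decay that the $\mathcal{T}^j_\alpha$ splitting in Lemma \ref{esT} was designed to extract.
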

\begin{proof}
	In this proof, we recall the notations $X_{s,t}^1$, $X_{s,t}^2$, $Y_{s,t}^1$, $Y_{s,t}^2$, $V_{s,t}^1$, $V_{s,t}^2$, $\mathbf{Y}_{s,t}$, $\mathbf{W}_{s,t}$ in Proposition \ref{Y W difference}, and $X_{s,t}(x,v,\varpi),~Y_{s,t}(x,v,\varpi),~V_{s,t}(x,v,\varpi), W_{s,t}(x,v,\varpi)$ in \eqref{XV varpi}. Then we have
\begin{align*}
&	\mathbf{R}(\rho_1,\rho_2,U_1,U_2)(t,x):=\mathcal{R}(\rho_1,U_1)(t,x)-	\mathcal{R}(\rho_2,U_2)(t,x)\\=&\int_{0}^{t}\int_{\mathbb{R}^{d}}(E^1-E^2)(s,x-(t-s)v)\cdot\nabla_v\mu(v)dvds-\int_{0}^{t}\int_{\mathbb{R}^{d}}(E^1-E^2)(s,X_{s,t}^1(x,v))\cdot\nabla_v\mu(V_{s,t}^1(x,v))dvds\\&+
\int_{0}^{t}\int_{\mathbb{R}^{d}}\left(E^2(s,X_{s,t}^1(x,v))-E^2(s,X_{s,t}^2(x,v))\right).\nabla\mu(V_{s,t}^1(x,v))\\&+
\int_{0}^{t}\int_{\mathbb{R}^{d}}E^2(s,X_{s,t}^2(x,v)).\left(\nabla\mu(V_{s,t}^1(x,v))-\nabla\mu(V_{s,t}^2(x,v))\right)
\\=&:\sum_{i=1}^{d}\mathcal{T}[(E^1-E^2)_i,\partial_i\mu](t,x)+ 	\mathbf{R}^1(\rho_1,\rho_2,U_1,U_2)(t,x)+\mathbf{R}^2(\rho_1,\rho_2,U_1,U_2)(t,x).
\end{align*}
Thanks to Lemma \ref{esT} with $F=(E^1-E^2)_i$, $\eta=\partial_i\mu$ and $\sigma=a$, we get
\begin{equation}
\sum_{i=1}^{d}\|\mathcal{T}[(E^1-E^2)_i,\partial_i\mu]\|_{a}\lesssim_{a,M^*}  \|(\rho_1,U_1)\|_{S^{{\varepsilon_0}}_a}^a\|(\rho_1-\rho_2,U_1-U_2)\|_{S^{{\varepsilon_0}}_a}.
\end{equation}
\textbf{(1)} Estimate on $\mathbf{R}^1(\rho_1,\rho_2,U_1,U_2)(t,x)$, one has 
\begin{align*}
\mathbf{R}^1(\rho_1,\rho_2,U_1,U_2)(t,x)=-\sum_{i=1}^{d}	\int_{0}^{1}\int_{0}^{t}\int_{\mathbb{R}^{d}}\mathbf{Y}_{s,t}(x-vt,v)\nabla E^2_i(s,X_{s,t}(x,v,\varpi))\partial_i\mu(V_{s,t}^1(x,v))d\varpi ds dv.
\end{align*}
Hence, 
\begin{align*}
&|\mathbf{R}^1(\rho_1,\rho_2,U_1,U_2)(t,x)|\leq  \sup_{0\leq s\leq t}\langle s\rangle ^{d-2+a}\|\mathbf{Y}_{s,t}\|_{L^\infty}	\int_{0}^{1}\int_{0}^{t}\int_{\mathbb{R}^{d}} \frac{|\nabla E^2(s,X_{s,t}(x,v,\varpi))|}{\langle s\rangle ^{d-2+a}}|\nabla\mu(V_{s,t}^1(x,v))| dsdvd\varpi\\
&\qquad\overset{\eqref{results of Y12 small}}{\lesssim_{a,M^*}} \|(\rho_1-\rho_2,U_1-U_2)\|_{S^{{\varepsilon_0}}_a}\int_{0}^{1}\int_{0}^{t}\int_{\mathbb{R}^{d}} \langle s\rangle ^{-d+2-a}|\nabla E^2(s,X_{s,t}(x,v,\varpi))|\frac{1}{\langle v \rangle^N} dsdvd\varpi.
\end{align*}
Note that $X_{s,t}(x,v,\varpi)=x-(t-s)v+Y_{s,t}(x-tv,v,\varpi)$, then we  apply Lemma \ref{mathcal Halpha} with $\mathcal{H}=\nabla E^2$ and $\varphi(x,v)=Y_{s,t}(x-tv,v,\varpi)$ to obtain that for $t> 1$,
\begin{align}\label{mathcalR1 Linfty}
	\begin{split}
		&\|\mathbf{R}^1(\rho_1,\rho_2,U_1,U_2)(t,x)\|_{L^p}
	\\&\quad	\lesssim_{a,M^*}\|(\rho_1-\rho_2,U_1-U_2)\|_{S^{{\varepsilon_0}}_a}\int_{0}^{1}\int_{0}^{t} \langle s\rangle ^{-d+2-a}\left \|\int_{\mathbb{R}^{d}}\nabla E^2(s,X_{s,t}(x,v,\varpi))\frac{1}{\langle v \rangle^N} dv\right\|_{L^p_x} dsd\varpi\\
		&\quad\lesssim_{a,M^*}\|(\rho_1-\rho_2,U_1-U_2)\|_{S^{{\varepsilon_0}}_a}\left( \int_{0}^{t/2}\langle s\rangle ^{-d+2-a} t ^{-\frac{d(p-1)}{p}}\left \|\nabla E^2(s)\right\|_{L^1_x} ds+\int_{t/2}^{t} \langle s\rangle ^{-d+2-a} \left \|\nabla E^2(s)\right\|_{L^p_x} ds\right)\\
		&\quad\lesssim_{a,M^*}\frac{\|(\rho_1-\rho_2,U_1-U_2)\|_{S^{{\varepsilon_0}}_a}}{t^\frac{d(p-1)}{p}}\left(\int_{0}^{t/2}+\int_{t/2}^{t}\right)\frac{\langle s\rangle^{2-a}}{\langle s\rangle^{d+a}}ds\lesssim\frac{\|(\rho_1-\rho_2,U_1-U_2)\|_{S^{{\varepsilon_0}}_a}}{t^\frac{d(p-1)}{p}}.
	\end{split}
\end{align}
Meanwhile, for $0\leq t\leq 1$
\begin{align}\nonumber
	\|\mathbf{R}^1(\rho_1,\rho_2,U_1,U_2)(t,x)\|_{L^p}&\lesssim_{a,M^*}\|(\rho_1-\rho_2,U_1-U_2)\|_{S^{{\varepsilon_0}}_a}\int_{0}^{t}\langle s\rangle ^{-d+2-a}\|\nabla E^2(s)\|_{L^p} ds\\&\lesssim_{a,M^*}\|(\rho_2,U_2)\|_{S^{{\varepsilon_0}}_a}\|(\rho_1-\rho_2,U_1-U_2)\|_{S^{{\varepsilon_0}}_a}.\label{mathcal R1 L1}
\end{align}
Then, with \eqref{mathcal R1 L1} and \eqref{mathcalR1 Linfty}, we can obtain 
\begin{align}\label{mathbfR1 1}
\sum_{p=1,\infty}\langle t\rangle^{\frac{d(p-1)}{p}} \|\mathbf{R}^1(\rho_1,\rho_2)\|_{L^p}\lesssim_{a,M^*} \|(\rho_2,U_2)\|_{S^{{\varepsilon_0}}_a}\|(\rho_1-\rho_2,U_1-U_2)\|_{S^{{\varepsilon_0}}_a}.
\end{align}
Now we deal with $\frac{|\delta_{\alpha}\mathbf{R}^1(\rho_1,\rho_2)(t,x)|}{|\alpha|^a}$, for the case $t\leq 1$ we have
\begin{align*}
&\frac{|\delta_{\alpha}\mathbf{R}^1(\rho_1,\rho_2)(t,x)|}{|\alpha|^a}\lesssim_{a,M^*} \|(\rho_1-\rho_2,U_1-U_2)\|_{S^{{\varepsilon_0}}_a}	\int_{0}^{1}\int_{0}^{t}\int_{\mathbb{R}^{d}}\langle s\rangle^{-d+2+a}|\nabla E^2(s,X_{s,t}(x,v,\varpi))|\frac{dv}{\langle v\rangle^N}dsd\varpi\\&\qquad
+ \|(\rho_1-\rho_2,U_1-U_2)\|_{S^{{\varepsilon_0}}_a}	\int_{0}^{1}\int_{0}^{t}\int_{\mathbb{R}^{d}}\langle s\rangle^{-d+2}\sup_z\frac{|\delta_{z}\nabla E^2(s,.)(X_{s,t}(x,v,\varpi))|}{|z|^a}\frac{dv}{\langle v\rangle^N}dsd\varpi\\&\qquad\quad
+\|(\rho_1-\rho_2,U_1-U_2)\|_{S^{{\varepsilon_0}}_a}	\int_{0}^{1}\int_{0}^{t}\int_{\mathbb{R}^{d}}\langle s\rangle^{-d+2-a}|\nabla E^2(s,X_{s,t}(x-\alpha,v,\varpi))| \frac{dv}{\langle v\rangle^N} dsd\varpi.
\end{align*}
Then thanks to \eqref{mathcal H alpha p}, we deduce that 
\begin{align*}
\frac{\|\delta_{\alpha}\mathbf{R}^1(\rho_1,\rho_2)\|_{L^p}}{|\alpha|^a}\lesssim_{a,M^*}&\|(\rho_1-\rho_2,U_1-U_2)\|_{S^{{\varepsilon_0}}_a}	\int_{0}^{t}\left(\langle s\rangle^{-d+2-a}\|\nabla E^2(s)\|_{L^p} +\langle s\rangle^{-d+2}\|\nabla E^2(s)\|_{\dot{F}^a_{p,\infty}}\right) ds\\\lesssim_{a,M^*}&
\|(\rho_1-\rho_2,U_1-U_2)\|_{S^{{\varepsilon_0}}_a}\|(\rho_2,U_2)\|_{S^{{\varepsilon_0}}_a}	\int_{0}^{t}\left(\frac{\langle s\rangle^{-d+2-a}}{ \langle s\rangle^{\frac{d(p-1)}{p}+a}} +\frac{\langle s\rangle^{-d+2}}{ \langle s\rangle^{\frac{d(p-1)}{p}+a}}\right)ds\\\lesssim_{a,M^*}&
\|(\rho_1-\rho_2,U_1-U_2)\|_{S^{{\varepsilon_0}}_a}\|(\rho_2,U_2)\|_{S^{{\varepsilon_0}}_a}.
\end{align*}
For the case $t>1$, $\frac{|\delta_{\alpha}\mathbf{R}^1(\rho_1,\rho_2,U_1,U_2)(t,x)|}{|\alpha|^a}$ can be recast as 
\begin{align*}
&\frac{|\delta_{\alpha}\mathbf{R}^1(\rho_1,\rho_2,U_1,U_2)(t,x)|}{|\alpha|^a}\lesssim_{a,M^*} \frac{1}{t^a}\|(\rho_1-\rho_2,U_1-U_2)\|_{S^{{\varepsilon_0}}_a}	\int_{0}^{1}\int_{0}^{t}\int_{\mathbb{R}^{d}}\langle s\rangle^{-d+2}|\nabla E^2(s,X_{s,t}(x,v,\varpi))|\frac{dv}{\langle v\rangle^N}dsd\varpi\\&
+\frac{1}{t^a} \|(\rho_1-\rho_2,U_1-U_2)\|_{S^{{\varepsilon_0}}_a}	\int_{0}^{1}\int_{0}^{t}\int_{\mathbb{R}^{d}}\langle s\rangle^{-d+2}\sup_z\frac{|\delta_{z}\nabla E^2(s,.)(X_{s,t}(x,v,\varpi))|}{|z|^a}\frac{dv}{\langle v\rangle^N}dsd\varpi\\&+\frac{1}{t^a} \|(\rho_1-\rho_2,U_1-U_2)\|_{S^{{\varepsilon_0}}_a}
\int_{0}^{1}\int_{0}^{t}\int_{\mathbb{R}^{d}}\langle s\rangle^{-d+2-a}|\nabla E^2_i(s,X_{s,t}(x-\alpha,v,\varpi))|\left(\frac{1}{\langle v\rangle^{N}}+\frac{1}{\langle v-\frac{\alpha}{t}\rangle^{N}}\right)dvdwd\varpi.
\end{align*}
Thus, it holds that for $t>1$,
\begin{align*}
&	\frac{\|\delta_{\alpha}\mathbf{R}^1(\rho_1,\rho_2,U_1,U_2)\|_{L^p}}{|\alpha|^a}\\&\quad\lesssim_{a,M^*}\frac{1}{t^a} \|(\rho_1-\rho_2,U_1-U_2)\|_{S^{{\varepsilon_0}}_a}\int_{0}^{t/2}\left(\langle s\rangle^{-d+2-a}\|\nabla E^2(s)\|_{L^1} +\langle s\rangle^{-d+2}\|\nabla E^2(s)\|_{\dot{F}^a_{1,\infty}}\right) ds\\
	&\qquad+\frac{1}{t^{a+\frac{d(p-1)}{p}}} \|(\rho_1-\rho_2,U_1-U_2)\|_{S^{{\varepsilon_0}}_a}\int_{t/2}^{t}\left(\langle s\rangle^{-d+2-a}\|\nabla E^2(s)\|_{L^p} +\langle s\rangle^{-d+2}\|\nabla E^2(s)\|_{\dot{F}^a_{p,\infty}}\right) ds\\
&\quad\lesssim_{a,M^*}\frac{1}{t^{a+\frac{d(p-1)}{p}}} \|(\rho_1-\rho_2,U_1-U_2)\|_{S^{{\varepsilon_0}}_a}\|(\rho_2,U_2)\|_{S^{{\varepsilon_0}}_a}.
\end{align*}
Then, 
\begin{align}
\sum_{p=1,\infty}\langle t\rangle^{a+\frac{d(p-1)}{p}} \frac{\|\delta_{\alpha}\mathbf{R}^1(\rho_1,\rho_2,U_1,U_2)\|_{L^p}}{|\alpha|^a}\lesssim_{a,M^*} \|(\rho_2,U_2)\|_{S^{{\varepsilon_0}}_a}\|(\rho_1-\rho_2,U_1-U_2)\|_{S^{{\varepsilon_0}}_a}.
\end{align}
Combining this with \eqref{mathbfR1 1}, one gets
\begin{equation}
\|\mathbf{R}^1(\rho_1,\rho_2,U_1,U_2)\|_{S^{{\varepsilon_0}}_a}\lesssim_{a,M^*} \|(\rho_2,U_2)\|_{S^{{\varepsilon_0}}_a}\|(\rho_1-\rho_2,U_1-U_2)\|_{S^{{\varepsilon_0}}_a}.
\end{equation}
\textbf{(2)} Estimate on $\mathbf{R}^2(\rho_1,\rho_2,U_1,U_2)(t,x)$.
We infer that
\begin{align*}
&\mathbf{R}^2(\rho_1,\rho_2,U_1,U_2)(t,x)\\&=-\sum_{i=1}^{d}	\int_{0}^{1}\int_{0}^{t}\int_{\mathbb{R}^{d}}\mathbf{W}_{s,t}(x-vt,v)E^2_i(s,X_{s,t}^2(x,v))(\nabla\partial_i\mu)(V_{s,t}(x,v,\varpi))dvdsd\varpi.
\end{align*}
Analogously, we  obtain 
\begin{equation}
\|	\mathbf{R}^2(\rho_1,\rho_2,U_1,U_2)\|_{S^{{\varepsilon_0}}_a}\lesssim_{a,M^*} \|(\rho_2,U_2)\|_{S^{{\varepsilon_0}}_a}\|(\rho_1-\rho_2,U_1-U_2)\|_{S^{{\varepsilon_0}}_a},
\end{equation}
which complete the proof.
\end{proof}\\

\begin{lemma} \label{lem1}Let $H_0:\mathbb{R}^d\to \mathbb{R}$ and $A_0:\mathbb{R}^d\to \mathbb{R}^d$ and $\lambda\in \mathbb{R}$. Then, for any muti-index $\alpha\not =0$, 
\begin{align*}
&	|\partial^\gamma_x\left[H_0(\lambda x+A_0(x))\right]-(\partial_x^\gamma H_0)(\lambda x+A_0(x))\lambda^{|\gamma|}|\lesssim |(\nabla^{|\gamma|} H_0)(\lambda x+A_0(x))||\nabla A_0(x)|(|\lambda|+|\nabla A_0(x)|)^{|\gamma|-1}\\&\quad\quad\quad+\sum_{k=1}^{|\gamma|-1}\sum_{j=0}^{k-1}\sum_{i=2}^{|\gamma|-j}|(\nabla^k H_0)(\lambda x+A_0(x))|(|\lambda|+|\nabla A_0(x)|)^j |\nabla^iA_0(x)|^{\frac{|\gamma|-j}{i}}.
\end{align*}
\end{lemma}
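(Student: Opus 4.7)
The plan is to apply the multivariate Faà di Bruno formula to $\partial_x^\gamma[H_0(\lambda x+A_0(x))]$, isolate the term that should cancel against $(\partial_x^\gamma H_0)(\lambda x+A_0(x))\lambda^{|\gamma|}$, and then organize the remainder to match the two groups of terms on the right-hand side. Setting $\phi(x):=\lambda x+A_0(x)$, one has $\partial_{x_i}\phi=\lambda e_i+\partial_{x_i}A_0$ and $\partial^\beta\phi=\partial^\beta A_0$ for $|\beta|\geq 2$, so Faà di Bruno gives a sum of terms of the form $C\,(\nabla^k H_0)(\phi)\prod_l(\partial^{m_l}\phi)^{r_l}$ indexed by $1\leq k\leq|\gamma|$ and by partitions $\sum_l r_l m_l=|\gamma|$, $\sum_l r_l=k$.

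The contribution with $k=|\gamma|$ forces $m_l=1$ for every $l$, producing $(\nabla^{|\gamma|}H_0)(\phi)\cdot(\lambda I+\nabla A_0)^{\otimes|\gamma|}$ (symmetrized according to $\gamma$). Expanding the tensor power binomially splits off a pure $\lambda^{|\gamma|}$ piece, which exactly matches $(\partial_x^\gamma H_0)(\phi)\lambda^{|\gamma|}$ and is subtracted in the LHS; every remaining piece carries at least one factor of $\nabla A_0$ and therefore is bounded by
\begin{align*}
|(\nabla^{|\gamma|}H_0)(\phi)|\cdot|\nabla A_0|\cdot(|\lambda|+|\nabla A_0|)^{|\gamma|-1},
\end{align*}
which is the first term on the right. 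For the partitions with $k<|\gamma|$ at least one $m_l\geq 2$; I would sort the factors into the $m_l=1$ group (the $\phi'$-type factors) and the $m_l\geq 2$ group. Letting $j:=\sum_{m_l=1}r_l$ be the number of $\phi'$-factors, we have $0\leq j\leq k-1$ and the surviving factors satisfy $m_l\geq 2$, $\sum_{m_l\geq 2}r_l m_l=|\gamma|-j$. The $\phi'$-factors contribute at most $(|\lambda|+|\nabla A_0|)^j$.

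The key estimate for the higher-order factors is the elementary inequality
\begin{align*}
\prod_{l:\, m_l\geq 2}|\nabla^{m_l}A_0|^{r_l}\;\leq\;\Bigl(\max_{2\leq i\leq|\gamma|-j}|\nabla^{i}A_0|^{1/i}\Bigr)^{|\gamma|-j}\;\leq\;\sum_{i=2}^{|\gamma|-j}|\nabla^{i}A_0(x)|^{\frac{|\gamma|-j}{i}},
\end{align*}
which follows because for a single factor $|\nabla^{m}A_0|^{r}=\bigl(|\nabla^{m}A_0|^{1/m}\bigr)^{rm}\leq\bigl(\max_i|\nabla^{i}A_0|^{1/i}\bigr)^{rm}$ and the exponents $rm$ add up to $|\gamma|-j$. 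Summing the resulting bounds over all Faà di Bruno partitions at fixed $(k,j)$ absorbs the combinatorial coefficients into an implicit constant and yields precisely the triple sum $\sum_{k=1}^{|\gamma|-1}\sum_{j=0}^{k-1}\sum_{i=2}^{|\gamma|-j}$ in the statement. The only real obstacle is bookkeeping: checking that the ranges of $k$, $j$ and $i$ from the partition indexing match the ones announced, and verifying the generalized AM--GM step above; once those are in place, the rest is triangle inequality.
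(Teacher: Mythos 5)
Your proposal follows essentially the same route as the paper: apply Faà di Bruno to $H_0(\tilde{A}_0(x))$ with $\tilde{A}_0=\lambda x+A_0$, isolate the top-order piece $\mathbf{Q}=(\nabla^{|\gamma|}H_0)(\tilde{A}_0):(\nabla\tilde{A}_0)^{\otimes|\gamma|}$ whose binomial expansion produces the $\lambda^{|\gamma|}$ cancellation plus the $|\nabla A_0|(|\lambda|+|\nabla A_0|)^{|\gamma|-1}$ bound, and then absorb the lower-order Faà di Bruno terms into the triple sum by splitting the product into $m_l=1$ and $m_l\geq 2$ factors and invoking $\prod_{m_l\geq 2}|\nabla^{m_l}A_0|^{r_l}\leq\bigl(\max_i|\nabla^iA_0|^{1/i}\bigr)^{\sum r_lm_l}\leq\sum_i|\nabla^iA_0|^{(|\gamma|-j)/i}$. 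The paper compresses the last step into an ``easy to check'' claim; your version supplies the same argument with the bookkeeping of $(k,j,i)$ made explicit and verified, which is a faithful elaboration rather than a different proof.
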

\begin{proof}  Assume $\partial_x^{\gamma}=\partial_{x_1}^{\gamma_1}...\partial_{x_d}^{\gamma_d}$. We know that, for $\tilde{A}_0(x)=\lambda x+A_0(x)$, we have
\begin{align*}
&	\left|\partial^\gamma_x\left[H_0( \lambda x+A_0(x))\right]-	\mathbf{Q}\right|\lesssim \sum_{k=1}^{|\gamma|-1} |(\nabla^kH_0)(\tilde{A}_0(x))|  \sum_{\substack{m_1,...,m_k\geq 1\\m_1+..+m_k=|\gamma|}}\prod_{j=1}^k |\nabla^{m_j} \tilde{A}_0(x)|,
\end{align*}
with 
\begin{equation}\label{23}
\mathbf{Q}=(\nabla^{|\gamma|}H_0)(\lambda x+A_0(x)):\left[ (\partial_{x_1} \tilde{A}_0(x))^{\otimes{\gamma_1}}\otimes (\partial_{x_2} \tilde{A}_0(x))^{\otimes{\gamma_2}}\otimes...\otimes (\partial_{x_d} \tilde{A}_0(x))^{\otimes{\gamma_d}}\right].
\end{equation}
It is easy to check that 
\begin{align*}
&	\sum_{k=1}^{|\gamma|-1} |(\nabla^kH_0)(\tilde{A}_0(x))|  \sum_{\substack{m_1,...,m_k\geq 1\\m_1+..+m_k=|\gamma|}}\prod_{j=1}^k |\nabla^{m_j} \tilde{A}_0(x)|\\&\qquad\lesssim \sum_{k=1}^{|\gamma|-1}\sum_{j=0}^{k-1}\sum_{i=2}^{|\gamma|-j}|(\nabla^k H_0)(\lambda x+A_0(x))|(|\lambda|+|\nabla A_0(x)|)^j |\nabla^iA_0(x)|^{\frac{|\gamma|-j}{i}},
\end{align*}
and 
\begin{align}\label{233}
\left|	\mathbf{Q}-(\partial_x^\gamma H_0)(\lambda x+A_0(x))\lambda^{|\gamma|}\right|\lesssim |(\nabla^{|\gamma|} H_0)(\lambda x+A_0(x))||\nabla A_0(x)|(|\lambda|+|\nabla A_0(x)|)^{|\gamma|-1}.
\end{align}
Combining \eqref{23} with \eqref{233}, we finish the proof.
\end{proof}	
\begin{proposition}\label{R estimate}Let $a\in (\frac{\sqrt{5}-1}{2},1)$ and  assume $\|(\rho,U)\|_{S^{{\varepsilon_0}}_a}\leq \varepsilon_0\leq\widetilde{\varepsilon}_0$ with $\widetilde{\varepsilon}_0$ mentioned in Proposition \ref{estimates about Y and W}. Then we have for $b\in(\frac{1+\delta_0}{2-\delta_0},1-2\delta_0)$,
\begin{equation}\label{higherRnorm}
\|	\mathcal{R}(\rho,U)\|_{l+1,b}\lesssim_{a,b,\delta_0,\mathbf{c},\mathbf{c}',C_{A,l}}1,
\end{equation} provided $\|(\rho,U)\|_{l,1-\delta_0}\leq \mathbf{c}$,  $	\sum_{j=2}^{l+1} \sup_{|r|\leq 1}	\left|{A^{(j)}(r)}\right|\leq C_{A,l}$  and $\mu$ satisfies  \begin{equation}\label{z14}
\sum_{i=1}^{l+3}\langle v\rangle^N\left|\nabla^i\mu(v)\right|\leq\mathbf{c}',
\end{equation}
where $l\geq 0$ , $0<\delta_0<\frac{4-\sqrt{13}}{3}$ and $N>d$. 
\end{proposition}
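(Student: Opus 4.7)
The plan is to differentiate $\mathcal{R}(\rho,U) = \sum_{i=1}^d \mathcal{T}[E_i,\partial_{x_i}\mu]$ up to order $l+1$ in $x$, expand the derivatives via Faà di Bruno (Lemma \ref{lem1}), and bound each piece by combining Lemma \ref{esT} with the higher-derivative bounds on $(Y_{s,t},W_{s,t})$ from Proposition \ref{proposition of higher derivative estimates Y W}. Fix $j\in\{0,\dots,l+1\}$. For the linear piece we trivially have $\nabla_x^j\mathcal{T}_L[F,\eta] = \mathcal{T}_L[\nabla_x^jF,\eta]$. For the nonlinear piece, using $X_{s,t}(x,v) = x-(t-s)v + Y_{s,t}(x-tv,v)$ and $V_{s,t}(x,v) = v + W_{s,t}(x-tv,v)$, Lemma \ref{lem1} applied to $F(s,\cdot)\circ X_{s,t}$ together with the chain rule on $\eta\circ V_{s,t}$ give
\[
\nabla_x^j\bigl[F(s,X_{s,t})\eta(V_{s,t})\bigr] = (\nabla_x^jF)(s,X_{s,t})\,\eta(V_{s,t}) + R_j(s,x,v),
\]
where $R_j$ collects correction terms each involving at least one factor $\nabla^k_xY_{s,t}$ or $\nabla^k_xW_{s,t}$ with $k\geq 1$, or at least one derivative of $\eta$.

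The principal term combines with the linear piece to produce exactly $\mathcal{T}[\nabla_x^jE_i,\partial_i\mu]$, so Lemma \ref{esT} applied with $\sigma=b$ reduces matters to controlling $\|\nabla^jE(s)\|_{L^p}$ and $\|\nabla^jE(s)\|_{\dot{F}^b_{p,\infty}}$ with the appropriate $\langle s\rangle$-weight. Since $E = -\nabla(1-\Delta)^{-1}(\rho+A(U))$, Lemma \ref{varrho} gains one derivative, so these norms are bounded by $\|\nabla^{j-1}(\rho+A(U))\|$. The $A(U)$ factor is expanded by Faà di Bruno as products of $A^{(k)}(U)$ and $\nabla^{m_i}U$; by the hypothesis on $A^{(k)}$ for $2\leq k\leq l+1$ together with $\|(\rho,U)\|_{l,1-\delta_0}\leq\mathbf{c}$, each such term is uniformly bounded in the required norm. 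This is exactly where the constraint $j-1\leq l$ is used, which is the source of the one-derivative gain in the conclusion.

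Each correction term in $R_j$ has the schematic form
\[
(\nabla^m F)(s,X_{s,t})\prod_{i}\nabla^{k_i}_{x}Y_{s,t}\cdot(\nabla^n\eta)(V_{s,t})\prod_{i}\nabla^{k'_i}_{x}W_{s,t},
\]
with $m+n+\sum_i(k_i-1)+\sum_i k'_i = j$ and at least one $k_i\geq 1$ or $k'_i\geq 1$. By Proposition \ref{proposition of higher derivative estimates Y W}, each factor $\nabla^k_xY_{s,t}$ supplies decay at least $\langle s\rangle^{-(d-1+b)}$, and $\nabla^k_xW_{s,t}$ supplies $\langle s\rangle^{-(d+b)}$. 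Since $\|W_{s,t}\|_{L^\infty_{x,v}}\lesssim 1$ we have $\langle V_{s,t}\rangle\sim\langle v\rangle$, so the weight $\langle v\rangle^N$ inherited from the hypothesis \eqref{z14} on $\nabla^{n+1}\mu$ survives the composition. The resulting pointwise integrand fits exactly the template handled in the proof of Lemma \ref{esT}, and the same dispersive argument (change of variables $w=x-tv$ combined with the auxiliary dispersive bound used there) yields $L^p_x$ and $\dot{B}^b_{p,\infty}$ bounds with the desired power of $\langle t\rangle$; crucially, for these genuinely nonlinear terms the $\mathcal{T}_L$-subtraction is not needed, which is exactly what saves a derivative compared to the principal piece.

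The main obstacle is the bookkeeping for the $s$-integral. After absorbing the time-weight $\langle s\rangle^{m}$ coming from $\|\nabla^mE\|_{m,b}\lesssim 1$ and the extra decay produced by each $\nabla^{k_i}Y_{s,t}$ and $\nabla^{k'_i}W_{s,t}$ factor (Proposition \ref{proposition of higher derivative estimates Y W}), each correction term must still integrate to give an $s$-convergent bound with the correct power of $\langle t\rangle$. The assumption $b<1-\delta_0$ is what prevents the Hölder-exponent singularity in the bounds on $\delta_\alpha^v\nabla_v^{\gamma+1}Y_{s,t}$ from spoiling the integral, while $a>\frac{\sqrt{5}-1}{2}$ (so $a^2+a>1$) is used exactly as in the proof of Lemma \ref{esT} to absorb the small-$s$ singularity. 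Once integrability is verified term by term, summing over $j=0,1,\dots,l+1$ and taking the supremum in $t$ yields the stated bound $\|\mathcal{R}(\rho,U)\|_{l+1,b}\lesssim_{a,b,\delta_0,\mathbf{c},\mathbf{c}',C_{A,l}}1$.
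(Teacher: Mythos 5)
Your proposal runs into a genuine gap in the large-time estimate, precisely where the extra $\langle t\rangle^{-j}$ weight in the norm $\|\cdot\|_{l+1,b}$ has to come from. You differentiate $F(s,X_{s,t}(x,v))\eta(V_{s,t}(x,v))$ directly in $x$ in the original $(x,v)$ variables, so the principal term after cancellation with $\mathcal{T}_L$ is $\mathcal{T}[\nabla_x^j E_i,\partial_i\mu]$ and the corrections carry factors $\nabla_x^k Y_{s,t}$, $\nabla_x^k W_{s,t}$. But Lemma \ref{esT} applied to $\mathcal{T}[\nabla_x^j E_i,\partial_i\mu]$ can only produce $\langle t\rangle^{-(\frac{d(p-1)}{p}+\sigma)}$: the right-hand side of \eqref{z12} normalizes $F$ by $\langle s\rangle^{\frac{d(p-1)}{p}}$ (or $\langle s\rangle^{\frac{d(p-1)}{p}+\sigma}$), so the extra $s$-decay of $\nabla^j E$ is simply absorbed and returns a constant, not a power of $t$. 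Tracing the dispersive step in the proof of Lemma \ref{esT}: for the $\int_0^{t/2}$ range the decay comes only from Lemma \ref{mathcal H1H2}, which gives $t^{-\frac{d(p-1)}{p}}$ and no more; putting faster $s$-decay on $F$ just makes $\int_0^{t/2}\langle s\rangle^{-j-(d-2+a)}ds$ uniformly bounded, not $t^{-j}$. So your estimate misses a factor $t^{-(l+1)}$, which is exactly the weight $\langle s\rangle^{l+1}$ multiplying $\nabla^{l+1}$ in the definition of $\|\cdot\|_{l+1,b}$.

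The paper avoids this by changing variables to $w=x-vt$ \emph{before} differentiating in $x$ (for $t\geq 1$). After the change, the arguments depend on $x$ only through $\frac{x-w}{t}$ and $\frac{s(x-w)}{t}$, so each $\nabla_x$ produces a factor $1/t$, and the Faà di Bruno expansion (Lemma \ref{lem1}) is applied with $\lambda=s/t$ (resp. $1/t$). The ``principal'' piece is then $\frac{1}{t^{l+1}}\mathcal{T}^1_\alpha\big[s^{|\beta_1|}\partial^{\beta_1}\delta_{\frac{s\alpha}{t}}E_i,\partial^{\beta_2}\partial_i\mu\big]$: the $s^{|\beta_1|}$ inside the argument compensates the extra $s$-decay of $\partial^{\beta_1}E_i$ so that \eqref{T alpha lower 1} is applicable, while the explicit prefactor $t^{-(l+1)}$ is the mechanism that yields the $\langle t\rangle^{-(l+1)}$ gain. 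Likewise the corrections then involve $\nabla_v^k Y_{s,t}$, $\nabla_v^k W_{s,t}$ (not $\nabla_x^k$) together with their own powers of $1/t$ from the chain rule, which is why Proposition \ref{proposition of higher derivative estimates Y W} is stated with the somewhat delicate $\delta$-loss in the $\nabla_v^{\gamma+1}$ bound and why $\delta_0<2-\sqrt{3}$ enters. Without this change of variables, no term in your decomposition carries the required $t^{-j}$.
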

\begin{proof} In this proof, we denote $C=C(a,b,\delta_0,\mathbf{c},\mathbf{c}',C_{A,l})$.\\
\textbf{(1)}We firstly consider $t\geq 1$.
Set 
\begin{align*}
&F_{1,1}(x)=	\delta_{\frac{s\alpha}{t}}E_i(s,.)\left(Z_1(x)\right),~~\eta_{1,1}(x)=\partial_i\mu\left(Z_2\right),\\& 
F_{1,2}(x)=\delta_{\frac{s\alpha}{t}}E_i(s,.)(w+\frac{s}{t}(x-w)),
\eta_{1,2}(x)=\partial_i\mu(\frac{x-w}{t}),\\&
F_{2,1}(x)=	E_i\left(s,Z_1(x)\right),
\eta_{2,1}(x)=\delta_{\frac{\alpha}{t}}\partial_i\mu\left(Z_2(x)\right),\\&
F_{2,2}(x)=E_i(s,w+\frac{s}{t}(x-w)),~~
\eta_{2,2}(x)=\delta_{\frac{\alpha}{t}}\partial_i\mu(\frac{x-w}{t}),\\&
F_{3}(x)=E_i\left(s,Z_1(x-\alpha)\right)-E_i\left(s,Z_4(x)\right),~~
\eta_3(x)=\partial_i\mu\left(Z_3(x)\right)-\partial_i\mu\left(Z_2(x)\right),
\end{align*}
where $Z_1,Z_2,Z_3,Z_4$ are defined	 in \eqref{Z1234} and  $w=x-vt$, then we have, 
\begin{align}\nonumber
&	\delta_{\alpha}\mathcal{T}[E_i,\partial_i\mu](t,x)= \sum_{j=1}^{4} \mathcal{T}_{\alpha}^j[E_i,\partial_i\mu](t,x),
\end{align} 
where
\begin{align*}			
&\mathcal{T}_{\alpha}^1[E_i,\partial_i\mu](t,x)=-\int_{0}^{t}\int_{\mathbb{R}^d}
\left(F_{1,1}(x)\eta_{1,1}(x)-	F_{1,2}(x)\eta_{1,2}(x)\right)\frac{dwds}{t^d},\\&
\mathcal{T}_{\alpha}^2[E_i,\partial_i\mu](t,x)=\int_{0}^{t}\int_{\mathbb{R}^d}
\left(F_{2,1}(x)\eta_{2,1}(x)-	F_{2,2}(x)\eta_{2,2}(x)\right)\frac{dwds}{t^d},\\&
\mathcal{T}_{\alpha}^3[E_i,\partial_i\mu](t,x)=\int_{0}^{t}\int_{\mathbb{R}^d} F_3(x)\eta_{1,1}(x)
\frac{dwds}{t^{d}},\\&
\mathcal{T}_{\alpha}^4[E_i,\partial_i\mu](t,x)=\int_{0}^{t}\int_{\mathbb{R}^d} F_{2,1}(x-\alpha)\eta_{3}(x)
\frac{dwds}{t^{d}}.
\end{align*}
Hence, we obtain	
\begin{align}\nonumber
&|\nabla^{l+1}_x	\delta_{\alpha}\mathcal{T}^j[E_i,\partial_i\mu](t,x)|\lesssim\sum_{|\beta_1|+|\beta_2|=l+1} \left|\mathcal{T}_{\alpha}^j[E_i,\partial_i\mu,\beta_1,\beta_2](t,x)\right|,
\end{align} 
where 
\begin{align*}
&\mathcal{T}_{\alpha}^1[E_i,\partial_i\mu,\beta_1,\beta_2](t,x)=-\int_{0}^{t}\int_{\mathbb{R}^d}
\left(\partial_x^{\beta_1}F_{1,1}(x)\partial_x^{\beta_2}\eta_{1,1}(x)-	\partial_x^{\beta_1}F_{1,2}(x)\partial_x^{\beta_2}\eta_{1,2}(x)\right)\frac{dwds}{t^d},\\&
\mathcal{T}_{\alpha}^2[E_i,\partial_i\mu,\beta_1,\beta_2](t,x)=\int_{0}^{t}\int_{\mathbb{R}^d}
\left(\partial_x^{\beta_1}F_{2,1}(x)\partial_x^{\beta_2}\eta_{2,1}(x)-	\partial_x^{\beta_1}F_{2,2}(x)\partial_x^{\beta_2}\eta_{2,2}(x)\right)\frac{dwds}{t^d},\\&
\mathcal{T}_{\alpha}^3[E_i,\partial_i\mu,\beta_1,\beta_2](t,x)=\int_{0}^{t}\int_{\mathbb{R}^d} \partial_x^{\beta_1}F_3(x)\partial_x^{\beta_2}\eta_{1,1}(x)
\frac{dwds}{t^{d}},\\&
\mathcal{T}_{\alpha}^4[E_i,\partial_i\mu,\beta_1,\beta_2](t,x)=\int_{0}^{t}\int_{\mathbb{R}^d} \partial_x^{\beta_1}F_{2,1}(x-\alpha)\partial_x^{\beta_2}\eta_{3}(x)
\frac{dwds}{t^{d}}.
\end{align*}		
\textbf{1.1)} 	 Estimate of terms in $\nabla_x^{l+1}\mathcal{T}^1_\alpha[E_i,\partial_i\mu]$. \\
Note that
\begin{align*}
&\partial_x^{\beta_1}F_{1,1}(x)\partial_x^{\beta_2}\eta_{1,1}(x)-	\partial_x^{\beta_1}F_{1,2}(x)\partial_x^{\beta_2}\eta_{1,2}(x)\\				&\quad=\left(\partial_x^{\beta_1}F_{1,1}(x)-\left(\frac{s}{t}\right)^{|\beta_1|}\partial_x^{\beta_1}	\delta_{\frac{s\alpha}{t}}E_i(s,.)(Z_1)\right)
\left(	\partial_x^{\beta_2}\eta_{1,1}(x)-\left(\frac{1}{t}\right)^{|\beta_2|}(\partial_x^{\beta_2}\partial_i\mu)(Z_2)\right)\\
&\qquad+\left(\partial_x^{\beta_1}F_{1,1}(x)-\left(\frac{s}{t}\right)^{|\beta_1|}\partial_x^{\beta_1}	\delta_{\frac{s\alpha}{t}}E_i(s,.)(Z_1)\right)\left(\frac{1}{t}\right)^{|\beta_2|}(\partial_x^{\beta_2}\partial_i\mu)(Z_2)\\
&\qquad+\left(\frac{s}{t}\right)^{|\beta_1|}\partial_x^{\beta_1}	\delta_{\frac{s\alpha}{t}}E_i(s,.)(Z_1)\left(	\partial_x^{\beta_2}\eta_{1,1}(x)-\left(\frac{1}{t}\right)^{|\beta_2|}(\partial_x^{\beta_2}\partial_i\mu)(Z_2)\right)\\
&\qquad+\left(\left(\frac{s}{t}\right)^{|\beta_1|}\partial_x^{\beta_1}	\delta_{\frac{s\alpha}{t}}E_i(s,.)(Z_1)\left(\frac{1}{t}\right)^{|\beta_2|}(\partial_x^{\beta_2}\partial_i\mu)(Z_2)-\partial_x^{\beta_1}F_{1,2}(x)\partial_x^{\beta_2}\eta_{1,2}(x)\right)\\
&\quad	:=\sum_{k=1}^{4}\mathcal{T}^{1,\beta_1,\beta_2}_{\alpha,k}(t,x,\frac{x-w}{t}).
\end{align*}
We first deal with the fourth term directly
\begin{align}\label{T alpha 1 higher 4}
\begin{split}
	&\sum_{|\beta_1|+|\beta_2|=l+1}\left\|\int_{0}^{t}\int_{\mathbb{R}^d}\mathcal{T}^{1,\beta_1,\beta_2}_{\alpha,4}(t,x,\frac{x-w}{t})\frac{dwds}{t^d}\right\|_{L^p_x}=\frac{1}{t^{l+1}}\sum_{|\beta_1|+|\beta_2|=l+1}\left\|\mathcal{T}^1_\alpha\big[s^{|\beta_1|}\partial^{\beta_1}\delta_{\frac{s\alpha}{t}}E_i,\partial^{\beta_2}\partial_i\mu\big]\right\|_{L^p_x}\\
	&\quad\overset{\eqref{T alpha lower 1}}\lesssim_{\delta_0,\mathbf{c}}\frac{|\alpha|^b}{t^{d+l+1+b}}\sum_{|\beta_1|=0}^{l+1}\left(\int_{0}^{t/2}|s|^{|\beta_1|+b}\|\nabla^{|\beta_1|}E_i(s)\|_{\dot{F}^b_{1,\infty}}\frac{ds}{\langle s\rangle^{d-1-\delta_0}}
\right.\\&\left.	+\int_{t/2}^{t}\sup_{0\leq s\leq t}|s|^{|\beta_1|+b+d}\|\nabla^{|\beta_1|}E_i(s)\|_{\dot{F}^b_{\infty,\infty}}\frac{ds}{\langle s\rangle^{d-1-\delta_0}}\right)\\
	&\quad\overset{\eqref{E estimate by rho'}}
	{	\lesssim_{b,\delta_0,\mathbf{c},C_{A,l}}}\frac{|\alpha|^b}{t^{d+l+1+b}}\int_{0}^{t}\frac{|s|^{l+1+b}}{\langle s\rangle^{(l+1-\delta_0)+(d-1-\delta_0)}}ds	\lesssim_{b,\delta_0,\mathbf{c},C_{A,l}}\frac{|\alpha|^b}{t^{d+l+1+b}}
	,
\end{split}
\end{align}
since $b<1-2\delta_0$.\\
Then we start to estimate $\mathcal{T}^{1,\beta_1,\beta_2}_{\alpha,k}$, where $k=1,2,3.$ 
Applying  Lemma \ref{lem1} and Proposition \ref{results of Y small},  \ref{proposition of higher derivative estimates Y W}, we get
\begin{align*}
&\left|\partial_x^{\beta_1}F_{1,1}(x)-\left(\frac{s}{t}\right)^{|\beta_1|}\partial_x^{\beta_1}	\delta_{\frac{s\alpha}{t}}E_i(s,.)(Z_1)\right|\lesssim \frac{\mathbf{1}_{|\beta_1|\geq1}}{t^{|\beta_1|}} |\nabla^{|\beta_1|} 	\delta_{\frac{s\alpha}{t}}E_i(s,.)(Z_1)|  |\nabla_vY_{s,t}|\left(s+|\nabla_vY_{s,t}|\right)^{|\beta_1|-1}\\&\qquad+\frac{\mathbf{1}_{|\beta_1|\geq1}}{t^{|\beta_1|}}\sum_{m=1}^{|\beta_1|-1}\sum_{j=0}^{m-1}\sum_{i=2}^{|\beta_1|-j}|\nabla^m \delta_{\frac{s\alpha}{t}}E_i(s,.)(Z_1)|(|s|+|\nabla_vY_{s,t}|)^j |\nabla_v^iY_{s,t}|^{\frac{|\beta_1|-j}{i}}
\\&\lesssim_{C}\mathbf{1}_{|\beta_1|\geq1}\left(\frac{\|(\rho,U)\|_{S^{{\varepsilon_0}}_a}}{t^{|\beta_1|}} |\nabla^{|\beta_1|} 	\delta_{\frac{s\alpha}{t}}E_i(s,.)(Z_1)| \langle s\rangle^{|\beta_1|-d+\delta_0}+ \frac{\|(\rho,U)\|_{S^{{\varepsilon_0}}_a}}{t^{|\beta_1|}}\sum_{m=1}^{|\beta_1|-1}|\nabla^m \delta_{\frac{s\alpha}{t}}E_i(s,.)(Z_1)|\langle s\rangle^{m-d+1}\right)\\
&\lesssim_{C}\mathbf{1}_{|\beta_1|\geq1}\frac{\|(\rho,U)\|_{S^{{\varepsilon_0}}_a}}{t^{|\beta_1|}}\sum_{m=1}^{|\beta_1|}|\nabla^m \delta_{\frac{s\alpha}{t}}E_i(s,.)(Z_1)|\langle s\rangle^{m-d+1}	,
\end{align*}
with $
\nabla_v^iY_{s,t}:=(\nabla_v^iY_{s,t})(w,\frac{x-w}{t}).$
Similarly, we also can estimate 
\begin{align*}
&\left|	\partial_x^{\beta_2}\eta_{1,1}(x)-\left(\frac{1}{t}\right)^{|\beta_2|}(\partial_x^{\beta_2}\partial_i\mu)(Z_2)\right|\lesssim \frac{\mathbf{1}_{|\beta_2|\geq1}}{t^{|\beta_2|}} |\nabla^{|\beta_2|} \partial_i\mu(Z_2)|  |\nabla_vW_{s,t}|(1+|\nabla_vW_{s,t}|)^{|\beta_2|-1}\\&\qquad+\frac{\mathbf{1}_{|\beta_2|\geq1}}{t^{|\beta_1|}}\sum_{m=1}^{|\beta_2|-1}\sum_{j=0}^{m-1}\sum_{i=2}^{|\beta_2|-j}|\nabla^m \partial_i\mu(Z_2)|(1+|\nabla_vW_{s,t}|)^j |\nabla_v^iW_{s,t}|^{\frac{|\beta_2|-j}{i}}\\&\overset{\eqref{results of Y small},  \eqref{proposition of higher derivative estimates Y W}}{\lesssim_{C}}\mathbf{1}_{|\beta_2|\geq1}\left(\frac{\|(\rho,U)\|_{S^{{\varepsilon_0}}_a}}{t^{|\beta_2|}}|\nabla^{|\beta_2|} \partial_i\mu(Z_2)| \langle s\rangle^{-d+\delta_0}+ \frac{\|(\rho,U)\|_{S^{{\varepsilon_0}}_a}}{t^{|\beta_2|}}\sum_{m=1}^{|\beta_2|}|\nabla^m \partial_i\mu(Z_2)|\langle s\rangle^{-d+1}\right)\\&
\quad\lesssim_{C}\mathbf{1}_{|\beta_2|\geq1}\frac{\|(\rho,U)\|_{S^{{\varepsilon_0}}_a}}{t^{|\beta_2|}}\frac{1}{\langle\frac{x-w}{t}\rangle^N}\langle s\rangle^{-d+1}.
\end{align*}
Thus, we have
\begin{align*}
\sum_{|\beta_1|+|\beta_2|=l+1}|\mathcal{T}^{1,\beta_1,\beta_2}_{\alpha,1}(t,x,\frac{x-w}{t})|\lesssim_{C}&\frac{\|(\rho,U)\|_{S^{{\varepsilon_0}}_a}^2}{t^{l+1}}\sum_{|\beta_1|=1}^{l}\sum_{m=1}^{|\beta_1|}|\nabla^m \delta_{\frac{s\alpha}{t}}E_i(s,.)(Z_1)|\langle s\rangle^{m-2d+2}\frac{1}{\langle\frac{x-w}{t}\rangle^N},\\
\sum_{|\beta_1|+|\beta_2|=l+1}|\mathcal{T}^{1,\beta_1,\beta_2}_{\alpha,2}(t,x,\frac{x-w}{t})|\lesssim_{C}&\frac{\|(\rho,U)\|_{S^{{\varepsilon_0}}_a}^2}{t^{l+1}}\sum_{|\beta_1|=1}^{l+1}\sum_{m=1}^{|\beta_1|}|\nabla^m \delta_{\frac{s\alpha}{t}}E_i(s,.)(Z_1)|\langle s\rangle^{m-d+1}\frac{1}{\langle\frac{x-w}{t}\rangle^N},\\
\sum_{|\beta_1|+|\beta_2|=l+1}|\mathcal{T}^{1,\beta_1,\beta_2}_{\alpha,3}(t,x,\frac{x-w}{t})|\lesssim_{C}&\frac{\|(\rho,U)\|_{S^{{\varepsilon_0}}_a}^2}{t^{l+1}}\sum_{|\beta_1|=0}^{l}
\left|\partial_x^{\beta_1}	\delta_{\frac{s\alpha}{t}}E_i(s,.)(Z_1)\right|
\langle s\rangle^{|\beta_1|-d+1}\frac{1}{\langle\frac{x-w}{t}\rangle^N}.
\end{align*}
Hence, we get
\begin{align}\label{T11}
\begin{split}
	\sum_{k=1}^{3}\sum_{|\beta_1|+|\beta_2|=l+1}|\mathcal{T}^{1,\beta_1,\beta_2}_{\alpha,k}(t,x,\frac{x-w}{t})|\lesssim_{C}&\frac{\|(\rho,U)\|_{S^{{\varepsilon_0}}_a}^2}{t^{l+1}}\sum_{|\beta_1|=1}^{l+1}\sum_{m=1}^{|\beta_1|}|\nabla^m \delta_{\frac{s\alpha}{t}}E_i(s,.)(Z_1)|\langle s\rangle^{m-d+1}\frac{1}{\langle\frac{x-w}{t}\rangle^N}.
\end{split}
\end{align} 
Note that we have \eqref{mathcal H p} and \eqref{mathcal H 1} in the Appendix with $\mathcal{H}=\nabla^m\delta_{\frac{s\alpha}{t}}E_i$ and $\varphi=Y_{s,t}(x,v)$, we make the change of variables $v=\frac{x-w}{t}$, then we apply \eqref{E estimate by rho'}  to obtain
\begin{align}\label{T alpha 1 higher 123}
\begin{split}
	&\left\|\int_{0}^{t}\int_{\mathbb{R}^d}\sum_{k=1}^{3}\sum_{|\beta_1|+|\beta_2|=l+1}|\mathcal{T}^{1,\beta_1,\beta_2}_{\alpha,k}(t,x,\frac{x-w}{t})|\frac{dwds}{t^d}\right\|_{L^p_x}\\
	&\quad\lesssim_{C}\frac{1}{t^{l+1}}\sum_{|\beta_1|=1}^{l+1}\sum_{m=1}^{|\beta_1|}\int_{0}^{t}\langle s\rangle^{m-d+1}\left\|\int_{\mathbb{R}^d}|\nabla^m \delta_{\frac{s\alpha}{t}}E_i(s,.)(X_{s,t}(x,v))|\frac{1}{\langle v\rangle^N}dv\right\|_{L^p_x}ds\\
	&\quad\lesssim_{C}\frac{1}{t^{l+1+\frac{d(p-1)}{p}}}\sum_{|\beta_1|=1}^{l+1}\sum_{m=1}^{|\beta_1|}\int_{0}^{t/2}\langle s\rangle^{m-d+1}\|\nabla^m \delta_{\frac{s\alpha}{t}}E_i(s)\|_{L^1}ds+\frac{1}{t^{l+1}}\sum_{|\beta_1|=1}^{l+1}\sum_{m=1}^{|\beta_1|}\int_{t/2}^{t}\langle s\rangle^{m-d+1}\|\nabla^m \delta_{\frac{s\alpha}{t}}E_i(s)\|_{L^p}ds
	\\&\quad\lesssim_{C}\frac{|\alpha|^b}{t^{\frac{d(p-1)}{p}+l+1+b}}\sum_{|\beta_1|=1}^{l+1}\sum_{m=1}^{|\beta_1|}\left(\int_{0}^{t/2}+\int_{t/2}^{t}\right)\frac{|s|^{b}\langle s\rangle^{m-d+1}}{\langle s\rangle^{m+1-\delta_0}}ds
	\lesssim_{C}\frac{|\alpha|^b}{t^{\frac{d(p-1)}{p}+l+1+b}}.
\end{split}
\end{align}
Thanks to \eqref{T alpha 1 higher 4} and \eqref{T alpha 1 higher 123}, we have
\begin{align}\label{T1p}
t^{\frac{d(p-1)}{p}+l+1+b}\sup_\alpha\frac{	\left\|\nabla^{l+1}_x\mathcal{T}^1_\alpha[E_i,\partial_i\mu](t)\right\|_{L^p_x}}{|\alpha|^b}\lesssim_{a,b,\delta_0,\mathbf{c},\mathbf{c}' ,C_{A,l}}1.
\end{align}
\textbf{1.2)} Estimate of terms in $\nabla_x^{l+1}\mathcal{T}^2_\alpha[E_i,\partial_i\mu]$. It is clear that
\begin{align*}
&\partial_x^{\beta_1}F_{2,1}(x)\partial_x^{\beta_2}\eta_{2,1}(x)-	\partial_x^{\beta_1}F_{2,2}(x)\partial_x^{\beta_2}\eta_{2,2}(x)\\
&\quad=\left(\partial_x^{\beta_1}F_{2,1}(x)-\left(\frac{s}{t}\right)^{|\beta_1|}\partial_x^{\beta_1}E_i(s,.)(Z_1)\right)
\left(	\partial_x^{\beta_2}\eta_{2,1}(x)-\left(\frac{1}{t}\right)^{|\beta_2|}\partial_x^{\beta_2}(\delta_{\frac{\alpha}{t}}\partial_i\mu)(Z_2)\right)\\
&\qquad+\left(\partial_x^{\beta_1}F_{2,1}(x)-\left(\frac{s}{t}\right)^{|\beta_1|}\partial_x^{\beta_1}E_i(s,.)(Z_1)\right)\left(\frac{1}{t}\right)^{|\beta_2|}\partial_x^{\beta_2}(\delta_{\frac{\alpha}{t}}\partial_i\mu)(Z_2)\\
&\qquad+\left(\frac{s}{t}\right)^{|\beta_1|}\partial_x^{\beta_1}E_i(s,.)(Z_1)\left(	\partial_x^{\beta_2}\eta_{2,1}(x)-\left(\frac{1}{t}\right)^{|\beta_2|}\partial_x^{\beta_2}(\delta_{\frac{\alpha}{t}}\partial_i\mu)(Z_2)\right)\\
&\qquad+\left(\left(\frac{s}{t}\right)^{|\beta_1|}\partial_x^{\beta_1}E_i(s,.)(Z_1)\left(\frac{1}{t}\right)^{|\beta_2|}\partial_x^{\beta_2}(\delta_{\frac{\alpha}{t}}\partial_i\mu)(Z_2)-\partial_x^{\beta_1}F_{2,2}(x)\partial_x^{\beta_2}\eta_{2,2}(x)\right)\\
&\quad	:=\sum_{k=1}^{4}\mathcal{T}^{2,\beta_1,\beta_2}_{\alpha,k}(t,x,\frac{x-w}{t}).
\end{align*}
Then, we also compute the last term first as follows
\begin{align}\nonumber
	&\sum_{|\beta_1|+|\beta_2|=l+1}\left\|\int_{0}^{t}\int_{\mathbb{R}^d}\mathcal{T}^{2,\beta_1,\beta_2}_{\alpha,4}(t,x,\frac{x-w}{t})\frac{dwds}{t^d}\right\|_{L^p_x}=\frac{1}{t^{l+1}}\sum_{|\beta_1|+|\beta_2|=l+1}\left\|\mathcal{T}^2_\alpha\big[s^{|\beta_1|}\partial^{\beta_1}E_i,\partial^{\beta_2}\partial_i\mu\big]\right\|_{L^p_x}\\
&\nonumber\overset{\eqref{T alpha lower 1}}{\lesssim_{\delta_0,\mathbf{c}}}\frac{|\alpha|^b}{t^{d+l+1+b}}\sum_{|\beta_1|=0}^{l+1}\left(\int_{0}^{t/2}|s|^{|\beta_1|}\|\nabla^{|\beta_1|}E_i(s)\|_{L^1}\frac{ds}{\langle s\rangle^{d-1-\delta_0}}+\int_{t/2}^{t}|s|^{|\beta_1|+d}\|\nabla^{|\beta_1|}E_i(s)\|_{L^\infty}\frac{ds}{\langle s\rangle^{d-1-\delta_0}}\right)\\
&\overset{\eqref{E estimate by rho'}}
\lesssim_{b,\delta_0,\mathbf{c},C_{A,l}}\frac{|\alpha|^b}{t^{d+l+1+b}}\int_{0}^{t}\frac{|s|^{l+1}}{\langle s\rangle^{(l+1-\delta_0)+(d-1-\delta_0)}}ds\lesssim_{b,\delta_0,\mathbf{c},C_{A,l}}\frac{|\alpha|^b}{t^{d+l+1+b}}.\label{T alpha 2 higher 4}
\end{align}
Now, we turn to $\mathcal{T}^{2,\beta_1,\beta_2}_{\alpha,k}$, where $k=1,2,3.$ 
Applying Proposition \ref{proposition of higher derivative estimates Y W}, we can get 
\begin{align*}	
&\left|\partial_x^{\beta_1}F_{2,1}(x)-\left(\frac{s}{t}\right)^{|\beta_1|}\partial_x^{\beta_1}E_i(s,.)(Z_1)\right|\lesssim \frac{\mathbf{1}_{|\beta_1|\geq1}}{t^{|\beta_1|}} |\nabla^{|\beta_1|} E_i(s,.)(Z_1)|  |(\nabla_vY_{s,t})|\left(s+|\nabla_vY_{s,t}|\right)^{|\beta_1|-1}\\&\qquad+\frac{\mathbf{1}_{|\beta_1|\geq1}}{t^{|\beta_1|}}\sum_{m=1}^{|\beta_1|-1}\sum_{j=0}^{m-1}\sum_{i=2}^{|\beta_1|-j}|\nabla^m E_i(s,.)(Z_1)|(|s|+|\nabla_vY_{s,t}|)^j |\nabla_v^iY_{s,t}|^{\frac{|\beta_1|-j}{i}}
\\&\overset{\eqref{results of Y small},  \eqref{proposition of higher derivative estimates Y W}}{\lesssim_{C}}\mathbf{1}_{|\beta_1|\geq1}\frac{\|(\rho,U)\|_{S^{{\varepsilon_0}}_a}}{t^{|\beta_1|}}\left(|\nabla^{|\beta_1|} E_i(s,.)(Z_1)|\langle s\rangle^{|\beta_1|-d+\delta_0} +\sum_{m=1}^{|\beta_1|-1}|\nabla^m E_i(s,.)(Z_1)|\langle s\rangle^{m-d+1}\right)\\&\lesssim_{C}\mathbf{1}_{|\beta_1|\geq1}\frac{\|(\rho,U)\|_{S^{{\varepsilon_0}}_a}}{t^{|\beta_1|}}\sum_{m=1}^{|\beta_1|}|\nabla^m E_i(s,.)(Z_1)|\langle s\rangle^{m-d+1},
\end{align*}
and 
\begin{align*}
&\left|	\partial_x^{\beta_2}\eta_{2,1}(x)-\left(\frac{1}{t}\right)^{|\beta_2|}\partial_x^{\beta_2}(\delta_{\frac{\alpha}{t}}\partial_i\mu)(Z_2)\right|\lesssim \frac{\mathbf{1}_{|\beta_2|\geq1}}{t^{|\beta_2|}} |\nabla^{|\beta_2|} (\delta_{\frac{\alpha}{t}}\partial_i\mu)(Z_2)|  |\nabla_vW_{s,t}|(1+|\nabla_vW_{s,t}|)^{|\beta_2|-1}\\&\qquad+\frac{\mathbf{1}_{|\beta_2|\geq1}}{t^{|\beta_2|}}\sum_{m=1}^{|\beta_2|}\sum_{j=0}^{m-1}\sum_{i=2}^{|\beta_2|-j}|\nabla^m (\delta_{\frac{\alpha}{t}}\partial_i\mu)(Z_2)|(1+|\nabla_vW_{s,t}|)^j |\nabla_v^iW_{s,t}|^{\frac{|\beta_2|-j}{i}}\\&\overset{\eqref{proposition of higher derivative estimates Y W}}{\lesssim_{C}}\mathbf{1}_{|\beta_2|\geq1}\frac{\|(\rho,U)\|_{S^{{\varepsilon_0}}_a}}{t^{|\beta_2|}} \left(|\nabla^{|\beta_2|} (\delta_{\frac{\alpha}{t}}\partial_i\mu)(Z_2)|\langle s\rangle^{-d+\delta_0}
+ \sum_{m=1}^{|\beta_2|-1}|\nabla^m (\delta_{\frac{\alpha}{t}}\partial_i\mu)(Z_2)|\langle s\rangle^{-d+1}\right)\\
&\lesssim_{C}\mathbf{1}_{|\beta_2|\geq1}\frac{|\alpha|^b\|(\rho,U)\|_{S^{{\varepsilon_0}}_a}}{t^{|\beta_2|+b}}\left(\frac{1}{\langle\frac{x-w}{t}\rangle^N}+\frac{1}{\langle\frac{x-w-\alpha}{t}\rangle^N}\right)\langle s\rangle^{-d+1}.
\end{align*}
Define 
\begin{equation*}
	\Xi(x,w,\alpha,t)=\frac{1}{\langle\frac{x-w}{t}\rangle^N}+\frac{1}{\langle\frac{x-w-\alpha}{t}\rangle^N}.
\end{equation*}
Thus, we have
\begin{align*}
\sum_{|\beta_1|+|\beta_2|=l+1}|\mathcal{T}^{2,\beta_1,\beta_2}_{\alpha,1}(t,x,\frac{x-w}{t})|\lesssim_{C}&\frac{|\alpha|^b}{t^{l+1+b}}\sum_{|\beta_1|=1}^{l}\sum_{m=1}^{|\beta_1|}|\nabla^mE_i(s,.)(Z_1)|\langle s\rangle^{m-2d+2}\Xi(x,w,\alpha,t),\\
\sum_{|\beta_1|+|\beta_2|=l+1}|\mathcal{T}^{2,\beta_1,\beta_2}_{\alpha,2}(t,x,\frac{x-w}{t})|\lesssim_{C}&\frac{|\alpha|^b}{t^{l+1+b}}\sum_{|\beta_1|=1}^{l+1}\sum_{m=1}^{|\beta_1|}|\nabla^m E_i(s,.)(Z_1)|\langle s\rangle^{m-d+1}	\Xi(x,w,\alpha,t),\\
\sum_{|\beta_1|+|\beta_2|=l+1}|\mathcal{T}^{2,\beta_1,\beta_2}_{\alpha,3}(t,x,\frac{x-w}{t})|\lesssim_{C}&\frac{|\alpha|^b}{t^{l+1+b}}\sum_{|\beta_1|=0}^{l}
\left|\partial_x^{\beta_1}	E_i(s,.)(Z_1)\right|
\langle s\rangle^{|\beta_1|-d+1}\Xi(x,w,\alpha,t).
\end{align*}
We observe to get
\begin{align}
\begin{split}
	\sum_{k=1}^{3}\sum_{|\beta_1|+|\beta_2|=l+1}|\mathcal{T}^{2,\beta_1,\beta_2}_{\alpha,k}(t,x,\frac{x-w}{t})|\lesssim_{C}\frac{|\alpha|^b}{t^{l+1+b}}\sum_{|\beta_1|=1}^{l+1}\sum_{m=1}^{|\beta_1|}|\nabla^m E_i(s,.)(Z_1)|\langle s\rangle^{m-d+1}	\Xi(x,w,\alpha,t).
\end{split}
\end{align} 
As the proof of \eqref{T alpha 1 higher 123},  we have Remark \ref{mathcal Halpha} in the Appendix with $\mathcal{H}=\nabla^mE_i$ and $\varphi=Y_{s,t}(x,v)$, we make the change of variable $v=\frac{x-w}{t}$, then we apply \eqref{E estimate by rho'}  to obtain
\begin{align}\label{T alpha 2 higher 123}
\begin{split}
	&\left\|\int_{0}^{t}\int_{\mathbb{R}^d}\sum_{k=1}^{3}\sum_{|\beta_1|+|\beta_2|=l+1}|\mathcal{T}^{1,\beta_1,\beta_2}_{\alpha,k}(t,x,\frac{x-w}{t})|\frac{dwds}{t^d}\right\|_{L^p_x}\\
	&\quad\lesssim_{C}\frac{|\alpha|^b}{t^{l+1+b}}\sum_{|\beta_1|=1}^{l+1}\sum_{m=1}^{|\beta_1|}\int_{0}^{t}\langle s\rangle^{m-d+1}\left\|\int_{\mathbb{R}^d}|\nabla^m E_i(s,.)(X_{s,t}(x,v))|\left(\frac{1}{\langle v\rangle^N}+\frac{1}{\langle v-\frac{\alpha}{t}\rangle^N}\right)dv\right\|_{L^p_x}ds\\
	&\quad\lesssim_{C}\frac{|\alpha|^b}{t^{l+1+b+\frac{d(p-1)}{p}}}\sum_{|\beta_1|=1}^{l+1}\sum_{m=1}^{|\beta_1|}\int_{0}^{t/2}\langle s\rangle^{m-d+1}\|\nabla^m E_i(s)\|_{L^1}ds\\
	&\qquad+\frac{|\alpha|^b}{t^{l+1+b}}\sum_{|\beta_1|=1}^{l+1}\sum_{m=1}^{|\beta_1|}\int_{t/2}^{t}\langle s\rangle^{m-d+1}\|\nabla^m E_i(s)\|_{L^p}ds
	\\&\quad\lesssim_{C}\frac{|\alpha|^b}{t^{\frac{d(p-1)}{p}+l+1+b}}\sum_{|\beta_1|=1}^{l+1}\sum_{m=1}^{|\beta_1|}\left(\int_{0}^{t/2}+\int_{t/2}^{t}\right)\frac{|s|^{b}\langle s\rangle^{m-d+1}}{\langle s\rangle^{m+1-\delta_0}}ds
	\lesssim_{C}\frac{|\alpha|^b}{t^{\frac{d(p-1)}{p}+l+1+b}}.
\end{split}
\end{align}
Thanks to \eqref{T alpha 2 higher 4} and \eqref{T alpha 2 higher 123}, we have
\begin{align}\label{T2p}
t^{\frac{d(p-1)}{p}+l+1+b}\sup_\alpha\frac{	\left\|\nabla^{l+1}_x\mathcal{T}^2_\alpha[E_i,\partial_i\mu](t)\right\|_{L^p_x}}{|\alpha|^b}\lesssim_{C}1.
\end{align}
\textbf{1.3)} Estimate of terms in $\mathcal{T}^3_\alpha[E_i,\partial_i\mu]$.
\\		
Firstly, we have
\begin{align*}
	|\partial^{\beta_1}_xF_{3}(x)|&\lesssim \sum_{\substack{1\leq |\beta_3|\leq |\beta_1|,|\vartheta_i|>0\\|\vartheta_1|+...+|\vartheta_{|\beta_3|}|=|\beta_1|}}\Bigg| \prod_{j=1}^{|\beta_3|}(\partial_x^{\vartheta_j}Z_4(x))(\partial^{\beta_3}E_i)(s,Z_4(x))-\prod_{j=1}^{|\beta_3|}(\partial_x^{\vartheta_j}Z_4(x-\alpha))(\partial^{\beta_3}E_i)(s,Z_1(x-\alpha))\Bigg|
\\&\leq \sum_{\substack{1\leq |\beta_3|\leq |\beta_1|,|\vartheta_i|>0\\|\vartheta_1|+...+|\vartheta_{|\beta_3|}|=|\beta_1|}}\left| \prod_{j=1}^{|\beta_3|}(\partial_x^{\vartheta_j}Z_4(x))-\prod_{j=1}^{|\beta_3|}(\partial_x^{\vartheta_j}Z_4(x-\alpha))\right||(\nabla^{|\beta_3|}E_i)(s,Z_4(x))|\\&\qquad+
\prod_{j=1}^{|\beta_3|}|\partial_x^{\vartheta_j}Z_4(x-\alpha)|\left|(\nabla^{|\beta_3|}E_i)(s,Z_4(x))-(\nabla^{|\beta_3|}E_i)(s,Z_1(x-\alpha))\right|.
\end{align*}
Then, by Proposition \ref{estimates about Y and W} and \ref{proposition of higher derivative estimates Y W},
for $|\beta_1|\geq1$,
\begin{align*}
	|\partial^{\beta_1}_xF_{3}(x)|&\lesssim_{b,\delta_0,\mathbf{c}} \frac{|\alpha|^{b}}{t^{|\beta_1|+b}}\sum_{1\leq |\beta_3|\leq \beta_1|}^{|\beta_1|}\left(\left[\langle s\rangle ^{|\beta_3|-d+1}\mathbf{1}_{(|\beta_3|,|\beta_1|)\not= (1,l+1)}+\langle s\rangle ^{-d+2+\delta_0+b}\mathbf{1}_{(|\beta_3|,|\beta_1|)= (1,l+1)}\right]|(\nabla^{|\beta_3|}E_i)(s,Z_4)\right.\\&\quad\left.+
\langle s\rangle ^{|\beta_3|-(d-1-\delta_0)b-\mathbf{1}_{|\beta_3|<|\beta_1|}}\sup_z\frac{\left|(\delta_{z}\nabla^{|\beta_3|}E_i)(s,Z_4)\right|}{|z|^{b}} \right).
\end{align*}
And if $|\beta_1|=0$, we can obtain
\begin{align*}
|F_{3}(x)|\leq\left|\frac{\alpha}{t}\right|^{b}\frac{|\delta_z  E_i(s,\cdot)|}{|z|^{b}}\|\nabla_vY_{s,t}\|_{L^\infty}^{b}\lesssim_{b,\delta_0,\mathbf{c}}\left|\frac{\alpha}{t}\right|^{b}\sup_z\frac{|\delta_z  E_i(s,\cdot)|}{|z|^{b}}\langle s\rangle^{-b(d-1-\delta_0)}.
\end{align*}
Now we estimate the $L^1$ and $L^\infty$ norm of 	$\partial^{\beta_1}_xF_{3}(x)$	as follows,	\begin{align}\nonumber
	\|\partial_x^{\beta_1}F_3(x)\|_{L^p_x}
	\lesssim&_{b,\delta_0,\mathbf{c}}\frac{|\alpha|^{b}}{t^{|\beta_1|+b}}\sum_{1\leq |\beta_3|\leq \beta_1|} 
	\left[\langle s\rangle ^{|\beta_3|-d+1}\mathbf{1}_{(|\beta_3|,|\beta_1|)\not= (1,l+1)}+\langle s\rangle ^{-d+2+\delta_0+b}\mathbf{1}_{(|\beta_3|,|\beta_1|)= (1,l+1)}\right]\|(\nabla^{|\beta_3|}E_i)(s)\|_{L^p}\\\nonumber
	&+\frac{|\alpha|^{b}}{t^{|\beta_1|+b}}\left(\sum_{1\leq |\beta_3|\leq \beta_1|}
	\langle s\rangle ^{|\beta_3|-(d-1-\delta_0)b-\mathbf{1}_{|\beta_3|<|\beta_1|}}      \|\nabla^{|\beta_1|}
	E_i(s)\|_{\dot{F}^b_{p,\infty}}+\| E_i(s)\|_{\dot{F}^b_{p,\infty}}\langle s\rangle^{-b(d-1-\delta_0)}\right)
	\\\nonumber
	\lesssim&_{C}\frac{|\alpha|^{b}}{t^{|\beta_1|+b}}
	\left(\frac{1}{\langle s\rangle^{d-\delta_0+\frac{d(p-1)}{p}}}
	+\frac{1}{\langle s\rangle^{d-b-2\delta_0+\frac{d(p-1)}{p}}}
	+\frac{1}{\langle s\rangle^{-\delta_0+(d-1-\delta_0)b+\frac{d(p-1)}{p}}}
	\right)\\
	\lesssim&_{C}\frac{|\alpha|^{b}}{t^{|\beta_1|+b}}\langle s\rangle^{\delta_0-(d-1-\delta_0)b-\frac{d(p-1)}{p}}\label{LpF3}
	.
\end{align}
On the other hand, for $|\beta_2|\geq 0$, we have
\begin{align*}
|\partial_x^{\beta_2}\eta_{1,1}(x)|
\lesssim	&\frac{1}{t^{|\beta_2|}}\sum_{\substack{1\leq |\beta_3|\leq |\beta_2|,|\vartheta_i|>0\\|\vartheta_1|+...+|\vartheta_{|\beta_3|}|=|\beta_2|}} \prod_{j=1}^{|\beta_3|}|\partial_x^{\vartheta_j}Z_2(x)\|\partial^{\beta_3}\partial_i\mu|(Z_2(x))
\lesssim_{C}\frac{1}{t^{|\beta_2|}}
\frac{1}{\langle\frac{x-w}{t}\rangle^N}.
\end{align*}
Since $(d-1-\delta_0)b-\delta_0>1$, $\delta_0\leq\frac{4-\sqrt{13}}{3}$, we apply \eqref{LpF3} to get
\begin{align*}
\begin{split}
	\|\mathcal{T}_{\alpha}^3[E_i,\partial_i\mu](t)\|_{L^p}
	\lesssim_{b,\delta_0,\mathbf{c},\mathbf{c}'}&\int_{0}^{t/2}\frac{1}{t^{|\beta_2|}}
	\|\partial_x^{\beta_1}F_3(x)\|_{L^1_x}\frac{ds}{(t-s)^\frac{d(p-1)}{p}}
	+\int_{t/2}^{t}\int_{\mathbb{R}^d}\frac{1}{t^{|\beta_2|}}
	\|\partial_x^{\beta_1}F_3(x)\|_{L^p_x}\frac{1}{\langle v\rangle^N}dvds\\
	\lesssim_{C}&\frac{|\alpha|^{b}}{t^{\frac{d(p-1)}{p}+l+1+b}}\left(\int_{0}^{t/2}+\int_{t/2}^{t}\right)\frac{1}{\langle s\rangle^{-\delta_0+(d-1-\delta_0)b}}ds
	\lesssim_{C}\frac{|\alpha|^{b}}{t^{\frac{d(p-1)}{p}+l+1+b}}.
\end{split}
\end{align*}
Thus, we have
\begin{align}\label{T3p}
t^{\frac{d(p-1)}{p}+l+1+b}\sup_\alpha\frac{	\left\|\mathcal{T}^3_\alpha[E_i,\partial_i\mu](t)\right\|_{L^p_x}}{|\alpha|^b}\lesssim_{C}1.
\end{align}
\textbf{1.4)} Estimate of terms in $\mathcal{T}^4_\alpha[E_i,\partial_i\mu]$.\\
Firstly, we have for $|\beta_1|\geq1$,
\begin{align*}
|\partial^{\beta_1}_x F_{2,1}(x-\alpha)|
\lesssim&\sum_{\substack{1\leq |\beta_3|\leq |\beta_1|-1,|\vartheta_i|>0\\|\vartheta_1|+...+|\vartheta_{|\beta_3|}|=|\beta_1|}}| \prod_{j=1}^{|\beta_3|}(\partial_x^{\vartheta_j}Z_1(x-\alpha))(\partial^{\beta_3}E_i)(s,Z_1(x-\alpha))|\\&
+\sum_{|\vartheta_1|+...+|\vartheta_{|\beta_1|}|=|\beta_1|,|\vartheta_i|>0}| \prod_{j=1}^{|\beta_1|}(\partial_x^{\vartheta_j}Z_1(x-\alpha))(\partial^{\beta_1}E_i)(s,Z_1(x-\alpha))|\\
\lesssim&_{b,\delta_0,\mathbf{c}} \frac{\|(\rho,U)\|_{S^{{\varepsilon_0}}_a}}{t^{|\beta_1|}}	\sum_{1\leq |\beta_3|\leq |\beta_1|}
\langle s\rangle^{|\beta_3|-\mathbf{1}_{|\beta_3|<|\beta_1|}}
\left|(\partial^{\beta_3}E_i)(s,Z_1(x-\alpha))\right|,
\end{align*}
and for $|\beta_1|=0$, we obtain
\begin{align*}
| F_{2,1}(x-\alpha)|=	\left|E_i(s,Z_1(x-\alpha))\right|.
\end{align*}
Then, we estimate the $L^1$ and $L^\infty$ norm of $\partial^{\beta_1}_x F_{2,1}$
\begin{align}
\nonumber\|\partial^{\beta_1}_x F_{2,1}\|_{L^p}\lesssim_{b,\delta_0,\mathbf{c}}&\frac{\mathbf{1}_{|\beta_1|\geq1}}{t^{|\beta_1|}}	\sum_{1\leq |\beta_3|\leq |\beta_1|}
	\langle s\rangle^{|\beta_3|-\mathbf{1}_{|\beta_3|<|\beta_1|}}
	\left\|(\partial^{\beta_3}E_i)(s,Z_1(x-\alpha))\right\|_{L^p}+\mathbf{1}_{|\beta_1|=0}\left\|E_i(s,Z_1(x-\alpha))\right\|_{L^p}\\
	\lesssim_{C}&\frac{1}{t^{|\beta_1|}}	\langle s\rangle^{\delta_0-1-\frac{d(p-1)}{p}}\lesssim_{C}\frac{1}{t^{|\beta_1|}}	\langle s\rangle^{\delta_0-1-\frac{d(p-1)}{p}}.\label{LpF12}
\end{align}
Meanwhile, for $|\beta_2|\geq1$, we have
\begin{align*}	|\partial^{\beta_2}_x\eta_{3}(x)|&\lesssim \sum_{\substack{1\leq |\beta_3|\leq |\beta_2|,|\vartheta_i|>0\\|\vartheta_1|+...+|\vartheta_{|\beta_3|}|=|\beta_2|}}| \prod_{j=1}^{|\beta_3|}(\partial_x^{\vartheta_j}Z_2(x))(\partial^{\beta_3}\partial_i\mu)(s,Z_2(x))-\prod_{j=1}^{|\beta_3|}(\partial_x^{\vartheta_j}Z_2(x-\alpha))(\partial^{\beta_3}\partial_i\mu)(s,Z_3(x))|
\\&\leq \sum_{\substack{1\leq |\beta_3|\leq |\beta_2|,|\vartheta_i|>0\\|\vartheta_1|+...+|\vartheta_{|\beta_3|}|=|\beta_2|}}\left(\left| \prod_{j=1}^{|\beta_3|}(\partial_x^{\vartheta_j}Z_2(x))-\prod_{j=1}^{|\beta_3|}(\partial_x^{\vartheta_j}Z_2(x-\alpha))\right||(\nabla^{|\beta_3|}\partial_i\mu)(s,Z_2(x))|\right.\\&\quad\left.+
\prod_{j=1}^{|\beta_3|}|\partial_x^{\vartheta_j}Z_2(x-\alpha)|\left|(\nabla^{|\beta_3|}\partial_i\mu)(s,Z_2(x))-(\nabla^{|\beta_3|}\partial_i\mu)(s,Z_3(x))\right|\right)
\\&\lesssim_{C}\frac{|\alpha|^b\|(\rho,U)\|_{S^{{\varepsilon_0}}_a}}{t^{|\beta_2|+b}}\frac{\langle s\rangle^{-d+2}}{\langle \frac{x-w}{t}\rangle^N}+\frac{\min\left\{\frac{|\alpha|}{t},1\right\}}{t^{|\beta_2|}}\frac{\langle s\rangle^{-d+2}}{\langle \frac{x-w}{t}\rangle^N}
\lesssim_{C}\frac{|\alpha|^b}{t^{|\beta_2|+b}}\frac{\langle s\rangle^{-d+2}}{\langle \frac{x-w}{t}\rangle^N}.
\end{align*}
For $|\beta_2|=0$, we have 
\begin{align*}
|\eta_{3}(x)|=&\left|(\partial_i\mu)(Z_2(x))-(\partial_i\mu)(Z_3(x))\right|=\frac{|\alpha|^{b}}{t^{b}}\frac{|\partial_i\mu(Z_2(x))-\partial_i\mu(Z_3(x))|}{|Z_2(x)-Z_3(x)|^{b}}\left|\frac{Z_2(x)-Z_3(x)}{|\alpha|/t}\right|^{b}\\
\lesssim&_{C}\frac{|\alpha|^{b}\|(\rho,U)\|_{S^{{\varepsilon_0}}_a}^b}{t^{b}}\langle s\rangle^{(\delta_0-d)b}\frac{1}{\langle\frac{x-w}{t}\rangle^N}\lesssim_{C}\frac{|\alpha|^{b}}{t^{b}}\frac{\langle s\rangle^{(\delta_0-d)b}}{\langle\frac{x-w}{t}\rangle^N}.
\end{align*}
Thus, with \eqref{LpF12}, we have
\begin{align*}
	\left\|\mathcal{T}_{\alpha}^4[E_i,\partial_i\mu](t,x)\right\|_{L^p}\lesssim_{C}&\sum_{|\beta_1|+|\beta_2|=l+1}\sum_{|\beta_1|=0}^{l}\int_{0}^{t/2}\frac{|\alpha|^b}{t^{|\beta_2|+b}}\langle s\rangle^{-d+2}
	\|\partial_x^{\beta_1}F_{2,1}(x)\|_{L^1_x}\frac{ds}{(t-s)^\frac{d(p-1)}{p}}\\
	&+\int_{0}^{t/2}\frac{|\alpha|^{b}}{t^{b}}\langle s\rangle^{(\delta_0-d)b}
	\|\partial_x^{l+1}F_{2,1}(x)\|_{L^1_x}\frac{ds}{(t-s)^\frac{d(p-1)}{p}}\\
	&+\sum_{|\beta_1|+|\beta_2|=l+1}\sum_{|\beta_1|=0}^{l}\int_{0}^{t/2}\int_{\mathbb{R}^d}\frac{|\alpha|^b}{t^{|\beta_2|+b}}\frac{\langle s\rangle^{-d+2}}{\langle v\rangle^N}
	\|\nabla_x^{\beta_1}F_{2,1}(x)\|_{L^p_x}dvds\\
	&+\int_{0}^{t/2}\int_{\mathbb{R}^d}\frac{|\alpha|^{b}}{t^{b}}\frac{\langle s\rangle^{(\delta_0-d)b}}{\langle v\rangle^N}
	\|\partial_x^{l+1}F_{2,1}(x)\|_{L^p_x}dvds
	\\
	\lesssim_{C}&\frac{|\alpha|^{b}}{t^{\frac{d(p-1)}{p}+l+1+b}}\left(\int_{0}^{t/2}+\int_{t/2}^{t}\right)\left(\frac{1}{\langle s\rangle^{d-1-\delta_0}}+\frac{1}{\langle s\rangle^{(d-\delta_0)b+1-\delta_0}}\right)ds\\
	\lesssim_{C}&\frac{|\alpha|^{b}}{t^{\frac{d(p-1)}{p}+l+1+b}}.
\end{align*}
This implies 
\begin{align}\label{T4p}
t^{\frac{d(p-1)}{p}+l+1+b}\sup_\alpha\frac{	\left\|\mathcal{T}^4_\alpha[E_i,\partial_i\mu](t)\right\|_{L^p_x}}{|\alpha|^b}\lesssim_{C}1.
\end{align}
Hence, combining \eqref{T1p}, \eqref{T2p}, \eqref{T3p} with \eqref{T4p} we obtain
\begin{align}\label{T t large}
t^{\frac{d(p-1)}{p}l+1+b}\|\nabla^{l+1}_x\mathcal{T}[E_i,\partial_i\mu](t)\|_{\dot{B}^b_{p,\infty}}\lesssim_{C}1.
\end{align}
\textbf{(2)}We now consider $t\leq 1$.
Denote
\begin{align*}
&\overline{F}_{1,1}(x)=	\delta_{\alpha}E_i(s,.)\left(X_{s,t}(x,v)\right),~~\overline{\eta}_{1,1}(x)=\partial_i\mu\left(V_{s,t}(x,v)\right),\\& 
\overline{F}_{1,2}(x)=\delta_{\alpha}E_i(s,.)(x-(t-s)v),~~
\overline{\eta}_{1,2}(x)=\partial_i\mu(v),\\&
\overline{F}_{2,1}(x)=	E_i\left(s,X_{s,t}(x,v)\right),~~\overline{F}_{3}(x)=E_i\left(s,X_{s,t}(x-\alpha,v)\right)-E_i\left(s,X_{s,t}(x,v)-\alpha\right),
\\&
\overline{\eta}_3(x)=\partial_i\mu\left(V_{s,t}(x-\alpha,v)\right)-\partial_i\mu\left(V_{s,t}(x,v)\right).
\end{align*}
Thus,  we have 
\begin{align}\nonumber
&	\delta_{\alpha}\mathcal{T}[E_i,\partial_i\mu](t,x)= \sum_{j=1}^{3} \overline{\mathcal{T}}_{\alpha}^j[E_i,\partial_i\mu](t,x),
\end{align} 
with
\begin{align*}			
&\overline{\mathcal{T}}_{\alpha}^1[E_i,\partial_i\mu](t,x)=-\int_{0}^{t}\int_{\mathbb{R}^d}
\left(\overline{F}_{1,1}(x)\overline{\eta}_{1,1}(x)-	 \overline{F}_{1,2}(x) \overline{\eta}_{1,2}(x)\right)dvds,\\
&
\overline{\mathcal{T}}_{\alpha}^2[E_i,\partial_i\mu](t,x)=\int_{0}^{t}\int_{\mathbb{R}^d}  \overline{F}_3(x) \overline{\eta}_{1,1}(x)
dvds,\\
&
\overline{\mathcal{T}}_{\alpha}^3[E_i,\partial_i\mu](t,x)=\int_{0}^{t}\int_{\mathbb{R}^d}  \overline{F}_{2,1}(x-\alpha) \overline{\eta}_{3}(x)dvds.
\end{align*}
Hence, we obtain	
\begin{align}\nonumber
&|\nabla^{l+1}_x	\delta_{\alpha}\overline{	\mathcal{T}}^j[E_i,\partial_i\mu](t,x)|\lesssim\sum_{|\beta_1|+|\beta_2|=l+1} \left|\overline{	\mathcal{T}}_{\alpha}^j[E_i,\partial_i\mu,\beta_1,\beta_2](t,x)\right|,
\end{align} 
where 
\begin{align*}
&\overline{\mathcal{T}}_{\alpha}^1[E_i,\partial_i\mu,\beta_1,\beta_2](t,x)=-\int_{0}^{t}\int_{\mathbb{R}^d}
\left(\partial_x^{\beta_1}\overline{F}_{1,1}(x)\partial_x^{\beta_2}\overline{\eta}_{1,1}(x)-	\partial_x^{\beta_1}\overline{F}_{1,2}(x)\partial_x^{\beta_2}\overline{\eta}_{1,2}(x)\right)dvds,\\
&
\overline{\mathcal{T}_{\alpha}}^2[E_i,\partial_i\mu,\beta_1,\beta_2](t,x)=\int_{0}^{t}\int_{\mathbb{R}^d} \partial_x^{\beta_1}\overline{F}_3(x)\partial_x^{\beta_2}\overline{\eta}_{1,1}(x)dvds,\\
&
\overline{\mathcal{T}}_{\alpha}^3[E_i,\partial_i\mu,\beta_1,\beta_2](t,x)=\int_{0}^{t}\int_{\mathbb{R}^d} \partial_x^{\beta_1}\overline{F}_{2,1}(x-\alpha)\partial_x^{\beta_2}\overline{\eta}_{3}(x)dvds.
\end{align*}
\textbf{2.1)} Estimate of  $\overline{\mathcal{T}}^1_\alpha[E_i,\partial_i\mu]$.\\
We can write
\begin{align*}
&\left|\partial_x^{\beta_1}\overline{F}_{1,1}(x)\partial_x^{\beta_2}\overline{\eta}_{1,1}(x)-	\partial_x^{\beta_1}\overline{F}_{1,2}(x)\partial_x^{\beta_2}\overline{\eta}_{1,2}(x)\right|\\				   &\quad\leq\left|\partial_x^{\beta_1}\overline{F}_{1,1}(x)- \partial_x^{\beta_1}	\delta_{\alpha}E_i(s,.)(X_{s,t}(x,v))\right|
\left|	\partial_x^{\beta_2}\overline{\eta}_{1,1}(x)-(\partial_x^{\beta_2}\partial_i\mu)(V_{s,t}(x,v))\right|\\
&\qquad
+\left|\partial_x^{\beta_1}\overline{F}_{1,1}(x)-\partial_x^{\beta_1}	\delta_{\alpha}E_i(s,.)(X_{s,t}(x,v))\right|\left|(\partial_x^{\beta_2}\partial_i\mu)(V_{s,t}(x,v))\right|\\
&\qquad+\left| \partial_x^{\beta_1}	\delta_{\alpha}E_i(s,.)(X_{s,t}(x,v))\right|\left|	\partial_x^{\beta_2}\overline{\eta}_{1,1}(x)- (\partial_x^{\beta_2}\partial_i\mu)(V_{s,t}(x,v))\right|\\
&\qquad 
+\left| \partial_x^{\beta_1}	\delta_{\alpha}E_i(s,.)(X_{s,t}(x,v)) (\partial_x^{\beta_2}\partial_i\mu)(V_{s,t}(x,v))-\overline{F}_{1,2}(x)\overline{\eta}_{1,2}(x)\right|.
\end{align*}
Thanks to Lemma \ref{lem1}	we  have 
\begin{align*}
&\left|\partial_x^{\beta_1}\overline{F}_{1,1}(x)-\partial_x^{\beta_1}	\delta_{\alpha}F(s,.)(X_{s,t}(x,v))\right|\\
&\lesssim \mathbf{1}_{|\beta_1|\geq1}|\nabla^{|\beta_1|} 	\delta_{\alpha}F(s,.)(X_{s,t}(x,v))|  |(\nabla_xY_{s,t})|\left(1+|\nabla_xY_{s,t}|\right)^{|\beta_1|-1}\\&\quad+\mathbf{1}_{|\beta_1|\geq1}\sum_{m=1}^{|\beta_1|-1}\sum_{j=0}^{m-1}\sum_{i=2}^{|\beta_1|-j}|\nabla^m \delta_{\alpha}F(s,.)(X_{s,t}(x,v))|(1+|\nabla_xY_{s,t}|)^j |\nabla_x^iY_{s,t}|^{\frac{|\beta_1|-j}{i}}
\\&\lesssim_{C} \mathbf{1}_{|\beta_1|\geq1} \sum_{m=1}^{|\beta_1|}|\nabla^m \delta_{\alpha}F(s,.)(X_{s,t}(x,v))|\langle s\rangle^{-d+\delta_0}.
\end{align*}
Then, for $|\beta_2|\geq1$,
\begin{align*}
&	\left|	\partial_x^{\beta_2}\overline{\eta}_{1,1}(x)-(\partial_x^{\beta_2}\partial_i\mu)(V_{s,t}(x,v))\right|\\&\qquad\lesssim\mathbf{1}_{|\beta_2|\geq1} \Bigg(|\nabla^{|\beta_2|} \partial_i\mu(V_{s,t}(x,v))|  |\nabla_xW_{s,t}|^{|\beta_2|}
+\sum_{m=1}^{|\beta_2|}\sum_{j=0}^{m-1}\sum_{i=2}^{|\beta_2|-j}|\nabla^m \partial_i\mu(V_{s,t}(x,v))\|\nabla_xW_{s,t}|^j |\nabla_x^iW_{s,t}|^{\frac{|\beta_2|-j}{i}}\Bigg)\\&\quad\overset{\eqref{results of Y small},  \eqref{proposition of higher derivative estimates Y W}}{\lesssim_{C}}\mathbf{1}_{|\beta_2|\geq1} \sum_{m=1}^{|\beta_2|}|\nabla^m \partial_i\mu(V_{s,t}(x,v))|\langle s\rangle^{-d-1+\delta_0}\|(\rho,U)\|_{S^{{\varepsilon_0}}_a}
\lesssim_{C}\frac{\mathbf{1}_{|\beta_2|\geq1}}{\langle v\rangle^N}\langle s\rangle^{-1-d+\delta_0}.
\end{align*}
Then,  we have for $p=1,\infty$,
\begin{align}\nonumber
	\left\|\overline{\mathcal{T}}^1_\alpha[E_i,\partial_i\mu]\right\|_{L^p_x}	\lesssim_{C}&\sum_{|\beta_1|=0}^{l+1}\sum_{m=0}^{|\beta_1|}\int_{0}^{t}\int_{\mathbb{R}^d}
	\|\nabla^m \delta_{\alpha}E_i(s)\|_{L^p}\langle s\rangle^{-d+\delta_0}\frac{dvds}{\langle v\rangle^N}
	+\sum_{|\beta_1|+|\beta_2|=l+1}\|\mathcal{T}[\partial^{\beta_1}_x\delta_\alpha E_i,\partial^{\beta_2}_x\partial_i\mu]\|_{L^p_x}
	\\
	\lesssim_{C}&|\alpha|^{b}\sum_{|\beta_1|+1}^{l+1}\sum_{m=0}^{|\beta_1|}\left(\int_{0}^{t}
	\frac{1}{\langle s\rangle^{m+1-\delta_0}}\langle s\rangle^{-d+\delta_0}ds
	+\|\nabla^{|\beta_1|}E_i(s)\|_{\dot{B}^b_{p,\infty}}\right)
	\lesssim_{C}|\alpha|^{b}.\label{Tbar1}
\end{align}
\textbf{2.2)} Estimate of  $\overline{\mathcal{T}}^2_\alpha[E_i,\partial_i\mu]$.\\
Firstly, we consider 	$\partial^{\beta_1}_x\overline{F}_{3}(x)$ as follows.
If $|\beta_1|\geq1$, we get
\begin{align*}
&|\partial^{\beta_1}_x\overline{F}_{3}(x)|\lesssim \sum_{\substack{1\leq |\beta_3|\leq |\beta_1|,|\vartheta_i|>0\\|\vartheta_1|+...+|\vartheta_{|\beta_3|}|=|\beta_1|}}\Bigg| \prod_{j=1}^{|\beta_3|}(\partial_x^{\vartheta_j}X_{s,t}(x,v))(\partial^{\beta_3}E_i)(s,X_{s,t}(x,v)-\alpha)\\
&\qquad\qquad\qquad\qquad\qquad\qquad\qquad-\prod_{j=1}^{|\beta_3|}(\partial_x^{\vartheta_j}X_{s,t}(x-\alpha,v))(\partial^{\beta_3}E_i)(s,X_{s,t}(x-\alpha,v))\Bigg|
\\
&\leq\sum_{\substack{1\leq |\beta_3|\leq |\beta_1|,|\vartheta_i|>0\\|\vartheta_1|+...+|\vartheta_{|\beta_3|}|=|\beta_1|}}\Bigg(\left| \prod_{j=1}^{|\beta_3|}(\partial_x^{\vartheta_j}X_{s,t}(x,v))-\prod_{j=1}^{|\beta_3|}(\partial_x^{\vartheta_j}X_{s,t}(x-\alpha,v))\right||(\nabla^{|\beta_3|}E_i)(s,X_{s,t}(x,v)-\alpha)|\\
&\quad+\prod_{j=1}^{|\beta_3|}|\partial_x^{\vartheta_j}X_{s,t}(x-\alpha,v)|\left|(\nabla^{|\beta_3|}E_i)(s,X_{s,t}(x,v)-\alpha)-(\nabla^{|\beta_3|}E_i)(s,X_{s,t}(x-\alpha,v))\right|\Bigg)\\
&\lesssim_{C}|\alpha|^{b} \sum_{1\leq |\beta_3|\leq \beta_1|}\left(\langle s\rangle^{-d+\delta_0}|\nabla^{|\beta_3|}E_i|(s,X_{s,t}(x,v)-\alpha)
+\langle s\rangle ^{-(d-\delta_0)b}\sup_{z} \frac{\left|\delta_{z}\nabla^{|\beta_3|}E_i(s,\cdot)(X_{s,t}(x-\alpha,v))\right|}{|z|^{b}} \right).
\end{align*}
If $|\beta_1|=0$, we can yield
\begin{align*}
|\overline{F}_{3}(x)|\leq\frac{|\overline{F}_3(x)|}{|\delta_\alpha Y_{s,t}(x,v)|^{b}}\left\|\nabla_xY_{s,t}\right\|_{L^\infty}^{b}\lesssim_{b,\delta_0,\mathbf{c}}|\alpha|^b\sup_{z}\frac{|\delta_zE_i(s,\cdot)(X_{s,t}(x-\alpha,v)|}{|z|^{b}}\langle s\rangle^{-(d-\delta_0)b}.
\end{align*}
Then, we consider another term. For $|\beta_2|\geq0$,
\begin{align*}
|\partial_x^{\beta_2}\overline{\eta}_{1,1}(x)|
&\lesssim\sum_{\substack{1\leq |\beta_3|\leq |\beta_2|,|\vartheta_i|>0\\|\vartheta_1|+...+|\vartheta_{|\beta_3|}|=|\beta_2|,}} \prod_{j=1}^{|\beta_3|}|\partial_x^{\vartheta_j}W_{s,t}(x-vt,v)\|\partial^{\beta_3}\partial_i\mu|(V_{s,t}(x,v))|
\lesssim_{b,\delta_0,\mathbf{c},\mathbf{c}'}
\frac{1}{\langle v\rangle^N}.
\end{align*}
Hence, we obtain
\begin{align}\nonumber
	\|\overline{\mathcal{T}}_{\alpha}^2[E_i,\partial_i\mu](t)\|_{L^p}
	&\lesssim_{C}|\alpha|^{b} \int_{0}^{t}\int_{\mathbb{R}^d}
	\sum_{\substack{1\leq|\beta_1|\leq l+1\\1\leq |\beta_3|\leq |\beta_1|}}\left(\langle s\rangle^{-d+\delta_0}\|(\nabla^{|\beta_3|}E_i)(s)\|_{L^p}
	+\langle s\rangle ^{-(d-\delta_0)b} \|\nabla^{|\beta_3|}E_i\|_{\dot{F}^b_{p,\infty}}\right)
	\frac{dvds}{\langle v\rangle^N}\\\nonumber
	&\quad+|\alpha|^{b} \int_{0}^{t}\int_{\mathbb{R}^d}\|E_i(s)\|_{\dot{F}^b_{p,\infty}}\langle s\rangle^{-(d-\delta_0)b}\frac{1}{\langle v\rangle^N}dvds
	\\
	&\lesssim_{C}|\alpha|^{b}\int_{0}^{t}\left(\frac{1}{\langle s\rangle^{d-2\delta_0+2}}+\frac{1}{\langle s\rangle^{(d-\delta_0)b+2-\delta_0}}\right)ds\lesssim_{C}|\alpha|^{b}.\label{Tbar2}
\end{align}
\textbf{2.3)} Estimate of  $\overline{\mathcal{T}}^3_\alpha[E_i,\partial_i\mu]$.\\
Firstly, we estimate $\partial^{\beta_1}_x \overline{F}_{2,1}(x-\alpha)$. If $|\beta_1|\geq1$,
\begin{align*}
|\partial^{\beta_1}_x \overline{F}_{2,1}(x-\alpha)|
&\lesssim	\sum_{\substack{1\leq |\beta_3|\leq |\beta_1|,|\vartheta_i|>0\\|\vartheta_1|+...+|\vartheta_{|\beta_3|}|=|\beta_1|}}| \prod_{j=1}^{|\beta_3|}(\partial_x^{\vartheta_j}X_{s,t}(x-\alpha,v))(\partial^{\beta_3}E_i)(s,X_{s,t}(x-\alpha,v))|\\
&\lesssim _{C}	\sum_{1\leq |\beta_3|\leq |\beta_1|}
\left|(\partial^{\beta_3}E_i)(s,X_{s,t}(x-\alpha,v))\right|.
\end{align*}
Combining above with the definition of $\partial^{\beta_1}_x \overline{F}_{2,1}(x-\alpha)$, we get for any $|\beta_1|\geq0$,
\begin{align*}
|\partial^{\beta_1}_x \overline{F}_{2,1}(x-\alpha)|\lesssim_{C} 	\sum_{0\leq |\beta_3|\leq |\beta_1|}
\left|(\partial^{\beta_3}E_i)(s,X_{s,t}(x-\alpha,v))\right|.
\end{align*}
Then, we compute the other term.
For $|\beta_2|\geq1$,
\begin{align*}
|\partial^{\beta_2}_x\overline{\eta}_{3}(x)|&\lesssim \sum_{\substack{1\leq |\beta_3|\leq \beta_2|,|\vartheta_i|>0\\|\vartheta_1|+...+|\vartheta_{|\beta_3|}|=|\beta_2|}}\left| \prod_{j=1}^{|\beta_3|}\partial_x^{\vartheta_j}W_{s,t}(x-vt,v)(\partial^{\beta_3}\partial_i\mu)(V_{s,t}(x,v))\right.\\
&\qquad\qquad\qquad\qquad\qquad\left.-\prod_{j=1}^{|\beta_3|}\partial_x^{\vartheta_j}W_{s,t}(x-\alpha-vt,v)(\partial^{\beta_3}\partial_i\mu)(V_{s,t}(x-\alpha,v))\right|
\\&\leq \sum_{\substack{1\leq |\beta_3|\leq \beta_2|,|\vartheta_i|>0\\|\vartheta_1|+...+|\vartheta_{|\beta_3|}|=|\beta_2|}}\left| \prod_{j=1}^{|\beta_3|}(\partial_x^{\vartheta_j}W_{s,t}(x-vt,v))-\prod_{j=1}^{|\beta_3|}(\partial_x^{\vartheta_j}W_{s,t}(x-\alpha-vt,v))\right||(\nabla^{|\beta_3|}\partial_i\mu)(V_{s,t}(x,v))|\\&\quad+
\prod_{j=1}^{|\beta_3|}|\partial_x^{\vartheta_j}W_{s,t}(x-\alpha-vt,v)|\left|(\nabla^{|\beta_3|}\partial_i\mu)(V_{s,t}(x,v))-(\nabla^{|\beta_3|}\partial_i\mu)(V_{s,t}(x-\alpha,v))\right|
\\&\lesssim_{C}|\alpha|^b\frac{\langle s\rangle^{-d+\delta_0-1}}{\langle v\rangle^N}+\min\{|\alpha|,1\}\left(\frac{1}{\langle v\rangle^N}+\frac{1}{\langle v-\alpha\rangle^N}\right)\langle s\rangle^{-d+\delta_0-1}\\
&\lesssim_{C}|\alpha|^b\left(\frac{1}{\langle v\rangle^N}+\frac{1}{\langle v-\alpha\rangle^N}\right)\langle s\rangle^{-d+\delta_0-1}.
\end{align*}			
If $|\beta|_2=0$, we have
\begin{align*}
|\overline{\eta}_{3}(x)|\lesssim_{C}\frac{|\alpha|^b}{\langle v\rangle^N}\langle s\rangle^{-d+\delta_0-1}.
\end{align*}
Hence, we deduce that
\begin{align}\nonumber
	\left\|\overline{\mathcal{T}}_{\alpha}^3[E_i,\partial_i\mu](t,x)\right\|_{L^p}
	\lesssim_{b,\delta_0,\mathbf{c},\mathbf{c}'}&|\alpha|^b\int_{0}^{t}\int_{\mathbb{R}^d}\sum_{\substack{0\leq|\beta_1|\leq l\\0\leq |\beta_3|\leq |\beta_1|}}
	\left\|(\partial^{\beta_3}E_i)(s)\right\|_{L^p}\left(\frac{1}{\langle v\rangle^N}+\frac{1}{\langle v-\alpha\rangle^N}\right)\langle s\rangle^{-d+\delta_0-1}dvds\\
	&\nonumber+|\alpha|^{b}\int_{0}^{t}\int_{\mathbb{R}^d}\sum_{1\leq |\beta_3|\leq l+1}
	\left\|(\partial^{\beta_3}E_i)(s)\right\|_{L^p}\langle s\rangle^{\delta_0-d-1}\frac{1}{\langle v\rangle^N}dvds\\
	\lesssim_{C}& |\alpha|^{b}\int_{0}^{t}\langle s\rangle^{-d+\delta_0-1}ds\lesssim_{C}|\alpha|^b.\label{Tbar3}
\end{align}
Thus, combining \eqref{Tbar1}, \eqref{Tbar2} with \eqref{Tbar3}, we have for $0\leq t\leq 1$,
\begin{align}\label{T t small}
\sup_\alpha\frac{\|\overline{	\mathcal{T}}_{\alpha}[E_i,\partial_i\mu](t)\|_{L^\infty}}{|\alpha|^{b}}\lesssim_{C}1.
\end{align}
In conclusion, thanks to \eqref{T t large} and \eqref{T t small}, we finish the proof.
\end{proof}
\section{Proof of the Theorem \ref{Thm1} and Theorem \ref{Thm2}}
Recall the operator $\mathcal{F}$ as:
$$\mathcal{F}(\rho,U)=\Big(\mathcal{F}_1(\rho,U),\mathcal{F}_2(\rho,U)\Big),
$$
where
$
\mathcal{F}_1(\rho,U)=G*_{(t,x)}(\mathcal{I}(\rho,U)+\mathcal{R}(\rho,U)+A(U))+\mathcal{I}(\rho,U)+\mathcal{R}(\rho,U)
$ and $ \mathcal{F}_2(\rho,U)=(1-\Delta)^{-1}(\rho+A(U)).$\\
Now we start to prove Theorem \ref{Thm1} and  \ref{Thm2}. \\
\begin{proof}[Proof of Theorem \ref{Thm1}]Let $\widetilde{\varepsilon}_0$ be in Proposition \ref{estimates about Y and W} and take $\varepsilon\leq\widetilde{\varepsilon}_0$. Assume $$\|(\rho,U)\|_{S^{{\varepsilon}}_a},\|(\rho_1,U_1)\|_{S^{{\varepsilon}}_a},\|(\rho_2,U_2)\|_{S^{{\varepsilon}}_a}\leq\varepsilon.$$Applying Theorem \ref{t modulus estimation} with $\mathbf{f}=\mathcal{I}_{f_0}(\rho,U)+\mathcal{R}(\rho,U)$ and $\mathbf{f}=\mathcal{I}_{f_0}(\rho_1,U_1)-\mathcal{I}_{f_0}(\rho_2,U_2)+\mathcal{R}(\rho_1,U_1)-\mathcal{R}(\rho_2,U_2)$, there exists a constant $M=M(\text{\r{c}},d,M^*)$ such that
\begin{align}\label{C2}
&\|\mathcal{F}_1(\rho,U)	\|_{a}\leq  M\Big(\| (\mathcal{I}_{f_0}+\mathcal{R})(\rho,U)\|_{a}+\|A(U)\|_a\Big),\\  &\|\mathcal{F}_1(\rho_1,U_1)-\mathcal{F}_1(\rho_2,U_2)\|_a\leq M\bigg(\|(\mathcal{I}_{f_0}+\mathcal{R})(\rho_1,U_1)-(\mathcal{I}_{f_0}+\mathcal{R})(\rho_2,U_2)\|_a+\|A(U_1)-A(U_2)\|_a\bigg)
.
\end{align}
Thanks to Lemma \ref{varrho},  Proposition \ref{I a estimate} , Proposition \ref{mathcalR sigma est} and \eqref{AU leq U}, we get that there exsists a constant $C_{3,1}=C_{3,1}(a,\text{\r{c}},d,M^*)>0$ and $C_{3,2}=C_{3,2}(a,d)>0$ such that
\begin{align}\label{F1}
\begin{split}
	\|\mathcal{F}_1(\rho,U)	\|_{a}\leq& C_{3,1}\left(	\sum_{p=1,\infty}\left\|\mathcal{D}^{a} (f_0)\right\|_{L^1_{x}L^p_v\cap L^1_{v}L^p_x}+ \|(\rho,U)\|_{S^{{\varepsilon}}_a}^{1+a}+C_A\varepsilon^{\frac{1}{3}}\|(\rho,U)\|_{S^{{\varepsilon}}_a}\right)\\
\leq&C_{3,1}\left(\frac{\varepsilon}{C_2}+\varepsilon^{1+a}+{\varepsilon}\varepsilon^{\frac{1}{3}}\right)\leq\left(\frac{C_{3,1}}{C_2}+C_{3,1}(1+C_A)\varepsilon^{\frac{1}{3}}\right)\varepsilon,
\end{split}
\end{align}
and
\begin{align}\label{F_2}
	\|\mathcal{F}_2(\rho,U)	\|_{a}\leq& C_{3,2} \|\rho +A(U)\|_a\leq C_{3,2} \|(\rho ,U)\|_{S^{{\varepsilon}}_a}\leq C_{3,2}\varepsilon, \\\label{F2 diff}
		\|\mathcal{F}_2(\rho_1,U_1)-\mathcal{F}_2(\rho_2,U_2)\|_a\leq&C_{3,2}\|\rho_1-\rho_2+A(U_1)-A(U_2)\|_a\leq C_{3,2} \|(\rho_1-\rho_2 ,U_1-U_2)\|_{S^{{\varepsilon}}_a}.
\end{align}
Moreover, with Proposition \ref{I 12a estimate} and Proposition \ref{mathcalR difference}, there exists a constant $C_4=C_4(a,\text{\r{c}},d,M^*)>0$ such that
\begin{align}\label{F1 diff}
\begin{split}
&	\|\mathcal{F}_1(\rho_1,U_1)-\mathcal{F}_1(\rho_2,U_2)\|_a\leq C_4\|A(U_1)-A(U_2)\|_a\\&+ C_4\|(\rho_1-\rho_2,U_1-U_2)\|_{S^{{\varepsilon}}_a} \left(	\sum_{p=1,\infty}\|\mathcal{D}^a(\nabla_{x,v} f_0)\|_{L^1_{x}L^p_v\cap L^1_{v}L^p_x}+	
\|(\rho_1,U_1)\|_{S^{{\varepsilon}}_a}^a+\|(\rho_2,U_2)\|_{S^{{\varepsilon}}_a}\right)\\
&\leq\|(\rho_1-\rho_2,U_1-U_2)\|_{S^{{\varepsilon}}_a}\left(\frac{C_4}{C_2}+2C_4\right) {\varepsilon}^{a} +{C_4}C_A\varepsilon^{\frac{1}{3}}\|(\rho_1-\rho_2,U_1-U_2)\|_{S^{{\varepsilon}}_a}\\
&\leq\|(\rho_1-\rho_2,U_1-U_2)\|_{S^{{\varepsilon}}_a}\left(\frac{C_4}{C_2}+(2+C_A)C_4\right) \varepsilon^{\frac{1}{3}}.
\end{split}
\end{align}
Combining \eqref{F1}, \eqref{F1 diff}, \eqref{F2 diff} with  \eqref{F_2}, we obtain that
\begin{align}\nonumber
\|\mathcal{F}(\rho,U)\|_{S^{{\varepsilon}}_a}=&\|\mathcal{F}_1(\rho,U)\|_a+\varepsilon^{\frac{1}{3}}\|\mathcal{F}_2(\rho,U)\|_a\leq \left(\frac{C_{3,1}}{C_2}+(C_{3,1}(1+C_A)+C_{3,2})\varepsilon^{\frac{1}{3}}\right)\varepsilon,\\
	\|\mathcal{F}(\rho_1,U_1)-\mathcal{F}(\rho_2,U_2)\|_{S^{{\varepsilon}}_a}=&	\|\mathcal{F}_1(\rho_1,U_1)-\mathcal{F}_1(\rho_2,U_2)\|_a+\varepsilon^{\frac{1}{3}}	\|\mathcal{F}_2(\rho_1,U_1)-\mathcal{F}_2(\rho_2,U_2)\|_a\nonumber\\\nonumber
	\leq&\|(\rho_1-\rho_2 ,U_1-U_2)\|_a\left(\frac{C_4}{C_2}+(2+C_A)C_4+C_{3,2}\right) \varepsilon^{\frac{1}{3}}\\:=&C_5 \varepsilon^{\frac{1}{3}}\|(\rho_1-\rho_2 ,U_1-U_2)\|_a.\label{diff12}
\end{align}
Hence, we choose $C_2=\max\{1,8C_{3,1},8C_4\}$,  and $\tilde{\varepsilon}_1=\min\left\{\left(\frac{1}{4(C_{3,1}(1+C_A)+C_{3,2})}\right)^3,\left(\frac{1}{100+4((2+C_A)C_4+C_{3,2})}\right)^{3},\tilde{\varepsilon}_0\right\},$ we can yield that
\begin{equation}\label{z111}
\|\mathcal{F}(\rho,U)	\|_{a}\leq   \varepsilon,~~~~~~
\|\mathcal{F}(\rho_1,U_1)-\mathcal{F}_1(\rho_2,U_2)\|_a\leq\frac{1}{2}\|(\rho_1-\rho_2,U_1-U_2)\|_a,~~\forall~\varepsilon\leq \tilde{\varepsilon}_1.
\end{equation}
Therefore, we conclude  that $\mathcal{F}$ is a compressed mapping in $\mathfrak{B}(\varepsilon)$ for $\varepsilon\leq \tilde{\varepsilon}_1$
Now we consider the iterative sequence $(\varrho_n,\mathcal{U}_n)=\mathcal{F}(\varrho_{n-1},\mathcal{U}_{n-1}) $ with $\varrho_0(t,x)=U_0(t,x)=0$. So, $\varrho_1(t,x)=\mathcal{F}_1(0,0)(t,x)=G*_{t,x}\tilde{\varrho}_0(t,x)+\tilde{\varrho}_0(t,x)$ and $\mathcal{U}_1=0$ with $\tilde{\varrho}_0(t,x)=\int_{\mathbb{R}^d}f_0(x-tv,v)dv$. From  \eqref{z111} we obtain that for $n\in\mathbb{N}$,
\begin{equation}\label{guji}
	\|\mathcal{F}(\varrho_{n+1},\mathcal{U}_{n+1})-\mathcal{F}(\varrho_{n},\mathcal{U}_n)\|_a\leq C_5 \varepsilon^{\frac{1}{3}}\|(\varrho_{n}-\varrho_{n-1},\mathcal{U}_n-\mathcal{U}_{n-1})\|_a=C_5 \varepsilon^{\frac{1}{3}}	\|\mathcal{F}(\varrho_{n},\mathcal{U}_{n})-\mathcal{F}(\varrho_{n-1},\mathcal{U}_{n-1})\|_a.
\end{equation} 
Thus $\{(\varrho_n,\mathcal{U}_n)\}_{n=1}^\infty$ is a Cauchy sequence. We denote 	$$(\rho,U)=\Big(\sum_{n=2}^\infty (\varrho_n-\varrho_{n-1})+\varrho_1,\sum_{n=2}^\infty (\mathcal{U}_n-\mathcal{U}_{n-1})+\mathcal{U}_1\Big),$$ it is clear that $\Big(\mathcal{F}_1(\rho,U),\mathcal{F}_2(\rho,U)\Big)=(\rho,U)\in \mathfrak{B}(\varepsilon).$ Moreover by \eqref{guji}, we have
\begin{align*}		&\|(\rho-\varrho_1,U-\mathcal{U}_1)\|_a\leq\sum_{n=2}^\infty \|(\varrho_n-\varrho_{n-1},\mathcal{U}_n-\mathcal{U}_{n-1})\|_a
	\\
&\qquad	=\sum_{n=2}^\infty \|\mathcal{F}(\varrho_{n-1},\mathcal{U}_{n-1})-\mathcal{F}(\varrho_{n-2},\mathcal{U}_{n-2})\|_a\leq \varepsilon\sum_{n=2}^\infty (C_5 \varepsilon^{\frac{1}{3}})^{n-1}\leq 2C_5  \varepsilon^\frac{4}{3}.
\end{align*}
Recall the definition of $\mathcal{F}(\rho)$ in \eqref{mathcal F}, we have that the fixed point is exactly the solution of \eqref{equation rho}.
Now we consider $\|(\rho,U)\|_{1,a}$, note that
\begin{align*}
	\| U\|_{1,a}&=\|\mathcal{F}_2(\rho)\|_{1,a}\leq \|\rho\|_{1,a}+C_A\|U\|_a\| U\|_{1,a}\leq  \|\rho\|_{1,a}+ \varepsilon^{\frac{1}{3}}\| U\|_{1,a} ,
	\\\|\rho\|_{1,a}&=\|\mathcal{F}_1(\rho,U)\|_{1,a}=
	\Big\| G*_{(t,x)}\Big(\mathcal{I}_{f_0}(\rho,U)+\mathcal{R}(\rho,U)+ A(U)\Big)+\Big(\mathcal{I}_{f_0}(\rho,U)+\mathcal{R}(\rho,U)\Big)\Big\|_{1,a}\\
	&\leq M\left(\|\mathcal{I}_{f_0}(\rho,U)\|_{1,a}+\|\mathcal{R}(\rho,U)\|_{1,a}+\| A(U)\|_{1,a}\right)\\
	&\leq M\left(\|\mathcal{I}_{f_0}(\rho,U)\|_{1,a}+\|\mathcal{R}(\rho,U)\|_{1,a}+\varepsilon^{\frac{1}{3}}\|U\|_{1,a}\right)
	.
\end{align*}
Therefore we have 
\begin{align}\label{tr11}
	\|\rho\|_{1,a}+\frac{1}{4}\|U\|_{1,a}\leq M\left(\|\mathcal{I}_{f_0}(\rho,U)\|_{1,a}+\|\mathcal{R}(\rho,U)\|_{1,a}\right)+M\varepsilon^{\frac{1}{3}}\| U\|_{1,a}+\frac{1}{4}\|\rho\|_{1,a}+\frac{1}{4}\varepsilon^{\frac{1}{3}}\|U\|_{1,a},
\end{align}
which implies that
\begin{align*}
	\|\rho\|_{1,a}+\| U\|_{1,a}\leq 8M\left(\|\mathcal{I}_{f_0}(\rho,U)\|_{1,a}+\|\mathcal{R}(\rho,U)\|_{1,a}\right).
\end{align*}
Thanks to \eqref{I norm} and \eqref{z13}, we get\begin{align*}
	\|\mathcal{I}_{f_0}(\rho,U)\|_{1-\delta_1}+\|\mathcal{R}(\rho,U)\|_{1-\delta_1}\lesssim_{\text{\r{c}},a,M^*} 1,
\end{align*}
where $\delta_1=\min\{\frac{4-\sqrt{13}}{9},\frac{1-a}{3}\}$.
Then we use Proposition \ref{higherRnorm} and \eqref{higher I norm} to obtain 
\begin{align*}
	\|\mathcal{I}_{f_0}(\rho,U)\|_{1,a}+\|\mathcal{R}(\rho,U)\|_{1,a}\lesssim_{\text{\r{c}},a,M^*} 1.
\end{align*}
Consequently, it gives that 
$$
\|(\rho,U)\|_{1,a}\lesssim_{\text{\r{c}},a,M^*} 1.
$$
The proof is complete. 
\end{proof}\\
\begin{proof}[Proof of Theorem \ref{Thm2}]From Theorem \ref{Thm1}, we know that $\|(\rho ,U)\|_{S^{{\varepsilon}}_a}\leq\widetilde{\varepsilon}_0$ and $\|(\rho,U)\|_{1,a}\lesssim_{\text{\r{c}},a,M^*} 1$. Applying the Gagliardo-Nirenberg interpolation inequality, we get $\|(\rho,U)\|_{1-\delta_0}\lesssim_{a,M^*} 1$, where $\delta_0\in(0,\min\{\frac{4-\sqrt{13}}{3^{m+1}},\frac{1-b}{3^m}\})$.
Combining  Theorem \ref{t modulus estimation} and Proposition \ref{I 12norm} with Proposition \ref{R estimate}, one has
\begin{align*}
	&\|\rho\|_{1,1-3\delta_0}=	\|\mathcal{F}_1(\rho,U)\|_{1,1-3\delta_0}\leq M \left(\| \mathcal{I}_{f_0}(\rho,U)\|_{1,1-3\delta_0} +\| \mathcal{R}(\rho,U)\|_{1,1-3\delta_0}+\| A(U)\|_{1,1-3\delta_0}\right),\\
	&\| U\|_{1,1-3\delta_0}=\|\mathcal{F}_2(\rho,U)\|_{1,1-3\delta_0}\leq \|\rho\|_{1,a}+\|A(U)\|_{1,1-3\delta_0}\leq\|\rho\|_{1,a}+ \varepsilon^{\frac{1}{3}}\| U\|_{1,a},
	\\
	&\| \mathcal{I}_{f_0}(\rho)\|_{1,1-3\delta_0} +\| \mathcal{R}(\rho)\|_{1,1-3\delta_0}\lesssim_{a,M^*,\mathbf{c},\mathbf{c}'}1.
\end{align*}
Using the same method in \eqref{tr11} we have
$
\|(\rho,U)\|_{1,1-3\delta_0}=\|(\mathcal{F}_1(\rho,U),\mathcal{F}_2(\rho,U))\|_{1,1-3\delta_0}\lesssim_{a,M^*,\mathbf{c},\mathbf{c}',C_{A,m}}1.
$
Note that\begin{align*}
	\|\langle s\rangle^2\nabla^2(A(U))\|_{\dot{B}^{1-9\delta_0}_{p,\infty}}\leq C_{A,m} \|U\|_{1,1-9\delta_0}+C_A\varepsilon^{\frac{2}{3}}	\|\langle s\rangle^2\nabla^2U\|_{\dot{B}^{1-9\delta_0}_{p,\infty}}
\end{align*}
We do one more step to obtain
\begin{align*}
	\|(\rho,U)\|_{2,1-9\delta_0}&=	\|(\mathcal{F}_1(\rho,U),\mathcal{F}(\rho,U))\|_{2,1-9\delta_0}\\
	&\lesssim_{\text{\r{c}},a,M^*,\mathbf{c},\mathbf{c}',C_{A,m}} \| \mathcal{I}_{f_0}(\rho,U)\|_{2,1-9\delta_0} +\|\mathcal{R}(\rho,U)\|_{2,1-9\delta_0}+\|(\rho,U)\|_{2,1-9\delta_0}\\
	&
	\lesssim_{\text{\r{c}},a,M^*,\mathbf{c},\mathbf{c}',C_{A,m}}1.
\end{align*}
Then, with $m$ times iteration, we have
$$ \|(\rho,U)\|_{m,1-3^{m}\delta_0}\lesssim_{\text{\r{c}},a,M^*,\mathbf{c},\mathbf{c}',C_{A,m}}1. $$
Note that $1-3^m\delta_0>b$, thus by the Gagliardo-Nirenberg interpolation inequality again to get $$\|(\rho,U)\|_{m,b}\lesssim_{\text{\r{c}},a,M^*,\mathbf{c},\mathbf{c}',C_{A,m}}1, $$
which completes the proof.			
\end{proof}\\
\section{Appendix}
\begin{lemma}\label{mathcal H1H2}
Assume $\mathcal{H}\in L^1(\mathbb{R}^d)\cap L^\infty(\mathbb{R}^d)$ and  $\varphi(x,v)$ satisfy $\|\nabla_{x,v} \varphi(x,v)\|_{L^\infty_{x,v}}\leq\frac{1}{2}$, then for $p=1,\infty$, $0\leq s\leq t$ and $t\geq0$ we have
\begin{align}\label{mathcal H p}
\left\|\int_{\mathbb{R}^d}\mathcal{H}\left(\varphi(x,v)+x-(t-s)v\right)\frac{dv}{\langle v\rangle^N}\right\|_{L^p_x}\lesssim\|\mathcal{H}\|_{L^p}.
\end{align}
Moreover, for $0\leq s\leq \frac{t}{2}$ and $t\geq1$, we also get
\begin{align}\label{mathcal H 1}
\left\|\int_{\mathbb{R}^d}\mathcal{H}\left(\varphi(x,v)+x-(t-s)v\right)\frac{dv}{\langle v\rangle^N}\right\|_{L^p_x}\lesssim\frac{1}{t^{\frac{d(p-1)}{p}}}\|\mathcal{H}\|_{L^1}.
\end{align}
\end{lemma}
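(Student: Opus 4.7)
The lemma contains four assertions (two bounds, each for $p=1$ and $p=\infty$), and the plan is to dispatch them in order of increasing difficulty. The $p=\infty$ case of the first bound is immediate: pulling the sup out of the integrand yields $\|\mathcal{H}\|_{L^\infty}\int\langle v\rangle^{-N}\,dv \lesssim \|\mathcal{H}\|_{L^\infty}$, since $N>d$.

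For both $p=1$ bounds I would first apply Fubini to rewrite the left-hand side as
\[
\int_{\mathbb{R}^d}\frac{dv}{\langle v\rangle^N}\,\bigl\|\mathcal{H}\bigl(\varphi(\cdot,v)+\cdot-(t-s)v\bigr)\bigr\|_{L^1_x},
\]
and then change variables in $x$ using the map $\Phi_v(x):=x+\varphi(x,v)-(t-s)v$. Since $\nabla_x\Phi_v=I+\nabla_x\varphi(x,v)$ and $\|\nabla_x\varphi\|_{L^\infty}\leq 1/2$, Hadamard's inequality gives $2^{-d}\leq|\det\nabla_x\Phi_v|\leq(3/2)^d$ pointwise, so $\Phi_v$ is a global $C^1$ diffeomorphism of $\mathbb{R}^d$ by the Hadamard global inverse function theorem. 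A standard change-of-variables computation then yields $\|\mathcal{H}\circ\Phi_v\|_{L^1_x}\lesssim\|\mathcal{H}\|_{L^1}$ uniformly in $v$, and the outer $v$-integral converges because $N>d$.

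The heart of the proof is the $p=\infty$ case of the second bound, where the dispersive factor $t^{-d}$ must be extracted. For fixed $x$ I would change variables in $v$ via $\Psi_x(v):=\varphi(x,v)+x-(t-s)v$, whose Jacobian matrix factors as $\nabla_v\Psi_x=-(t-s)\bigl(I-(t-s)^{-1}\nabla_v\varphi\bigr)$. Under the hypotheses $s\leq t/2$ and $t\geq 1$, we have $t-s\geq t/2\geq 1/2$, and combined with $\|\nabla_v\varphi\|_{L^\infty}\leq 1/2$, Hadamard gives $|\det\nabla_v\Psi_x|\gtrsim(t-s)^d\sim t^d$; the same bound certifies that $\Psi_x$ is a diffeomorphism of $\mathbb{R}^d$. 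Substituting $y=\Psi_x(v)$, dropping the $\langle\Psi_x^{-1}(y)\rangle^{-N}$ weight, and integrating in $y$ produces the desired estimate $\lesssim\|\mathcal{H}\|_{L^1}/t^d$.

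The principal obstacle is the borderline behaviour of this last Jacobian bound at the corner of the parameter range $t=1$, $s=t/2$, where $t-s=1/2$ matches $\|\nabla_v\varphi\|_{L^\infty}=1/2$ and the naive estimate $|\det\nabla_v\Psi_x|\geq(t-s-1/2)^d$ collapses to zero. I would resolve this either by appealing to the fact that in every application of the lemma in the paper (through characteristics $Y_{s,t},W_{s,t}$) the quantity $\|\nabla_v\varphi\|_{L^\infty}$ is in fact of order $\varepsilon_0\ll 1/2$, so $(t-s)^{-1}\|\nabla_v\varphi\|_{L^\infty}\leq 2\varepsilon_0<1/2$ strictly and the Jacobian lower bound is uniform, or by splitting the parameter range into $t\geq 2$ (where the naive Hadamard estimate has slack since $t-s\geq 1$) and the bounded range $t\in[1,2]$, in which $t^{-d}$ is comparable to $1$ and the claim reduces to the same $L^1$-diffeomorphism estimate with a uniform Jacobian lower bound on this compact parameter set.
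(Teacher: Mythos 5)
Your strategy --- a pointwise bound for $p=\infty$ in \eqref{mathcal H p}, a change of variables in $x$ via $\Phi_v$ for the two $p=1$ cases, and a change of variables in $v$ via $\Psi_x$ to extract $t^{-d}$ in \eqref{mathcal H 1} with $p=\infty$ --- is exactly the route the paper takes, and your execution is the more explicit of the two (one terminology quibble: the lower bound $|\det(I+A)|\geq(1-\|A\|)^d$ you invoke is a singular-value estimate, $\sigma_{\min}(I+A)\geq 1-\|A\|$, not Hadamard's inequality, which gives only upper bounds; the inequalities you write down are nonetheless correct).

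You are right to flag the degeneracy at the corner $t-s=1/2$, $\|\nabla_v\varphi\|_{L^\infty}=1/2$, where $\det(I-(t-s)^{-1}\nabla_v\varphi)$ can vanish and the change of variables in $v$ breaks down; the paper's own one-line Jacobian step silently has the same problem. Your first proposed resolution is the correct one: every invocation of this lemma in the paper takes $\varphi\in\{Y_{s,t},W_{s,t}\}$ with $\|\nabla_{x,v}\varphi\|_{L^\infty}\lesssim\varepsilon_0\ll 1/2$, so either restating the hypothesis with a constant strictly below $1/2$ or simply noting this about the applications makes the Jacobian lower bound uniform. Your second proposed resolution, however, does not close the gap: on the range $t\in[1,2]$, $s\in[0,t/2]$ one still reaches $t-s=1/2$ (at $t=1$, $s=1/2$), so the same degenerate corner is present, and compactness of the scalar parameters $(t,s)$ gives no uniform lower bound on $\det(I-(t-s)^{-1}\nabla_v\varphi)$ over all admissible $\varphi$. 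You should drop the second alternative and keep only the first.
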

\begin{proof}
We can directly prove the first inequality  as follows
\begin{align*}
\left\|\int_{\mathbb{R}^d}\mathcal{H}\left(\varphi(x,v)+x-(t-s)v\right)\frac{dv}{\langle v\rangle^N}\right\|_{L^p_x}\leq\|\mathcal{H}\|_{L^p}\left(1+\|\nabla_{x,v} \varphi(x,v)\|_{L^\infty_{x,v}}\right)
\lesssim\|\mathcal{H}\|_{L^p}.
\end{align*}
Furthermore, we change variable $\zeta=x-(t-s)v$ and obtain for $0\leq s\leq \frac{t}{2}$,
\begin{align*}
&\left\|\int_{\mathbb{R}^d}\mathcal{H}\left(\varphi(x,v)+x-(t-s)v\right)\frac{dv}{\langle v\rangle^N}\right\|_{L^\infty_x}=\frac{1}{(t-s)^d}\left\|\int_{\mathbb{R}^d}\mathcal{H}\left(\varphi(x,\frac{x-\zeta}{t-s})+\zeta\right)\frac{dw}{\langle \frac{x-\zeta}{t-s}\rangle^N}\right\|_{L^\infty_x}
\\&\qquad\lesssim\frac{1}{t^d}\bigg[\sup_{x,\zeta}\left(\left|(\nabla_v\varphi)(x,\frac{x-\zeta}{t-s})\right|(t-s)^{-d}+1\right)^{-1}\bigg]\|\mathcal{H}\|_{L^1}\lesssim\frac{1}{t^d}\|\mathcal{H}\|_{L^1}.
\end{align*}
\end{proof}
\begin{remark}\label{mathcal Halpha}
Similarly, under the same assumptions of $\mathcal{H}$ and $\varphi$, one has
\begin{align}\label{mathcal H alpha p}
\left\|\int_{\mathbb{R}^d}\left|\mathcal{H}\left(\varphi(x,v)+x-(t-s)v\right)\right|\left(\frac{1}{\langle v\rangle^N}+\frac{1}{\langle v-\frac{\alpha}{t}\rangle^N}\right)dv\right\|_{L^p_x}\lesssim\|\mathcal{H}\|_{L^p},
\end{align}
and for $0\leq s\leq \frac{t}{2},~t\geq1$, 
\begin{align}\label{mathcal H alpha 1}
\left\|\int_{\mathbb{R}^d}\left|\mathcal{H}\left(\varphi(x,v)+x-(t-s)v\right)\right|\left(\frac{1}{\langle v\rangle^N}+\frac{1}{\langle v-\frac{\alpha}{t}\rangle^N}\right)dv\right\|_{L^p_x}\lesssim\frac{1}{t^{\frac{d(p-1)}{p}}}\|\mathcal{H}\|_{L^1}.
\end{align}
\end{remark}

\end{document}